\theoremstyle{theorem}
\newtheorem{theorem}{Theorem}[section]
\newtheorem{proposition}[theorem]{Proposition}
\newtheorem{lemma}[theorem]{Lemma}
\newtheorem{condition}[theorem]{Condition}
\newtheorem{remark}[theorem]{Remark}
\newtheorem{corollary}[theorem]{Corollary}
\newtheorem{example}[theorem]{Example}
\numberwithin{equation}{section}
\theoremstyle{plain}
\def\Ker{\mathrm{Ker}\,}
\def\rank{\mathrm{rank}\,}
\def\Dom{\mathrm{Dom}\,}
\def\clos{\mathrm{clos}\,}
\def\sgn{\mathrm{sign}\,}
\def\mes{\mathrm{meas}\,}
\def\diag{\mathrm{diag}\,}
\def\esssup{\mathrm{ess}\text{-}\mathrm{sup}\,}
\def\le{\leqslant}
\def\ge{\geqslant}
\begin{document}

\title[Spectral approach to homogenization of hyperbolic equations]{Spectral approach to homogenization \\of hyperbolic equations with periodic coefficients}

\author{M.~A.~Dorodnyi and T.~A.~Suslina}

\keywords{Periodic differential operators, hyperbolic equations, homogenization, effective operator, operator error estimates}

\address{St. Petersburg State University, Universitetskaya nab. 7/9, St.~Petersburg, 199034, Russia}

\email{mdorodni@yandex.ru}

\email{t.suslina@spbu.ru}

\subjclass[2010]{Primary 35B27}

\begin{abstract}
In $L_2(\mathbb{R}^d;\mathbb{C}^n)$, we consider selfadjoint strongly elliptic second order differential operators ${\mathcal A}_\varepsilon$
with periodic coefficients depending on ${\mathbf x}/ \varepsilon$, $\varepsilon>0$.
We study the behavior of the operators $\cos( {\mathcal A}^{1/2}_\varepsilon \tau)$ and
${\mathcal A}^{-1/2}_\varepsilon \sin( {\mathcal A}^{1/2}_\varepsilon \tau)$, $\tau \in \mathbb{R}$, for small $\varepsilon$. Approximations for these
operators in the $(H^s\to L_2)$-operator norm with a suitable $s$ are obtained. The results are used to study the behavior of
the solution ${\mathbf v}_\varepsilon$ of the Cauchy problem for the hyperbolic equation
$\partial^2_\tau {\mathbf v}_\varepsilon = - \mathcal{A}_\varepsilon {\mathbf v}_\varepsilon +\mathbf{F}$.
General results are applied to the acoustics equation and the system of elasticity theory.
\end{abstract}

\thanks{Supported by Russian Foundation for Basic Research (grant no.~16-01-00087)}

\maketitle

\section*{Introduction\label{Intr}}

The paper concerns  homogenization for periodic differential operators (DOs). A broad literature is devoted to homogenization problems;
first, we mention the books \cite{BeLP, BaPa, ZhKO}.
For homogenization problems in  ${\mathbb R}^d$, one of the methods is the spectral approach based on the Floquet-Bloch theory; see, e.~g.,
\cite[Chapter~4]{BeLP}, \cite[Chapter~2]{ZhKO}, \cite{Se}, \cite{COrVa}.

\subsection{The class of operators\label{sec0.1}}
We consider selfadjoint elliptic second order DOs in $L_2(\mathbb{R}^d;\mathbb{C}^n)$ admitting a factorization of the form
\begin{equation}
\label{intro_A}
\mathcal{A} = f(\mathbf{x})^* b(\mathbf{D})^* g(\mathbf{x}) b(\mathbf{D}) f(\mathbf{x}).
\end{equation}
Here $b(\mathbf{D})=\sum_{l=1}^d b_l D_l$ is the $(m\times n)$-matrix first order DO.
We assume that $m\ge n$ and that the symbol $b(\boldsymbol{\xi})$ has maximal rank.
The matrix-valued functions  $g(\mathbf{x})$ (of size $m\times m$) and $f(\mathbf{x})$
(of size $n\times n$) are assumed to be periodic with respect to some lattice $\Gamma$ and such that
$g(\mathbf{x}) >0$; $g,g^{-1} \in L_\infty$; $f,f^{-1} \in L_\infty$.
It is convenient to start with a simpler class of operators
\begin{equation}
\label{intro_hatA}
\widehat{\mathcal{A}} = b(\mathbf{D})^* g(\mathbf{x}) b(\mathbf{D}).
\end{equation}
Many operators of mathematical physics can be represented in the form \eqref{intro_A} or~\eqref{intro_hatA}; see \cite{BSu1}.
The simplest example is the acoustics operator $\widehat{\mathcal{A}} = - \hbox{\rm div}\, g(\mathbf{x}) \nabla =
\mathbf{D}^* g(\mathbf{x})\mathbf{D}$.

Now we introduce the small parameter  $\varepsilon > 0$ and
denote $\varphi^\varepsilon(\mathbf{x}):= \varphi(\varepsilon^{-1}\mathbf{x})$ for any $\Gamma$-periodic function $\varphi(\mathbf{x})$.
Consider the operators
\begin{gather}
\label{intro_A_eps}
\mathcal{A}_{\varepsilon} = f^{\varepsilon}(\mathbf{x})^* b(\mathbf{D})^* g^{\varepsilon}(\mathbf{x}) b(\mathbf{D}) f^{\varepsilon}(\mathbf{x}),\\
\label{intro_hatA_eps}
\widehat{\mathcal{A}}_{\varepsilon} = b(\mathbf{D})^* g^{\varepsilon}(\mathbf{x}) b(\mathbf{D}).
\end{gather}

\subsection{Operator error estimates for elliptic and parabolic problems in $\mathbb{R}^d$\label{sec0.2}}
 In a series of papers \cite{BSu1,BSu2,BSu3,BSu4} by Birman and Suslina, an operator-theoretic approach to homogenization of elliptic equations in
$\mathbb{R}^d$  was suggested and developed. This approach is based on the scaling transformation, the Floquet-Bloch theory, and the analytic perturbation theory.

Let us talk about the simpler operator \eqref{intro_hatA_eps}.  In \cite{BSu1}, it was proved that
\begin{equation}
\label{intro_resolv_est}
\| (\widehat{\mathcal{A}}_{\varepsilon} + I )^{-1} - (\widehat{\mathcal{A}}^0 + I )^{-1} \|_{L_2(\mathbb{R}^d) \to L_2(\mathbb{R}^d)} \le C \varepsilon.
\end{equation}
Here $\widehat{\mathcal{A}}^0 = b(\mathbf{D})^* g^0 b(\mathbf{D})$ is the \textit{effective operator} with the constant \textit{effective matrix} $g^0$. Approximations for $(\widehat{\mathcal{A}}_{\varepsilon} +I)^{-1}$
in the $(L_2\to L_2)$-norm with an error $O(\varepsilon^2)$ and in the $(L_2\to H^1)$-norm with an error $O(\varepsilon)$
(with correctors taken into account) were obtained in \cite{BSu2, BSu3} and \cite{BSu4}, respectively.

The operator-theoretic approach was applied to parabolic problems in \cite{Su1, Su2, Su3, V, VSu}.
In \cite{Su1,Su2}, it was proved that
\begin{equation}
\label{intro_parab_exp_est}
\| e^{-\tau \widehat{\mathcal{A}}_{\varepsilon}} - e^{-\tau \widehat{\mathcal{A}}^0} \|_{L_2(\mathbb{R}^d) \to L_2(\mathbb{R}^d)} \le C \varepsilon (\tau + \varepsilon^2)^{-1/2},\quad \tau >0.
\end{equation}
Approximations for $e^{-\tau \widehat{\mathcal{A}}_{\varepsilon}}$
in the $(L_2\to L_2)$-norm with an error $O(\varepsilon^2)$ and in the $(L_2\to H^1)$-norm with an error $O(\varepsilon)$
 were found in \cite{V} and \cite{Su3}, respectively.
Even more accurate approximations for the semigroup and the resolvent of $\widehat{\mathcal{A}}_{\varepsilon}$
were found in \cite{VSu}.

Estimates of the form \eqref{intro_resolv_est}, \eqref{intro_parab_exp_est} are called \textit{operator error estimates} in homogenization theory.
They are order-sharp. A different approach to operator error estimates
was suggested by Zhikov; see \cite{Zh, ZhPas1, ZhPas2} and a recent survey \cite{ZhPas3}.

\subsection{Operator error estimates for nonstationary Schr\"odinger-type and hyperbolic equations\label{intro_BSu5_Su5_rewiew_section}}
The situation with  homogenization of nonstationary  Schr\"odinger-type and hyperbolic equations
is different from the case of elliptic and parabolic problems. The paper \cite{BSu5} is devoted to such problems. Again, we dwell on the results for the simpler operator \eqref{intro_hatA_eps}.
In operator terms, the behavior of  the operators $e^{-i \tau \widehat{\mathcal{A}}_{\varepsilon}}$ and
$\cos(\tau \widehat{\mathcal{A}}_{\varepsilon}^{1/2})$  (where $\tau \in \mathbb{R}$) for small $\varepsilon$  is studied.
For these operators it is impossible to obtain approximations in the $(L_2 \to L_2)$-norm,
and we are forced to consider the norm of operators acting from the Sobolev space $H^s (\mathbb{R}^d; \mathbb{C}^n)$ (with appropriate $s$)
to $L_2 (\mathbb{R}^d; \mathbb{C}^n)$. In \cite{BSu5}, the following estimates were proved:
\begin{gather}
\label{intro_exp_est}
\| e^{-i \tau \widehat{\mathcal{A}}_{\varepsilon}} - e^{-i \tau \widehat{\mathcal{A}}^0} \|_{H^3 (\mathbb{R}^d) \to L_2 (\mathbb{R}^d)} \le \widetilde{C}(1 + |\tau|)\varepsilon, \\
\label{intro_cos_est}
\| \cos(\tau \widehat{\mathcal{A}}_{\varepsilon}^{1/2}) - \cos(\tau (\widehat{\mathcal{A}}^0)^{1/2}) \|_{H^2 (\mathbb{R}^d) \to L_2 (\mathbb{R}^d)} \le C(1 + |\tau|)\varepsilon.
\end{gather}
A similar result for the operator  $\widehat{\mathcal A}^{-1/2}_\varepsilon \sin(\tau \widehat{\mathcal A}^{1/2}_\varepsilon )$
was recently obtained by Meshkova \cite{M}:
\begin{equation}
\label{intro_sin_est}
\| \widehat{\mathcal{A}}_{\varepsilon}^{-1/2} \sin(\tau \widehat{\mathcal{A}}_{\varepsilon}^{1/2}) - (\widehat{\mathcal{A}}^0)^{-1/2} \sin(\tau (\widehat{\mathcal{A}}^0)^{1/2}) \|_{H^1 (\mathbb{R}^d) \to L_2 (\mathbb{R}^d)} \le \check{C}(1 + |\tau|)\varepsilon.
\end{equation}
By interpolation, the operator in \eqref{intro_exp_est} in the $(H^s \to L_2)$-norm is of order
$O(\varepsilon^{s/3})$ (where $0\le s \le 3$), for
the operator in \eqref{intro_cos_est} the $(H^s \to L_2)$-norm is of order $O(\varepsilon^{s/2})$ (where $0\le s \le 2$),
and for
the operator in \eqref{intro_sin_est} the $(H^s \to L_2)$-norm is of order $O(\varepsilon^{s})$ (where $0\le s \le 1$).

Let us explain the method; we comment on the proof of estimate \eqref{intro_cos_est}.
 Denote ${\mathcal H}_0 := -\Delta$.
Clearly, \eqref{intro_cos_est} is equivalent to
\begin{equation}
\label{intro_est2}
\| \bigl( \cos(\tau \widehat{\mathcal{A}}_{\varepsilon}^{1/2}) - \cos(\tau (\widehat{\mathcal{A}}^0)^{1/2}) \bigr) (\mathcal{H}_0 + I )^{-1} \|_{L_2(\mathbb{R}^d) \to L_2(\mathbb{R}^d)} \le C(1+ |\tau|)\varepsilon.
\end{equation}
By the scaling transformation, \eqref{intro_est2} is equivalent to
\begin{equation}
\label{intro_est3}
\| \bigl( \cos(\tau \varepsilon^{-1} \widehat{\mathcal{A}}^{1/2}) - \cos(\tau \varepsilon^{-1} (\widehat{\mathcal{A}}^0)^{1/2})\bigr)
 \varepsilon^{2} (\mathcal{H}_0 + \varepsilon^{2} I )^{-1} \|_{L_2(\mathbb{R}^d) \to L_2(\mathbb{R}^d)} \le C (1+ |\tau|) \varepsilon.
\end{equation}

Next, $\widehat{\mathcal{A}}$ expands in the direct integral of the operators $\widehat{\mathcal{A}}(\mathbf{k})$ acting in $L_2 (\Omega; \mathbb{C}^n)$
(where $\Omega$ is the cell of the lattice $\Gamma$) and given by
$b(\mathbf{D} + \mathbf{k})^* g(\mathbf{x}) b(\mathbf{D} + \mathbf{k})$ with periodic boundary conditions.
The spectrum of  $\widehat{\mathcal{A}}(\mathbf{k})$ is discrete. The family of operators $\widehat{\mathcal{A}}(\mathbf{k})$ is studied by means of the analytic perturbation theory
(with respect to the onedimensional parameter $t = |\mathbf{k}|$). For the operators $\widehat{\mathcal{A}}(\mathbf{k})$
the analog of estimate \eqref{intro_est3} is proved with
the constant independent of  $\mathbf{k}$. This yields \eqref{intro_est3}.

Recently, in~\cite{Su4} (see also~\cite{Su5}) it was shown that estimate~\eqref{intro_exp_est} is sharp in the following sense:
there exist operators for which estimate
$\| e^{-i \tau \widehat{\mathcal{A}}_{\varepsilon}} - e^{-i \tau \widehat{\mathcal{A}}^0} \|_{H^s \to L_2}  \le C(\tau) \varepsilon$
is false if $s < 3$. On the other hand, under some additional assumptions the result can be improved:
the analog of \eqref{intro_exp_est} in the $(H^2 \to L_2)$-norm holds.

\subsection{Main results of the paper\label{intro_main_results_section}}
We study the behavior of the operators  $\cos(\tau \widehat{\mathcal{A}}_{\varepsilon}^{1/2})$ and
$\widehat{\mathcal{A}}_{\varepsilon}^{-1/2} \sin(\tau \widehat{\mathcal{A}}_{\varepsilon}^{1/2})$ for small $\varepsilon$.
On the one hand, we confirm the sharpness of estimates \eqref{intro_cos_est} and \eqref{intro_sin_est} in the following sense.
We find a condition under which the norm in \eqref{intro_cos_est} cannot be replaced by the $(H^s \to L_2)$-norm with some $s<2$
and the norm in \eqref{intro_sin_est} cannot be replaced by the $(H^r \to L_2)$-norm with some $r<1$.
This condition is formulated  in the spectral terms. Consider the operator family
$\widehat{\mathcal{A}}(\mathbf{k})$ and put
 $\mathbf{k} = t \boldsymbol{\theta}$, $t = |\mathbf{k}|$, $\boldsymbol{\theta} \in \mathbb{S}^{d-1}$.
This family is analytic with respect to the parameter~$t$. For $t=0$ the point $\lambda_0=0$ is an eigenvalue of multiplicity $n$ of the ``unperturbed''
operator $\widehat{\mathcal{A}}(0)$. Then for small $t$ there exist the real-analytic branches of the eigenvalues and the eigenvectors of
$\widehat{\mathcal{A}}(t \boldsymbol{\theta})$.
For small $t$ the eigenvalues $\lambda_l (t, \boldsymbol{\theta})$, $l=1,\dots,n,$ admit the convergent power series expansions
\begin{equation}
\label{intro_eigenvalues_series}
\lambda_l(t, \boldsymbol{\theta}) = \gamma_l (\boldsymbol{\theta}) t^2 + \mu_l (\boldsymbol{\theta}) t^3 + \ldots,
\quad \; \quad   l = 1, \ldots, n,
\end{equation}
where $\gamma_l(\boldsymbol{\theta})>0$ and $\mu_l(\boldsymbol{\theta})\in \mathbb{R}$.
The condition is that $\mu_l (\boldsymbol{\theta}_0) \ne 0$ for some $l$ and some  $\boldsymbol{\theta}_0 \in \mathbb{S}^{d-1}$.
Examples of the operators satisfying this condition are provided.

On the other hand, under some additional assumptions we improve the results and obtain the analog of
\eqref{intro_cos_est} in the $(H^{3/2}\to L_2)$-norm and the analog of \eqref{intro_sin_est} in the $(H^{1/2}\to L_2)$-norm.
If  $n=1$, for this it suffices that $\mu (\boldsymbol{\theta}) = \mu_1 (\boldsymbol{\theta})=0$, $\boldsymbol{\theta} \in \mathbb{S}^{d-1}$.
This is the case for the operator $- \operatorname{div} g^\varepsilon (\mathbf{x}) \nabla$, where $g(\mathbf{x})$
is symmetric matrix with real entries.
If $n\ge 2$, besides the condition that all the coefficients $\mu_l (\boldsymbol{\theta})$
in \eqref{intro_eigenvalues_series}  are equal to zero, we impose one more condition in terms of the coefficients
$\gamma_l (\boldsymbol{\theta})$, $l=1,\dots,n$. The simplest version of this condition is that the branches
$\gamma_l (\boldsymbol{\theta})$ do not intersect.

For more general operator \eqref{intro_A_eps},
we obtain analogs of the results described above
for the  operators $f^\varepsilon \cos(\tau \mathcal{A}_{\varepsilon}^{1/2}) (f^\varepsilon)^{-1}$
and $f^\varepsilon \mathcal{A}_{\varepsilon}^{-1/2} \sin(\tau \mathcal{A}_{\varepsilon}^{1/2}) (f^\varepsilon)^*$.

Next, we apply the results given in operator terms to study the behavior of the solution of the Cauchy
problem for hyperbolic equations. In particular, we consider the nonstationary acoustics equation and the system of elasticity theory.

\subsection{Method\label{method}}
The results are obtained by further development of the operator-theoretic approach.
We follow the plan described above in Subsection~\ref{intro_BSu5_Su5_rewiew_section}.
Considerations are based on the abstract operator-theoretic scheme.
We study the family of operators $A(t)=X(t)^* X(t)$ acting
in some Hilbert space $\mathfrak{H}$. Here $X(t)=X_0 + tX_1$. (The family $A(t)$ models the operator family $\mathcal{A}(\mathbf{k}) = \mathcal{A}(t \boldsymbol{\theta})$, but the parameter $\boldsymbol{\theta}$ is absent in the abstract setting.) It is assumed that
the point $\lambda_0=0$ is an eigenvalue of $A(0)$ of finite multiplicity $n$.
Then for $|t|\le t^0$ the perturbed operator $A(t)$ has exactly $n$ eigenvalues on the interval $[0,\delta]$
(here $\delta$ and $t^0$ are controlled explicitly).
These eigenvalues and the corresponding eigenvectors are real-analytic functions of $t$.
The coefficients of the corresponding power series expansions
are called \textit{threshold characteristics} of the operator $A(t)$.
We distinguish a finite rank operator $S$ (the \textit{spectral germ} of  $A(t)$) which acts in the space
$\mathfrak{N} = \Ker A(0)$.
The germ is related to the threshold characteristics of principal order.

 In terms of the spectral germ, it is possible to approximate the operators $\cos(\varepsilon^{-1} \tau A(t)^{1/2})$ and
$A(t)^{-1/2} \sin(\varepsilon^{-1} \tau A(t)^{1/2})$.
  Application of the abstract results leads to the required estimates for DOs.
  However, at this step additional difficulties arise. They concern the improvement of the results in the case where
  $\mu_l (\boldsymbol{\theta})=0$, $l=1,\dots,n$. In the general case,
we are not always able to make our constructions and estimates uniform in $\boldsymbol{\theta}$, and we are forced to
  impose the additional assumption of isolation of the branches $\gamma_l (\boldsymbol{\theta})$, $l=1,\dots,n$.

\subsection{Plan of the paper}
The paper consists of three chapters. Chapter~1 (Sections 1--4) contains the necessary operator-theoretic material;
here main results in the abstract setting are obtained.
In Chapter~2 (Sections~5--10), the periodic DOs of the form~\eqref{intro_A}, \eqref{intro_hatA} are studied.
In Section~5 we describe the class of operators and the direct integral expansion for operators of the form~\eqref{intro_A};
the corresponding family of operators ${\mathcal A}(\mathbf{k})$ is incorporated in the framework of the abstract scheme.
In Section~6 we describe the effective characteristics for the operator~\eqref{intro_hatA}.
In Section~7, using the abstract results, we obtain the required approximations for the operator functions of
$\widehat{\mathcal A}(\mathbf{k})$. The operator ${\mathcal A}(\mathbf{k})$ is considered in Section~8.
In Section~9, we find approximations for the operator functions of
 ${\mathcal A}(\mathbf{k})$. Next, in Section~10
from the results of Sections~7 and~9 we deduce the required approximations for the operator functions of
 the operators $\widehat{\mathcal A}$ and ${\mathcal A}$. Chapter~3 (Sections 11--14) is devoted to homogenization problems.
In Section~11, by the scaling transformation, we deduce main results (approximations
for the operator functions of $\widehat{\mathcal A}_\varepsilon$ and ${\mathcal A}_\varepsilon$) from the results of Chapter~2.
  In Section~12, the results are applied to the Cauchy problem for
hyperbolic equations. Sections~13 and~14 are devoted to applications of the general results to the particular equations.

\subsection{Notation}
Let $\mathfrak{H}$ and $\mathfrak{H}_{*}$ be complex separable Hilbert spaces.
The symbols $(\cdot, \cdot)_{\mathfrak{H}}$ and $ \| \cdot \|_{\mathfrak{H}}$ stand for the inner product and the norm in $\mathfrak{H}$, respectively;
the symbol $\| \cdot \|_{\mathfrak{H} \to \mathfrak{H}_*}$ stands for the norm of a linear continuous operator from $\mathfrak{H}$ to $\mathfrak{H}_{*}$. Sometimes we omit the indices. By $I = I_{\mathfrak{H}}$ we denote the identity operator in $\mathfrak{H}$.
If $\mathfrak{N}$ is a subspace of $\mathfrak{H}$, then $\mathfrak{N}^{\perp} : = \mathfrak{H} \ominus \mathfrak{N}$.
If $P$ is the orthogonal projection of $\mathfrak{H}$ onto $\mathfrak{N}$, then $P^\perp$ is the orthogonal projection onto
 $\mathfrak{N}^{\perp}$.
If $A: \mathfrak{H} \to \mathfrak{H}_*$ is a linear operator, then $\Dom A$ and $\Ker A$ stand for its domain and its kernel.

The symbols $\langle\cdot, \cdot \rangle$ and $|\cdot|$ denote the inner product and the norm in $\mathbb{C}^n$;
$\boldsymbol{1}_n$ is the unit $(n \times n)$-matrix. If $a$ is an $(n\times n)$-matrix, then the symbol $|a|$
denotes the norm of $a$ viewed as a linear operator in $\mathbb{C}^n$.
 Next, we use the notation $\mathbf{x} = (x_1,\dots, x_d) \in \mathbb{R}^d$, $i D_j = \partial_j = \partial / \partial x_j$,
$j=1,\dots,d$, $\mathbf{D} = -i \nabla = (D_1,\dots, D_d)$.
The $L_p$-classes (where $1 \le p \le \infty$) and the Sobolev classes of $\mathbb{C}^n$-valued functions in a domain
$\mathcal{O} \subset \mathbb{R}^d$ are denoted by $L_p (\mathcal{O}; \mathbb{C}^n)$ and
$H^s (\mathcal{O}; \mathbb{C}^n)$, respectively.
Sometimes we write simply $L_p({\mathcal O})$, $H^s({\mathcal O})$.

\smallskip
Part of the results of the present paper was announced in \cite{DSu}.

\section*{Chapter~1. Abstract operator-theoretic scheme}

\section{Quadratic operator pencils\label{abstr_section_1}}

The material of this section is borrowed from \cite{BSu1}, \cite{BSu2}, and \cite{Su4}.

\subsection{The operators $X(t)$ and $A(t)$\label{abstr_X_A_section}}
Let $\mathfrak{H}$ and $\mathfrak{H}_{*}$ be complex separable Hilbert spaces. Suppose that
$X_0: \mathfrak{H} \to \mathfrak{H}_{*}$ is a densely defined and closed operator, and
$X_1: \mathfrak{H} \to \mathfrak{H}_{*}$ is a bounded operator.
Then the operator $X(t) := X_0 + tX_1$, $t \in \mathbb{R}$, is closed on the domain $\Dom X(t) = \Dom X_0$.
Consider the family of selfadjoint (and nonnegative) operators
$A(t) := X(t)^* X(t)$ in $\mathfrak{H}$.
The operator $A(t)$ is generated by the closed quadratic form $\| X(t) u \|^{2}_{\mathfrak{H}_*}$,
$u \in \Dom X_0$. Denote $A(0) = X_0^*X_0 =: A_0$ and
$\mathfrak{N} := \Ker  A_0 = \Ker X_0$.
We impose the following condition.

\begin{condition}\label{cond1}
The point $\lambda_0 = 0$ is an isolated point of the spectrum of $A_0$, and $0 < n := {\rm{dim}}\, \mathfrak{N} < \infty$.
\end{condition}

Let $d^0$ be the \textit{distance from the point $\lambda_0 = 0$ to the rest of the spectrum of} $A_0$.
We put $\mathfrak{N}_*:= \Ker X_0^*$, $n_* := \dim \mathfrak{N}_*$, and \textit{assume that} $n \le n_* \le \infty$.
Let $P$ and $P_*$  be the orthogonal projections of $\mathfrak{H}$ onto $\mathfrak{N}$ and of
$\mathfrak{H}_*$ onto  $\mathfrak{N}_*$, respectively.
Denote by $F(t;[a,b])$ the spectral projection of $A(t)$ for the interval $[a,b]$, and put
${\mathfrak F}(t;[a,b]):=F(t; [a,b]) \mathfrak{H}$. \textit{We fix a number $\delta>0$ such that} $8 \delta < d^0$.
We write $F(t)$ in place of $F(t;[0,\delta])$ and ${\mathfrak F}(t)$ in place of ${\mathfrak F}(t;[0,\delta])$.
Next, we choose a number $t^0 >0$ such that
\begin{equation}
\label{abstr_t0_fixation}
t^0 \le \delta^{1/2} \|X_1\|^{-1}.
\end{equation}
According to \cite[Chapter~1, (1.3)]{BSu1},
$F(t; [0,\delta]) = F(t;[0, 3 \delta])$ and $\rank F(t; [0,\delta]) = n$ for $|t| \le t^0$.

\subsection{The operators $Z$, $R$, and $S$\label{abstr_Z_R_S_op_section}}
Now we introduce some operators appearing in the analytic perturbation theory considerations; see \cite[Chapter~1, Section~1]{BSu1},
\cite[Section~1]{BSu2}.

Let $\omega \in \mathfrak{N}$, and let $\psi = \psi(\omega) \in \Dom X_0 \cap \mathfrak{N}^{\perp}$
be a (weak) solution of the equation \hbox{$X^*_0 (X_0 \psi + X_1 \omega) = 0$}.
We define a bounded operator $Z : \mathfrak{H} \to \mathfrak{H}$ by the relation $Zu = \psi (P u)$, $u \in \mathfrak{H}$.
Note that $Z$  takes $\mathfrak{N}$ to $\mathfrak{N}^\perp$ and $\mathfrak{N}^\perp$ to $\{0\}$.
Let $R : \mathfrak{N} \to \mathfrak{N}_*$ be the operator defined by
$R \omega =  (X_0 Z + X_1)\omega$, $\omega \in \mathfrak{N}$.
Another representation for $R$ is given by $R = P_* X_1\vert_{\mathfrak{N}}$.

According to \cite[Chapter~1]{BSu1}, the operator  $S:=R^*R: \mathfrak{N} \to \mathfrak{N}$
is called the \textit{spectral germ of the operator family $A(t)$ at} $t=0$.
The germ $S$ can be represented as $S= P X_1^* P_* X_1\vert_{\mathfrak{N}}.$
The spectral germ is said to be \textit{nondegenerate} if ${\rm Ker}\, S = \{ 0\}$.
We have
\begin{equation}
\label{abstr_Z_R_S_est}
\| Z \| \le (8 \delta)^{-1/2} \| X_1 \|, \quad \| R \| \le \| X_1 \|, \quad   \| S \| \le \| X_1 \|^2.
\end{equation}

\subsection{The analytic branches of eigenvalues and eigenvectors  of $A(t)$\label{branches}}

According to the general analytic perturbation theory (see \cite{Ka}), for $|t|\le t^0$ there exist real-analytic functions $\lambda_l(t)$
and real-analytic $\mathfrak{H}$-valued functions $\varphi_l(t)$ such that
$A(t) \varphi_l(t) = \lambda_l (t) \varphi_l(t)$, $l = 1, \dots, n$,
and the $\varphi_l(t)$, $l=1,\dots,n$, form an \textit{orthonormal basis} in ${\mathfrak F}(t)$.
Moreover, for $|t|\le t_*$, where $0< t_* \le t^0$ \textit{is sufficiently small}, we have the following convergent power series expansions:
\begin{align}
\label{abstr_A(t)_eigenvalues_series}
\lambda_l(t) &= \gamma_l t^2 + \mu_l t^3 + \dots, \quad \gamma_l \ge 0, \quad \; \mu_l \in \mathbb{R}, \quad   l = 1, \dots, n, \\
\label{abstr_A(t)_eigenvectors_series}
\varphi_l (t) &= \omega_l + t \psi_l^{(1)} + \dots, \quad	l = 1, \dots, n.
\end{align}
The elements $\omega_l= \varphi_l(0)$, $l=1,\dots,n,$ form an orthonormal basis in  $\mathfrak{N}$.

In \cite[Chapter~1, Section~1]{BSu1} and  \cite[Section~1]{BSu2} it was checked that
$\widetilde{\omega}_l := \psi_l^{(1)} - Z \omega_l \in \mathfrak{N}$,
\begin{equation}
\label{abstr_S_eigenvectors}
S \omega_l = \gamma_l \omega_l , \quad l = 1, \ldots, n,
\end{equation}
and $(\widetilde{\omega}_l , \omega_j) + (\omega_l, \widetilde{\omega}_j)=0$, $l,j =1,\dots,n$.
Thus, the \textit{numbers $\gamma_l$ and the elements $\omega_l$
are eigenvalues and eigenvectors of the germ} $S$.
We have
$P = \sum_{l=1}^{n} (\cdot, \omega_l) \omega_l$ and  $SP = \sum_{l=1}^{n} \gamma_l (\cdot, \omega_l) \omega_l$.

\subsection{Threshold approximations\label{threshold}}

The following statement was obtained in \cite[Chapter~1, Theorems~4.1~and~4.3]{BSu1}.
In what follows, \emph{we denote by $\beta_j$ various  absolute constants assuming that} $\beta_j \ge 1$.

\begin{proposition}[\cite{BSu1}]
Under the assumptions of Subsection~\emph{\ref{abstr_X_A_section}}, for $|t| \le t^0$ we have
    \begin{align}
    \label{abstr_F(t)_threshold_1}
    \| F(t) - P \| &\le C_1 |t|; \quad   C_1 = \beta_1 \delta^{-1/2} \| X_1 \|,
\\
\label{1.6a}
    \| A(t)F(t) - t^2 SP \| &\le C_2 |t|^3; \quad C_2 = \beta_2 \delta^{-1/2}\| X_1 \|^3.
    \end{align}
\end{proposition}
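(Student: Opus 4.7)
The plan is to express both spectral projections as Riesz contour integrals on a circle $\Gamma$ that encloses the zero eigenvalue of $A_0$ while avoiding the remainder of its spectrum, and then to extract the stated approximations from the second resolvent identity. I take $\Gamma = \{\zeta\in\mathbb{C}\colon|\zeta|=2\delta\}$: the gap condition $8\delta < d^0$ keeps $\sigma(A_0)\setminus\{0\}$ at distance at least $6\delta$ from $\Gamma$, while $(t^0)^2\|X_1\|^2\le\delta$ (from \eqref{abstr_t0_fixation}) combined with a short form-perturbation argument shows that $\Gamma\subset\rho(A(t))$ uniformly for $|t|\le t^0$, with $\|(A(t)-\zeta)^{-1}\|$ bounded by a multiple of $\delta^{-1}$. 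Two auxiliary ``square-root'' bounds on $\Gamma$,
\[
\|X_0(A_0-\zeta)^{-1}\|,\ \|X(t)(A(t)-\zeta)^{-1}\|\ \le\ \mathrm{const}\cdot\delta^{-1/2},
\]
follow by applying the identity $\|Yu\|^2=(Y^*Yu,u)$ (with $Y^*Y$ equal to $A_0$ or $A(t)$ respectively) to $u=(A-\zeta)^{-1}v$ and using $A(A-\zeta)^{-1} = I + \zeta(A-\zeta)^{-1}$.

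For the first estimate, I write $F(t)-P$ as the $\Gamma$-integral of the resolvent difference and apply the quadratic-form version of the second resolvent identity,
\[
(A(t)-\zeta)^{-1}-(A_0-\zeta)^{-1} = (A(t)-\zeta)^{-1}\bigl[X_0^*X_0 - X(t)^*X(t)\bigr](A_0-\zeta)^{-1}.
\]
Expanding $X_0^*X_0-X(t)^*X(t) = -t(X_0^*X_1+X_1^*X_0)-t^2X_1^*X_1$ and grouping so that every unbounded $X_0^*,X_0,X(t)^*,X(t)$ sits next to a resolvent (to which the square-root bounds then apply), each summand has norm bounded by a multiple of $|t|\|X_1\|\delta^{-1}$. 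Integrating around $\Gamma$ (length $4\pi\delta$) produces $\|F(t)-P\|\le\beta_1|t|\|X_1\|\delta^{-1/2}$.

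For the second estimate, I start from $A(t)F(t) = -(2\pi i)^{-1}\oint_\Gamma\zeta(A(t)-\zeta)^{-1}d\zeta$ and expand the resolvent to second order in $B := A(t)-A_0 = t(X_0^*X_1+X_1^*X_0)+t^2X_1^*X_1$. The zeroth-order contribution is $A_0P=0$. The first-order contour integral, evaluated by spectral calculus for $A_0$, equals $PBP = t^2PX_1^*X_1P$, since the $O(t)$ part of $B$ is killed by $X_0P=0$ and $PX_0^*=0$. The second-order contour integral reduces, after the same projection identities, to $-t^2PX_1^*X_0\bigl(A_0|_{\mathfrak{N}^\perp}\bigr)^{-1}X_0^*X_1P$; the key identity
\[
X_0\,\bigl(X_0^*X_0\big|_{\mathfrak{N}^\perp}\bigr)^{-1}X_0^*\ =\ I_{\mathfrak{H}_*}-P_*,
\]
expressing $X_0(X_0^*X_0)^{-1}X_0^*$ as the orthogonal projection onto $\overline{\Ran X_0}=\mathfrak{N}_*^\perp$, converts this into $-t^2PX_1^*(I-P_*)X_1P$. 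Adding the two leading contributions yields exactly $t^2PX_1^*P_*X_1P = t^2SP$, while the third-order remainder is controlled by three factors of the square-root bound together with $|t|^3\|X_1\|^3$, producing the error $\le\beta_2|t|^3\|X_1\|^3\delta^{-1/2}$.

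The main obstacle is the clean cancellation in the second estimate: the $t^2PX_1^*X_1P$ contribution extracted from the first-order resolvent expansion is precisely annihilated by the matching piece coming out of the second-order expansion, leaving only $t^2PX_1^*P_*X_1P$. This cancellation hinges on the projection identity above, which is a direct consequence of the quadratic factorization $A(t)=X(t)^*X(t)$ and would not occur for a generic selfadjoint perturbation; identifying it uniformly for $|t|\le t^0$ (and not merely for $|t|\le t_*$ where the power series \eqref{abstr_A(t)_eigenvalues_series}, \eqref{abstr_A(t)_eigenvectors_series} converge) is where the contour-integral approach is essential.
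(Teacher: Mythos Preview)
The paper does not supply a proof of this proposition; it is quoted from \cite{BSu1} (Chapter~1, Theorems~4.1 and~4.3). Your contour-integral plan is correct and is essentially the method used in that reference: Riesz projections on a circle of radius comparable to~$\delta$, the ``square-root'' resolvent bounds $\|X_0(A_0-\zeta)^{-1}\|$, $\|X(t)(A(t)-\zeta)^{-1}\|=O(\delta^{-1/2})$, a form-version of the second resolvent identity arranged so that every unbounded factor $X_0$, $X_0^*$, $X(t)$, $X(t)^*$ sits adjacent to a resolvent, and the projection identity $X_0\bigl(A_0|_{\mathfrak{N}^\perp}\bigr)^{-1}X_0^*=I_{\mathfrak{H}_*}-P_*$ that collapses the first two orders of the expansion of $A(t)F(t)$ to $t^2PX_1^*P_*X_1P=t^2SP$. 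Your identification of this cancellation as the heart of the matter is exactly right. One small check worth making explicit: your choice $\Gamma=\{|\zeta|=2\delta\}$ does lie in $\rho(A(t))$ with $\operatorname{dist}(\Gamma,\sigma(A(t)))\ge\delta$, because for $u\in\mathfrak{N}^\perp$ with $\|u\|=1$ one has $\|X(t)u\|\ge\|X_0u\|-|t|\|X_1\|\ge\sqrt{d^0}-\sqrt{\delta}>(2\sqrt{2}-1)\sqrt{\delta}$, so the $(n{+}1)$st eigenvalue of $A(t)$ exceeds $(9-4\sqrt{2})\delta>3\delta$, while the first $n$ eigenvalues are at most $t^2\|X_1\|^2\le\delta$.
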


From \eqref{abstr_t0_fixation}, \eqref{abstr_Z_R_S_est}, and \eqref{1.6a} it follows that
\begin{equation}
\label{1.6b}
\| A(t) F(t)\| \le (1+\beta_2) \| X_1 \|^2 t^2,\quad |t| \le t^0.
\end{equation}

We also need a more precise approximation for the operator $A(t)F(t)$; see  \cite[Theorem~4.1]{BSu2}.

\begin{proposition}[\cite{BSu2}]
\label{th1.3}
Under the assumptions of Subsection~\emph{\ref{abstr_X_A_section}},
for \hbox{$|t|\le t^0$} we have
    \begin{align}
     \label{abstr_A(t)_threshold_2}
    A(t) F(t) &= t^2 SP + t^3 K + \Psi (t),
\\
    \label{abstr_C5}
    \| \Psi (t) \| &\le C_3 t^4; \quad C_3 = \beta_3 \delta^{-1}\| X_1 \|^4.
    \end{align}
The operator  $K$ is represented as $K = K_0 + N = K_0 + N_0 + N_*$,
where $K_0$ takes $\mathfrak{N}$ to $\mathfrak{N}^{\perp}$ and $\mathfrak{N}^{\perp}$ to $\mathfrak{N}$, while $ N_0$ and $ N_*$
take $\mathfrak{N}$ to itself and $\mathfrak{N}^{\perp}$ to $\{ 0 \}$.
 In terms of the power series coefficients, these operators are given by
$K_0 = \sum_{l=1}^{n} \gamma_l \left( (\cdot, Z \omega_l) \omega_l + (\cdot, \omega_l) Z \omega_l \right)$,
\begin{equation}
\label{abstr_N_0_N_*}
N_0 = \sum_{l=1}^{n} \mu_l (\cdot, \omega_l) \omega_l, \quad N_* = \sum_{l=1}^{n} \gamma_l \left( (\cdot, \widetilde{\omega}_l) \omega_l + (\cdot, \omega_l) \widetilde{\omega}_l\right) .
\end{equation}
In the invariant terms, we have $K_0 = Z S P + S P Z^*$ and $N = Z^*X_1^* R P + (RP)^* X_1 Z$.
The operators $N$ and $K$ satisfy the following estimates
\begin{equation}
\label{abstr_K_N_estimates}
\|N\| \le (2 \delta)^{-1/2} \| X_1 \|^3, \quad \|K\| \le 2(2 \delta)^{-1/2} \| X_1 \|^3.
\end{equation}
\end{proposition}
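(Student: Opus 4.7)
The natural starting point is the spectral decomposition of the finite-rank operator $A(t)F(t)$ afforded by Subsection~\ref{branches}: on the subspace $\mathfrak F(t)$ the operator acts as $A(t)F(t)=\sum_{l=1}^n \lambda_l(t)\,(\,\cdot\,,\varphi_l(t))\,\varphi_l(t)$. The plan is to plug the convergent expansions \eqref{abstr_A(t)_eigenvalues_series}, \eqref{abstr_A(t)_eigenvectors_series} into this identity, group terms by powers of $t$, and read off the coefficients of $t^2$ and $t^3$.

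First I would compute the $t^2$-coefficient: it equals $\sum_{l=1}^n \gamma_l(\,\cdot\,,\omega_l)\omega_l$, which is precisely $SP$ in view of \eqref{abstr_S_eigenvectors} and the formula $P=\sum_l(\,\cdot\,,\omega_l)\omega_l$. Next the $t^3$-coefficient collects two contributions:
\begin{equation*}
\sum_{l=1}^n \mu_l(\,\cdot\,,\omega_l)\omega_l \;+\; \sum_{l=1}^n \gamma_l\bigl((\,\cdot\,,\psi_l^{(1)})\omega_l + (\,\cdot\,,\omega_l)\psi_l^{(1)}\bigr).
\end{equation*}
The first sum is $N_0$. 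Splitting $\psi_l^{(1)} = Z\omega_l + \widetilde\omega_l$ (with $\widetilde\omega_l\in\mathfrak N$, by the result recorded in Subsection~\ref{branches}) separates the second sum into the $\mathfrak N\to\mathfrak N^\perp$/$\mathfrak N^\perp\to\mathfrak N$ part $K_0 = \sum_l\gamma_l[(\,\cdot\,,Z\omega_l)\omega_l+(\,\cdot\,,\omega_l)Z\omega_l]$ and the $\mathfrak N\to\mathfrak N$ part $N_*$. Rewriting $K_0$ in invariant form is immediate from $SP=\sum_l\gamma_l(\,\cdot\,,\omega_l)\omega_l$: one gets $K_0 = ZSP + SPZ^*$. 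For $N=N_0+N_*$, I would use the definitions $R\omega_l=(X_0Z+X_1)\omega_l$ and $R=P_*X_1|_{\mathfrak N}$ and compute $Z^*X_1^*RP$ acting on an element of $\mathfrak N$, matching its action with the explicit coefficients using the fact that $S\omega_l=R^*R\omega_l=\gamma_l\omega_l$ and a short calculation producing the $\mu_l$ and $\gamma_l\widetilde\omega_l$ terms.

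For the norm bounds in \eqref{abstr_K_N_estimates} I would just invoke \eqref{abstr_Z_R_S_est}: $\|K_0\|\le 2\|Z\|\,\|S\|\le 2(8\delta)^{-1/2}\|X_1\|^3=(2\delta)^{-1/2}\|X_1\|^3$, and similarly $\|N\|\le 2\|Z\|\,\|X_1\|\,\|R\|\le (2\delta)^{-1/2}\|X_1\|^3$, giving the stated bound on $\|K\|$ by the triangle inequality.

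The main obstacle is the remainder estimate \eqref{abstr_C5}, since the power series only converge on some possibly small interval $|t|\le t_*$, whereas \eqref{abstr_C5} must be valid on the full interval $|t|\le t^0$ with a constant $C_3$ depending solely on $\|X_1\|$ and $\delta$. The standard trick, which I would use, is a two-zone argument: for $|t|\le t_*$ the bound $\|\Psi(t)\|\le C_3 t^4$ is a direct consequence of the tails of the convergent Taylor expansions (with $C_3$ controlled via Cauchy estimates in terms of $t_*$, $\|X_1\|$, $\delta$). For $t_*\le|t|\le t^0$, the a priori bound \eqref{1.6b} together with $\|t^2 SP\|\le\|X_1\|^2 t^2$ and $\|t^3K\|\le 2(2\delta)^{-1/2}\|X_1\|^3|t|^3$ yields $\|\Psi(t)\|\le C' t^2$, and then $t^2\le t^4/t_*^2$ converts this into an $O(t^4)$ bound. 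Absorbing $t_*^{-2}$ into the constant (admissible, as $t_*$ is determined by $\|X_1\|$ and $\delta$) and taking the maximum of the two constants gives \eqref{abstr_C5}.
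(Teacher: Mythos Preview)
The paper does not actually prove this proposition; it is quoted verbatim from \cite[Theorem~4.1]{BSu2}, where the argument is carried out by resolvent (contour-integral) methods rather than by substituting the analytic branch expansions.

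Your identification of the $t^2$ and $t^3$ coefficients from the spectral resolution is correct and recovers the stated formulas for $SP$, $K_0$, $N_0$, $N_*$; the norm bounds \eqref{abstr_K_N_estimates} also follow as you say from \eqref{abstr_Z_R_S_est}. The real gap is in the remainder estimate. You assert that ``$t_*$ is determined by $\|X_1\|$ and $\delta$'' and therefore that $t_*^{-2}$ can be absorbed into the absolute constant $\beta_3$. This is not justified. The convergence radius $t_*$ of the \emph{individual} branches $\lambda_l(t)$, $\varphi_l(t)$ is governed by the nearest complex exceptional point (a crossing among the $n$ branches), and that distance depends on the fine structure of the germ~$S$ --- for instance on how close the eigenvalues $\gamma_l$ are to one another --- and is \emph{not} bounded below in terms of $\delta$ and $\|X_1\|$ alone. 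Two families with identical $\delta$, $\|X_1\|$ can have arbitrarily small $t_*$, so your $C_3$ would not have the claimed form $\beta_3\delta^{-1}\|X_1\|^4$.

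The cure is to work with the \emph{operator} $A(t)F(t)$ rather than with individual branches. Writing $F(t)=-\tfrac{1}{2\pi i}\oint_\Gamma (A(t)-zI)^{-1}\,dz$ over a fixed contour $\Gamma$ around $[0,\delta]$ at distance $\sim\delta$ from $\sigma(A_0)$, one expands the resolvent by iterating the second resolvent identity; each iteration contributes a factor bounded by $\|X_1\|$ and by $\sup_{z\in\Gamma}\|(A_0-zI)^{-1}\|\sim\delta^{-1}$. This produces the expansion \eqref{abstr_A(t)_threshold_2} with the remainder bounded uniformly on $|t|\le t^0$ by $\beta_3\delta^{-1}\|X_1\|^4 t^4$, with no reference to $t_*$. (Equivalently: $A(t)F(t)$ extends analytically to a complex disk of radius $\sim t^0$ controlled by $\delta,\|X_1\|$, and Cauchy's estimate on that disk gives the bound; the individual branch radii $t_*$ are irrelevant because the symmetric combination $A(t)F(t)$ has no singularities at branch crossings.)
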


\begin{remark}
    \label{abstr_N_remark}
 $1^\circ$. In the basis $\{\omega_l\}$, the operators $N$, $N_0$, and $N_*$
 \emph{(}restricted to $\mathfrak{N}$\emph{)}
are represented by $(n\times n)$-matrices. The operator $N_0$ is diagonal{\rm :}
            $(N_0 \omega_j, \omega_k ) = \mu_j \delta_{jk}$, $j, k = 1, \ldots ,n$.
The matrix entries of $N_*$ are given by
         $(N_* \omega_j, \omega_k) = \gamma_k (\omega_j, \widetilde{\omega}_k) + \gamma_j (\widetilde{\omega}_j, \omega_k ) = ( \gamma_j - \gamma_k)(\widetilde{\omega}_j, \omega_k )$, $j, k = 1,\ldots, n$.
    Hence, the diagonal elements of $N_*$ are equal to zero. Moreover,
           $(N_* \omega_j, \omega_k) = 0$ if $\gamma_j = \gamma_k$.
 $2^\circ$.
If $n=1$, then $N_*=0$, i.~e., $N=N_0$.
\end{remark}

\subsection{The nondegeneracy condition\label{abstr_nondegenerated_section}}
Below we impose the following additional condition.

\begin{condition}\label{nondeg}
There exists a constant $c_*>0$ such that $A(t) \ge c_* t^2 I$ for $|t| \le t^0$.
\end{condition}

From Condition \ref{nondeg} it follows that $\lambda_l(t) \ge c_* t^2$, $l=1,\dots,n$, for $|t|\le t^0$.
By \eqref{abstr_A(t)_eigenvalues_series}, this implies
$\gamma_l \ge c_* > 0$, $l= 1, \ldots, n$, i.~e., the germ $S$ is nondegenerate:
\begin{equation}
\label{abstr_S_nondegenerated}
S \ge c_* I_{\mathfrak{N}}.
\end{equation}

\subsection{The clusters of eigenvalues of $A(t)$\label{abstr_cluster_section}}

The content of this subsection is borrowed from ~\cite[Section~2]{Su4} and concerns the case where $n \ge 2$.

Suppose that Condition~\ref{nondeg} is satisfied.
Now it is convenient to change the notation tracing the multiplicities of the eigenvalues of $S$.
Let $p$ be the number of different eigenvalues of the germ $S$.
We enumerate these eigenvalues in the increasing order
and denote them by $\gamma_j^\circ$, $j=1,\dots,p$.
Their multiplicities are denoted by $k_1,\dots, k_p$ (obviously, $k_1+\dots+k_p =n$).
Let $\mathfrak{N}_j := \Ker(S - \gamma^{\circ}_j I_\mathfrak{N})$, $j = 1 ,\ldots, p$. Then
$\mathfrak{N} = \sum_{j=1}^{p} \oplus \mathfrak{N}_j$.
Let $P_j$ be the orthogonal projection of $\mathfrak{H}$ onto $\mathfrak{N}_j$. Then
\begin{equation}
\label{abstr_P_Pj}
P = \sum_{j=1}^{p} P_j, \qquad P_j P_l = 0 \quad \text{for} \  j \ne l.
\end{equation}

We also change the notation for the eigenvectors of the germ
(which are the ``embryos'' in \eqref{abstr_A(t)_eigenvectors_series}) dividing them in $p$ parts so that
$\omega_1^{(j)},\dots, \omega^{(j)}_{k_j}$ correspond to the eigenvalue
 $\gamma_j^\circ$ and form an orthonormal basis in $\mathfrak{N}_j$.
We also change the notation for  the eigenvalues and the eigenvectors of $A(t)$.
The eigenvalue and the eigenvector whose expansions \eqref{abstr_A(t)_eigenvalues_series} and \eqref{abstr_A(t)_eigenvectors_series}
start with the terms $\gamma_j^\circ t^2$ and $\omega^{(j)}_q$
are denoted by $\lambda^{(j)}_q(t)$ and $\varphi^{(j)}_q(t)$.

So, for $|t|\le t^0$ the first $n$ eigenvalues of $A(t)$ are divided in $p$ clusters:
the $j$-th  cluster
consists of the eigenvalues $\lambda^{(j)}_q(t)$, $q=1,\dots,k_j$.
We have $\lambda^{(j)}_q(t)= \gamma_j^\circ t^2 + \mu_q^{(j)} t^3+\dots$,
 $q=1,\dots,k_j$.

For each pair of indices $(j,l)$,  $1\le j,  l \le p$, $j \ne l$, we denote
\begin{equation}
\label{abstr_c_circ_jl}
c^{\circ}_{jl} := \min \{c_*, n^{-1} |\gamma^{\circ}_l - \gamma^{\circ}_j|\}.
\end{equation}
Clearly, there exists a number $i_0 = i_0(j,l)$, where $j \le i_0 \le l-1$ if $j<l$
and $l \le i_0 \le j-1$ if $l<j$, such that $\gamma^\circ_{i_0+1} - \gamma_{i_0}^\circ \ge c^\circ_{jl}$.
It means that on the interval between $\gamma_{j}^\circ$ and $\gamma_{l}^\circ$
there is a gap in the spectrum of $S$ of length at least $c^\circ_{jl}$.
If such $i_0$ is not unique, we agree to take the minimal possible $i_0$ (for definiteness).
  Next, we choose a number $t^{00}_{jl}\le t^0$ such that
\begin{equation}
\label{abstr_t00_jl}
t^{00}_{jl} \le (4C_2)^{-1} c^{\circ}_{jl} = (4 \beta_2)^{-1} \delta^{1/2} \|X_1\|^{-3 } c^{\circ}_{jl}.
\end{equation}
Let $\Delta^{(1)}_{jl}:=[\gamma_1^\circ - c_{jl}^\circ/4, \gamma^\circ_{i_0} + c_{jl}^\circ/4]$ and
$\Delta_{jl}^{(2)}:=[\gamma_{i_0+1}^\circ - c_{jl}^\circ/4, \gamma^\circ_{p} + c_{jl}^\circ/4]$.
The distance between the  segments $\Delta^{(1)}_{jl}$ and $\Delta_{jl}^{(2)}$ is at least $c_{jl}^\circ/2$.
As was shown in \cite[Section~2]{Su4}, for
$|t| \le t^{00}_{jl}$ the operator $A(t)$
has exactly $k_1+\dots + k_{i_0}$ eigenvalues (counted with multiplicities)
in the segment $t^2 \Delta^{(1)}_{jl}$  and
exactly $k_{i_0+1}+\dots + k_{p}$ eigenvalues in the segment $t^2\Delta^{(2)}_{jl}$. Let $F_{jl}^{(r)}(t)$ be the spectral projection of $A(t)$
corresponding to the segment $t^2 \Delta^{(r)}_{jl}$, $r=1,2$.
Then
\begin{equation}
\label{abstr_F(t)_F(1)_F(2)}
F(t) = F^{(1)}_{jl} (t) + F^{(2)}_{jl} (t), \quad |t| \le t^{00}_{jl}.
\end{equation}

The following statement was proved in \cite[Proposition 2.1]{Su4}.

\begin{proposition}[\cite{Su4}]
\label{prop2.1}
Suppose that $t^{00}_{jl}$ is subject to~\emph{\eqref{abstr_c_circ_jl}}, \emph{\eqref{abstr_t00_jl}}.
Then for $|t|\le t^{00}_{jl}$ we have
    \begin{equation}
    \label{abstr_cluster_thresold}
    \| F^{(1)}_{jl} (t) - (P_1 + \cdots + P_{i_0}) \| \le C_{4,jl} |t|, \quad
    \| F^{(2)}_{jl} (t) - (P_{i_0+1} + \cdots + P_p) \| \le C_{4,jl} |t|.
    \end{equation}
The constant~$C_{4,jl}$ is given by $C_{4,jl} = \beta_4 \delta^{-1/2} \|X_1\|^5 (c^{\circ}_{jl})^{-2}$.
\end{proposition}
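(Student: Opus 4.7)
The plan is to realize both projections as Cauchy integrals of resolvents on a common contour and then estimate the difference via the second resolvent identity together with the threshold approximation~\eqref{1.6a}. First, I introduce the self-adjoint operator $M(t):=A(t)F(t)=F(t)A(t)F(t)$. On the invariant subspace $\mathfrak{F}(t)$ it coincides with $A(t)|_{\mathfrak{F}(t)}$, while on $\mathfrak{F}(t)^{\perp}$ it vanishes; hence its spectrum consists of $\{0\}$ together with the eigenvalues $\lambda^{(j')}_q(t)$. By the cluster statement preceding the proposition, for $|t|\le t^{00}_{jl}$ these eigenvalues split between $t^2\Delta^{(1)}_{jl}$ and $t^2\Delta^{(2)}_{jl}$, with $k_1+\dots+k_{i_0}$ of them (counted with multiplicity) in the first segment. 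Similarly, $t^2SP=t^2PSP$ is self-adjoint with spectrum $\{0\}\cup\{t^2\gamma^{\circ}_{j'}\}_{j'=1}^{p}$ and eigenspaces $\mathfrak{N}^{\perp}$ and $\mathfrak{N}_{j'}$ respectively, so its spectral projection for $t^2\Delta^{(1)}_{jl}$ equals exactly $Q:=P_1+\dots+P_{i_0}$.

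Next, I choose a contour $\Gamma\subset\mathbb{C}$ enclosing $\Delta^{(1)}_{jl}$ at distance $c^{\circ}_{jl}/8$ from this segment. Since the gap between $\Delta^{(1)}_{jl}$ and $\Delta^{(2)}_{jl}$ is at least $c^{\circ}_{jl}/2$, and since $\gamma^{\circ}_{1}\ge c_{*}\ge c^{\circ}_{jl}$ by~\eqref{abstr_S_nondegenerated} and~\eqref{abstr_c_circ_jl}, the contour $\Gamma$ stays at distance at least $c^{\circ}_{jl}/8$ from $\{0\}\cup\Delta^{(2)}_{jl}$. After rescaling by $t^{2}$, this yields the uniform resolvent bounds
\[
\|(M(t)-\zeta)^{-1}\|\le \tfrac{8}{t^{2}c^{\circ}_{jl}},\qquad \|(t^{2}SP-\zeta)^{-1}\|\le \tfrac{8}{t^{2}c^{\circ}_{jl}},\qquad \zeta\in t^{2}\Gamma,
\]
while the length of $t^{2}\Gamma$ is bounded by a constant times $t^{2}\|X_1\|^{2}$, using $\gamma^{\circ}_{p}\le\|S\|\le\|X_1\|^{2}$ from~\eqref{abstr_Z_R_S_est}.

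Now write the Riesz representations
\[
F^{(1)}_{jl}(t)=-\frac{1}{2\pi i}\oint_{t^{2}\Gamma}(M(t)-\zeta)^{-1}\,d\zeta,\qquad Q=-\frac{1}{2\pi i}\oint_{t^{2}\Gamma}(t^{2}SP-\zeta)^{-1}\,d\zeta,
\]
subtract, and apply the second resolvent identity $(M(t)-\zeta)^{-1}-(t^{2}SP-\zeta)^{-1}=(M(t)-\zeta)^{-1}(t^{2}SP-M(t))(t^{2}SP-\zeta)^{-1}$, controlling the middle factor by $\|M(t)-t^{2}SP\|\le C_2|t|^3$ from~\eqref{1.6a}. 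Combining the length of the contour, the two resolvent bounds, and $C_2=\beta_2\delta^{-1/2}\|X_1\|^3$ produces
\[
\|F^{(1)}_{jl}(t)-Q\|\le \mathrm{const}\cdot\delta^{-1/2}\|X_1\|^{5}(c^{\circ}_{jl})^{-2}|t|,
\]
which is the first inequality of the proposition. The second inequality follows at once from the first by subtracting and using~\eqref{abstr_F(t)_F(1)_F(2)}, \eqref{abstr_F(t)_threshold_1}, and $P=\sum_{j'=1}^{p}P_{j'}$; the additional contribution $C_1|t|$ is absorbed into $C_{4,jl}|t|$ upon noting that $c^{\circ}_{jl}\le c_{*}\le\|X_1\|^{2}$, so $C_{4,jl}\gtrsim C_1$ for a suitable $\beta_4$.

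The main obstacle is the precise calibration of~\eqref{abstr_t00_jl}: it must be exactly what is needed to guarantee, uniformly in $|t|\le t^{00}_{jl}$, that the spectrum of $M(t)$ actually lies inside $t^{2}(\Delta^{(1)}_{jl}\cup\Delta^{(2)}_{jl})$ (this is a quantitative consequence of \eqref{1.6a} combined with $4C_2 t^{00}_{jl}\le c^{\circ}_{jl}$) and that the rescaled contour $t^{2}\Gamma$ remains at distance of order $t^{2}c^{\circ}_{jl}$ from every relevant spectral point of both $M(t)$ and $t^{2}SP$. Once this separation is locked in, the gain of one factor of $|t|$ in the final estimate arises by matching the cubic factor $|t|^{3}$ of~\eqref{1.6a} against the four factors of $t^{-2}$ produced by the two resolvent bounds, after absorbing the $t^{2}$ from the length of the contour.
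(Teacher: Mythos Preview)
Your argument is correct. The paper does not prove this proposition at all: it is quoted verbatim from \cite[Proposition~2.1]{Su4}, so there is no in-paper proof to compare against. Your Riesz--contour approach (passing to $M(t)=A(t)F(t)$, choosing a contour around $\Delta^{(1)}_{jl}$ at distance $c^{\circ}_{jl}/8$, and invoking the second resolvent identity with~\eqref{1.6a}) is precisely the standard route for such cluster-projection estimates and is almost certainly what is done in \cite{Su4}; the bookkeeping you give---contour length $\lesssim t^{2}\|X_1\|^{2}$, two resolvent factors $\lesssim (t^{2}c^{\circ}_{jl})^{-1}$, and the $C_2|t|^{3}$ perturbation---indeed yields the stated constant $\beta_4\delta^{-1/2}\|X_1\|^{5}(c^{\circ}_{jl})^{-2}$.

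One small remark: your Riesz formula for $F^{(1)}_{jl}(t)$ via $M(t)$ tacitly requires $t\ne 0$ (at $t=0$ the two rescaled segments collapse to $\{0\}$ and the definitions degenerate). This is harmless, since the estimate is only used for $t\ne 0$ anyway, but you might add a clause saying so.
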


\section{Threshold approximations for the operators $\cos(\tau A(t)^{1/2})$ and $A(t)^{-1/2}\sin(\tau A(t)^{1/2})$}\label{sec2}

In Sections \ref{sec2} and \ref{abstr_aprox_thrm_section} we suppose that the assumptions of Subsection~{\ref{abstr_X_A_section}} and Condition~{\ref{nondeg}}
are satisfied.

\subsection{Approximation for $A(t)^{1/2} F(t)$}

The following result was proved in~\cite[Theorem~2.4]{BSu5}.

\begin{proposition}[\cite{BSu5}]
   We have
    \begin{equation}
    \label{abstr_A_sqrt_threshold_1}
    \| A(t)^{1/2} F(t) - (t^2 S)^{1/2} P \| \le C_5 t^2, \quad |t| \le t^0; \quad     C_5 = \beta_5 \delta^{-1/2} \bigl(\|X_1\|^2 +  c_*^{-1/2} \|X_1\|^3\bigr).
    \end{equation}
\end{proposition}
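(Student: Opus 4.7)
The strategy is to reduce the estimate to an operator Lipschitz bound for the square root on a pair of suitably \emph{coercive} auxiliary operators. I would introduce
\[
B(t):=A(t)F(t)+c_* t^2(I-F(t)),\qquad B^\circ(t):=t^2 SP+c_* t^2(I-P).
\]
These are bounded selfadjoint operators on $\mathfrak H$. Condition~\ref{nondeg} gives $A(t)F(t)\ge c_* t^2 F(t)$, hence $B(t)\ge c_* t^2 I$; the germ nondegeneracy \eqref{abstr_S_nondegenerated} gives $SP\ge c_* P$, hence $B^\circ(t)\ge c_* t^2 I$ as well. Because $F(t)$ commutes with $A(t)$ (and thus with $B(t)$) and $P$ commutes with $S$ (and thus with $B^\circ(t)$), the positive square roots satisfy
\[
B(t)^{1/2}F(t)=A(t)^{1/2}F(t),\qquad B^\circ(t)^{1/2}P=(t^2 S)^{1/2}P.
\]

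First, I would estimate $\|B(t)-B^\circ(t)\|$ by writing
\[
B(t)-B^\circ(t)=\bigl[A(t)F(t)-t^2 SP\bigr]-c_* t^2\bigl[F(t)-P\bigr]=t^3 K+\Psi(t)-c_* t^2\bigl[F(t)-P\bigr].
\]
Combining \eqref{abstr_K_N_estimates}, \eqref{abstr_C5}, and \eqref{abstr_F(t)_threshold_1} (and absorbing the $t^4$ term from $\Psi(t)$ via $|t|\le t^0\le\delta^{1/2}\|X_1\|^{-1}$) yields $\|B(t)-B^\circ(t)\|\le C_6 |t|^3$ for $|t|\le t^0$, with a constant $C_6$ of the form $\beta\,\delta^{-1/2}\|X_1\|^3+c_* C_1$. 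Next I would invoke the operator Lipschitz bound for the square root on coercive operators: for bounded selfadjoint $B_1,B_2\ge cI>0$,
\[
\|B_1^{1/2}-B_2^{1/2}\|\le(2\sqrt c)^{-1}\|B_1-B_2\|,
\]
which follows from $B_1^{1/2}-B_2^{1/2}=\pi^{-1}\int_0^\infty\lambda^{1/2}(B_1+\lambda)^{-1}(B_1-B_2)(B_2+\lambda)^{-1}\,d\lambda$ together with $\int_0^\infty\lambda^{1/2}(c+\lambda)^{-2}\,d\lambda=\pi/(2\sqrt c)$. Applied with $c=c_* t^2$ this gives $\|B(t)^{1/2}-B^\circ(t)^{1/2}\|\le C_6 t^2/(2c_*^{1/2})$.

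Finally, I would decompose
\[
A(t)^{1/2}F(t)-(t^2 S)^{1/2}P=\bigl[B(t)^{1/2}-B^\circ(t)^{1/2}\bigr]F(t)+B^\circ(t)^{1/2}\bigl[F(t)-P\bigr],
\]
bound the first summand by the previous estimate, and bound the second via $\|B^\circ(t)^{1/2}\|\le|t|\|X_1\|$ (from \eqref{abstr_Z_R_S_est} and $c_*\le\|S\|$) together with \eqref{abstr_F(t)_threshold_1}. Both summands are $O(t^2)$, and collecting the constants reproduces the form $C_5=\beta_5\delta^{-1/2}\bigl(\|X_1\|^2+c_*^{-1/2}\|X_1\|^3\bigr)$. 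The main obstacle is the coercive lifting itself: since neither $A(t)F(t)$ nor $t^2 SP$ is bounded below on all of $\mathfrak H$ (both have rank $n$), a direct square-root Lipschitz estimate is unavailable. Padding the orthogonal complements by the common scalar $c_* t^2 I$, which leaves the quantities of interest unchanged on the invariant subspaces $\mathfrak F(t)$ and $\mathfrak N$, supplies the shared lower bound $c_* t^2$; the resulting factor $(c_*^{1/2}|t|)^{-1}$ is precisely what converts the cubic bound of the first step into the desired quadratic bound for the square-root difference.
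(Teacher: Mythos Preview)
Your argument is correct. The paper itself does not prove this proposition; it merely quotes it from \cite{BSu5}, so there is no in-paper proof to compare against. Your coercive ``padding'' by $c_* t^2$ on the orthogonal complements is a clean device: it turns the rank-$n$ operators $A(t)F(t)$ and $t^2SP$ into operators with a common lower bound $c_* t^2 I$ without altering them on the invariant subspaces $\mathfrak F(t)$ and $\mathfrak N$, so that the standard integral formula for $B_1^{1/2}-B_2^{1/2}$ applies with the sharp Lipschitz constant $(2\sqrt{c_* t^2})^{-1}$. The constant-tracking is right: the contribution $c_*^{1/2}\delta^{-1/2}\|X_1\|$ coming from the padding term is absorbed into $\delta^{-1/2}\|X_1\|^2$ via $c_*\le\|S\|\le\|X_1\|^2$, yielding the stated form of $C_5$.

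Two cosmetic remarks. First, invoking the refined expansion \eqref{abstr_A(t)_threshold_2}--\eqref{abstr_C5} is unnecessary here; the cruder bound \eqref{1.6a} already gives $\|A(t)F(t)-t^2SP\|\le C_2|t|^3$ and streamlines the constant. Second, the argument as written tacitly excludes $t=0$ (where the lower bound $c_* t^2$ degenerates), but there the inequality is trivial since both $A(0)^{1/2}F(0)=0$ and $(0\cdot S)^{1/2}P=0$.

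For context, the refinement proved immediately afterward in the paper (Proposition~2.2) uses the same integral representation \eqref{abstr_A_sqrt_repres} but expands the resolvent of $A(t)$ directly via \eqref{abstr_resolv_threshold} rather than passing through a global Lipschitz bound; your approach is somewhat more elementary for the present coarser statement, while the paper's resolvent-expansion route is what is needed to extract the next-order term $t|t|G$.
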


We need to find a more precise approximation for the operator $A(t)^{1/2} F(t)$.
For this, we use the following representation (see, e.~g.,~\cite[Chapter~3, Section~3, Subsection~4]{Kr})
\begin{equation}
\label{abstr_A_sqrt_repres}
A(t)^{1/2} F(t)  = \frac{1}{\pi} \int_{0}^{\infty} \zeta^{-1/2} (A(t) + \zeta I)^{-1} A(t) F(t) \, d\zeta, \quad 0 < |t| \le t^0.
\end{equation}
Also, we need the following approximation for the resolvent $(A(t) + \zeta I)^{-1}$ obtained in~\cite[(5.19)]{BSu2}:
\begin{equation}
\label{abstr_resolv_threshold}
(A(t) + \zeta I)^{-1} F(t) = \Xi(t,\zeta) + t (Z \Xi(t,\zeta) + \Xi(t,\zeta) Z^*) -
 t^3 \Xi(t,\zeta) N \Xi(t,\zeta) + \mathcal{J}(t,\zeta), \  |t| \le t^0, \; \zeta > 0.
\end{equation}
Here $\Xi(t,\zeta): = (t^2 SP + \zeta I)^{-1} P$, and the operator $\mathcal{J}(t,\zeta) $ satisfies the following estimate (see~\cite[Subsection~5.2]{BSu2})
\begin{align}
\label{abstr_J_est}
&\| \mathcal{J}(t,\zeta) \| \le C_6 t^4 (c_* t^2 + \zeta)^{-2} + C_7 t^2 (c_* t^2 + \zeta)^{-1},\quad |t|\le t^0,\ \zeta >0;
\\
\label{abstr_C8_C9}
&C_6 = \beta_6 \delta^{-1} \bigl(\|X_1\|^4 +  c_*^{-1} \|X_1\|^6 \bigr),\quad
C_7 = \beta_7 \delta^{-1} \bigl(\|X_1\|^2 +  c_*^{-1} \|X_1\|^4\bigr).
\end{align}

By~\eqref{abstr_A(t)_threshold_2} and~\eqref{abstr_resolv_threshold},
\begin{equation}
\label{abstr_sum}
(A(t) + \zeta I)^{-1} A(t) F(t) = t^2 \Xi(t,\zeta)  SP + t^3 Z \Xi(t,\zeta)   SP
+    t^3 \Xi(t,\zeta) K -  t^5 \Xi(t,\zeta) N \Xi(t,\zeta) SP + Y(t,\zeta),
\end{equation}
where
\begin{equation}
\label{2.6a}
\begin{aligned}
Y(t,\zeta)&:=
 \Xi(t,\zeta) \Psi(t) + t (Z \Xi(t,\zeta) + \Xi(t,\zeta) Z^*)(t^3 K + \Psi(t))
\\
& - t^3 \Xi(t,\zeta) N \Xi(t,\zeta) (t^3 K + \Psi(t)) + \mathcal{J} (t,\zeta) \left( t^2 SP + t^3 K + \Psi (t) \right).
\end{aligned}
\end{equation}
We have used that $Z^* P =0$.
Substituting \eqref{abstr_sum} in~\eqref{abstr_A_sqrt_repres} and recalling that $N = N_0 + N_*$, we obtain
\begin{align}
\label{abstr_1/2}
A(t)^{1/2} F(t) & = \sum_{j=1}^3 I_j(t) + I_0(t) + I_*(t) + \Phi(t), \quad |t| \le t^0,
\\
\label{I1}
I_1(t)& := \frac{1}{\pi} \int_{0}^{\infty} \zeta^{-1/2} t^2 \Xi(t,\zeta)  SP \, d\zeta,
\\
\nonumber
I_2(t) &:= \frac{1}{\pi} \int_{0}^{\infty} \zeta^{-1/2}  t^3  Z \Xi(t,\zeta)  SP\, d\zeta= tZ I_1(t),
\\
\nonumber
I_3(t) &:= \frac{1}{\pi} \int_{0}^{\infty} \zeta^{-1/2}  t^3 \Xi(t,\zeta) K \, d\zeta = t I_1(t) S^{-1}PK,
\\
\nonumber
I_0(t) &:= -\frac{1}{\pi} \int_{0}^{\infty} \zeta^{-1/2} t^5 \Xi (t,\zeta) N_0 \Xi (t,\zeta) \, SP \, d\zeta,
\\
\label{I*}
I_*(t) &:= -\frac{1}{\pi} \int_{0}^{\infty} \zeta^{-1/2}  t^5 \Xi (t,\zeta) N_* \Xi (t,\zeta) \, SP \, d\zeta,
\\
\label{Phi}
\Phi(t) &:= \frac{1}{\pi} \int_{0}^{\infty} \zeta^{-1/2}  Y(t,\zeta) \, d\zeta.
\end{align}
For $t=0$ we put $I_j(0):=0$, $j=1,2,3,$ and $I_0(0) = I_*(0) = \Phi(0):=0$.
Then \eqref{abstr_1/2} for $t=0$ is obvious.

Using representation of the form \eqref{abstr_A_sqrt_repres}
for the operator $(t^2SP)^{1/2}P=|t| S^{1/2}P$, we see that
\begin{equation}
\label{abstr_I1-3}
I_1(t)=  |t| S^{1/2} P,\quad I_2(t)= t |t| Z S^{1/2} P, \quad I_3(t) = t |t| S^{-1/2} P K.
\end{equation}
Let us calculate the operator $I_0(t)$  in the basis $\{\omega_l\}_{l=1}^n$.
Since $N_0 S = S N_0$ (see \eqref{abstr_S_eigenvectors} and \eqref{abstr_N_0_N_*}), then
\begin{equation*}
I_0 (t) \omega_l = -\frac{t^5}{\pi} \int_{0}^{\infty} \zeta^{-1/2} \frac{\gamma_l}{( \gamma_l t^2 + \zeta)^{2}} N_0 \omega_l \, d\zeta = - \frac{1}{2} t |t| \gamma_l^{-1/2} N_0 \omega_l,\quad l=1,\dots,n.
\end{equation*}
In the invariant terms, we have
\begin{equation}
\label{I0}
I_0(t) = - \frac{1}{2} t |t| N_0 S^{-1/2} P.
\end{equation}
Substituting \eqref{abstr_I1-3} and \eqref{I0} in \eqref{abstr_1/2}
and recalling that $K = Z S P + S P Z^* + N_0 + N_*$, we obtain
\begin{equation}
\label{repr}
A(t)^{1/2} F(t)  = |t| S^{1/2} P +  t |t| \bigl( Z S^{1/2} P +  S^{1/2} P Z^* \bigr)
+ \frac{1}{2} t |t| N_0 S^{-1/2} P
+ t |t| S^{-1/2} P N_* + I_*(t) + \Phi(t).
\end{equation}
We have taken into account that $PZ=0$.
Consider the operator
\begin{equation}
\label{II*}
{\mathcal I}_*(t) := I_*(t) - t |t| N_* S^{-1/2} P.
\end{equation}
Since $|t| S^{-1/2} P = S^{-1} I_1(t)$, it follows from \eqref{I1} and \eqref{I*} that
\begin{equation}
\label{III*}
{\mathcal I}_*(t)
= -\frac{t^3}{\pi} \int_{0}^{\infty} \zeta^{-1/2} \left( \Xi (t,\zeta) N_*  + N_* \Xi (t,\zeta) - \zeta \,\Xi (t,\zeta) N_* \Xi (t,\zeta)\right) \, d\zeta.
\end{equation}
From the last representation it is clear that the operator ${\mathcal I}_*(t)$
is selfadjoint. Is is easily seen that
${\mathcal I}_*(t)= t|t| {\mathcal I}_*(1)$.
Relations \eqref{abstr_S_nondegenerated} and \eqref{III*} imply the estimate
\begin{equation*}
\| \mathcal{I}_*(1) \| \le \frac{1}{\pi} \|N\|  \int_{0}^{\infty} \zeta^{-1/2} \left( 2(\zeta+ c_*)^{-1} + \zeta (\zeta+ c_*)^{-2}\right)\,  d \zeta.
\end{equation*}
Combining this with \eqref{abstr_K_N_estimates}, we obtain
\begin{equation}
\label{I*est}
\| \mathcal{I}_* (1)\| \le \frac{5}{2}(2\delta)^{-1/2} c_*^{-1/2} \|X_1\|^3.
\end{equation}

By~\eqref{abstr_t0_fixation}, \eqref{abstr_Z_R_S_est}, \eqref{abstr_C5}, \eqref{abstr_K_N_estimates}, \eqref{abstr_S_nondegenerated}, \eqref{abstr_J_est}, \eqref{abstr_C8_C9}, and \eqref{2.6a}, it is easy to check that the term
\eqref{Phi} satisfies
\begin{equation}
\label{abstr_A_sqrt_threshold_pre}
 \| \Phi (t) \| \le C_{8} |t|^3, \quad |t| \le t^0;
\quad C_{8} = \beta_{8} \delta^{-1} \bigl( \|X_1\|^4 c_*^{-1/2} + \|X_1\|^6 c_*^{-3/2} + \|X_1\|^8  c_*^{-5/2}\bigr).
\end{equation}

Finally, combining \eqref{repr}, \eqref{II*}, and \eqref{abstr_A_sqrt_threshold_pre}, we arrive at the following result.

\begin{proposition}
 We have
     \begin{align}
     \label{abstr_A_sqrt_threshold_2}
     &A(t)^{1/2} F(t) = |t| S^{1/2} P  + t |t| G + \Phi (t),\quad |t| \le t^0,
\\
     \label{abstr_G}
     &G := Z S^{1/2} P + S^{1/2}P Z^* + \frac{1}{2} N_0 S^{-1/2} P  + S^{-1/2} N_* P +   N_* S^{-1/2} P + \mathcal{I}_* (1).
     \end{align}
    The operator $\Phi(t)$ satisfies estimate \eqref{abstr_A_sqrt_threshold_pre}.
\end{proposition}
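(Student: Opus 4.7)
Since the excerpt has already prepared every ingredient, the proof is a matter of substitution, collection, and a final bookkeeping bound on the remainder. The plan is to insert the threshold approximation \eqref{abstr_A(t)_threshold_2} of $A(t)F(t)$ and the resolvent approximation \eqref{abstr_resolv_threshold} into the integral representation \eqref{abstr_A_sqrt_repres}; this directly produces the decomposition \eqref{abstr_1/2} with the five explicit integrals $I_1, I_2, I_3, I_0, I_*$ and a remainder $\Phi(t)$ defined via the error term $Y(t,\zeta)$ of \eqref{2.6a}.

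Next, I would evaluate the explicit pieces. The integrals $I_1, I_2, I_3$ reduce, by the spectral decomposition of $S$ on $\mathfrak{N}$, to the elementary identity $\pi^{-1}\int_0^\infty \zeta^{-1/2}\gamma(t^2\gamma+\zeta)^{-1}d\zeta = |t|^{-1}\gamma^{1/2}$, giving the closed forms \eqref{abstr_I1-3}. The diagonal piece $I_0$ is evaluated similarly via $\pi^{-1}\int_0^\infty \zeta^{-1/2}(t^2\gamma+\zeta)^{-2}d\zeta = \frac{1}{2}|t|^{-3}\gamma^{-3/2}$, yielding \eqref{I0}. Substituting these and using the decomposition $K = ZSP + SPZ^* + N_0 + N_*$ together with $PZ=0$ produces the intermediate identity \eqref{repr}.

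The non-explicit piece $I_*(t)$ is handled by the algebraic splitting \eqref{II*}: isolating $t|t|N_*S^{-1/2}P$ (which is obtained by rewriting $|t|S^{-1/2}P = S^{-1}I_1(t) = \pi^{-1}t^2\int_0^\infty \zeta^{-1/2}\Xi(t,\zeta)\,d\zeta$) leaves $\mathcal{I}_*(t)$ in the form \eqref{III*}, which is manifestly selfadjoint and, by a change of variable $\zeta \mapsto t^2 \zeta$ in the integral, homogeneous of degree $t|t|$, so that $\mathcal{I}_*(t)=t|t|\mathcal{I}_*(1)$. Plugging this back into \eqref{repr} and using $PN_*=N_*=N_*P$ (so $S^{-1/2}PN_* = S^{-1/2}N_*P$), the coefficients of $t|t|$ collect exactly into the operator $G$ of \eqref{abstr_G}; selfadjointness of $G$ can be verified as a sanity check by noting that $A(t)^{1/2}F(t)$ is selfadjoint and each summand of $G$ except the pair $S^{-1/2}N_*P + N_*S^{-1/2}P$ is individually selfadjoint.

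The final, most technical, step is the bound $\|\Phi(t)\| \le C_8|t|^3$. One estimates each of the four summands of $Y(t,\zeta)$ in \eqref{2.6a} weighted by $\pi^{-1}\zeta^{-1/2}d\zeta$, using $\|\Xi(t,\zeta)\| \le (c_*t^2+\zeta)^{-1}$ via \eqref{abstr_S_nondegenerated}, the operator norm bounds \eqref{abstr_Z_R_S_est}, \eqref{abstr_C5}, \eqref{abstr_K_N_estimates} for $Z, \Psi, K, N$, and the estimate \eqref{abstr_J_est} for $\mathcal{J}(t,\zeta)$. Each resulting scalar integral is of the form $\int_0^\infty \zeta^{-1/2}(c_*t^2+\zeta)^{-k}d\zeta \sim c_*^{1/2-k}|t|^{1-2k}$ and, when combined with the explicit $t$-powers of each summand, produces $O(|t|^3)$ in every case. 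The main obstacle here is purely combinatorial bookkeeping: the expression for $Y(t,\zeta)$ contains a dozen cross-products, and one must track how powers of $\|X_1\|, \delta, c_*$ enter through each of them to obtain the constant $C_8$ in precisely the form stated in \eqref{abstr_A_sqrt_threshold_pre}.
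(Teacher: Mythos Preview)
Your proposal is correct and follows essentially the same approach as the paper: the paper likewise derives the decomposition \eqref{abstr_1/2} by inserting \eqref{abstr_A(t)_threshold_2} and \eqref{abstr_resolv_threshold} into \eqref{abstr_A_sqrt_repres}, evaluates $I_1,I_2,I_3,I_0$ to obtain \eqref{abstr_I1-3} and \eqref{I0}, rewrites $I_*(t)$ via \eqref{II*}--\eqref{III*} to extract $t|t|\mathcal{I}_*(1)$, and then collects the $t|t|$ terms from \eqref{repr} into $G$. The remainder bound \eqref{abstr_A_sqrt_threshold_pre} is handled exactly as you describe, by estimating each summand of $Y(t,\zeta)$ using \eqref{abstr_t0_fixation}, \eqref{abstr_Z_R_S_est}, \eqref{abstr_C5}, \eqref{abstr_K_N_estimates}, \eqref{abstr_S_nondegenerated}, \eqref{abstr_J_est}, and \eqref{abstr_C8_C9}.
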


\subsection{Approximation of the operator $e^{-i \tau A(t)^{1/2}} P$}

Consider the operator
\begin{equation}
    \label{abstr_E}
    E (t, \tau) := e^{-i\tau A(t)^{1/2}} P - e^{-i\tau (t^2 S)^{1/2}P} P.
\end{equation}
We have
\begin{gather}
    \label{abstr_E_E1_E2}
    E (t, \tau) = E_1 (t, \tau) + E_2 (t, \tau), \\
    \notag
    E_1 (t, \tau) := e^{-i\tau A(t)^{1/2}} F(t)^{\perp} P - F(t)^{\perp} e^{-i\tau (t^2 S)^{1/2}P} P, \\
    \label{abstr_E2}
    E_2 (t, \tau) := e^{-i\tau A(t)^{1/2}} F(t) P - F(t) e^{-i\tau (t^2 S)^{1/2}P} P.
\end{gather}
Since  $F(t)^\perp P = (P - F(t))P$, then \eqref{abstr_F(t)_threshold_1}  implies that
\begin{equation}
    \label{abstr_E1_estimate}
    \| E_1 (t, \tau) \| \le 2 C_1 |t|, \quad |t| \le t^0.
\end{equation}

The operator \eqref{abstr_E2}  can be written as (cf. \cite[Section 2]{BSu5})
\begin{equation}
    \label{abstr_E2_Sigma}
    {E}_2(t, \tau) = -i \int_0^{\tau} e^{i (\tilde{\tau} -\tau) A(t)^{1/2}} F(t) \bigl(A(t)^{1/2}F(t)  -  (t^2 S)^{1/2}P \bigr)
e^{-i \tilde{\tau} (t^2S)^{1/2}P} P \, d \tilde{\tau}.
\end{equation}
By \eqref{abstr_A_sqrt_threshold_1} and \eqref{abstr_E2_Sigma}, $\| E_2 (t, \tau) \| \le C_5 |\tau| t^2$ for $|t| \le t^0$.
Together with~\eqref{abstr_E_E1_E2} and~\eqref{abstr_E1_estimate} this implies
\begin{equation}
    \label{estimate_E}
    \| E(t,\tau) \| \le 2 C_1 |t| + C_5 |\tau| t^2, \quad |t| \le t^0.
\end{equation}

Now, we show that estimate \eqref{estimate_E} can be improved under the additional assumptions.
From~\eqref{abstr_A_sqrt_threshold_2} and \eqref{abstr_E2_Sigma} it follows that
\begin{equation}
    \label{abstr_Sigma}
    E_2(t, \tau) = -i \int_0^{\tau} e^{i (\tilde{\tau}-\tau) A(t)^{1/2}}F(t) ( t |t| G  + \Phi (t)) e^{-i \tilde{\tau} (t^2 S)^{1/2}P} P \, d \tilde{\tau}.
\end{equation}
Taking~\eqref{abstr_G} into account and recalling that $Z^* P = 0$, we represent the operator~\eqref{abstr_Sigma} as
\begin{align}
    \label{abstr_Sigma_tildeSigma_hatSigma}
    E_2 (t, \tau) &= \widetilde{E}_2(t, \tau) + {E}_0(t, \tau) + E_*(t,\tau),\\
    \label{abstr_tildeSigma}
    \widetilde{E}_2(t, \tau)& := -i \int_0^{\tau} e^{i (\tilde{\tau} -\tau) A(t)^{1/2}}F(t) \bigl( t |t|  Z S^{1/2} P + \Phi (t) \bigr) e^{-i \tilde{\tau} (t^2 S)^{1/2}P} P \, d \tilde{\tau},
\\
    \label{2.28a}
    E_0(t, \tau) &:= -\frac{i}{2} t |t|  \int_0^{\tau} e^{i (\tilde{\tau}-\tau) A(t)^{1/2}}F(t) S^{-1/2} N_0 e^{-i \tilde{\tau} (t^2 S)^{1/2}P} P \, d \tilde{\tau}, \\
    \label{abstr_E*}
    E_*(t, \tau) &:= -i  t |t|  \int_0^{\tau} e^{i (\tilde{\tau}-\tau) A(t)^{1/2}}F(t)
G_*  e^{-i \tilde{\tau} (t^2 S)^{1/2}P} P \, d \tilde{\tau},
\\
    \label{G*}
G_* &:= S^{-1/2} N_*P  +   N_* S^{-1/2}P  +  \mathcal{I}_* (1).
\end{align}
Since $P Z = 0$, then $F(t)Z S^{1/2} P = (F(t) - P ) Z S^{1/2} P$. Hence, relations~\eqref{abstr_Z_R_S_est},~\eqref{abstr_F(t)_threshold_1}, and~\eqref{abstr_A_sqrt_threshold_pre} imply the following estimate for the term~\eqref{abstr_tildeSigma}:
\begin{equation}
    \label{abstr_tildeSigma_estimate}
    \| \widetilde{E}_2(t, \tau) \| \le C_{9} | \tau | |t|^3, \quad |t| \le t^0,
\end{equation}
where $C_{9} = C_1 (8 \delta)^{-1/2} \| X_1 \|^2 + C_{8}$.

Note that, if $N=0$, then also $N_0=N_*=0$, ${\mathcal I}_*(1)=0$, whence
 ${E}_0(t,\tau)=E_*(t,\tau)= 0$.
Then relations~\eqref{abstr_E_E1_E2}, \eqref{abstr_E1_estimate}, \eqref{abstr_Sigma_tildeSigma_hatSigma},
and \eqref{abstr_tildeSigma_estimate} imply that
\begin{equation}
\label{abstr_cos_enchanced_est_1_wo_eps}
\| E(t,\tau)  \| \le 2 C_1 |t| + C_{9} | \tau | |t|^3,\quad |t| \le t^0,\quad \text{if}\  N=0.
 \end{equation}

\subsection{Estimate for the term $E_*(t,\tau)$}

This subsection concerns the case where $n \ge 2$.
We will use the notation and the results of Subsection~\ref{abstr_cluster_section}.
Recall that $N=N_0 + N_*$. By Remark~\ref{abstr_N_remark},
\begin{equation}
    \label{PNP}
    P_j N_* P_j = 0,\quad j = 1, \ldots ,p;\qquad P_l N_0 P_j  =0\quad \text{for} \  l \ne j.
\end{equation}
Thus, the operators $N_0$ and $N_*$ admit the following invariant representations:
\begin{equation}
    \label{abstr_N_invar_repers}
    N_0 = \sum_{j=1}^{p} P_j N P_j, \quad N_* = \sum_{{1 \le j, l \le p: \; j \ne l}} P_j N P_l.
\end{equation}

In this subsection, we obtain the analog of estimate \eqref{abstr_cos_enchanced_est_1_wo_eps} under the weaker assumption that $N_0=0$.
For this, we have to estimate the operator~\eqref{abstr_E*}. However, we are able to do this only for a
smaller interval of $t$. By \eqref{abstr_P_Pj}, \eqref{III*}, \eqref{G*}, and \eqref{PNP},
it is seen that
$G_* = \sum_{{1 \le j, l \le p: \;  j \ne l}} P_j G_* P_l$.
Hence, the term \eqref{abstr_E*} can be represented as
\begin{gather}
    \label{abstr_E*_sumJjl}
    E_*(t, \tau) = -i e^{-i \tau A(t)^{1/2}} \sum_{{1 \le j, l \le p: \;  j \ne l}} J_{jl} (t, \tau),\\
    \label{abstr_Jjl}
    J_{jl} (t, \tau) := t |t| \int_0^{\tau} e^{i \tilde{\tau} A(t)^{1/2}}F(t) P_j G_* P_l e^{-i \tilde{\tau} (t^2 S)^{1/2}P} P \, d \tilde{\tau}.
\end{gather}

We have to estimate only those terms in \eqref{abstr_E*_sumJjl} for which $P_j N P_l \ne 0$.
So, let $j\ne l$, and let $P_j N P_l \ne 0$. Suppose that $c^\circ_{jl}$ is defined by~\eqref{abstr_c_circ_jl},
and $t^{00}_{jl}$ is subject to~\eqref{abstr_t00_jl}.
By~\eqref{abstr_F(t)_F(1)_F(2)},  the operator~\eqref{abstr_Jjl} can be represented as
\begin{gather}
    \label{abstr_J_jl_sumJ^r_jl}
    J_{jl}(t, \tau) = J^{(1)}_{jl}(t, \tau) + J^{(2)}_{jl}(t, \tau),  \quad |t|\le t^{00}_{jl},
\\
    \label{abstr_J^r_jl}
    \begin{split}
    J^{(r)}_{jl} (t, \tau) :=  t |t| \int_0^{\tau} e^{i \tilde{\tau} A(t)^{1/2}}F^{(r)}_{jl}(t)  P_j G_*  P_l  e^{-i \tilde{\tau} (t^2 S)^{1/2}P} P \, d \tilde{\tau},
\quad  r=1,2.
    \end{split}
\end{gather}

For definiteness, let $j<l$. Then $j < i_0+1$ and, by~\eqref{abstr_P_Pj} and~\eqref{abstr_cluster_thresold},
\begin{equation}
    \label{abstr_F^(2)_jl_P_j}
    \|F^{(2)}_{jl}(t) P_j \| = \| ( F^{(2)}_{jl}(t) - (P_{i_0+1} + \ldots + P_p) ) P_j \| \le C_{4,jl}|t|, \quad |t|\le t^{00}_{jl}.
\end{equation}
Here $C_{4,jl}$ is as in Proposition~\ref{prop2.1}.
Combining~\eqref{G*},~\eqref{abstr_J^r_jl},~\eqref{abstr_F^(2)_jl_P_j}, and taking~\eqref{abstr_K_N_estimates}, \eqref{abstr_S_nondegenerated},  and~\eqref{I*est} into account, we obtain
\begin{equation}
    \label{abstr_J^2_jl_estimate}
    \| J^{(2)}_{jl} (t, \tau)\| \le C_{4,jl} |t|^3 | \tau | (2 c_*^{-1/2} \|N\| + \|\mathcal{I}_* (1)\|) \le C_{10,jl}  | \tau | |t|^3, \quad |t| \le t^{00}_{jl},
\end{equation}
where $C_{10,jl} =\beta_{10} \delta^{-1} c_*^{-1/2} \|X_1\|^{8} (c^{\circ}_{jl})^{-2}$.

It remains to consider the term $J_{jl}^{(1)}(t,\tau)$.
Obviously, $P_l e^{-i \tilde{\tau} (t^2 S)^{1/2}P} P = e^{-i \tilde{\tau} |t| \sqrt{\gamma_l^{\circ}} } P_l$.
Next, we have
\begin{equation*}
    e^{i \tilde{\tau} A(t)^{1/2}}F^{(1)}_{jl}(t) = \sum_{r=1}^{i_0} \sum_{q=1}^{k_r}
e^{i \tilde{\tau} \sqrt{\lambda^{(r)}_q (t)}} (\cdot, \varphi^{(r)}_{q}(t))\varphi^{(r)}_{q}(t).
\end{equation*}
As a result, the operator  $J_{jl}^{(1)}(t,\tau)$ can be written as
\begin{equation}
    \label{abstr_J^1_jl}
    J^{(1)}_{jl} (t, \tau) = t |t| \sum_{r=1}^{i_0} \sum_{q=1}^{k_r} \left(  \int_0^{\tau} e^{i \tilde{\tau}  \bigl( \sqrt{ \mathstrut \lambda^{(r)}_q (t)}-\sqrt{\mathstrut \gamma^{\circ}_l} |t|\bigr) } d \tilde{\tau} \right) ( P_j G_* P_l \cdot, \varphi^{(r)}_{q}(t))\varphi^{(r)}_{q}(t).
\end{equation}
Calculating the integral and taking into account that
\hbox{$| \lambda^{(r)}_q (t)-\gamma^{\circ}_l t^2 | \ge \frac{3}{4} c^{\circ}_{jl} t^2$}
and
    $\left|  \sqrt{{\lambda^{(r)}_q (t)}} - \sqrt{{\gamma^{\circ}_l}} |t| \right| \ge (2\| X_1 \| |t|)^{-1}
| \lambda^{(r)}_q (t)- \gamma^{\circ}_l t^2 |$ for $|t| \le t^{00}_{jl}$, we obtain
\begin{equation}
    \label{abstr_J^1_jl_est_a}
    \left| \int_0^{\tau} e^{i \tilde{\tau}  \bigl( \sqrt{ \mathstrut \lambda^{(r)}_q (t)}-\sqrt{\mathstrut \gamma^{\circ}_l} |t|\bigr) }
 d \tilde{\tau} \right|
\le 2 \left|  \sqrt{{\lambda^{(r)}_q (t)}} - \sqrt{{\gamma^{\circ}_l}} |t| \right|^{-1}
\le 16 \| X_1 \| (3 c^{\circ}_{jl})^{-1}  |t|^{-1}.
\end{equation}
Now, relations~\eqref{abstr_J^1_jl} and~\eqref{abstr_J^1_jl_est_a} together with~\eqref{abstr_K_N_estimates},~\eqref{abstr_S_nondegenerated}, \eqref{I*est}, and \eqref{G*}
imply that
\begin{equation}
    \label{abstr_J^1_jl_estimate}
    \| J^{(1)}_{jl} (t, \tau) \| \le 16 \| X_1 \| (3 c^{\circ}_{jl})^{-1}  |t| (2 c_*^{-1/2}\|N\| + \|  \mathcal{I}_* (1)\|) \le C_{11,jl} |t|, \quad |t| \le  t^{00}_{jl},
\end{equation}
where
$C_{11,jl} = \beta_{11} \delta^{-1/2} c_*^{-1/2} \| X_1 \|^4 (c^{\circ}_{jl})^{-1}$.
The case where $j>l$ can be treated similarly.

Denote $\mathcal{Z} = \{ (j, l) \colon 1 \le j, l \le p, \; j \ne l, \; P_j N P_l \ne 0\}$. We put
\begin{equation}
    \label{abstr_c^circ}
    c^{\circ} := \min_{(j, l) \in \mathcal{Z}} c^{\circ}_{jl}
\end{equation}
and choose a number $t^{00} \le t^0$ such that
\begin{equation}
    \label{abstr_t00}
    t^{00} \le (4 \beta_2 )^{-1} \delta^{1/2} \|X_1\|^{-3} c^{\circ}.
\end{equation}
We may assume that  $t^{00} \le t^{00}_{jl}$ for all $(j,l)\in {\mathcal Z}$ (see~\eqref{abstr_t00_jl}).
Now, relations~\eqref{abstr_E*_sumJjl},~\eqref{abstr_J_jl_sumJ^r_jl},~\eqref{abstr_J^2_jl_estimate}, and~\eqref{abstr_J^1_jl_estimate},
together with expressions for the constants~$C_{10,jl}$ and~$C_{11,jl}$ yield
\begin{equation}
    \label{abstr_E*_estimate}
 \| E_*(t, \tau) \| \le C_{10} | \tau | |t|^3 + C_{11} |t|, \quad |t| \le t^{00},
\end{equation}
where       $C_{10} = \beta_{10} n^2    \delta^{-1}  c_*^{-1/2} \| X_1 \|^{8}  (c^{\circ})^{-2}$ and
       $C_{11} = \beta_{11} n^2   \delta^{-1/2} c_*^{-1/2} \| X_1 \|^4  ( c^{\circ})^{-1}$.

Note that, if $N_0=0$, then $E_0(t,\tau)=0$ and $E(t,\tau)= E_1(t,\tau)  + \widetilde{E}_2(t,\tau) + {E}_*(t,\tau)$
(see ~\eqref{abstr_E_E1_E2}, \eqref{abstr_Sigma_tildeSigma_hatSigma}, and \eqref{2.28a}).
Then estimates \eqref{abstr_E1_estimate}, \eqref{abstr_tildeSigma_estimate}, and \eqref{abstr_E*_estimate} imply that
\begin{equation}
\label{N0=0}
\| E(t,\tau)   \| \le C_{12} |t| + C_{13} | \tau | |t|^3,\quad |t|\le t^{00}, \quad \text{if}\ N_0=0;\quad C_{12} = 2C_1 + C_{11},\
C_{13} = C_{9} + C_{10}.
\end{equation}

\begin{remark}
Let $\mu_l$, $l=1,\dots,n,$  be the coefficients at $t^3$ in the expansions \emph{\eqref{abstr_A(t)_eigenvalues_series}}.
By Remark~\emph{\ref{abstr_N_remark}}, the condition $N_0=0$ is equivalent to the relations $\mu_l=0$ for all $l=1,\dots,n$.
\end{remark}

\subsection{Approximation for the operator $ A(t)^{-1/2} e^{-i \tau A(t)^{1/2}}$}

We need the following statement.

\begin{proposition}
\label{lemma_M}
   For $0< |t| \le t^0$ we have
    \begin{equation}
    \label{lemma_M_1}
    \| A(t)^{-1/2} P  - (t^2 S)^{-1/2} P \| \le {C}_{14};
 \quad
    {C}_{14} = {\beta}_{14} \delta^{-1/2} c_*^{-1/2} \| X_1\|  (1+  c_*^{-1} \|X_1\|^2 ).
    \end{equation}
\end{proposition}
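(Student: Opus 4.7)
The natural strategy is to use the spectral integral representation
\[
B^{-1/2} = \frac{1}{\pi}\int_0^\infty \zeta^{-1/2}(B+\zeta I)^{-1}\,d\zeta,
\]
as already employed in~\eqref{abstr_A_sqrt_repres} for the square root, combined with the resolvent expansion~\eqref{abstr_resolv_threshold}. The first step is to split
\[
A(t)^{-1/2}P = A(t)^{-1/2}F(t)P + A(t)^{-1/2}F(t)^\perp P.
\]
The off-spectral piece is handled directly: the equality $F(t;[0,\delta]) = F(t;[0,3\delta])$ recorded in Subsection~\ref{abstr_X_A_section} means that $A(t)\ge 3\delta I$ on $\Ran F(t)^\perp$, hence $\|A(t)^{-1/2}F(t)^\perp\|\le(3\delta)^{-1/2}$; together with $\|F(t)^\perp P\| = \|F(t)^\perp(P-F(t))\|\le C_1|t|$ (via~\eqref{abstr_F(t)_threshold_1}) and $|t|\le t^0\le \delta^{1/2}\|X_1\|^{-1}$, this gives a contribution of order $\delta^{-1/2}$, easily absorbed in $C_{14}$.

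For the principal term, Condition~\ref{nondeg} and~\eqref{abstr_S_nondegenerated} give $A(t)|_{\Ran F(t)}\ge c_* t^2 I$ and $S\ge c_* I_{\mathfrak N}$, so for $t\ne 0$ the integral representation converges on both subspaces:
\[
A(t)^{-1/2}F(t) = \frac{1}{\pi}\int_0^\infty \zeta^{-1/2}(A(t)+\zeta I)^{-1}F(t)\,d\zeta,\qquad |t|^{-1}S^{-1/2}P = \frac{1}{\pi}\int_0^\infty \zeta^{-1/2}\Xi(t,\zeta)\,d\zeta.
\]
Subtracting, multiplying by $P$ on the right, and substituting~\eqref{abstr_resolv_threshold} produces
\[
A(t)^{-1/2}F(t)P - |t|^{-1}S^{-1/2}P = \frac{1}{\pi}\int_0^\infty \zeta^{-1/2}\bigl(tZ\Xi(t,\zeta) - t^3\Xi(t,\zeta)N\Xi(t,\zeta) + \mathcal{J}(t,\zeta)P\bigr)\,d\zeta,
\]
where crucially the term $t\Xi(t,\zeta)Z^* P$ has disappeared thanks to $Z^*P = 0$ (since $Z$ vanishes on $\mathfrak{N}^\perp$), and $\Xi(t,\zeta)P = \Xi(t,\zeta)$ has been exploited.

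It remains to bound the three integrals. The first integrates explicitly to $(\sgn t)\,Z S^{-1/2}P$, of norm at most $(8\delta)^{-1/2}c_*^{-1/2}\|X_1\|$ by~\eqref{abstr_Z_R_S_est} and~\eqref{abstr_S_nondegenerated}. For the other two I would use the standard identities
\[
\int_0^\infty \zeta^{-1/2}(c_* t^2 + \zeta)^{-1}\,d\zeta = \pi c_*^{-1/2}|t|^{-1},\qquad \int_0^\infty \zeta^{-1/2}(c_* t^2 + \zeta)^{-2}\,d\zeta = \tfrac{\pi}{2}\,c_*^{-3/2}|t|^{-3},
\]
together with the operator bound $\|N\|\le (2\delta)^{-1/2}\|X_1\|^3$ from~\eqref{abstr_K_N_estimates} and estimates~\eqref{abstr_J_est}--\eqref{abstr_C8_C9} for $\mathcal{J}$; the powers of $|t|$ cancel in the $\Xi N\Xi$ integral, leaving a $t$-independent bound, while any residual $|t|$ in the $\mathcal{J}$ integral is killed by $|t|\le \delta^{1/2}\|X_1\|^{-1}$. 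Summing the three contributions and collecting constants in terms of $\delta$, $c_*$, $\|X_1\|$ produces the claimed form of $C_{14}$. No genuine analytic obstacle arises: the substantive steps are the cancellation $Z^*P = 0$, $\Xi P = \Xi$ that simplifies the integrand, and the careful bookkeeping of constants; all needed ingredients---the resolvent expansion~\eqref{abstr_resolv_threshold}, the estimates of Proposition~\ref{th1.3}, and the bounds on $\|Z\|$, $\|N\|$, $\|\mathcal{J}\|$---have already been assembled.
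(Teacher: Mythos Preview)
Your argument is correct and essentially complete. The paper's own proof takes a shorter path: it quotes \cite[Lemma~1.1]{M} for the estimate $\|A(t)^{-1/2}F(t) - (t^2S)^{-1/2}P\|\le \check C_{14}$ (with $\check C_{14}$ of the stated form) and then simply adds the correction $\|A(t)^{-1/2}(F(t)-P)\|\le C_1 c_*^{-1/2}$, which follows from \eqref{abstr_F(t)_threshold_1} and Condition~\ref{nondeg}. Your proposal is therefore a self-contained substitute for the cited lemma, based on the same resolvent expansion~\eqref{abstr_resolv_threshold} that the paper has already set up; this is a perfectly natural route and all the ingredients you list are the right ones.

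One small point of bookkeeping: the $\mathcal J$-integral contributes, via~\eqref{abstr_J_est}--\eqref{abstr_C8_C9}, a term of order $\delta^{-1/2}c_*^{-5/2}\|X_1\|^5$ after you use $|t|\le \delta^{1/2}\|X_1\|^{-1}$, so the constant you actually assemble is a polynomial in $c_*^{-1}\|X_1\|^2$ of one degree higher than the displayed $C_{14}$. This is harmless for everything that follows (cf.\ Remark~\ref{rem_coeff}, which only requires polynomial control in $\delta^{-1/2}$, $c_*^{-1/2}$, $\|X_1\|$), but your closing sentence ``produces the claimed form of $C_{14}$'' slightly overstates what the computation yields.
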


\begin{proof}
In \cite[Lemma 1.1]{M} it was checked that
$\| A(t)^{-1/2} F(t)  - (t^2 S)^{-1/2} P \| \le \check{C}_{14}$ for $0< |t| \le t^0$,
    where $\check{C}_{14} = \check{\beta}_{14} \delta^{-1/2} c_*^{-1/2} \| X_1\|  (1+  c_*^{-1} \|X_1\|^2 )$.
Next, by \eqref{abstr_F(t)_threshold_1} and Condition~{\ref{nondeg}},
$\| A(t)^{-1/2} (F(t) - P)\| \le C_1 c_*^{-1/2}$.
This yields \eqref{lemma_M_1} with the constant ${C}_{14} = \check{C}_{14} + C_1 c_*^{-1/2}$.
\end{proof}

Consider the operator
\begin{equation}
    \label{sin_E}
    {\mathcal E} (t, \tau) := A(t)^{-1/2} e^{-i\tau A(t)^{1/2}} P - (t^2 S)^{-1/2} e^{-i\tau (t^2 S)^{1/2}P} P.
\end{equation}
By \eqref{abstr_E} and \eqref{sin_E}, we have
\begin{equation*}
    {\mathcal E} (t, \tau) = {\mathcal E}^\circ (t, \tau) + E(t,\tau)  (t^2 S)^{-1/2}P,\quad  {\mathcal E}^\circ (t, \tau):=
 e^{-i\tau A(t)^{1/2}} \bigl(A(t)^{-1/2} P -  (t^2 S)^{-1/2} P \bigr).
\end{equation*}
The operator ${\mathcal E}^\circ (t, \tau)$ is estimated by Proposition~\ref{lemma_M},
the operator $E(t,\tau)  (t^2 S)^{-1/2}P$ is estimated with the help of \eqref{abstr_S_nondegenerated} and \eqref{estimate_E}.
As a result, we obtain
    \begin{equation}
    \label{theorem_M_1}
    \| {\mathcal E}(t,\tau) \| \le {C}_{15} + C_{16} |\tau| |t|,\quad 0< |t|\le t^0;\quad
{C}_{15} = C_{14}+ 2 c_*^{-1/2} C_1,\ C_{16} = c_*^{-1/2} C_5.
    \end{equation}

Similarly, if $N=0$, using estimate \eqref{abstr_cos_enchanced_est_1_wo_eps}, we have
    \begin{equation}
    \label{abstr_sin_enchanced_est_1_wo_eps}
    \| {\mathcal E}(t,\tau) \| \le {C}_{15} + C_{17} |\tau| t^2,\quad 0< |t|\le t^0,\quad \text{if}\ N=0;\quad
C_{17} = c_*^{-1/2} C_{9}.
    \end{equation}

Finally, if $N_0=0$, applying \eqref{N0=0}, we arrive at
   \begin{equation}
    \label{N000}
    \| {\mathcal E}(t,\tau) \| \le {C}_{18} + C_{19} |\tau| t^2,\  0< |t|\le t^{00},\quad \text{if}\ N_0=0;\quad
C_{18} = {C}_{14} + c_*^{-1/2} C_{12}, \   C_{19} = c_*^{-1/2} C_{13}.
    \end{equation}

\subsection{Results: approximations for the operators $\cos(\tau A(t)^{1/2})P$ and $A(t)^{-1/2}\sin(\tau A(t)^{1/2})P$}
\label{abstract_results}

Now, we summarize the results.

\begin{theorem}
    \label{cos_thrm}
For $\tau \in \mathbb{R}$ and $|t| \le t^0$ we have
    \begin{align}
        \label{abstr_cos_general_thrm_wo_eps}
        &\| \cos (\tau A(t)^{1/2}) P - \cos (\tau (t^2 S)^{1/2}P) P \| \le 2C_1 |t| + C_5 |\tau| t^2,
\\
        \label{abstr_sin_general_thrm_wo_eps}
        &\| A(t)^{-1/2}\sin (\tau A(t)^{1/2}) P - (t^2 S)^{-1/2} \sin (\tau (t^2 S)^{1/2}P) P \|
 \le C_{15} + C_{16} |\tau| |t|.
    \end{align}
\end{theorem}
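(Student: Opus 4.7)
The plan is straightforward because both estimates follow essentially by taking the symmetric and antisymmetric parts of bounds already proved. The key observation is that the two estimates \eqref{estimate_E} for the exponential operator $E(t,\tau)$ and \eqref{theorem_M_1} for the operator $\mathcal{E}(t,\tau)$ already do all the work; Theorem~\ref{cos_thrm} is just a restatement in terms of trigonometric functions.

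For the cosine estimate, I would use the identity $\cos(\tau A(t)^{1/2}) = \tfrac{1}{2}\bigl(e^{-i\tau A(t)^{1/2}} + e^{i\tau A(t)^{1/2}}\bigr)$, and similarly for $\cos(\tau(t^2S)^{1/2}P)$. Subtracting these and inserting the projection $P$ gives the representation
\begin{equation*}
\cos(\tau A(t)^{1/2}) P - \cos(\tau(t^2S)^{1/2}P)P = \tfrac{1}{2}\bigl(E(t,\tau) + E(t,-\tau)\bigr),
\end{equation*}
so the triangle inequality applied to \eqref{estimate_E} (which is symmetric in the sign of $\tau$) yields \eqref{abstr_cos_general_thrm_wo_eps} with exactly the same constants $2C_1$ and $C_5$.

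For the sine estimate, I would use $\sin(\tau A(t)^{1/2}) = \tfrac{i}{2}\bigl(e^{-i\tau A(t)^{1/2}} - e^{i\tau A(t)^{1/2}}\bigr)$; multiplying by $A(t)^{-1/2}$ (valid on the range of $P$ only for $t\ne 0$, but the composition $A(t)^{-1/2}\sin(\tau A(t)^{1/2})$ extends continuously to $t=0$ via functional calculus of $\sin(x)/x$), I obtain
\begin{equation*}
A(t)^{-1/2}\sin(\tau A(t)^{1/2})P - (t^2S)^{-1/2}\sin(\tau(t^2S)^{1/2}P)P = \tfrac{i}{2}\bigl(\mathcal{E}(t,\tau) - \mathcal{E}(t,-\tau)\bigr),
\end{equation*}
and \eqref{theorem_M_1} together with the triangle inequality produces \eqref{abstr_sin_general_thrm_wo_eps} with the constants $C_{15}$ and $C_{16}$ as stated.

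There is essentially no obstacle here, since all the analytic work (threshold expansion of $A(t)^{1/2}F(t)$, the splitting $E=E_1+E_2$ with its Duhamel representation, and the resolvent identity used to derive \eqref{lemma_M_1}) has been carried out in the preceding subsections. The only minor care point is the extension to $t=0$ of the second estimate, which is handled by viewing $A(t)^{-1/2}\sin(\tau A(t)^{1/2})$ as the bounded operator given by the entire function $x\mapsto \sin(\tau x)/x$ applied to $A(t)^{1/2}$.
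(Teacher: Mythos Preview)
Your proposal is correct and follows exactly the route the paper indicates: the paper simply states that Theorem~\ref{cos_thrm} follows from estimates \eqref{estimate_E} and \eqref{theorem_M_1}, and you have spelled out the standard real/imaginary-part identities that make this deduction explicit. Your handling of the $t=0$ case for the sine operator matches the paper's remark that the operator under the norm sign is understood as the limit (equal to zero) as $t\to 0$.
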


\begin{theorem}
\label{abstr_cos_enchanced_thrm_1_wo_eps}
Let $N=0$. Then for $\tau \in \mathbb{R}$ and $|t| \le t^{0}$ we have
\begin{align}
\label{NNN}
&\| \cos(\tau A(t)^{1/2}) P - \cos(\tau (t^2 S)^{1/2}P) P \| \le 2 C_{1} |t| + C_{9} | \tau | |t|^3,
\\
\label{NNNN}
&\| A(t)^{-1/2}\sin (\tau A(t)^{1/2}) P - (t^2 S)^{-1/2} \sin (\tau (t^2 S)^{1/2}P) P \|
 \le C_{15} + C_{17} |\tau| t^2.
 \end{align}
 \end{theorem}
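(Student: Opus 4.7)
The plan is to view Theorem~\ref{abstr_cos_enchanced_thrm_1_wo_eps} as a direct corollary of the exponential estimates \eqref{abstr_cos_enchanced_est_1_wo_eps} and \eqref{abstr_sin_enchanced_est_1_wo_eps} that were already proved under the hypothesis $N=0$. All the substantive work (the threshold expansion \eqref{abstr_A_sqrt_threshold_2}, the Duhamel representation \eqref{abstr_E2_Sigma}, the decomposition \eqref{abstr_Sigma_tildeSigma_hatSigma} and the observation that $N=0$ kills the ``bad'' terms $E_0$ and $E_*$) is already done; what remains is to pass from $e^{\pm i\tau A(t)^{1/2}}$ to the cosine and sine by taking the appropriate symmetric/antisymmetric combinations.

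For the cosine estimate \eqref{NNN}, I would start from Euler's formula $\cos(\tau A(t)^{1/2}) = \tfrac{1}{2}\bigl(e^{i\tau A(t)^{1/2}} + e^{-i\tau A(t)^{1/2}}\bigr)$ (and the analogous identity for $\cos(\tau(t^2 S)^{1/2}P)$), observe that the operator $E(t,\tau)$ from \eqref{abstr_E} satisfies $E(t,-\tau) = e^{i\tau A(t)^{1/2}}P - e^{i\tau(t^2 S)^{1/2}P}P$, and hence write
\begin{equation*}
\cos(\tau A(t)^{1/2})P - \cos(\tau(t^2 S)^{1/2}P)P = \tfrac{1}{2}\bigl(E(t,\tau) + E(t,-\tau)\bigr).
\end{equation*}
The triangle inequality, combined with the bound \eqref{abstr_cos_enchanced_est_1_wo_eps} applied at $\tau$ and $-\tau$ (the bound being even in $\tau$), yields \eqref{NNN} immediately.

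For the sine estimate \eqref{NNNN}, the same idea works with $\mathcal{E}(t,\tau)$ from \eqref{sin_E} in place of $E(t,\tau)$. Since for $0<|t|\le t^0$ the operator $A(t)^{-1/2}$ is well-defined (by Condition~\ref{nondeg}), I would use $\sin(\tau X) = \tfrac{1}{2i}(e^{i\tau X} - e^{-i\tau X})$ applied to $X = A(t)^{1/2}$ and $X = (t^2 S)^{1/2}P$ to get
\begin{equation*}
A(t)^{-1/2}\sin(\tau A(t)^{1/2})P - (t^2 S)^{-1/2}\sin(\tau(t^2 S)^{1/2}P)P = \tfrac{1}{2i}\bigl(\mathcal{E}(t,-\tau) - \mathcal{E}(t,\tau)\bigr),
\end{equation*}
and then the bound \eqref{abstr_sin_enchanced_est_1_wo_eps}, again applied at $\pm\tau$, gives \eqref{NNNN}. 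The degenerate case $t=0$ is handled by passing to the limit (or by noting that both sides of \eqref{NNNN} reduce to $\tau P - \tau P = 0$ at $t=0$, so the bound $C_{15}$ is trivial there).

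The only point worth mild care is the $t=0$ endpoint in the sine statement, where one should interpret $A(t)^{-1/2}\sin(\tau A(t)^{1/2})P$ through the entire-function representation of $z\mapsto z^{-1/2}\sin(\tau z^{1/2})$, and similarly for the effective expression; but this is routine and does not affect the norm estimates, so there is no genuine obstacle. Essentially Theorem~\ref{abstr_cos_enchanced_thrm_1_wo_eps} is obtained by ``realifying'' the complex-exponential estimates already established.
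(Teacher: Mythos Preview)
Your proposal is correct and matches the paper's approach: the paper simply states that Theorem~\ref{abstr_cos_enchanced_thrm_1_wo_eps} ``is deduced from \eqref{abstr_cos_enchanced_est_1_wo_eps} and \eqref{abstr_sin_enchanced_est_1_wo_eps}'', and your use of Euler's formula to pass from the exponential operators $E(t,\pm\tau)$ and $\mathcal{E}(t,\pm\tau)$ to the cosine and sine is exactly the intended deduction. The handling of the $t=0$ endpoint via the entire-function interpretation is also consistent with the paper's convention noted after the statement of Theorem~\ref{cos_thrm}.
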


\begin{theorem}
\label{abstr_cos_enchanced_thrm_2_wo_eps}
Let $N_0=0$. Then for $\tau \in \mathbb{R}$ and $|t| \le t^{00}$ we have
\begin{align*}
&\| \cos(\tau A(t)^{1/2}) P - \cos(\tau (t^2 S)^{1/2}P) P \| \le C_{12} |t| + C_{13} | \tau | |t|^3.
\\
&\| A(t)^{-1/2}\sin (\tau A(t)^{1/2}) P - (t^2 S)^{-1/2} \sin (\tau (t^2 S)^{1/2}P) P \|
 \le C_{18} + C_{19} |\tau| t^2.
 \end{align*}
Here the number $t^{00}\le t^0$ is subject to~\emph{\eqref{abstr_c^circ}},~\emph{\eqref{abstr_t00}}.
 \end{theorem}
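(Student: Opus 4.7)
The plan is that both estimates of this theorem are essentially bookkeeping: the hard work has already been done in the preceding subsections, where $E(t,\tau)$ and ${\mathcal E}(t,\tau)$ were estimated under the hypothesis $N_0=0$. What remains is to extract the cosine and sine differences as the selfadjoint and skew-adjoint parts of the corresponding exponential differences.

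First I would invoke the hypothesis $N_0=0$ to kill the term ${E}_0(t,\tau)$ of \eqref{2.28a}, so that the decomposition \eqref{abstr_Sigma_tildeSigma_hatSigma} collapses to $E(t,\tau)=E_1(t,\tau)+\widetilde{E}_2(t,\tau)+E_*(t,\tau)$. Combining \eqref{abstr_E1_estimate}, \eqref{abstr_tildeSigma_estimate}, and \eqref{abstr_E*_estimate} then produces the intermediate bound \eqref{N0=0}, namely $\|E(t,\tau)\|\le C_{12}|t|+C_{13}|\tau||t|^3$ for $|t|\le t^{00}$. The analogous intermediate bound \eqref{N000} for $\|{\mathcal E}(t,\tau)\|$ is obtained by combining this with Proposition~\ref{lemma_M}, as already recorded.

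Next I would pass from $E(t,\tau)$ and ${\mathcal E}(t,\tau)$ to the actual cosine and sine differences. Since $A(t)^{1/2}$ and $(t^2 S)^{1/2}P$ are selfadjoint and $P$ commutes with the latter,
\begin{equation*}
E(t,\tau)=\bigl(\cos(\tau A(t)^{1/2})P-\cos(\tau(t^2 S)^{1/2}P)P\bigr)-i\bigl(\sin(\tau A(t)^{1/2})P-\sin(\tau(t^2 S)^{1/2}P)P\bigr)
\end{equation*}
is the splitting of $E(t,\tau)$ into its selfadjoint and skew-adjoint components, so the cosine difference coincides with $\tfrac{1}{2}(E(t,\tau)+E(t,\tau)^*)$ and is controlled in norm by $\|E(t,\tau)\|$. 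This yields the first asserted inequality. An identical argument applied to ${\mathcal E}(t,\tau)$ of \eqref{sin_E}, whose imaginary part delivers the claimed sine difference, yields the second.

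The main obstacle was overcome earlier, in the cluster analysis of Subsection on $E_*(t,\tau)$: the reason this theorem is stated on $|t|\le t^{00}$ rather than the larger interval $|t|\le t^0$ is that the cluster-based estimate \eqref{abstr_E*_estimate} for $E_*(t,\tau)$ required the spectral projections $F^{(1)}_{jl}(t)$ and $F^{(2)}_{jl}(t)$ to remain cleanly separated, which forced the smallness conditions \eqref{abstr_c^circ}--\eqref{abstr_t00}. No new estimates are needed at this final step; the theorem is simply the synthesis of what has already been proved.
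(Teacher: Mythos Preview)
Your overall plan is exactly what the paper does: the theorem is recorded as an immediate consequence of \eqref{N0=0} and \eqref{N000}, and all the substantive work was completed in the cluster analysis leading to \eqref{abstr_E*_estimate}. However, there is a genuine slip in the final step of your argument.

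You write that
\[
E(t,\tau)=\bigl(\cos(\tau A(t)^{1/2})P-\cos(\tau(t^2 S)^{1/2}P)P\bigr)-i\bigl(\sin(\tau A(t)^{1/2})P-\sin(\tau(t^2 S)^{1/2}P)P\bigr)
\]
is the decomposition of $E(t,\tau)$ into selfadjoint and skew-adjoint parts. This decomposition of $E(t,\tau)$ into a cosine piece and a sine piece is correct, but neither piece is selfadjoint (respectively skew-adjoint): $\cos(\tau A(t)^{1/2})P$ has adjoint $P\cos(\tau A(t)^{1/2})$, and $P$ does \emph{not} commute with $A(t)$ for $t\ne 0$ (indeed, $P$ is the projection onto $\Ker A(0)$, not an invariant subspace of $A(t)$). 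So your identification of the cosine difference with $\tfrac{1}{2}(E(t,\tau)+E(t,\tau)^*)$ fails, and the inequality ``controlled in norm by $\|E(t,\tau)\|$'' does not follow from your argument as written.

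The fix is immediate: replace the adjoint by time-reversal. One has
\[
\cos(\tau A(t)^{1/2})P-\cos(\tau(t^2 S)^{1/2}P)P=\tfrac{1}{2}\bigl(E(t,\tau)+E(t,-\tau)\bigr),
\]
and since the bound \eqref{N0=0} depends on $\tau$ only through $|\tau|$, it applies equally to $E(t,-\tau)$, giving the first estimate. The second estimate follows in the same way from ${\mathcal E}(t,\tau)$ and ${\mathcal E}(t,-\tau)$ via \eqref{N000}. With this correction your proof matches the paper's.
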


Note that for $t=0$ the operator under the norm sign in \eqref{abstr_sin_general_thrm_wo_eps}
is understood as the limit operator, as $t\to 0$. Clearly, this limit is equal to zero.
Theorem~\ref{cos_thrm} follows from estimates \eqref{estimate_E} and \eqref{theorem_M_1};
Theorem~\ref{abstr_cos_enchanced_thrm_1_wo_eps} is deduced from \eqref{abstr_cos_enchanced_est_1_wo_eps} and \eqref{abstr_sin_enchanced_est_1_wo_eps};
Theorem~\ref{abstr_cos_enchanced_thrm_2_wo_eps} is a consequence of  \eqref{N0=0} and \eqref{N000}.
Theorem~\ref{cos_thrm} was known before (see~\cite[Theorem~2.5]{BSu5} and \cite[Proposition 2.1]{M}).

\section{Approximation of the operators $\cos(\varepsilon^{-1} \tau A(t)^{1/2})$ and $A(t)^{-1/2}\sin(\varepsilon^{-1} \tau A(t)^{1/2})$}
\label{abstr_aprox_thrm_section}

Let $\varepsilon > 0$. We study the behavior of the operators $\cos(\varepsilon^{-1} \tau A(t)^{1/2})$
and $A(t)^{-1/2}\sin(\varepsilon^{-1} \tau A(t)^{1/2})$
for small $\varepsilon$, multiplying them by the ``smoothing factor''
\hbox{$\varepsilon^s (t^2 + \varepsilon^2)^{-s/2}P$}, where $s > 0$.
(In applications to DOs  this factor turns into the smoothing operator.)
Our goal is to find approximation for the first operator with an error $O (\varepsilon)$ and for the second one with an error $O(1)$ for minimal possible $s$.

\subsection{The general case}

\begin{theorem}
    \label{abstr_cos_general_thrm}
For~$\varepsilon > 0$, $\tau \in \mathbb{R}$, and $|t| \le t^0$ we have
    \begin{align*}
       &\| \cos (\varepsilon^{-1} \tau A(t)^{1/2}) P - \cos (\varepsilon^{-1} \tau (t^2 S)^{1/2}P) P \| \varepsilon^2 (t^2 + \varepsilon^2)^{-1}
\le (C_1 + C_5 |\tau|) \varepsilon,
\\
                &\| A(t)^{-1/2}\sin (\varepsilon^{-1} \tau A(t)^{1/2}) P - (t^2 S)^{-1/2} \sin (\varepsilon^{-1} \tau (t^2 S)^{1/2}P) P \|
\varepsilon (t^2 + \varepsilon^2)^{-1/2} \le C_{15} + C_{16} |\tau|.
    \end{align*}
\end{theorem}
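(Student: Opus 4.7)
The plan is to obtain both inequalities as a direct consequence of Theorem~\ref{cos_thrm} via the scaling substitution $\tau \mapsto \varepsilon^{-1}\tau$ followed by elementary bounds on the smoothing factor $\varepsilon^s(t^2+\varepsilon^2)^{-s/2}$. Since the estimates in Theorem~\ref{cos_thrm} are valid for every $\tau \in \mathbb{R}$, replacing $\tau$ with $\varepsilon^{-1}\tau$ in \eqref{abstr_cos_general_thrm_wo_eps} and \eqref{abstr_sin_general_thrm_wo_eps} yields, for $|t|\le t^0$,
\begin{align*}
\| \cos(\varepsilon^{-1}\tau A(t)^{1/2}) P - \cos(\varepsilon^{-1}\tau (t^2 S)^{1/2}P) P \| &\le 2C_1|t| + C_5 \varepsilon^{-1}|\tau| t^2, \\
\| A(t)^{-1/2}\sin(\varepsilon^{-1}\tau A(t)^{1/2}) P - (t^2 S)^{-1/2}\sin(\varepsilon^{-1}\tau (t^2 S)^{1/2}P) P \| &\le C_{15} + C_{16}\varepsilon^{-1}|\tau||t|.
\end{align*}

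Next, I multiply the first inequality by $\varepsilon^2(t^2+\varepsilon^2)^{-1}$ and the second by $\varepsilon(t^2+\varepsilon^2)^{-1/2}$. Two elementary pointwise bounds do all the work. From the AM--GM inequality $t^2+\varepsilon^2 \ge 2|t|\varepsilon$ one obtains $|t|\varepsilon^2(t^2+\varepsilon^2)^{-1} \le \varepsilon/2$, while trivially $t^2(t^2+\varepsilon^2)^{-1} \le 1$; combining these turns the right-hand side of the first line into $C_1\varepsilon + C_5|\tau|\varepsilon = (C_1+C_5|\tau|)\varepsilon$, which is the first claimed estimate. For the second line, the bounds $\varepsilon(t^2+\varepsilon^2)^{-1/2}\le 1$ and $|t|(t^2+\varepsilon^2)^{-1/2}\le 1$ immediately yield the right-hand side $C_{15}+C_{16}|\tau|$.

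There is no serious obstacle; the theorem is simply a reformulation of Theorem~\ref{cos_thrm} in a form that is compatible with the scaling used in the forthcoming applications to periodic DOs, where $\varepsilon^s(t^2+\varepsilon^2)^{-s/2}$ will play the role (after passing to the DO $\mathcal{H}_0=-\Delta$) of the symbol of a smoothing operator. All the analytic content lies already in the threshold approximations of Section~\ref{sec2}; the purpose of the smoothing factors $\varepsilon^2(t^2+\varepsilon^2)^{-1}$ and $\varepsilon(t^2+\varepsilon^2)^{-1/2}$ is merely to absorb the factor $|t|$ and the singular factor $\varepsilon^{-1}$ produced by the substitution, at the cost of a single power of $\varepsilon$ (respectively, a bounded factor), which is precisely what the elementary pointwise bounds above accomplish.
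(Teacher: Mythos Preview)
Your proof is correct and follows exactly the approach indicated in the paper, which simply states that the theorem is deduced from estimates \eqref{abstr_cos_general_thrm_wo_eps} and \eqref{abstr_sin_general_thrm_wo_eps} with $\tau$ replaced by $\varepsilon^{-1}\tau$. You have merely spelled out the elementary pointwise bounds that the paper leaves implicit.
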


Theorem~\ref{abstr_cos_general_thrm} is deduced from estimates \eqref{abstr_cos_general_thrm_wo_eps}
and \eqref{abstr_sin_general_thrm_wo_eps} with $\tau$ replaced by $\varepsilon^{-1} \tau$.
This result was known before (see \cite[Theorem 2.7]{BSu5} and \cite[Theorem~2.3]{M}).

\subsection{ Refinement of approximations under the additional assumptions}

Now we obtain new results and show
that Theorem~\ref{abstr_cos_general_thrm} can be improved under the additional assumptions.

\begin{theorem}
    \label{abstr_cos_enchanced_thrm_1}
 Suppose that $N = 0$.
Then for~$\varepsilon > 0$, $\tau \in \mathbb{R}$, and~$|t| \le t^0$ we have
    \begin{align}
        \label{abstr_cos_enchanced_est_1}
&\| \cos(\varepsilon^{-1} \tau A(t)^{1/2}) P - \cos(\varepsilon^{-1} \tau (t^2 S)^{1/2}P) P \| \varepsilon^{3/2} (t^2 + \varepsilon^2)^{-3/4} \le ( C'_1 + C_{9} | \tau | ) \varepsilon,
\\
        \label{abstr_sin_enchanced_est_1}
&\| A(t)^{-1/2}\sin(\varepsilon^{-1} \tau A(t)^{1/2}) P - (t^2 S)^{-1/2} \sin(\varepsilon^{-1} \tau (t^2 S)^{1/2}P) P
\| \varepsilon^{1/2} (t^2 + \varepsilon^2)^{-1/4}   \le C_{15}' + C_{17} | \tau | .
      \end{align}
Here $C'_1 = \max\{2, 2 C_1\}$ and $C_{15}'= \max \{ 2 c_*^{-1/2}, C_{15}\}$.
\end{theorem}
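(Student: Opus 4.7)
I will derive both estimates directly from the unscaled bounds \eqref{NNN}, \eqref{NNNN} of Theorem~\ref{abstr_cos_enchanced_thrm_1_wo_eps} (applied with $\tau$ replaced by $\varepsilon^{-1}\tau$), combined with trivial norm bounds on the operators in question, via a case split according as $|t|>\varepsilon^{1/3}$ or $|t|\le\varepsilon^{1/3}$. The smoothing factors $\varepsilon^{3/2}(t^2+\varepsilon^2)^{-3/4}$ and $\varepsilon^{1/2}(t^2+\varepsilon^2)^{-1/4}$ are chosen so that the refined and trivial contributions cross over precisely at this scale.

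\textbf{Cosine estimate \eqref{abstr_cos_enchanced_est_1}.} From \eqref{NNN} with $\varepsilon^{-1}\tau$ in place of $\tau$, for $|t|\le t^0$ one has $\|E(t,\varepsilon^{-1}\tau)\|\le 2C_1|t|+C_9\varepsilon^{-1}|\tau||t|^3$, where $E$ is the operator in \eqref{abstr_E}; moreover the universal bound $\|E(t,\varepsilon^{-1}\tau)\|\le 2$ holds. Set $\rho:=\varepsilon^{3/2}(t^2+\varepsilon^2)^{-3/4}$ and note $(t^2+\varepsilon^2)^{-3/4}\le\min\{\varepsilon^{-3/2},|t|^{-3/2}\}$, whence $|t|\rho\le\varepsilon$. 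In the regime $|t|>\varepsilon^{1/3}$, the trivial bound combined with $\rho\le\varepsilon^{3/2}|t|^{-3/2}\le\varepsilon$ gives $\|E\|\rho\le 2\varepsilon\le C_1'\varepsilon$. In the regime $|t|\le\varepsilon^{1/3}$, the refined bound yields
$\|E\|\rho\le 2C_1|t|\rho+C_9\varepsilon^{-1}|\tau||t|^3\rho\le 2C_1\varepsilon+C_9|\tau|\varepsilon$,
where the second term uses $|t|^3\rho\le\varepsilon^{3/2}|t|^{3/2}\le\varepsilon^{2}$. Since $C_1'\ge 2C_1$, both cases are $\le(C_1'+C_9|\tau|)\varepsilon$.

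\textbf{Sine estimate \eqref{abstr_sin_enchanced_est_1}.} From \eqref{NNNN} with $\varepsilon^{-1}\tau$ in place of $\tau$, $\|\mathcal{E}(t,\varepsilon^{-1}\tau)\|\le C_{15}+C_{17}\varepsilon^{-1}|\tau|t^2$, where $\mathcal{E}$ is as in \eqref{sin_E}. The trivial bound comes from Condition~\ref{nondeg}: since $A(t)\ge c_*t^2 I$ on $\mathfrak{H}$ and $t^2S\ge c_*t^2 I_\mathfrak{N}$, spectral calculus together with $|\sin|\le 1$ gives $\|A(t)^{-1/2}\sin(\cdot)P\|\le c_*^{-1/2}|t|^{-1}$ and similarly for the germ term, hence $\|\mathcal{E}(t,\varepsilon^{-1}\tau)\|\le 2c_*^{-1/2}|t|^{-1}$ for $0<|t|\le t^0$. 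Set $\rho':=\varepsilon^{1/2}(t^2+\varepsilon^2)^{-1/4}\le\min\{1,\varepsilon^{1/2}|t|^{-1/2}\}$ and apply the same case split: for $|t|>\varepsilon^{1/3}$, $\|\mathcal{E}\|\rho'\le 2c_*^{-1/2}\varepsilon^{1/2}|t|^{-3/2}\le 2c_*^{-1/2}\le C_{15}'$; for $|t|\le\varepsilon^{1/3}$, using $\rho'\le 1$ and $t^2\rho'\le\varepsilon^{1/2}|t|^{3/2}\le\varepsilon$ gives $\|\mathcal{E}\|\rho'\le C_{15}+C_{17}|\tau|\le C_{15}'+C_{17}|\tau|$.

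\textbf{Main obstacle.} The conceptual difficulty is identifying the crossover scale $|t|\asymp\varepsilon^{1/3}$: it is precisely at this scale that the refined cubic-in-$|t|$ error and the bare weight $\rho$ simultaneously reach the target order $\varepsilon$ (and analogously for $\rho'$ versus $|t|^{-1}\rho'$ in the sine case), so that neither regime dominates. Any other exponent in the split would leave an intermediate range in $|t|$ uncovered and force a weaker result. Once this balance is recognized, the remainder of the argument is routine bookkeeping, and the constants $C_1'$ and $C_{15}'$ are introduced precisely to absorb the contribution of the regime where only the trivial bound is available.
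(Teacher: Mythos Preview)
Your proof is correct and follows essentially the same approach as the paper: the same case split at $|t|=\varepsilon^{1/3}$, the same use of the trivial bounds in the large-$|t|$ regime and of \eqref{NNN}, \eqref{NNNN} (with $\tau\mapsto\varepsilon^{-1}\tau$) in the small-$|t|$ regime. The only quibble is notational: you cite \eqref{NNN}, \eqref{NNNN} (which are stated for the cosine and sine differences) while writing the bounds for $E$ and $\mathcal{E}$ from \eqref{abstr_E}, \eqref{sin_E} (the exponential versions); since the constants are identical this is harmless, but it would be cleaner to work directly with the operators appearing in the statement.
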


\begin{proof} For $|t| \ge {\varepsilon}^{1/3}$ we have $\varepsilon^{3/2} (t^2 + \varepsilon^2)^{-3/4} \le \varepsilon$,
whence the left-hand side of~(\ref{abstr_cos_enchanced_est_1}) does not exceed $2 \varepsilon$.
 For~$|t| < {\varepsilon}^{1/3}$ we use~\eqref{NNN} with $\tau$ replaced by
$\varepsilon^{-1} \tau$ and estimate the left-hand side by
$\left( 2 C_1 |t| + C_{9} \varepsilon^{-1}  | \tau | |t|^3 \right) \varepsilon^{3/2} (t^2 + \varepsilon^2)^{-3/4}
   \le (2 C_1  + C_{9} |\tau|) \varepsilon$.
Estimate \eqref{abstr_cos_enchanced_est_1} follows.

Similarly, if $|t| \ge {\varepsilon}^{1/3}$, then $|t|^{-1} \varepsilon^{1/2} (t^2 + \varepsilon^2)^{-1/4} \le 1$. Hence, by Condition~\ref{nondeg} and \eqref{abstr_S_nondegenerated}, the left-hand side of~(\ref{abstr_sin_enchanced_est_1}) does not exceed $2 c_*^{-1/2}$. For~$|t| < {\varepsilon}^{1/3}$, we apply~\eqref{NNNN} with $\tau$ replaced by
$\varepsilon^{-1} \tau$ and note that
$\left(  C_{15} + C_{17} \varepsilon^{-1}  | \tau | t^2 \right) \varepsilon^{1/2} (t^2 + \varepsilon^2)^{-1/4}
    \le C_{15}  + C_{17} |\tau|$. This yields \eqref{abstr_sin_enchanced_est_1}.
\end{proof}

Similarly, Theorem~\ref{abstr_cos_enchanced_thrm_2_wo_eps} implies the following result.

\begin{theorem}
    \label{abstr_cos_enchanced_thrm_2}
 Suppose that $N_0 = 0$.
    Then for $\varepsilon > 0$, $\tau \in \mathbb{R}$, and $|t| \le t^{00}$ we have
    \begin{align*}
        &\| \cos(\varepsilon^{-1} \tau A(t)^{1/2}) P - \cos(\varepsilon^{-1} \tau (t^2 S)^{1/2}P) P \| \varepsilon^{3/2} (t^2 + \varepsilon^2)^{-3/4} \le (C'_{12} + C_{13} | \tau |) \varepsilon,
\\
        &\| A(t)^{-1/2}\sin(\varepsilon^{-1} \tau A(t)^{1/2}) P - (t^2 S)^{-1/2} \sin(\varepsilon^{-1} \tau (t^2 S)^{1/2}P) P
\| \varepsilon^{1/2} (t^2 + \varepsilon^2)^{-1/4}   \le C_{18}' + C_{19} | \tau |.
      \end{align*}
    Here $C'_{12} = \max \{2, C_{12}\}$ and $C_{18}' = \max \{ 2 c_*^{-1/2}, C_{18}\}$. The number $t^{00}\le t^0$ is subject to~\emph{\eqref{abstr_t00}}.
\end{theorem}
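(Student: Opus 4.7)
The plan is to imitate the proof of Theorem~\ref{abstr_cos_enchanced_thrm_1} essentially verbatim, feeding in the estimates of Theorem~\ref{abstr_cos_enchanced_thrm_2_wo_eps} in place of those of Theorem~\ref{abstr_cos_enchanced_thrm_1_wo_eps}. The only structural differences are that the admissible range of $t$ shrinks to $|t|\le t^{00}$ (inherited from Theorem~\ref{abstr_cos_enchanced_thrm_2_wo_eps}), and the constants $C_{12},C_{13}$ and $C_{18},C_{19}$ take the roles previously played by $2C_1,C_{9}$ and $C_{15},C_{17}$.

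I would split the interval $|t|\le t^{00}$ at the threshold $|t|=\varepsilon^{1/3}$. In the regime $|t|\ge \varepsilon^{1/3}$, the trivial operator bounds ($\le 2$ for the cosine difference, and $\le 2c_*^{-1/2}$ for the sine difference, by Condition~\ref{nondeg} and \eqref{abstr_S_nondegenerated}) are combined with the crude smoothing-factor estimates $\varepsilon^{3/2}(t^2+\varepsilon^2)^{-3/4}\le \varepsilon^{3/2}|t|^{-3/2}\le \varepsilon$ and $\varepsilon^{1/2}(t^2+\varepsilon^2)^{-1/4}\le 1$; the resulting contributions are absorbed into $C_{12}'\varepsilon$ and $C_{18}'$ respectively, by the very definition of $C_{12}'$ and $C_{18}'$.

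In the regime $|t|<\varepsilon^{1/3}$, I would apply Theorem~\ref{abstr_cos_enchanced_thrm_2_wo_eps} with $\tau$ rescaled to $\varepsilon^{-1}\tau$, obtaining the bounds $C_{12}|t|+C_{13}\varepsilon^{-1}|\tau||t|^3$ for the cosine and $C_{18}+C_{19}\varepsilon^{-1}|\tau|t^2$ for the sine. Multiplying by the respective smoothing factors and invoking the elementary calculus estimates $|t|\varepsilon^{3/2}(t^2+\varepsilon^2)^{-3/4}\le \varepsilon$ and $|t|^3(t^2+\varepsilon^2)^{-3/4}\le |t|^{3/2}\le \varepsilon^{1/2}$ for the cosine, and $\varepsilon^{1/2}(t^2+\varepsilon^2)^{-1/4}\le 1$ together with $t^2(t^2+\varepsilon^2)^{-1/4}\le |t|^{3/2}\le \varepsilon^{1/2}$ for the sine, reduces each term to the asserted form.

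No serious obstacle is anticipated: the whole scheme is already assembled in the proof of Theorem~\ref{abstr_cos_enchanced_thrm_1}, and the calculations are routine once one notes the non-monotone calculus bound $\sup_t |t|\varepsilon^{3/2}(t^2+\varepsilon^2)^{-3/4}=O(\varepsilon)$, whose maximum is attained near $|t|=\sqrt{2}\varepsilon$ rather than at an endpoint; this is precisely what dictates $|t|=\varepsilon^{1/3}$ as the correct threshold for the case split, and why the exponent $3/2$ in the smoothing factor is matched to the order $|t|$ of the error in Theorem~\ref{abstr_cos_enchanced_thrm_2_wo_eps}.
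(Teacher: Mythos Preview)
Your proposal is correct and follows exactly the paper's approach: the paper states only that ``Similarly, Theorem~\ref{abstr_cos_enchanced_thrm_2_wo_eps} implies the following result,'' meaning the proof of Theorem~\ref{abstr_cos_enchanced_thrm_1} is repeated verbatim with the constants swapped. One minor wording slip: for the sine in the regime $|t|\ge \varepsilon^{1/3}$, the trivial operator bound is $2c_*^{-1/2}|t|^{-1}$ (not $2c_*^{-1/2}$), and it is the combined factor $|t|^{-1}\varepsilon^{1/2}(t^2+\varepsilon^2)^{-1/4}\le 1$ that yields the contribution $2c_*^{-1/2}$---exactly as in the paper's proof of \eqref{abstr_sin_enchanced_est_1}.
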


\begin{remark}
\label{rem_coeff}
We have traced the dependence of constants in estimates on the parameters.
The constants $C_1$, $C_5$, $C_{15}$, $C_{16}$ from Theorem~\emph{\ref{abstr_cos_general_thrm}}
and the constants $C_1'$, $C_{9}$, $C_{15}'$, $C_{17}$ from Theorem~\emph{\ref{abstr_cos_enchanced_thrm_1}}
are majorated by polynomials with {\rm (}absolute{\rm )} positive coefficients of the variables $\delta^{-1/2}$, $c_*^{-1/2}$, $\|X_1\|$.
The constants $C_{12}'$, $C_{13}$, $C_{18}'$, $C_{19}$ from Theorem~\emph{\ref{abstr_cos_enchanced_thrm_2}}
are majorated by polynomials with positive coefficients of the same variables and of $(c^\circ)^{-1}$ and $n$.
\end{remark}

\subsection{Sharpness of the result in the general case}

Now we show that the result of Theorem  \ref{abstr_cos_general_thrm}  is sharp in the general case.

\begin{theorem}
    \label{abstr_s<2_general_thrm}
 Let $N_0 \ne 0$.

\noindent $1^\circ$. Let $\tau \ne 0$ and $0 \le s < 2$. Then there does not exist a constant $C(\tau)>0$
    such that the estimate
        \begin{equation}
        \label{abstr_s<2_est_imp}
        \| \cos(\varepsilon^{-1} \tau A(t)^{1/2}) P - \cos(\varepsilon^{-1} \tau (t^2 S)^{1/2}P) P \| \varepsilon^{s} (t^2 + \varepsilon^2)^{-s/2} \le C(\tau) \varepsilon
    \end{equation}
    holds for all sufficiently small $|t|$ and $\varepsilon > 0$.

\noindent $2^\circ$. Let $\tau \ne 0$ and $0 \le s < 1$. Then there does not exist a constant $C(\tau)>0$
    such that the estimate
        \begin{equation}
        \label{sin_s<1_est_imp}
        \| A(t)^{-1/2}\sin(\varepsilon^{-1} \tau A(t)^{1/2}) P - (t^2 S)^{-1/2} \sin(\varepsilon^{-1} \tau (t^2 S)^{1/2}P) P \| \varepsilon^{s} (t^2 + \varepsilon^2)^{-s/2} \le C(\tau)
    \end{equation}
    holds for all sufficiently small $|t|$ and $\varepsilon > 0$.
\end{theorem}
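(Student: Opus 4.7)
Both assertions rest on a single contradiction argument where the putative inequality is tested on an eigenvector of the germ $S$ along a carefully chosen sequence $(\varepsilon_n,t_n)$ with $|t_n|$ comparable to $\sqrt{\varepsilon_n}$. Since $N_0\ne 0$, Remark~\ref{abstr_N_remark} yields an index $l$ with $\mu_l\ne 0$; fix it and the corresponding eigenvector $\omega_l$ of $S$ from \eqref{abstr_S_eigenvectors}. Then $P\omega_l=\omega_l$ and $(t^2 S)^{1/2}\omega_l=|t|\sqrt{\gamma_l}\,\omega_l$ with $\gamma_l\ge c_*>0$. Extracting from \eqref{abstr_A(t)_eigenvalues_series} the expansion
\[
\sqrt{\lambda_l(t)}=|t|\sqrt{\gamma_l}+\frac{\mu_l}{2\sqrt{\gamma_l}}\,t|t|+O(|t|^3),
\]
and setting $A:=\varepsilon^{-1}\tau\sqrt{\lambda_l(t)}$, $B:=\varepsilon^{-1}\tau|t|\sqrt{\gamma_l}$, we see that $A-B=\frac{\mu_l\tau}{2\sqrt{\gamma_l}}\,\frac{t|t|}{\varepsilon}+O(\varepsilon^{-1}|t|^3)$, which stays of order one when $|t|\sim\sqrt{\varepsilon}$, while $A+B$ diverges like $\varepsilon^{-1/2}$.

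To convert the operator-norm bound into a scalar lower bound I will evaluate the matrix elements $(\cdot\,\omega_l,\omega_l)$. Decomposing $\omega_l=F(t)\omega_l+F(t)^\perp\omega_l$ and using \eqref{abstr_F(t)_threshold_1} together with the analytic expansion \eqref{abstr_A(t)_eigenvectors_series} --- which yields $|(\omega_l,\varphi_l(t))|^2=1+O(t^2)$, $(\omega_l,\varphi_j(t))=O(t)$ for $j\ne l$, and $\|F(t)^\perp\omega_l\|\le C_1|t|$ --- the spectral representation on $\mathfrak{F}(t)$ gives
\begin{align*}
\bigl([\cos(\varepsilon^{-1}\tau A(t)^{1/2})P-\cos(\varepsilon^{-1}\tau(t^2 S)^{1/2})P]\omega_l,\omega_l\bigr)&=\cos A-\cos B+O(t^2),\\
\bigl([A(t)^{-1/2}\sin(\varepsilon^{-1}\tau A(t)^{1/2})P-(t^2 S)^{-1/2}\sin(\varepsilon^{-1}\tau(t^2 S)^{1/2})P]\omega_l,\omega_l\bigr)&=\frac{\sin A-\sin B}{|t|\sqrt{\gamma_l}}+O(1).
\end{align*}
The $O(1)$ remainder in the sine line uses $\|A(t)^{-1/2}F(t)^\perp\|\le(3\delta)^{-1/2}$ and the bound $|(\omega_l,\varphi_j(t))|^2/\sqrt{\lambda_j(t)}=O(|t|)$ for $j\ne l$.

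Now pick $c>0$ so that $\beta:=\tfrac{\mu_l\tau c^2}{2\sqrt{\gamma_l}}\notin\pi\mathbb{Z}$, and choose $\varepsilon_n\to 0^+$ and integers $k_n$ (with $k_n\sim c\tau\sqrt{\gamma_l}/(\pi\sqrt{\varepsilon_n})$) so that $t_n:=\varepsilon_n(\tfrac{\pi}{2}+\pi k_n)/(\tau\sqrt{\gamma_l})$ satisfies $t_n/\sqrt{\varepsilon_n}\to c$. By construction $B_n=\tfrac{\pi}{2}+\pi k_n$, hence $\cos B_n=0$ and $|\sin B_n|=1$; consequently $|\cos A_n-\cos B_n|=|\sin B_n||\sin(A_n-B_n)|\to |\sin\beta|>0$ and $|\sin A_n-\sin B_n|=|\sin B_n||\cos(A_n-B_n)-1|\to|1-\cos\beta|>0$. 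Since $\varepsilon_n^{\,s}(t_n^2+\varepsilon_n^2)^{-s/2}=c^{-s}\varepsilon_n^{\,s/2}(1+o(1))$ and $|t_n|\sim c\sqrt{\varepsilon_n}$, the left-hand sides of \eqref{abstr_s<2_est_imp} and \eqref{sin_s<1_est_imp} are bounded below along this sequence by positive constants times $\varepsilon_n^{\,s/2}$ and $\varepsilon_n^{\,(s-1)/2}$ respectively, contradicting the hypothetical bounds whenever $s<2$ and $s<1$.

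The main technical point I expect to work through carefully is the simultaneous pinning: the same sequence $(\varepsilon_n,t_n)$ must keep the rapidly oscillating phase $B_n$ at an odd multiple of $\pi/2$ (achievable by the explicit formula above), keep $t_n/\sqrt{\varepsilon_n}$ bounded and bounded away from zero (achievable by choosing $k_n$ near a prescribed target), and ensure that the error terms $O(\varepsilon^{-1}|t|^3)$ in $A-B$ as well as the $O(t^2)$ and $O(1)$ remainders in the two matrix elements are genuinely subdominant to the main contributions $|\sin\beta|$ and $|1-\cos\beta|/\sqrt{\varepsilon}$. Tracking these remainders is routine, but is the only non-trivial bookkeeping in the argument.
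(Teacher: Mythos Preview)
Your argument is correct and follows essentially the same route as the paper: argue by contradiction, test the operator on the eigenvector $\omega_l$ with $\mu_l\neq 0$, and choose a sequence with $t\sim c\sqrt{\varepsilon}$ so that the cubic correction $\mu_l t^3$ in $\lambda_l(t)$ produces an order-one phase shift. The only cosmetic difference is the phase-pinning: the paper fixes $c$ so that the shift equals $\pm\pi$ (making $\cos A\approx -\cos B$) and then takes $\varepsilon_k$ with $B_k\in 2\pi\mathbb{Z}$, whereas you take $c$ generic with $\beta\notin\pi\mathbb{Z}$ and force $B_n\in \tfrac{\pi}{2}+\pi\mathbb{Z}$; both choices yield the same contradiction.
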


\begin{proof}
First, we prove assertion $1^\circ$.
It suffices to consider $1 \le s < 2$.
We start with a preliminary remark.
Since $F(t)^{\perp}P = (P - F(t))P$, from~(\ref{abstr_F(t)_threshold_1}) it follows that
\begin{equation}
    \label{abstr_s<2_cos_Fperp_est}
    \| \cos(\varepsilon^{-1} \tau A(t)^{1/2}) F(t)^{\perp} P \| \varepsilon (t^2 + \varepsilon^2)^{-1/2}  \le C_1 \varepsilon,
    \quad   |t| \le t^0.
\end{equation}

We prove by contradiction. Suppose that for some $0 \ne \tau \in \mathbb{R}$ and $1\le s<2$ there exists a constant $C(\tau)>0$ such that~\eqref{abstr_s<2_est_imp}
 is valid for all sufficiently small~$|t|$ and~$\varepsilon$.
 By~\eqref{abstr_s<2_cos_Fperp_est}, this assumption is equivalent to
the existence of a constant $\widetilde{C}(\tau)$ such that
\begin{equation}
    \label{abstr_s<2_est_a}
    \left\| \bigl( \cos(\varepsilon^{-1} \tau A(t)^{1/2}) F(t) - \cos(\varepsilon^{-1} \tau (t^2 S)^{1/2}P) P\bigr)  P \right\| \varepsilon^{s} (t^2 + \varepsilon^2)^{-s/2} \le \widetilde{C} (\tau) \varepsilon
\end{equation}
for all sufficiently small~$|t|$ and~$\varepsilon$.

Recall that for $|t| \le t_*$ the power series expansions~\eqref{abstr_A(t)_eigenvalues_series} and~\eqref{abstr_A(t)_eigenvectors_series} are convergent, and $\gamma_l \ge c_* >0$.
Together with~the Taylor formula for
   $\sqrt{1+x}, \; |x| < 1,$ this implies that for some $0 < t_{**} \le t_*$ we have the following convergent power series expansions:
   \begin{equation}
    \label{abstr_sqrtA(t)_eigenvalues_series}
    \sqrt{\lambda_l(t)} = \sqrt{\gamma_l} |t| \left( 1 + {\mu_l}({2 \gamma_l})^{-1} t + \ldots\right) , \qquad l = 1, \ldots, n, \quad  |t| \le t_{**}.
\end{equation}

For $ |t| \le t^0$ we have
\begin{equation}
\label{s<2_a}
    \cos(\varepsilon^{-1} \tau A(t)^{1/2})F (t) = \sum_{l=1}^{n} \cos(\varepsilon^{-1} \tau \sqrt{\lambda_l(t)})(\cdot,\varphi_l(t))\varphi_l(t).
\end{equation}
From the convergence of the power series expansions~\eqref{abstr_A(t)_eigenvectors_series} it follows that
    $\| \varphi_l(t) - \omega_l \| \le c_1 |t|$ for $|t| \le t_*$, $l = 1,\ldots,n$.
Together with \eqref{abstr_s<2_est_a} and \eqref{s<2_a} this yields that there exists a constant
$\widehat{C} (\tau)$ such that
\begin{equation}
    \label{abstr_s<2_est_b}
    \biggl\| \sum_{l=1}^{n} \left( \cos(\varepsilon^{-1} \tau \sqrt{\mathstrut \lambda_l(t)}) - \cos(\varepsilon^{-1} \tau |t| \sqrt{\mathstrut \gamma_l}) \right) (\cdot,\omega_l)\omega_l  \biggr\| \varepsilon^{s} (t^2 + \varepsilon^2)^{-s/2} \le \widehat{C} (\tau) \varepsilon
\end{equation}
for all sufficiently small $|t|$ and $\varepsilon$.
The condition $N_0 \ne 0$ means that $\mu_j \ne 0$ at least for one $j$.
Applying the operator under the norm sign in~\eqref{abstr_s<2_est_b} to $\omega_j$, we obtain
\begin{equation}
    \label{abstr_s<2_est_c}
    \left| \cos \bigl(\varepsilon^{-1} \tau \sqrt{\mathstrut \smash{\lambda_j(t)}}\bigr)
    - \cos\bigl(\varepsilon^{-1} \tau |t| \sqrt{\mathstrut \smash{\gamma_j}}\bigr) \right| \varepsilon^{s} (t^2 + \varepsilon^2)^{-s/2} \le \widehat{C} (\tau) \varepsilon
\end{equation}
for all sufficiently small $|t|$ and $\varepsilon$.
Now, we put $t = t(\varepsilon) = (2 \pi)^{1/2}  {\gamma_j}^{1/4} |{\mu_j \tau}|^{-1/2} \varepsilon^{1/2} = c \varepsilon^{1/2}$.
Then $\cos\bigl(\varepsilon^{-1} \tau |t(\varepsilon)| \sqrt{\mathstrut \smash{\gamma_j}}\bigr) =\cos\bigl(\alpha_j \varepsilon^{-1/2}\bigr)$,
where $\alpha_j := (2 \pi)^{1/2} \gamma_j^{3/4} |\tau|^{1/2} |\mu_j|^{-1/2}$.
If $\varepsilon$ is so small that $t(\varepsilon) \le t_{**}$,
by~\eqref{abstr_sqrtA(t)_eigenvalues_series}, we have
$$
\cos \bigl(\varepsilon^{-1} \tau \sqrt{\mathstrut \smash{\lambda_j(t(\varepsilon))}}\bigr) =\cos \bigl(\alpha_j \varepsilon^{-1/2} +
\pi \, \sgn \,\mu_j + O(\varepsilon^{1/2}) \bigr) = - \cos \bigl( \alpha_j \varepsilon^{-1/2} + O(\varepsilon^{1/2})\bigr).
$$
Hence, by \eqref{abstr_s<2_est_c}, the function
$\left| \cos \bigl( \alpha_j \varepsilon^{-1/2} + O(\varepsilon^{1/2})\bigr) +
\cos\bigl(\alpha_j \varepsilon^{-1/2}\bigr) \right| \varepsilon^{s/2-1}(c^2 +  \varepsilon)^{-s/2}$
is uniformly bounded for small $\varepsilon >0$.
But this is not so, provided that $s<2$ (it suffices to consider the sequence $\varepsilon_k = (2 \pi k)^{-2}\alpha_j^2$, $k \in {\mathbb N}$).
This contradiction completes the proof of assertion $1^\circ$.

Now, we prove assertion $2^\circ$. As above, it is easy to show that, if  \eqref{sin_s<1_est_imp} holds for some $s<1$, then for some $j$
we have $\mu_j \ne 0$ and
\begin{equation}
    \label{sin_s<1_est_c}
    \left| \lambda_j(t)^{-1/2} \sin \bigl(\varepsilon^{-1} \tau \sqrt{\mathstrut \smash{\lambda_j(t)}}\bigr)
    - |t|^{-1} \gamma_j^{-1/2} \sin\bigl(\varepsilon^{-1} \tau |t| \sqrt{\mathstrut \smash{\gamma_j}}\bigr) \right| \varepsilon^{s} (t^2 + \varepsilon^2)^{-s/2} \le \widehat{C} (\tau)
\end{equation}
 for all sufficiently small $|t|$ and $\varepsilon$.
Taking $t=t(\varepsilon) =(2 \pi)^{1/2} {\gamma_j}^{1/4} |{\mu_j \tau}|^{-1/2} \varepsilon^{1/2} = c \varepsilon^{1/2}$ as above,
and using \eqref{abstr_sqrtA(t)_eigenvalues_series}, we deduce from \eqref{sin_s<1_est_c}
that the function
$\varepsilon^{(s-1)/2} |\sin(\alpha_j \varepsilon^{-1/2}) |$ is uniformly bounded for sufficiently small $\varepsilon >0$.
Choosing $\varepsilon_k = \alpha_j^2 (\pi/2 + 2\pi k)^{-2}$, $k \in {\mathbb N}$, we arrive at a contradiction.
\end{proof}

\section{The results for the operator family $A(t) = M^* \widehat{A} (t) M$}
\label{abstr_sandwiched_section}

\subsection{The operator family $A(t) = M^* \widehat{A} (t) M$}
\label{abstr_A_and_Ahat_section}

Let $\widehat{\mathfrak{H}}$ be yet another separable Hilbert space.
Suppose that the operator family $\widehat{X} (t) = \widehat{X}_0 + t \widehat{X}_1 \colon \widehat{\mathfrak{H}} \to \mathfrak{H}_* $
satisfies the assumptions of Subsection~\ref{abstr_X_A_section}.
Let $M \colon \mathfrak{H} \to \widehat{\mathfrak{H}}$~be an isomorphism. Suppose that
$M \Dom X_0 = \Dom \widehat{X}_0$ and $X(t) = \widehat{X} (t) M$.
In $\widehat{\mathfrak{H}}$ we consider the family of selfadjoint operators $\widehat{A} (t) = \widehat{X} (t)^* \widehat{X} (t)$.
Then, obviously,
\begin{equation}
\label{abstr_A_and_Ahat}
A(t) = M^* \widehat{A} (t) M.
\end{equation}
In what follows, all the objects corresponding to the family $\widehat{A}(t)$ are marked by~\textquotedblleft hat\textquotedblright.  Note that $\widehat{\mathfrak{N}} = M \mathfrak{N}$ and $\widehat{\mathfrak{N}}_* =  \mathfrak{N}_*$.
In $\widehat{\mathfrak{H}}$ we consider the positive definite operator
$Q := (M M^*)^{-1}$. Let
$Q_{\widehat{\mathfrak{N}}} = \widehat{P} Q|_{\widehat{\mathfrak{N}}}$~be the block of  $Q$ in the subspace~$\widehat{\mathfrak{N}}$.
Obviously, $Q_{\widehat{\mathfrak{N}}}$~is an isomorphism in~$\widehat{\mathfrak{N}}$.

According to \cite[Proposition~1.2]{Su2}, the orthogonal projection $P$ of~$\mathfrak{H}$ onto $\mathfrak{N}$ and the orthogonal
projection~$\widehat{P}$ of $\widehat{\mathfrak{H}}$ onto $\widehat{\mathfrak{N}}$ satisfy the following relation
\begin{equation}
\label{abstr_P_and_P_hat_relation}
P = M^{-1} (Q_{\widehat{\mathfrak{N}}})^{-1} \widehat{P} (M^*)^{-1}.
\end{equation}
Let $\widehat{S} \colon \widehat{\mathfrak{N}} \to \widehat{\mathfrak{N}}$~be the spectral germ of~$\widehat{A} (t)$ at $t = 0$,
and let~$S$~be the germ of $A (t)$. According to \cite[Chapter~1, Subsection~1.5]{BSu1}, we have
\begin{equation}
\label{abstr_S_and_S_hat_relation}
S = P M^* \widehat{S} M |_\mathfrak{N}.
\end{equation}

\subsection{The operators $\widehat{Z}_Q$ and $\widehat{N}_Q$}
\label{abstr_hatZ_Q_and_hatN_Q_section}

Let~$\widehat{Z}_Q$ be the operator in~$\widehat{\mathfrak{H}}$
taking~$\widehat{u} \in \widehat{\mathfrak{H}}$ to the solution $\widehat{\phi}_Q$ of the problem
$\widehat{X}^*_0 (\widehat{X}_0 \widehat{\phi}_Q + \widehat{X}_1 \widehat{\omega}) = 0$, $Q \widehat{\phi}_Q \perp \widehat{\mathfrak{N}}$,
where $\widehat{\omega} = \widehat{P} \widehat{u}$. As shown in~\cite[Section~6]{BSu2},
the operator $Z$ for $A(t)$ and the operator~$\widehat{Z}_Q$
satisfy~$\widehat{Z}_Q =M Z M^{-1} \widehat{P}$.  We put
$\widehat{N}_Q := \widehat{Z}_Q^* \widehat{X}_1^* \widehat{R} \widehat{P} + (\widehat{R} \widehat{P})^* \widehat{X}_1  \widehat{Z}_Q$.
According to~\cite[Section~6]{BSu2}, the operator~$N$ for $A(t)$ and the operator~$\widehat{N}_Q$ satisfy
\begin{equation}
\label{abstr_N_and_hatN_Q_relat}
\widehat{N}_Q = \widehat{P} (M^*)^{-1} N M^{-1} \widehat{P}.
\end{equation}
Since $N=N_0 + N_*$, we have $\widehat{N}_Q = \widehat{N}_{0,Q} + \widehat{N}_{*,Q}$, where
\begin{equation}
\label{abstr_N0*_and_hatN0*_Q_relat}
\widehat{N}_{0,Q} = \widehat{P} (M^*)^{-1} N_0 M^{-1} \widehat{P}, \qquad \widehat{N}_{*,Q} = \widehat{P} (M^*)^{-1} N_* M^{-1} \widehat{P}.
\end{equation}

We need the following lemma proved in~\cite[Lemma~5.1]{Su4}.

\begin{lemma}[\cite{Su4}]
    \label{abstr_N_and_Nhat_lemma}
    The relation~$N = 0$ is equivalent to the relation~$\widehat{N}_Q = 0$.
    The relation~$N_0 = 0$ is equivalent to the relation~$\widehat{N}_{0,Q} = 0$.
\end{lemma}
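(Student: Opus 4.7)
The two equivalences have essentially the same structure, so the plan is to prove both by reducing $\widehat{N}_Q = 0$ (resp.\ $\widehat{N}_{0,Q} = 0$) to the vanishing of $N$ (resp.\ $N_0$) as finite-rank operators between the $n$-dimensional subspaces $\mathfrak{N}$ and $\widehat{\mathfrak{N}} = M\mathfrak{N}$. The forward implications are immediate from~\eqref{abstr_N_and_hatN_Q_relat} and~\eqref{abstr_N0*_and_hatN0*_Q_relat}: plugging in $N=0$ or $N_0=0$ on the right-hand side gives zero. So the content is in the reverse implications, and for these the key structural input is that $N$ and $N_0$ both take $\mathfrak{N}$ to $\mathfrak{N}$ and vanish on $\mathfrak{N}^\perp$, i.e.\ $N = PNP$ and $N_0 = PN_0P$.

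The first step is to identify the two \textquotedblleft sandwich\textquotedblright\ operators that flank $N$ in~\eqref{abstr_N_and_hatN_Q_relat} as isomorphisms. On the right, $M^{-1}\widehat{P}$ sends $\widehat{\mathfrak{H}}$ onto $\widehat{\mathfrak{N}}$ and then, via $M^{-1}|_{\widehat{\mathfrak{N}}}$, isomorphically onto $\mathfrak{N}$; in particular, $PM^{-1}\widehat{P} = M^{-1}\widehat{P}$. On the left, using~\eqref{abstr_P_and_P_hat_relation} I would apply $P = M^{-1}(Q_{\widehat{\mathfrak{N}}})^{-1}\widehat{P}(M^*)^{-1}$ to an arbitrary $v \in \mathfrak{N}$, obtaining $\widehat{P}(M^*)^{-1}|_{\mathfrak{N}} = Q_{\widehat{\mathfrak{N}}} M|_{\mathfrak{N}}$, which is the composition of two isomorphisms $\mathfrak{N} \to \widehat{\mathfrak{N}} \to \widehat{\mathfrak{N}}$, hence an isomorphism.

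The second step is to insert $N = PNP$ into~\eqref{abstr_N_and_hatN_Q_relat} to get
\begin{equation*}
\widehat{N}_Q = \bigl(\widehat{P}(M^*)^{-1}P\bigr)\, N\, \bigl(PM^{-1}\widehat{P}\bigr),
\end{equation*}
and to note that both parenthesized factors, when restricted to the relevant subspaces, are isomorphisms by the previous step. Consequently $\widehat{N}_Q$ vanishes on $\widehat{\mathfrak{H}}$ if and only if $N$ vanishes on $\mathfrak{N}$, which by $N = PNP$ is the same as $N = 0$. The identical argument applied to~\eqref{abstr_N0*_and_hatN0*_Q_relat} (using $N_0 = PN_0P$ in place of $N = PNP$) yields the equivalence $\widehat{N}_{0,Q} = 0 \iff N_0 = 0$.

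There is no real obstacle here; the only thing to verify carefully is the identification $\widehat{P}(M^*)^{-1}|_{\mathfrak{N}} = Q_{\widehat{\mathfrak{N}}} M|_{\mathfrak{N}}$, since this is where the nontrivial information from~\eqref{abstr_P_and_P_hat_relation} enters and where one has to use that $M$ need not be unitary. Once this identification is in hand, the lemma reduces to the trivial observation that a composition $ABC$ with $A$ and $C$ invertible vanishes iff $B$ does.
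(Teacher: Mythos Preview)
Your argument is correct. The paper does not give its own proof of this lemma but simply cites \cite[Lemma~5.1]{Su4}, so there is no in-paper proof to compare against; that said, your approach is the natural one and is almost certainly what is done in the cited reference. The only points worth being explicit about (and you have them) are that $N = PNP$, $N_0 = PN_0P$ follow from Proposition~\ref{th1.3}, and that $\widehat{N}_Q$ vanishes on $\widehat{\mathfrak{N}}^\perp$ because of the factor $\widehat{P}$ on the right in~\eqref{abstr_N_and_hatN_Q_relat}, so the question reduces to the restriction to $\widehat{\mathfrak{N}}$, where your isomorphism identification $\widehat{P}(M^*)^{-1}|_{\mathfrak{N}} = Q_{\widehat{\mathfrak{N}}} M|_{\mathfrak{N}}$ (obtained from~\eqref{abstr_P_and_P_hat_relation}) does the job.
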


\subsection{Relations between the operators and the coefficients of the power series expansions}

(See~\cite[Subsections~1.6,~1.7]{BSu3}.) Denote $\zeta_l := M \omega_l \in \widehat{\mathfrak{N}}, \, l = 1, \ldots, n$.
Then relations~\eqref{abstr_S_eigenvectors},~\eqref{abstr_P_and_P_hat_relation}, and \eqref{abstr_S_and_S_hat_relation} show that
\begin{equation}
\label{abstr_hatS_gener_spec_problem}
\widehat{S} \zeta_l  = \gamma_l Q_{\widehat{\mathfrak{N}}} \zeta_l, \quad l = 1, \ldots, n.
\end{equation}
The set $\zeta_1, \ldots, \zeta_n$ forms a basis in~$\widehat{\mathfrak{N}}$ orthonormal with the weight~$Q_{\widehat{\mathfrak{N}}}$:
$(Q_{\widehat{\mathfrak{N}}} \zeta_l, \zeta_j) = \delta_{lj}$, $l,j = 1,\ldots,n$.

The operators $\widehat{N}_{0,Q}$ and $\widehat{N}_{*,Q}$ can be described in terms of the coefficients
of the expansions~\eqref{abstr_A(t)_eigenvalues_series} and~\eqref{abstr_A(t)_eigenvectors_series}; cf.~\eqref{abstr_N_0_N_*}.
We put~$\widetilde{\zeta}_l := M \widetilde{\omega}_l \in \widehat{\mathfrak{N}}, \; l = 1, \ldots,n $. Then
\begin{equation}
\label{abstr_hatN_0Q_N_*Q}
\widehat{N}_{0,Q} = \sum_{k=1}^{n} \mu_k (\cdot, Q_{\widehat{\mathfrak{N}}} \zeta_k) Q_{\widehat{\mathfrak{N}}} \zeta_k,
\quad \widehat{N}_{*,Q} = \sum_{k=1}^{n}
\gamma_k \left( (\cdot, Q_{\widehat{\mathfrak{N}}} \widetilde{\zeta}_k) Q_{\widehat{\mathfrak{N}}} \zeta_k + (\cdot, Q_{\widehat{\mathfrak{N}}} \zeta_k) Q_{\widehat{\mathfrak{N}}} \widetilde{\zeta}_k \right).
\end{equation}

Now we return to the notation of Section~\ref{abstr_cluster_section}.
Recall that the different eigenvalues of the germ $S$ are denoted by
$\gamma^{\circ}_j,\, j = 1,\ldots,p$, and the corresponding eigenspaces by~$\mathfrak{N}_j$.
The vectors~$\omega^{(j)}_i, \, i = 1,\ldots, k_j,$ form an orthonormal basis in~$\mathfrak{N}_j$.
Then the same numbers~$\gamma^{\circ}_j, \, j = 1,\ldots,p$,~are different eigenvalues of the problem~\eqref{abstr_hatS_gener_spec_problem}, and
$M \mathfrak{N}_j$~are the corresponding eigenspaces. The vectors~$\zeta^{(j)}_i = M\omega^{(j)}_i, \, i = 1,\ldots, k_j,$
 form a basis in~$M \mathfrak{N}_j$ (orthonormal with the weight~$Q_{\widehat{\mathfrak{N}}}$).
 By~$\mathcal{P}_j$ we denote the ``skew'' projection of $\widehat{\mathfrak H}$ onto~$M \mathfrak{N}_j$ that  is orthogonal with respect to the inner product~$(Q_{\widehat{\mathfrak{N}}} \cdot, \cdot)$, i.~e.,
$\mathcal{P}_j = \sum_{i=1}^{k_j} (\cdot, Q_{\widehat{\mathfrak{N}}} \zeta^{(j)}_i) \zeta^{(j)}_i$, $j = 1, \ldots, p$.
It is easily seen that~$\mathcal{P}_j =M P_j M^{-1} \widehat{P}$.
Using~\eqref{abstr_N_invar_repers}, \eqref{abstr_N_and_hatN_Q_relat}, and~\eqref{abstr_N0*_and_hatN0*_Q_relat},
it is easy to check that
\begin{equation}
\label{abstr_hatN_0Q_N_*Q_invar_repr}
\widehat{N}_{0,Q} = \sum_{j=1}^{p} \mathcal{P}_j^* \widehat{N}_Q \mathcal{P}_j, \quad \widehat{N}_{*,Q} = \sum_{{1 \le l,j \le p: \; l \ne j}} \mathcal{P}_l^* \widehat{N}_Q \mathcal{P}_j.
\end{equation}

\subsection{Approximations of the operators $\cos(\varepsilon^{-1} \tau A(t)^{1/2})$ and $A(t)^{-1/2}\sin(\varepsilon^{-1} \tau A(t)^{1/2})$}

In this subsection, we find approximations for the operators $\cos(\varepsilon^{-1}\tau A(t)^{1/2})$ and $A(t)^{-1/2}\sin(\varepsilon^{-1}\tau A(t)^{1/2})$, where
$A(t)$ is given by~\eqref{abstr_A_and_Ahat}, in terms of the germ~$\widehat{S}$ of $\widehat{A}(t)$ and the isomorphism~$M$.
It is convenient to border these operators by appropriate factors.
Denote $M_0 := (Q_{\widehat{\mathfrak{N}}})^{-1/2}$. We have
\begin{align}
\label{abstr_sandwiched_cos_S_relation}
& M \cos(\tau (t^2 S)^{1/2} P) P M^* = M_0 \cos (\tau (t^2 M_0 \widehat{S} M_0)^{1/2}) M_0 \widehat{P},
\\
\label{abstr_sandwiched_sin_S_relation}
& M (t^2 S)^{-1/2} \sin(\tau (t^2 S)^{1/2} P) P M^* = M_0 (t^2 M_0 \widehat{S} M_0)^{-1/2} \sin (\tau (t^2 M_0 \widehat{S} M_0)^{1/2}) M_0 \widehat{P}.
\end{align}
Relation~\eqref{abstr_sandwiched_cos_S_relation} was checked in~\cite[Proposition~3.3]{BSu5};
identity~\eqref{abstr_sandwiched_sin_S_relation} follows from \eqref{abstr_sandwiched_cos_S_relation}
by integration over the interval $(0,\tau)$. Denote
\begin{align}
\label{Jdef}
{J}_1(t,\tau)& := M \cos(\tau A(t)^{1/2})M^{-1} \widehat{P} - M_0 \cos (\tau (t^2 M_0 \widehat{S} M_0)^{1/2}) M_0^{-1} \widehat{P},
\\
\label{J2def}
\widetilde{J}_2(t,\tau)& := M A(t)^{-1/2}\sin(\tau A(t)^{1/2}) P M^* - M_0 (t^2 M_0 \widehat{S} M_0)^{-1/2}
\sin (\tau (t^2 M_0 \widehat{S} M_0)^{1/2}) M_0 \widehat{P}.
\end{align}

\begin{lemma}
    \label{abstr_cos_sandwiched_est_lemma}
   Under the assumptions of Subsection~\emph{\ref{abstr_A_and_Ahat_section}}, we have
   \begin{align}
   \label{abstr_cos_sandwiched_est_1}
   	&\| {J}_1(t,\tau)\|       \le \| M \| \| M^{-1} \| \| \cos (\tau A(t)^{1/2}) P -  \cos ( \tau (t^2 S)^{1/2} P) P \|,
   \\
   \label{abstr_sin_sandwiched_est_1}
   	&\| \widetilde{J}_2(t,\tau)\|       \le \| M \|^2 \| A(t)^{-1/2}\sin (\tau A(t)^{1/2}) P -  (t^2 S)^{-1/2} \sin ( \tau (t^2 S)^{1/2} P) P \|,
   \\
   \label{abstr_cos_sandwiched_est_2}
   &\| \cos (\tau  A(t)^{1/2}) P -  \cos ( \tau (t^2 S)^{1/2} P) P \|
   \le \| M \|^2 \| M^{-1} \|^2 \| {J}_1(t,\tau)\|,
\\
   \label{abstr_sin_sandwiched_est_2}
   &\| A(t)^{-1/2}\sin (\tau  A(t)^{1/2}) P -  (t^2 S)^{-1/2} \sin ( \tau (t^2 S)^{1/2} P) P \|
   \le \| M^{-1} \|^2 \| \widetilde{J}_2(t,\tau)\|.
   \end{align}
\end{lemma}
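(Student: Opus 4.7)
My approach is to reduce the four estimates to a single pair of ``sandwich'' identities
\begin{gather*}
\widetilde{J}_2(t,\tau) = M D_{\sin}(t,\tau) M^{*}, \qquad J_1(t,\tau) = M D_{\cos}(t,\tau) M^{-1}\widehat{P},\\
D_{\cos}(t,\tau) := \cos(\tau A(t)^{1/2}) P - \cos(\tau (t^2 S)^{1/2} P) P,\\
D_{\sin}(t,\tau) := A(t)^{-1/2}\sin(\tau A(t)^{1/2}) P - (t^2 S)^{-1/2}\sin(\tau (t^2 S)^{1/2} P) P.
\end{gather*}
Once these identities are in hand, all four inequalities follow from submultiplicativity of the operator norm (using $\|M^{*}\| = \|M\|$ and $\|\widehat{P}\| = 1$) and from inversion on both sides by $M$, $M^{*}$ or their inverses.

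The identity for $\widetilde{J}_2$ is immediate from~\eqref{abstr_sandwiched_sin_S_relation}: substituting its right-hand side into the second term of~\eqref{J2def} yields $\widetilde{J}_2 = M D_{\sin} M^{*}$ at once. Hence $\|\widetilde{J}_2\| \le \|M\|^2 \|D_{\sin}\|$, which is~\eqref{abstr_sin_sandwiched_est_1}. Inverting, $D_{\sin} = M^{-1}\widetilde{J}_2 (M^{*})^{-1}$, so $\|D_{\sin}\| \le \|M^{-1}\|^2 \|\widetilde{J}_2\|$, giving~\eqref{abstr_sin_sandwiched_est_2}.

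For $J_1$ the first term of~\eqref{Jdef} already equals $M \cos(\tau A(t)^{1/2}) P \cdot M^{-1}\widehat{P}$, since $M^{-1}\widehat{P}$ has range in $\mathfrak{N} = \Ker A(0)$ and therefore $P M^{-1}\widehat{P} = M^{-1}\widehat{P}$. The heart of the matter is to identify the second term with $M \cos(\tau (t^2 S)^{1/2} P) P \cdot M^{-1}\widehat{P}$. I plan to derive this by right-multiplying~\eqref{abstr_sandwiched_cos_S_relation} by $(M^{*})^{-1} M^{-1}\widehat{P} = Q \widehat{P}$ (using $MM^{*} = Q^{-1}$) and then collapsing the resulting factor $M_0 \widehat{P}\, Q \widehat{P}$ via $\widehat{P} Q \widehat{P} = Q_{\widehat{\mathfrak{N}}}\widehat{P}$ (immediate from the definition of the block $Q_{\widehat{\mathfrak{N}}}$) followed by $M_0 Q_{\widehat{\mathfrak{N}}} = M_0^{-1}$ (from $M_0 = Q_{\widehat{\mathfrak{N}}}^{-1/2}$); this reduces the right-hand factor to $M_0^{-1}\widehat{P}$, as required. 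With $J_1 = M D_{\cos} M^{-1}\widehat{P}$ established, \eqref{abstr_cos_sandwiched_est_1} is the bound $\|J_1\| \le \|M\|\|M^{-1}\|\|D_{\cos}\|$. For~\eqref{abstr_cos_sandwiched_est_2} I recover $D_{\cos}$ from $J_1$: for $v \in \mathfrak{N}$ one has $\widehat{P} M v = M v$, so $J_1(Mv) = M D_{\cos} v$, whence $D_{\cos} v = M^{-1} J_1 M v$; extending by $D_{\cos} = D_{\cos} P$ gives $D_{\cos} = M^{-1} J_1 M P$ on all of $\mathfrak{H}$, so $\|D_{\cos}\| \le \|M\|\|M^{-1}\|\|J_1\|$, which is a fortiori the stated bound.

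The main obstacle is the sandwich identity for $J_1$, specifically the simplification $M_0 \widehat{P} Q \widehat{P} = M_0^{-1}\widehat{P}$, which is where the interplay between the isomorphism $M$, the metric $Q = (MM^{*})^{-1}$, and its block $Q_{\widehat{\mathfrak{N}}}$ on $\widehat{\mathfrak{N}}$ really enters. The remainder is routine bookkeeping with projections and operator-norm submultiplicativity.
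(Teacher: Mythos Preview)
Your proof is correct and rests on the same key identity as the paper, namely $J_1(t,\tau)=M D_{\cos}(t,\tau) M^{-1}\widehat{P}$ (this is exactly relation~\eqref{JJJ}); your derivation of it via right-multiplying~\eqref{abstr_sandwiched_cos_S_relation} by $Q\widehat{P}$ and collapsing $M_0\widehat{P}Q\widehat{P}=M_0^{-1}\widehat{P}$ is equivalent to the paper's use of $PM^{*}=M^{-1}Q_{\widehat{\mathfrak{N}}}^{-1}\widehat{P}$. The one noteworthy difference is in the reverse direction~\eqref{abstr_cos_sandwiched_est_2}: the paper bounds $\|D_{\cos}\|\le\|M^{-1}\|^{2}\|MD_{\cos}M^{*}\|$ and then rewrites $MD_{\cos}M^{*}=J_1 Q_{\widehat{\mathfrak{N}}}^{-1}\widehat{P}$, picking up the factor $\|Q_{\widehat{\mathfrak{N}}}^{-1}\widehat{P}\|\le\|M\|^{2}$, whereas your inversion $D_{\cos}=M^{-1}J_1 MP$ is more direct and in fact yields the sharper constant $\|M\|\,\|M^{-1}\|$ in place of $\|M\|^{2}\|M^{-1}\|^{2}$.
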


\begin{proof}
Since $M_0 = (Q_{\widehat{\mathfrak{N}}})^{-1/2}$ and
$M^{-1} Q_{\widehat{\mathfrak{N}}}^{-1} \widehat{P} = P M^*$ (see~\eqref{abstr_P_and_P_hat_relation}),
using~\eqref{abstr_sandwiched_cos_S_relation}, we obtain
\begin{equation}
\label{JJJ}
 {J}_1(t,\tau) =  M \bigl( \cos (\tau A(t)^{1/2}) P -
\cos ( \tau (t^2 S)^{1/2} P) P \bigr) M^{-1} \widehat{P}.
\end{equation}
This implies~(\ref{abstr_cos_sandwiched_est_1}).
Estimate~\eqref{abstr_cos_sandwiched_est_2} can be checked similarly in the ``inverse way''.
Obviously,
\begin{equation*}
\| \cos (\tau A(t)^{1/2}) P -  \cos ( \tau (t^2 S)^{1/2} P) P\| \le
\| M^{-1} \|^2 \|M\cos (\tau A(t)^{1/2}) PM^* -  M\cos ( \tau (t^2 S)^{1/2} P)P M^* \|.
\end{equation*}
By \eqref{JJJ}, the right-hand side can be written as
$\| M^{-1} \|^2 \| {J}_1(t,\tau) Q_{\widehat{\mathfrak{N}}}^{-1} \widehat{P} \|$.
 Together with the inequality \hbox{$\| Q_{\widehat{\mathfrak{N}}}^{-1} \widehat{P}\| \le \| M \|^2$}
(which follows from the identity $Q_{\widehat{\mathfrak{N}}}^{-1} \widehat{P} = M P M^*$),
this implies~\eqref{abstr_cos_sandwiched_est_2}.

Estimates \eqref{abstr_sin_sandwiched_est_1} and \eqref{abstr_sin_sandwiched_est_2} follow directly from~\eqref{abstr_sandwiched_sin_S_relation}
and \eqref{J2def}.
\end{proof}

Next, we put
\begin{equation}
\label{J22def}
{J}_2(t,\tau) := M A(t)^{-1/2}\sin(\tau A(t)^{1/2}) M^* \widehat{P} - M_0 (t^2 M_0 \widehat{S} M_0)^{-1/2}
\sin (\tau (t^2 M_0 \widehat{S} M_0)^{1/2}) M_0 \widehat{P}.
\end{equation}
By \eqref{abstr_P_and_P_hat_relation}, $P M^* = M^{-1} Q_{\widehat{\mathfrak{N}}}^{-1} \widehat{P}$. Hence, $P M^* = P M^* \widehat{P}$.
From \eqref{J2def} and \eqref{J22def} it is seen that
${J}_2(t,\tau) - \widetilde{J}_2(t,\tau) = M A(t)^{-1/2}\sin(\tau A(t)^{1/2}) (I - P) M^* \widehat{P}$.
Applying \eqref{abstr_F(t)_threshold_1} and Condition~\ref{nondeg}, we obtain
\begin{equation}
\label{J - Jtilde}
\|\widetilde{J}_2(t,\tau) - {J}_2(t,\tau)\|\le \| M\|^2 (C_1 c_*^{-1/2} + \delta^{-1/2}) =: \widetilde{C},\quad \tau \in {\mathbb R},\ |t|\le t^0.
\end{equation}

Using inequalities \eqref{abstr_cos_sandwiched_est_1}, \eqref{abstr_sin_sandwiched_est_1}, \eqref{J - Jtilde} and
taking Lemma~\ref{abstr_N_and_Nhat_lemma} into account,
we deduce the following results from Theorems \ref{abstr_cos_general_thrm},
\ref{abstr_cos_enchanced_thrm_1}, and \ref{abstr_cos_enchanced_thrm_2}.
In formulations, we use the notation \eqref{Jdef} and \eqref{J22def}.

\begin{theorem}
    \label{abstr_cos_sandwiched_general_thrm}
        Suppose that the assumptions of Subsection~\emph{\ref{abstr_A_and_Ahat_section}} and Condition~\emph{\ref{nondeg}}
    are satisfied. Then for $\tau \in \mathbb{R}$, $\varepsilon >0$, and $|t| \le t^0$ we have
    \begin{align*}
    \| {J}_1(t,\varepsilon^{-1} \tau) \| \varepsilon^{2} (t^2 + \varepsilon^2)^{-1}
    &\le \| M \| \| M^{-1} \| (C_1  + C_5 |\tau| ) \varepsilon,
\\
    \| {J}_2(t,\varepsilon^{-1} \tau) \| \varepsilon (t^2 + \varepsilon^2)^{-1/2}
    &\le \| M \|^2 (C_{15}  + C_{16} |\tau| ) + \widetilde{C}.
    \end{align*}
\end{theorem}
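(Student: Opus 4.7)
The plan is to derive this theorem as a direct consequence of the ``unbordered'' bounds of Theorem~\ref{abstr_cos_general_thrm}, by invoking the sandwich inequalities \eqref{abstr_cos_sandwiched_est_1} and \eqref{abstr_sin_sandwiched_est_1} of Lemma~\ref{abstr_cos_sandwiched_est_lemma} and the comparison \eqref{J - Jtilde} between $\widetilde{J}_2$ and ${J}_2$. All the analytic work has been carried out at the level of $A(t)$ in Section~\ref{sec2}; what remains is bookkeeping of constants and the passage from $\widetilde{J}_2$ to $J_2$.

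For the cosine estimate I will replace $\tau$ by $\varepsilon^{-1}\tau$ in \eqref{abstr_cos_sandwiched_est_1} to obtain
\begin{equation*}
\| J_1(t,\varepsilon^{-1}\tau)\| \le \|M\|\,\|M^{-1}\|\,\| \cos(\varepsilon^{-1}\tau A(t)^{1/2})P - \cos(\varepsilon^{-1}\tau (t^2 S)^{1/2}P)P\|.
\end{equation*}
Multiplying by the smoothing factor $\varepsilon^2(t^2+\varepsilon^2)^{-1}$ and applying the first inequality of Theorem~\ref{abstr_cos_general_thrm} then yields exactly the claimed bound $\|M\|\,\|M^{-1}\|(C_1 + C_5|\tau|)\varepsilon$.

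For the sine estimate the same reduction, via \eqref{abstr_sin_sandwiched_est_1} and the second inequality of Theorem~\ref{abstr_cos_general_thrm}, will control $\widetilde{J}_2$ rather than $J_2$:
\begin{equation*}
\| \widetilde{J}_2(t,\varepsilon^{-1}\tau)\|\,\varepsilon (t^2+\varepsilon^2)^{-1/2} \le \|M\|^2 (C_{15}+C_{16}|\tau|).
\end{equation*}
To reach $J_2$ I will use \eqref{J - Jtilde}, which asserts $\|J_2(t,\tau) - \widetilde{J}_2(t,\tau)\| \le \widetilde{C}$ uniformly in $\tau\in\mathbb{R}$ and $|t|\le t^0$. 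Combining a triangle inequality with the elementary bound $\varepsilon(t^2+\varepsilon^2)^{-1/2}\le 1$ produces the additive constant $\widetilde{C}$ on the right-hand side and gives the claim.

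The only step meriting a moment's attention is the appearance of $\widetilde{C}$: the difference $J_2 - \widetilde{J}_2$ equals $MA(t)^{-1/2}\sin(\tau A(t)^{1/2})(I-P)M^*\widehat{P}$, whose uniform boundedness rests on the threshold estimate \eqref{abstr_F(t)_threshold_1}, the nondegeneracy Condition~\ref{nondeg}, and the identity $PM^* = M^{-1}Q_{\widehat{\mathfrak{N}}}^{-1}\widehat{P}$ from \eqref{abstr_P_and_P_hat_relation}. Beyond this, no genuine obstacle arises; Lemma~\ref{abstr_N_and_Nhat_lemma} together with Theorems~\ref{abstr_cos_enchanced_thrm_1}--\ref{abstr_cos_enchanced_thrm_2} will not be needed for the present general statement and enter only in the corresponding sharpened versions under the additional hypotheses $N=0$ or $N_0=0$.
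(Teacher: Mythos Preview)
Your proof is correct and follows essentially the same approach as the paper, which simply states that the theorem is deduced from Theorem~\ref{abstr_cos_general_thrm} via inequalities \eqref{abstr_cos_sandwiched_est_1}, \eqref{abstr_sin_sandwiched_est_1}, and \eqref{J - Jtilde}. You have accurately filled in the details, including the use of $\varepsilon(t^2+\varepsilon^2)^{-1/2}\le 1$ to absorb the $\widetilde{C}$ term, and you are right that Lemma~\ref{abstr_N_and_Nhat_lemma} is needed only for the sharpened versions, not here.
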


\begin{theorem}
    \label{abstr_cos_sandwiched_enchanced_thrm_1}
    Under the assumptions of Theorem~\emph{\ref{abstr_cos_sandwiched_general_thrm}}, suppose that $\widehat{N}_Q=0$. Then for
    $\tau \in \mathbb{R}$, $\varepsilon >0$, and $|t| \le t^0$ we have
    \begin{align}
    \label{4.19a}
    \| {J}_1(t,\varepsilon^{-1} \tau) \| \varepsilon^{3/2} (t^2 + \varepsilon^2)^{-3/4}
    &\le \| M \| \| M^{-1} \| (C'_1  + C_{9} |\tau| ) \varepsilon,
\\
\label{4.19b}
       \| {J}_2(t,\varepsilon^{-1} \tau) \| \varepsilon^{1/2} (t^2 + \varepsilon^2)^{-1/4}
    &\le \| M \|^2 (C'_{15}  + C_{17} |\tau| ) + \widetilde{C}.
    \end{align}
\end{theorem}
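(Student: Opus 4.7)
The plan is to chain together three previously established results in the natural way: first translate the hypothesis $\widehat{N}_Q = 0$ to the abstract side, then invoke the refined abstract estimates of Theorem~\ref{abstr_cos_enchanced_thrm_1}, and finally transfer the bounds through the sandwich inequalities of Lemma~\ref{abstr_cos_sandwiched_est_lemma}. This is essentially a ``packaging'' result; no new analytic input is needed.

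First I would apply Lemma~\ref{abstr_N_and_Nhat_lemma}: the hypothesis $\widehat{N}_Q = 0$ is equivalent to $N = 0$ for the family $A(t) = X(t)^*X(t)$, so the abstract framework of Subsection~\ref{abstr_X_A_section} satisfies the hypothesis of Theorem~\ref{abstr_cos_enchanced_thrm_1}. Applying that theorem with $\tau$ replaced by $\varepsilon^{-1}\tau$ gives directly the two scalar inequalities \eqref{abstr_cos_enchanced_est_1} and \eqref{abstr_sin_enchanced_est_1} for the operator differences on the $\mathfrak{H}$-side with the correct powers of $\varepsilon^{3/2}(t^2+\varepsilon^2)^{-3/4}$ and $\varepsilon^{1/2}(t^2+\varepsilon^2)^{-1/4}$, respectively.

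To deduce \eqref{4.19a}, I would simply multiply \eqref{abstr_cos_sandwiched_est_1} (with $\tau \to \varepsilon^{-1}\tau$) by $\varepsilon^{3/2}(t^2+\varepsilon^2)^{-3/4}$ and plug in \eqref{abstr_cos_enchanced_est_1}; the factor $\|M\|\|M^{-1}\|$ appears cleanly. For \eqref{4.19b} the reasoning is analogous but with one extra step: multiplying \eqref{abstr_sin_sandwiched_est_1} by $\varepsilon^{1/2}(t^2+\varepsilon^2)^{-1/4}$ and using \eqref{abstr_sin_enchanced_est_1} yields
\begin{equation*}
\|\widetilde{J}_2(t,\varepsilon^{-1}\tau)\|\,\varepsilon^{1/2}(t^2+\varepsilon^2)^{-1/4} \le \|M\|^2(C_{15}' + C_{17}|\tau|),
\end{equation*}
which controls $\widetilde{J}_2$ rather than $J_2$. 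To pass to $J_2$, I would invoke \eqref{J - Jtilde}: the triangle inequality gives
\begin{equation*}
\|J_2(t,\varepsilon^{-1}\tau)\|\,\varepsilon^{1/2}(t^2+\varepsilon^2)^{-1/4} \le \|\widetilde{J}_2(t,\varepsilon^{-1}\tau)\|\,\varepsilon^{1/2}(t^2+\varepsilon^2)^{-1/4} + \widetilde{C}\,\varepsilon^{1/2}(t^2+\varepsilon^2)^{-1/4},
\end{equation*}
and since $\varepsilon^{1/2}(t^2+\varepsilon^2)^{-1/4} \le 1$ (as $\varepsilon^2 \le t^2+\varepsilon^2$), the second term is bounded by $\widetilde{C}$, yielding \eqref{4.19b}.

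Since every ingredient is quoted verbatim from the earlier results, there is no genuine obstacle. The only point requiring minor attention is the discrepancy between $\widetilde{J}_2$ and $J_2$ noted above; the smoothing factor $\varepsilon^{1/2}(t^2+\varepsilon^2)^{-1/4} \le 1$ makes the correction harmless, producing the additive constant $\widetilde{C}$ that appears on the right-hand side of \eqref{4.19b}. A similar issue does not arise for $J_1$ because the ``cos'' version of the sandwich identity \eqref{abstr_sandwiched_cos_S_relation} already involves $M^{-1}\widehat{P}$ rather than $M^*$, so $J_1$ coincides with the naturally bordered operator directly.
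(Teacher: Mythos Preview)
Your proposal is correct and follows exactly the same route as the paper: invoke Lemma~\ref{abstr_N_and_Nhat_lemma} to pass from $\widehat{N}_Q=0$ to $N=0$, apply Theorem~\ref{abstr_cos_enchanced_thrm_1}, and then transfer the bounds via \eqref{abstr_cos_sandwiched_est_1}, \eqref{abstr_sin_sandwiched_est_1}, and \eqref{J - Jtilde}. Your observation that $\varepsilon^{1/2}(t^2+\varepsilon^2)^{-1/4}\le 1$ absorbs the $\widetilde{C}$ correction is precisely the point, and the paper handles it the same way.
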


\begin{theorem}
    \label{abstr_cos_sandwiched_enchanced_thrm_2}
    Under the assumptions of Theorem~\emph{\ref{abstr_cos_sandwiched_general_thrm}}, suppose that $\widehat{N}_{0,Q}=0$. Then for
    $\tau \in \mathbb{R}$, $\varepsilon >0$, and $|t| \le t^{00}$ we have
    \begin{align*}
    \| {J}_1(t,\varepsilon^{-1} \tau) \| \varepsilon^{3/2} (t^2 + \varepsilon^2)^{-3/4}
    &\le \| M \| \| M^{-1} \| (C'_{12}  + C_{13} |\tau| ) \varepsilon,
\\
    \| {J}_2(t,\varepsilon^{-1} \tau) \| \varepsilon^{1/2} (t^2 + \varepsilon^2)^{-1/4}
    &\le \| M \|^2 \| (C'_{18}  + C_{19} |\tau| ) + \widetilde{C}.
    \end{align*}
\end{theorem}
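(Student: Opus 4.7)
The plan is to reduce everything to the already-established abstract Theorem~\ref{abstr_cos_enchanced_thrm_2_wo_eps} and~Theorem~\ref{abstr_cos_enchanced_thrm_2}, and then transfer the estimates from the operator $A(t)$ to the bordered operators \eqref{Jdef} and \eqref{J22def} using Lemma~\ref{abstr_cos_sandwiched_est_lemma}.

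First I would invoke Lemma~\ref{abstr_N_and_Nhat_lemma}: the hypothesis $\widehat{N}_{0,Q} = 0$ is equivalent to $N_0 = 0$ for the operator family $A(t) = M^*\widehat{A}(t) M$. This places us in the setting of Theorem~\ref{abstr_cos_enchanced_thrm_2}, so for $\tau \in \mathbb{R}$, $\varepsilon > 0$ and $|t| \le t^{00}$ we have
\begin{align*}
\| \cos(\varepsilon^{-1}\tau A(t)^{1/2})P - \cos(\varepsilon^{-1}\tau (t^2 S)^{1/2}P) P\| \varepsilon^{3/2}(t^2+\varepsilon^2)^{-3/4} &\le (C'_{12} + C_{13}|\tau|)\varepsilon,\\
\| A(t)^{-1/2}\sin(\varepsilon^{-1}\tau A(t)^{1/2})P - (t^2 S)^{-1/2}\sin(\varepsilon^{-1}\tau (t^2 S)^{1/2}P)P\| \varepsilon^{1/2}(t^2+\varepsilon^2)^{-1/4} &\le C'_{18} + C_{19}|\tau|.
\end{align*}

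Next, I would apply the sandwich estimates \eqref{abstr_cos_sandwiched_est_1} and \eqref{abstr_sin_sandwiched_est_1} from Lemma~\ref{abstr_cos_sandwiched_est_lemma}. For the cosine, estimate \eqref{abstr_cos_sandwiched_est_1} gives
\begin{equation*}
\|J_1(t,\varepsilon^{-1}\tau)\|\,\varepsilon^{3/2}(t^2+\varepsilon^2)^{-3/4} \le \|M\|\|M^{-1}\|\,(C'_{12} + C_{13}|\tau|)\varepsilon,
\end{equation*}
which is exactly the first claim. For the sine, \eqref{abstr_sin_sandwiched_est_1} gives the same bound (up to the constant $\|M\|^2$) for the auxiliary operator $\widetilde{J}_2(t,\varepsilon^{-1}\tau)$ defined in \eqref{J2def}. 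To obtain the statement in terms of $J_2(t,\varepsilon^{-1}\tau)$ from \eqref{J22def}, I would use the triangle inequality together with \eqref{J - Jtilde}, which bounds $\|\widetilde{J}_2(t,\tau) - J_2(t,\tau)\|$ by an absolute constant $\widetilde{C}$ uniformly in $\tau \in \mathbb{R}$ and $|t|\le t^0 \ge t^{00}$. Since the smoothing factor $\varepsilon^{1/2}(t^2+\varepsilon^2)^{-1/4}$ is bounded by $1$, this contributes only an additive $\widetilde{C}$ to the right-hand side, producing the second claim.

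The argument is essentially assembly: the genuinely substantive work (the decomposition of $E_2(t,\tau)$ into $\widetilde{E}_2, E_0, E_*$, the cluster analysis yielding estimate \eqref{abstr_E*_estimate}, and the $|t|\gtrless \varepsilon^{1/3}$ splitting) was already carried out in Sections~\ref{sec2} and~\ref{abstr_aprox_thrm_section}. The only nontrivial point is the correct bookkeeping for $J_2$ versus $\widetilde{J}_2$, which is handled by the uniform bound \eqref{J - Jtilde}; this is where I would be most careful, since the constant $\widetilde{C}$ is absorbed into the additive right-hand side rather than the $|\tau|$-dependent term, and one has to check that the $(t^2+\varepsilon^2)^{-1/4}\varepsilon^{1/2}$ prefactor does not upset this. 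No further obstacle arises.
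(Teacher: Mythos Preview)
Your proposal is correct and follows essentially the same route as the paper: the paper states that Theorems~\ref{abstr_cos_sandwiched_general_thrm}--\ref{abstr_cos_sandwiched_enchanced_thrm_2} are deduced from Theorems~\ref{abstr_cos_general_thrm}--\ref{abstr_cos_enchanced_thrm_2} by invoking Lemma~\ref{abstr_N_and_Nhat_lemma}, the sandwich inequalities \eqref{abstr_cos_sandwiched_est_1}, \eqref{abstr_sin_sandwiched_est_1}, and the correction \eqref{J - Jtilde}, which is exactly your argument. Your bookkeeping for the $J_2$ versus $\widetilde{J}_2$ step, including the observation that $\varepsilon^{1/2}(t^2+\varepsilon^2)^{-1/4}\le 1$ so that $\widetilde{C}$ enters additively, is correct.
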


Theorem \ref{abstr_cos_sandwiched_general_thrm} was known before (see \cite[Theorem~3.4]{BSu5} and \cite[Theorem~3.3]{M}).

\subsection{The sharpness of the result}

Now we confirm that the result of Theorem~\ref{abstr_cos_sandwiched_general_thrm} is sharp in the general case.

\begin{theorem}
    \label{abstr_sndwchd_s<2_general_thrm}
    Suppose that the assumptions of Theorem~\emph{\ref{abstr_cos_sandwiched_general_thrm}}
    are satisfied. Let $\widehat{N}_{0,Q} \ne 0$.

\noindent $1^\circ$. Let $\tau \ne 0$ and $0 \le s < 2$.
    Then there does not exist a constant $C(\tau)>0$ such that the estimate
    \begin{equation}
    \label{abstr_sndwchd_s<2_est_imp}
    \| {J}_1(t,\varepsilon^{-1} \tau) \| \varepsilon^{s} (t^2 + \varepsilon^2)^{-s/2} \le C(\tau) \varepsilon
    \end{equation}
    holds for all sufficiently small $|t|$ and $\varepsilon >0$.

\noindent $2^\circ$. Let $\tau \ne 0$ and $0 \le s < 1$.
    Then there does not exist a constant $C(\tau)>0$ such that the estimate
    \begin{equation}
\label{4.23}
    \| {J}_2(t,\varepsilon^{-1} \tau) \| \varepsilon^{s} (t^2 + \varepsilon^2)^{-s/2} \le C(\tau)
    \end{equation}
    holds for all sufficiently small $|t|$ and $\varepsilon >0$.
\end{theorem}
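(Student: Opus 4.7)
The plan is to reduce the statement to Theorem \ref{abstr_s<2_general_thrm}, which already established the corresponding sharpness assertions for the unsandwiched family $A(t)$ under the hypothesis $N_0\ne 0$. The bridge from the sandwiched quantities $J_1,J_2$ back to the unsandwiched operators is provided by Lemma \ref{abstr_cos_sandwiched_est_lemma}, and the fact that our hypothesis $\widehat{N}_{0,Q}\ne 0$ is the \emph{right} hypothesis comes from Lemma \ref{abstr_N_and_Nhat_lemma}, which gives the equivalence $\widehat{N}_{0,Q}=0\Leftrightarrow N_0=0$. So from the outset we know that $N_0\ne 0$, making Theorem \ref{abstr_s<2_general_thrm} available.

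For assertion $1^\circ$, I argue by contradiction. Assume \eqref{abstr_sndwchd_s<2_est_imp} holds for some $\tau\ne 0$, $0\le s<2$, and some $C(\tau)>0$, for all sufficiently small $|t|$ and $\varepsilon>0$. Applying estimate \eqref{abstr_cos_sandwiched_est_2} with $\tau$ replaced by $\varepsilon^{-1}\tau$ gives
\begin{equation*}
\| \cos(\varepsilon^{-1}\tau A(t)^{1/2})P - \cos(\varepsilon^{-1}\tau (t^2 S)^{1/2}P)P\|
\le \|M\|^2\|M^{-1}\|^2\, \|J_1(t,\varepsilon^{-1}\tau)\|.
\end{equation*}
Multiplying by $\varepsilon^{s}(t^2+\varepsilon^2)^{-s/2}$ and using the assumed bound yields an estimate of the form $\|\cos(\varepsilon^{-1}\tau A(t)^{1/2})P-\cos(\varepsilon^{-1}\tau(t^2S)^{1/2}P)P\|\varepsilon^{s}(t^2+\varepsilon^2)^{-s/2}\le C'(\tau)\varepsilon$, which contradicts Theorem \ref{abstr_s<2_general_thrm}($1^\circ$) since $N_0\ne 0$.

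For assertion $2^\circ$, the argument is parallel, but one must first switch from $J_2$ to $\widetilde J_2$ because Lemma \ref{abstr_cos_sandwiched_est_lemma} only controls the latter. This is where estimate \eqref{J - Jtilde} enters: $\|J_2(t,\varepsilon^{-1}\tau)-\widetilde J_2(t,\varepsilon^{-1}\tau)\|\le\widetilde C$ uniformly in $\tau$ and for $|t|\le t^0$. Assuming \eqref{4.23}, and using the trivial bound $\varepsilon^{s}(t^2+\varepsilon^2)^{-s/2}\le 1$ (valid for $s\ge 0$), we obtain
\begin{equation*}
\|\widetilde J_2(t,\varepsilon^{-1}\tau)\|\,\varepsilon^{s}(t^2+\varepsilon^2)^{-s/2}
\le C(\tau)+\widetilde C.
\end{equation*}
Feeding this into \eqref{abstr_sin_sandwiched_est_2} (with $\tau$ replaced by $\varepsilon^{-1}\tau$) gives a uniform bound on $\|A(t)^{-1/2}\sin(\varepsilon^{-1}\tau A(t)^{1/2})P-(t^2S)^{-1/2}\sin(\varepsilon^{-1}\tau (t^2S)^{1/2}P)P\|\varepsilon^{s}(t^2+\varepsilon^2)^{-s/2}$, contradicting Theorem \ref{abstr_s<2_general_thrm}($2^\circ$).

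There is no real obstacle: the entire content of the theorem is an almost mechanical transplant of the already-proven unsandwiched sharpness statement, and the only place where one must be a little careful is in part $2^\circ$, where the operator $P$ does not appear on the right of the sine in the definition \eqref{J22def} of $J_2$, so one must replace $J_2$ by $\widetilde J_2$ via the uniform bound \eqref{J - Jtilde} before invoking the sandwich inequality \eqref{abstr_sin_sandwiched_est_2}. All other ingredients — in particular, the decisive reduction of hypotheses on $N$ to hypotheses on $\widehat{N}_Q$ — are supplied verbatim by Lemma \ref{abstr_N_and_Nhat_lemma}.
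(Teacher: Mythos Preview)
The proposal is correct and follows essentially the same approach as the paper: both reduce to Theorem~\ref{abstr_s<2_general_thrm} via Lemma~\ref{abstr_N_and_Nhat_lemma} (to obtain $N_0\ne 0$), inequality~\eqref{abstr_cos_sandwiched_est_2} for part~$1^\circ$, and the combination of~\eqref{J - Jtilde} with~\eqref{abstr_sin_sandwiched_est_2} for part~$2^\circ$. Your handling of the passage from $J_2$ to $\widetilde J_2$ is exactly the point the paper singles out as the only nontrivial step.
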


\begin{proof} By Lemma~\ref{abstr_N_and_Nhat_lemma}, under our assumptions we have $N_0 \ne 0$.
Let us check assertion $1^\circ$. We prove by contradiction.
Suppose that for some $\tau \ne 0$ and $0 \le s < 2$ there exists a constant $C (\tau) > 0$ such that~\eqref{abstr_sndwchd_s<2_est_imp}
 holds for all sufficiently small $|t|$ and $\varepsilon$.
 By~\eqref{abstr_cos_sandwiched_est_2}, this means that the inequality of the form~\eqref{abstr_s<2_est_imp} also holds
 (with some other constant).
But this contradicts the statement of Theorem~\ref{abstr_s<2_general_thrm}$(1^\circ)$.

Assertion $2^\circ$ is proved similarly with the help of \eqref{abstr_sin_sandwiched_est_2}, \eqref{J - Jtilde}, and Theorem~\ref{abstr_s<2_general_thrm}$(2^\circ)$.
\end{proof}

\section*{Chapter 2. Periodic differential operators in $L_2(\mathbb{R}^d; \mathbb{C}^n)$}

\section{The class of differential operators in $L_2(\mathbb{R}^d; \mathbb{C}^n)$}

\subsection{Preliminaries: Lattices in ${\mathbb R}^d$ and the Gelfand transformation}

Let $\Gamma$ be a lattice in $\mathbb{R}^d$ generated by
the basis $\mathbf{a}_1, \ldots , \mathbf{a}_d$, i.~e.,
$\Gamma = \left\{ \mathbf{a} \in \mathbb{R}^d \colon \mathbf{a} = \sum_{j=1}^{d} n_j \mathbf{a}_j, \; n_j \in \mathbb{Z} \right\}$,
and let $\Omega$ be the (elementary) cell of this lattice:
$\Omega := \left\{ \mathbf{x} \in \mathbb{R}^d \colon \mathbf{x} = \sum_{j=1}^{d} \xi_j \mathbf{a}_j, \; 0 < \xi_j < 1 \right\}$.
The basis $\mathbf{b}_1, \ldots , \mathbf{b}_d$ dual to $\mathbf{a}_1, \ldots , \mathbf{a}_d$ is defined by the relations
   $\langle \mathbf{b}_l, \mathbf{a}_j \rangle = 2 \pi \delta_{jl}$.
This basis generates the \textit{lattice  $\widetilde \Gamma$ dual to} $\Gamma$.
Let $\widetilde \Omega$   be the central \emph{Brillouin zone} of the lattice $\widetilde \Gamma $:
\begin{equation}
\label{Brillouin_zone}
\widetilde \Omega = \left\{ \mathbf{k} \in \mathbb{R}^d \colon | \mathbf{k} | < | \mathbf{k} - \mathbf{b} |, \; 0 \ne \mathbf{b} \in \widetilde \Gamma \right\}.
\end{equation}
Denote $| \Omega | = \mes \Omega$ and $| \widetilde \Omega | = \mes \widetilde \Omega$.
Note that $| \Omega |  | \widetilde \Omega | = (2 \pi)^d$. Let $r_0$ be the maximal radius of the ball containing in
$\clos \widetilde \Omega$. We have
$2 r_0 = \min_{0 \ne \mathbf{b} \in \widetilde \Gamma} |\mathbf{b}|$.

We need the discrete Fourier transformation  $ \{ \hat{\mathbf{v}}_{\mathbf{b}}\} \mapsto \mathbf{v}$:
$\mathbf{v}(\mathbf{x}) = | \Omega |^{-1/2} \sum_{\mathbf{b} \in \widetilde \Gamma} \hat{\mathbf{v}}_{\mathbf{b}} e^{i \left<\mathbf{b}, \mathbf{x} \right>}$,
which is a unitary mapping of $l_2 (\widetilde \Gamma; \mathbb{C}^n) $ onto $L_2 (\Omega; \mathbb{C}^n)$.
\textit{By $\widetilde H^1(\Omega; \mathbb{C}^n)$ we denote the subspace in $H^1(\Omega; \mathbb{C}^n)$
consisting of the functions whose $\Gamma$-periodic extension to
$\mathbb{R}^d$ belongs to $H^1_{\mathrm{loc}}(\mathbb{R}^d; \mathbb{C}^n)$}. We have
\begin{equation}
\label{D_and_fourier}
\int_{\Omega} |(\mathbf{D} + \mathbf{k}) \mathbf{u}|^2\, d\mathbf{x} = \sum_{\mathbf{b} \in \widetilde{\Gamma}} |\mathbf{b} + \mathbf{k} |^2 |\hat{\mathbf{u}}_{\mathbf{b}}|^2, \quad \mathbf{u} \in \widetilde{H}^1(\Omega; \mathbb{C}^n), \; \mathbf{k} \in \mathbb{R}^d,
\end{equation}
and convergence of the series in the right-hand side of~\eqref{D_and_fourier} is equivalent to the relation
$\mathbf{u} \in \widetilde{H}^1(\Omega; \mathbb{C}^n)$. From~\eqref{Brillouin_zone} and~\eqref{D_and_fourier} it follows that
\begin{equation}
\label{AO}
\int_{\Omega} |(\mathbf{D} + \mathbf{k}) \mathbf{u}|^2 d\mathbf{x} \ge \sum_{\mathbf{b} \in \widetilde{\Gamma}} | \mathbf{k} |^2 |\hat{\mathbf{u}}_{\mathbf{b}}|^2 = | \mathbf{k} |^2 \int_{\Omega} |\mathbf{u}|^2 d\mathbf{x}, \quad \mathbf{u} \in \widetilde{H}^1(\Omega; \mathbb{C}^n), \; \mathbf{k} \in \widetilde{\Omega}.
\end{equation}

Initially, the \textit{Gelfand transformation} $\mathcal{U}$ is defined on the functions of the Schwartz class $\mathbf{v} \in \mathcal{S} (\mathbb{R}^d; \mathbb{C}^n)$ by the formula
\begin{equation*}
\widetilde{\mathbf{v}} ( \mathbf{k}, \mathbf{x}) = (\mathcal{U} \- \mathbf{v}) (\mathbf{k}, \mathbf{x}) = | \widetilde \Omega |^{-1/2} \sum_{\mathbf{a} \in \Gamma} e^{- i \left< \mathbf{k}, \mathbf{x} + \mathbf{a} \right>} \mathbf{v} ( \mathbf{x} + \mathbf{a}),
 \quad \mathbf{x} \in \Omega, \quad \mathbf{k} \in \widetilde \Omega.
\end{equation*}
It extends by continuity up to a \textit{unitary mapping}
$\mathcal{U} \colon L_2 (\mathbb{R}^d; \mathbb{C}^n) \to \int_{\widetilde \Omega} \oplus  L_2 (\Omega; \mathbb{C}^n) d \mathbf{k} =: \mathcal{H}.$

\subsection{Factorized second order operators $\mathcal{A}$}
\label{A_oper_subsect}

Let $b(\mathbf{D}) = \sum_{l=1}^d b_l D_l$, where $b_l$ are constant  $(m \times n)$-matrices
(in general, with complex entries). \emph{Assume that $m \ge n$}.
Consider the symbol $b(\boldsymbol{\xi})= \sum^d_{l=1} b_l \xi_l$, $ \boldsymbol{\xi} \in \mathbb{R}^d$, and \emph{assume that}
\hbox{$\rank b( \boldsymbol{\xi} ) = n$} for $0 \ne  \boldsymbol{\xi} \in \mathbb{R}^d $.
This condition is equivalent to the inequalities
\begin{equation}
\label{rank_alpha_ineq}
\alpha_0 \mathbf{1}_n \le b( \boldsymbol{\theta} )^* b( \boldsymbol{\theta} ) \le \alpha_1 \mathbf{1}_n, \quad  \boldsymbol{\theta} \in \mathbb{S}^{d-1}, \quad 0 < \alpha_0 \le \alpha_1 < \infty,
\end{equation}
with some positive constants $\alpha_0, \alpha_1$.
Suppose that an $(m\times m)$-matrix-valued function $h(\mathbf{x})$ and an $(n\times n)$-matrix-valued function $f(\mathbf{x})$
(in general, with complex entries) are $\Gamma$-periodic and such that
\begin{equation}
\label{h_f_L_inf}
h, h^{-1} \in L_{\infty} (\mathbb{R}^d); \quad f, f^{-1} \in L_{\infty} (\mathbb{R}^d).
\end{equation}
Consider the closed operator
$\mathcal{X}  \colon  L_2 (\mathbb{R}^d ; \mathbb{C}^n) \to  L_2 (\mathbb{R}^d ; \mathbb{C}^m)$
given by $\mathcal{X} = h b( \mathbf{D} ) f$ on the domain $\Dom \mathcal{X} = \left\lbrace \mathbf{u}
\in L_2 (\mathbb{R}^d ; \mathbb{C}^n) \colon f \mathbf{u} \in H^1  (\mathbb{R}^d ; \mathbb{C}^n) \right\rbrace$.
In $L_2 (\mathbb{R}^d ; \mathbb{C}^n)$, consider the selfadjoint operator
$\mathcal{A} = \mathcal{X}^* \mathcal{X}$ generated by the closed quadratic form
$\mathfrak{a}[\mathbf{u}, \mathbf{u}] = \| \mathcal{X} \mathbf{u} \|^2_{L_2(\mathbb{R}^d)}, \; \mathbf{u} \in \Dom \mathcal{X}$.
 Formally, we have
\begin{equation}
\label{A}
\mathcal{A} = f (\mathbf{x})^* b( \mathbf{D} )^* g( \mathbf{x} )  b( \mathbf{D} ) f(\mathbf{x}),
\end{equation}
where $g(\mathbf{x}) := h(\mathbf{x} )^* h( \mathbf{x})$.
Note that the Hermitian matrix-valued function $g(\mathbf{x})$ is bounded and uniformly positive definite.
Using the Fourier transformation and \eqref{rank_alpha_ineq},~\eqref{h_f_L_inf}, it is easy to check that
\begin{equation*}
\alpha_0 \| g^{-1} \|_{L_{\infty}}^{-1} \| \mathbf{D} (f \mathbf{u}) \|_{L_2({\mathbb R}^d)}^2 \le \mathfrak{a}[\mathbf{u}, \mathbf{u}] \le \alpha_1 \| g \|_{L_{\infty}} \| \mathbf{D} (f \mathbf{u}) \|_{L_2({\mathbb R}^d)}^2, \  \mathbf{u} \in \Dom \mathcal{X}.
\end{equation*}

\subsection{The operators $\mathcal{A}(\mathbf{k})$}

Putting $\mathfrak{H} = L_2 (\Omega; \mathbb{C}^n)$ and $\mathfrak{H}_* = L_2 (\Omega; \mathbb{C}^m)$,
we consider the closed operator $\mathcal{X} (\mathbf{k}) \colon \mathfrak{H} \to \mathfrak{H}_*$ depending on the parameter $\mathbf{k} \in \mathbb{R}^d$ and given by
$\mathcal{X} (\mathbf{k}) = hb(\mathbf{D} + \mathbf{k})f$ on the domain $\Dom \mathcal{X} (\mathbf{k}) = \left\lbrace \mathbf{u} \in \mathfrak{H} \colon   f \mathbf{u} \in \widetilde{H}^1 (\Omega; \mathbb{C}^n)\right\rbrace =: \mathfrak{d}$.
The selfadjoint operator
$\mathcal{A} (\mathbf{k}) =\mathcal{X} (\mathbf{k})^* \mathcal{X} (\mathbf{k})$ in $\mathfrak{H}$
is generated by the closed quadratic form
$\mathfrak{a}(\mathbf{k})[\mathbf{u}, \mathbf{u}] = \| \mathcal{X}(\mathbf{k}) \mathbf{u} \|_{\mathfrak{H}_*}^2$, $\mathbf{u} \in \mathfrak{d}$.
Using the Fourier series expansion for $\mathbf{v}= f \mathbf{u}$ and conditions~\eqref{rank_alpha_ineq},~\eqref{h_f_L_inf},
it is easy to check that
\begin{equation}
\label{a(k)_form_est}
\alpha_0 \|g^{-1} \|_{L_\infty}^{-1} \|(\mathbf{D} + \mathbf{k}) f \mathbf{u} \|_{L_2 (\Omega)}^2 \le \mathfrak{a}(\mathbf{k})[\mathbf{u}, \mathbf{u}] \le \alpha_1 \|g \|_{L_\infty} \|(\mathbf{D} + \mathbf{k}) f \mathbf{u} \|_{L_2 (\Omega)}^2,
\quad
\mathbf{u} \in \mathfrak{d}.
\end{equation}
From \eqref{a(k)_form_est} and the compactness of the embedding of $\widetilde{H}^1(\Omega;\mathbb{C}^n)$
in $\mathfrak{H}$ it follows that the resolvent of $\mathcal{A}({\mathbf k})$ is compact and depends on ${\mathbf k}$ continuously.
By the lower estimate~\eqref{a(k)_form_est} and~\eqref{AO}, we have
\begin{align}
\label{A(k)_nondegenerated_and_c_*}	
\mathcal{A} (\mathbf{k}) &\ge c_* |\mathbf{k}|^2 I, \quad \mathbf{k} \in \widetilde{\Omega},
\\
\label{c*}
 c_* &= \alpha_0\|f^{-1} \|_{L_\infty}^{-2} \|g^{-1} \|_{L_\infty}^{-1} .
\end{align}

We put $\mathfrak{N} := \Ker \mathcal{A} (0) = \Ker \mathcal{X} (0)$.
Relations~\eqref{a(k)_form_est} with $\mathbf{k} = 0$ show that
\begin{equation}
\label{Ker2}
\mathfrak{N} = \left\lbrace \mathbf{u} \in L_2 (\Omega; \mathbb{C}^n) \colon f \mathbf{u} = \mathbf{c} \in \mathbb{C}^n \right\rbrace, \quad \dim \mathfrak{N} = n.
\end{equation}

\subsection{The direct integral expansion for the operator $\mathcal{A}$}
With the help of the Gelfand transformation, the operator $\mathcal A$ is represented as
\begin{equation}
\label{decompose}
\mathcal{U} \mathcal{A}  \mathcal{U}^{-1} = \int_{\widetilde \Omega} \oplus \mathcal{A} (\mathbf{k}) \, d \mathbf{k}.
\end{equation}
This means the following.
If $\mathbf{v} \in \Dom \mathcal{X}$, then
$\widetilde{\mathbf{v}}(\mathbf{k}, \cdot) \in \mathfrak{d}$ for a.~e. $\mathbf{k} \in \widetilde \Omega$, and
\begin{equation}
\label{AB}
\mathfrak{a}[\mathbf{v}, \mathbf{v}] = \int_{\widetilde{\Omega}} \mathfrak{a}(\mathbf{k}) [\widetilde{\mathbf{v}}(\mathbf{k}, \cdot), \widetilde{\mathbf{v}}(\mathbf{k}, \cdot)] \, d \mathbf{k} .
\end{equation}
Conversely, if $\widetilde{\mathbf{v}} \in \mathcal{H}$ satisfies~$\widetilde{\mathbf{v}}(\mathbf{k}, \cdot) \in \mathfrak{d}$ for a.~e. $\mathbf{k} \in \widetilde \Omega$ and the integral in~\eqref{AB} is finite, then
$\mathbf{v} \in \Dom \mathcal{X}$ and~\eqref{AB} is valid.

\subsection{Incorporation of the operators $\mathcal{A} (\mathbf{k})$ in the pattern of Chapter~1}
For $\mathbf{k} \in {\mathbb R}^d$ we put $\mathbf{k} = t \boldsymbol{\theta}$,
$t = | \mathbf{k}|$, $\boldsymbol{\theta} \in {\mathbb S}^{d-1}$.
We will apply the scheme of Chapter~1. Then all constructions and estimates will depend on the additional parameter $\boldsymbol{\theta}$.
We have to make our estimates uniform in $\boldsymbol{\theta}$.

We put $\mathfrak{H}:= L_2(\Omega;{\mathbb C}^n)$ and $\mathfrak{H}_* := L_2(\Omega;{\mathbb C}^m)$.
The role of $X(t)$ is played by $X(t, \boldsymbol{\theta}) := \mathcal{X}(t \boldsymbol{\theta})$.
Then $X(t, \boldsymbol{\theta}) = X_0 + t  X_1 (\boldsymbol{\theta})$, where
$X_0 = h(\mathbf{x}) b (\mathbf{D}) f(\mathbf{x}), \; \Dom X_0 = \mathfrak{d}$,
and $ X_1 (\boldsymbol{\theta}) = h b(\boldsymbol{\theta}) f$.
The role of $A(t)$ is played by $A(t, \boldsymbol{\theta}) := \mathcal{A}(t \boldsymbol{\theta})$. We have
$A(t, \boldsymbol{\theta}) = X(t, \boldsymbol{\theta})^* X(t, \boldsymbol{\theta})$.
The kernel $\mathfrak{N} = \Ker X_0$ is described by \eqref{Ker2}.
In \cite[Chapter~2,~Section~3]{BSu1}, it was shown that the distance $d^0$ from the point $\lambda_0=0$ to the rest of the spectrum of
${\mathcal A}(0)$ satisfies the estimate~$d^0 \ge 4 c_* r_0^2$. The condition $m \ge n$ ensures that $n \le n_* = \dim \Ker X^*_0$.

   In Subsection~\ref{abstr_X_A_section}, it was required to fix a number $\delta \in (0, d^0/8)$.
   Since $d^0 \ge 4 c_* r_0^2$, we choose
\begin{equation}
\label{delta_fixation}
\delta = \frac{1}{4} c_* r^2_0 = \frac{1}{4} \alpha_0\|f^{-1} \|_{L_\infty}^{-2} \|g^{-1} \|_{L_\infty}^{-1} r^2_0.
\end{equation}
Next, by~\eqref{rank_alpha_ineq} and~\eqref{h_f_L_inf}, we have
\begin{equation}
\label{X_1_estimate}
\| X_1 (\boldsymbol{\theta}) \| \le  \alpha^{1/2}_1 \| h \|_{L_{\infty}} \| f \|_{L_{\infty}}, \quad \boldsymbol{\theta} \in \mathbb{S}^{d-1}.
\end{equation}
This allows us to choose $t^0$ (see~\eqref{abstr_t0_fixation})
to be independent of $\boldsymbol{\theta}$. We put
\begin{equation}
\label{t0_fixation}
\begin{aligned}
t^0 = \frac{r_0}{2} \alpha_0^{1/2} \alpha_1^{-1/2} \left( \| h \|_{L_{\infty}} \| h^{-1} \|_{L_{\infty}} \|f \|_{L_\infty} \|f^{-1} \|_{L_\infty} \right)^{-1}.
\end{aligned}
\end{equation}
Obviously, $t^0 \le r_0/2$. Thus, the ball $|\mathbf{k}| \le t^0$ lies inside $\widetilde{\Omega}$.
 It is important that $c_*$, $\delta$, and $t^0$ (see~\eqref{c*}, \eqref{delta_fixation}, and
 \eqref{t0_fixation}) do not depend on $\boldsymbol{\theta}$.
By~\eqref{A(k)_nondegenerated_and_c_*},
Condition~\ref{nondeg} is now satisfied.
The germ $S(\boldsymbol{\theta})$ of the operator $A(t, \boldsymbol{\theta})$ is nondegenerate uniformly in
$\boldsymbol{\theta}$: we have $S(\boldsymbol{\theta}) \ge c_* I_{\mathfrak{N}}$ (cf.~\eqref{abstr_S_nondegenerated}).

\section{The effective characteristics of the operator $\widehat{\mathcal{A}}$}

\subsection{The case where $f = \mathbf{1}_n$}
In the case where $f = \mathbf{1}_n$, all the objects will be marked by \textquotedblleft hat\textquotedblright.
For instance, for the operator
\begin{equation}
\label{hatA}
\widehat{\mathcal{A}} = b(\mathbf{D})^* g(\mathbf{x}) b(\mathbf{D})
\end{equation}
the family $\widehat{\mathcal{A}} (\mathbf{k})$ is denoted by
$\widehat{A} (t, \boldsymbol{\theta})$. The kernel~(\ref{Ker2}) takes the form
\begin{equation}
\label{Ker3}
\widehat{\mathfrak{N}} = \left\lbrace \mathbf{u} \in L_2 (\Omega; \mathbb{C}^n) \colon \mathbf{u} = \mathbf{c} \in \mathbb{C}^n \right\rbrace,
\end{equation}
i.~e., $\widehat{\mathfrak{N}}$ consists of constant vector-valued functions.
The orthogonal projection $\widehat{P}$ of the space $L_2 (\Omega; \mathbb{C}^n)$ onto the subspace \eqref{Ker3}
is the operator of averaging over the cell:
\begin{equation}
    \label{Phat_projector}
	\widehat{P} \mathbf{u} = |\Omega|^{-1} \int_{\Omega} \mathbf{u} (\mathbf{x}) \, d\mathbf{x}.
\end{equation}

If $f = \mathbf{1}_n$, the constants \eqref{c*}, \eqref{delta_fixation}, and~\eqref{t0_fixation}
take the form
\begin{align}
\label{hatc_*}
&\widehat{c}_*  = \alpha_0 \|g^{-1} \|_{L_\infty}^{-1}, \\
\label{hatdelta_fixation}
&\widehat{\delta} =  \frac{1}{4} \alpha_0 \|g^{-1} \|_{L_\infty}^{-1} r^2_0,\\
\label{hatt0_fixation}
&\widehat{t}^{\,0}  = \frac{r_0}{2} \alpha_0^{1/2} \alpha_1^{-1/2} \left( \| h \|_{L_{\infty}} \| h^{-1} \|_{L_{\infty}} \right)^{-1}.
\end{align}

Inequality \eqref{X_1_estimate} takes the form
\begin{equation}
\label{hatX_1_estmate}
\| \widehat{X}_1 (\boldsymbol{\theta}) \| \le \alpha_1^{1/2} \| g \|_{L_{\infty}}^{1/2}.
\end{equation}

According to~\cite[Chapter~3,~Section~1]{BSu1},
the spectral germ $\widehat{S} (\boldsymbol{\theta}) : \widehat{\mathfrak{N}} \to \widehat{\mathfrak{N}}$ of the family
$\widehat{A}(t, \boldsymbol{\theta})$ is represented as
$\widehat{S} (\boldsymbol{\theta}) = b(\boldsymbol{\theta})^* g^0 b(\boldsymbol{\theta})$, $\boldsymbol{\theta} \in \mathbb{S}^{d-1}$,
where $g^0$ is the so-called \emph{effective matrix}. The constant
       ($m \times m$)-matrix $g^0$ is defined as follows. Suppose that a $\Gamma$-periodic $(n \times m)$-matrix-valued function
       $\Lambda \in \widetilde{H}^1 (\Omega)$ is the weak solution of the problem
       \begin{equation}
\label{equation_for_Lambda}
b(\mathbf{D})^* g(\mathbf{x}) \left(b(\mathbf{D}) \Lambda (\mathbf{x}) + \mathbf{1}_m \right) = 0, \quad \int_{\Omega} \Lambda (\mathbf{x}) \, d \mathbf{x} = 0.
\end{equation}
The effective matrix $g^0$ is given by
\begin{align}
\label{g0}
g^0 &= | \Omega |^{-1} \int_{\Omega} \widetilde{g} (\mathbf{x}) \, d \mathbf{x},
\\
\label{g_tilde}
\widetilde{g} (\mathbf{x}) &:= g(\mathbf{x})( b(\mathbf{D}) \Lambda (\mathbf{x}) + \mathbf{1}_m).
\end{align}
It turns out that the matrix $g^0$ is positive definite.
Consider the symbol
\begin{equation}
\label{effective_oper_symb}
\widehat{S} (\mathbf{k}) := t^2 \widehat{S} (\boldsymbol{\theta}) = b(\mathbf{k})^* g^0 b(\mathbf{k}), \quad \mathbf{k} \in \mathbb{R}^{d}.
\end{equation}
Expression~(\ref{effective_oper_symb}) is the symbol of the DO
\begin{equation}
\label{hatA0}
\widehat{\mathcal{A}}^0 = b(\mathbf{D})^* g^0 b(\mathbf{D})
\end{equation}
acting in $L_2(\mathbb{R}^d; \mathbb{C}^n)$ and called the \emph{effective operator} for the operator~$\widehat{\mathcal{A}}$.

Let $\widehat{\mathcal{A}}^0 (\mathbf{k})$ be the operator family in~$L_2(\Omega; \mathbb{C}^n)$ corresponding to the operator~\eqref{hatA0}.
Then $\widehat{\mathcal{A}}^0 (\mathbf{k})$ is given by the expression
$b(\mathbf{D} + \mathbf{k})^* g^0 b(\mathbf{D} + \mathbf{k})$ with periodic boundary conditions. By~\eqref{Phat_projector} and~\eqref{effective_oper_symb},
\begin{equation}
\label{hatS_P=hatA^0_P}
\widehat{S} (\mathbf{k}) \widehat{P} = \widehat{\mathcal{A}}^0 (\mathbf{k}) \widehat{P}.
\end{equation}

\subsection{Properties of the effective matrix}

The following properties of the matrix $g^0$ were checked in~\cite[Chapter~3, Theorem~1.5]{BSu1}.
\begin{proposition}
    The effective matrix satisfies the estimates
    \begin{equation}
    \label{Voigt_Reuss}
    \underline{g} \le g^0 \le \overline{g},
    \end{equation}
    where
    $ \overline{g} := | \Omega |^{-1} \int_{\Omega} g (\mathbf{x}) \, d \mathbf{x}$ and
    $\underline{g} := \left( | \Omega |^{-1} \int_{\Omega} g (\mathbf{x})^{-1} \, d \mathbf{x}\right)^{-1}$.
    If $m = n$, then $g^0 = \underline{g}$.
\end{proposition}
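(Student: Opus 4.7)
The plan is to deduce everything from the defining variational identity for $\Lambda$. Writing the equation \eqref{equation_for_Lambda} in weak form, for any $\boldsymbol{\xi}\in\mathbb{C}^m$ and any $\Gamma$-periodic $\Phi\in\widetilde{H}^1(\Omega)$,
\begin{equation*}
\int_{\Omega}\langle g(\mathbf{x})(b(\mathbf{D})\Lambda(\mathbf{x})\boldsymbol{\xi}+\boldsymbol{\xi}),\,b(\mathbf{D})\Phi(\mathbf{x})\boldsymbol{\xi}\rangle\,d\mathbf{x}=0.
\end{equation*}
Taking $\Phi=\Lambda$ and adding to the defining formula \eqref{g0}--\eqref{g_tilde}, I obtain the key quadratic identity
\begin{equation*}
\langle g^0\boldsymbol{\xi},\boldsymbol{\xi}\rangle=|\Omega|^{-1}\int_{\Omega}\langle g(\mathbf{x})\boldsymbol{\eta}(\mathbf{x}),\boldsymbol{\eta}(\mathbf{x})\rangle\,d\mathbf{x},\qquad \boldsymbol{\eta}:=(b(\mathbf{D})\Lambda+\mathbf{1}_m)\boldsymbol{\xi}.
\end{equation*}
Note also that $|\Omega|^{-1}\int_{\Omega}\boldsymbol{\eta}\,d\mathbf{x}=\boldsymbol{\xi}$, since $b(\mathbf{D})\Lambda$ has zero mean by periodicity of $\Lambda$.

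For the upper bound $g^0\le\overline{g}$, I will recognize the right-hand side above as a Dirichlet-type energy and apply the standard minimization interpretation: the identity above shows $\boldsymbol{\eta}$ solves the Euler--Lagrange equation for the functional $\Phi\mapsto|\Omega|^{-1}\int\langle g(b(\mathbf{D})\Phi\boldsymbol{\xi}+\boldsymbol{\xi}),b(\mathbf{D})\Phi\boldsymbol{\xi}+\boldsymbol{\xi}\rangle$ over periodic $\Phi$, so the value at $\Phi=\Lambda$ does not exceed the value at $\Phi=0$, which equals $\langle\overline{g}\boldsymbol{\xi},\boldsymbol{\xi}\rangle$.

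For the lower bound $g^0\ge\underline{g}$, I will use a Cauchy--Schwarz argument. For an arbitrary constant vector $\boldsymbol{\zeta}\in\mathbb{C}^m$,
\begin{equation*}
|\Omega||\langle\boldsymbol{\xi},\boldsymbol{\zeta}\rangle|=\Bigl|\int_{\Omega}\langle g^{1/2}\boldsymbol{\eta},g^{-1/2}\boldsymbol{\zeta}\rangle\,d\mathbf{x}\Bigr|\le\Bigl(\int\langle g\boldsymbol{\eta},\boldsymbol{\eta}\rangle\Bigr)^{1/2}\Bigl(\int\langle g^{-1}\boldsymbol{\zeta},\boldsymbol{\zeta}\rangle\Bigr)^{1/2}=|\Omega|\langle g^0\boldsymbol{\xi},\boldsymbol{\xi}\rangle^{1/2}\langle\underline{g}^{-1}\boldsymbol{\zeta},\boldsymbol{\zeta}\rangle^{1/2}.
\end{equation*}
Taking the supremum over $\boldsymbol{\zeta}$ with $\langle\underline{g}^{-1}\boldsymbol{\zeta},\boldsymbol{\zeta}\rangle\le 1$, which is exactly $\langle\underline{g}\boldsymbol{\xi},\boldsymbol{\xi}\rangle^{1/2}$, yields $\langle\underline{g}\boldsymbol{\xi},\boldsymbol{\xi}\rangle\le\langle g^0\boldsymbol{\xi},\boldsymbol{\xi}\rangle$. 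Both inequalities being valid for every $\boldsymbol{\xi}\in\mathbb{C}^m$ give \eqref{Voigt_Reuss}.

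For the last assertion ($m=n$), I would pass to Fourier series. Set $\mathbf{q}(\mathbf{x}):=g(\mathbf{x})(b(\mathbf{D})\Lambda(\mathbf{x})+\mathbf{1}_n)$; then \eqref{equation_for_Lambda} reads $b(\mathbf{D})^*\mathbf{q}=0$, i.e.\ $b(\mathbf{b})^*\hat{\mathbf{q}}_{\mathbf{b}}=0$ for every $\mathbf{b}\in\widetilde{\Gamma}$. When $m=n$ the symbol $b(\mathbf{b})$ is invertible for all $\mathbf{b}\neq 0$, so $\hat{\mathbf{q}}_{\mathbf{b}}=0$ for $\mathbf{b}\neq 0$, meaning $\mathbf{q}$ is a constant matrix. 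Consequently $\mathbf{q}\equiv g^0$ by \eqref{g0}. The relation $g^{-1}\mathbf{q}-\mathbf{1}_n=b(\mathbf{D})\Lambda$ combined with $\int_{\Omega}b(\mathbf{D})\Lambda\,d\mathbf{x}=0$ then gives $\underline{g}^{-1}g^0=\mathbf{1}_n$, i.e.\ $g^0=\underline{g}$. The only non-routine point is this last step: recognizing that for $m=n$ the orthogonality $b(\mathbf{D})^*\mathbf{q}=0$ forces $\mathbf{q}$ to be constant; once this is seen, the equality follows immediately.
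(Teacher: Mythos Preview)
The paper does not supply its own proof of this proposition; it merely cites \cite[Chapter~3, Theorem~1.5]{BSu1}. So there is no in-paper argument to compare against.

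Your argument is correct and is in fact the standard proof of the Voigt--Reuss bracketing. The quadratic identity $\langle g^0\boldsymbol{\xi},\boldsymbol{\xi}\rangle=|\Omega|^{-1}\int_\Omega\langle g\boldsymbol{\eta},\boldsymbol{\eta}\rangle$ with $\boldsymbol{\eta}=(b(\mathbf{D})\Lambda+\mathbf{1}_m)\boldsymbol{\xi}$ follows exactly as you wrote from the weak form of \eqref{equation_for_Lambda}; the upper bound is the observation that $\Lambda\boldsymbol{\xi}$ minimizes the convex functional $\mathbf{w}\mapsto\int_\Omega\langle g(b(\mathbf{D})\mathbf{w}+\boldsymbol{\xi}),b(\mathbf{D})\mathbf{w}+\boldsymbol{\xi}\rangle$ over periodic $\mathbf{w}$, and comparison with $\mathbf{w}=0$ gives $g^0\le\overline{g}$; the lower bound via Cauchy--Schwarz and the duality $\sup_{\langle\underline{g}^{-1}\boldsymbol{\zeta},\boldsymbol{\zeta}\rangle\le 1}|\langle\boldsymbol{\xi},\boldsymbol{\zeta}\rangle|=\langle\underline{g}\boldsymbol{\xi},\boldsymbol{\xi}\rangle^{1/2}$ is clean and correct. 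For $m=n$, the Fourier argument showing that $b(\mathbf{D})^*\mathbf{q}=0$ forces $\mathbf{q}$ constant (because $b(\mathbf{b})^*$ is an invertible square matrix for every nonzero $\mathbf{b}\in\widetilde{\Gamma}$) is exactly right, and the averaging step $\underline{g}^{-1}g^0=\mathbf{1}_n$ closes the proof. The only small comment: you might make explicit that $\mathbf{q}\in L_2(\Omega;\mathbb{C}^{m\times m})$ (which follows from $g\in L_\infty$ and $b(\mathbf{D})\Lambda\in L_2$), so that the Fourier expansion is legitimate.
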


For specific DOs, estimates~\eqref{Voigt_Reuss} are known in homogenization theory as the Voigt-Reuss bracketing.
Now we distinguish the cases where one of the inequalities in~\eqref{Voigt_Reuss} becomes an identity.
    The following statements were obtained in~\cite[Chapter~3, Propositions~1.6, 1.7]{BSu1}.

\begin{proposition}
    The identity $g^0 = \overline{g}$ is equivalent to the relations
    \begin{equation}
    \label{g0=overline_g_relat}
    b(\mathbf{D})^* \mathbf{g}_k (\mathbf{x}) = 0, \quad k = 1, \ldots, m,
    \end{equation}
    where $\mathbf{g}_k (\mathbf{x}), \; k = 1, \ldots,m,$~are the columns of the matrix~$g (\mathbf{x})$.
\end{proposition}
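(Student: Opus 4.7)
The proof hinges on a single quadratic-form identity relating $\overline g - g^0$ to an ``energy'' of the corrector $\Lambda$. For an arbitrary $\xi \in {\mathbb C}^m$, consider the vector $\mathbf{w}_\xi := \Lambda\xi \in \widetilde H^1(\Omega;{\mathbb C}^n)$. Using $\mathbf{w}_\xi$ as a test function in the weak form of the defining equation \eqref{equation_for_Lambda} gives
\[
\int_\Omega \langle g(\mathbf{x})(b(\mathbf{D})\Lambda\xi + \xi),\, b(\mathbf{D})\Lambda\xi\rangle\,d\mathbf{x} = 0.
\]
Combined with the definition of $g^0$ via \eqref{g0}--\eqref{g_tilde} and the definition of $\overline g$, a short expansion of the square yields the key identity
\[
|\Omega|\bigl((\overline g - g^0)\xi,\xi\bigr) = \int_\Omega \langle g(\mathbf{x})\,b(\mathbf{D})\Lambda\xi,\,b(\mathbf{D})\Lambda\xi\rangle\,d\mathbf{x}.
\]
This manifestly non-negative expression reproduces the upper Voigt bound $g^0 \le \overline g$ of \eqref{Voigt_Reuss} and isolates the equality case precisely.

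With the identity in hand, both implications are short. For sufficiency, assume $b(\mathbf{D})^* \mathbf{g}_k = 0$ for every $k=1,\dots,m$, i.e. $b(\mathbf{D})^* g(\mathbf{x}) = 0$. Then $\Lambda \equiv 0$ trivially solves \eqref{equation_for_Lambda} together with the zero-mean condition; by the uniqueness of that solution, $\Lambda \equiv 0$, whence $\widetilde g \equiv g$ and so $g^0 = \overline g$. For necessity, assume $g^0 = \overline g$. The key identity forces, for every $\xi \in {\mathbb C}^m$, the integral on the right to vanish, and the uniform positive-definiteness of $g$ then gives $b(\mathbf{D})\Lambda\xi = 0$ in $L_2(\Omega;{\mathbb C}^m)$. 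Expanding $\Lambda\xi \in \widetilde H^1(\Omega;{\mathbb C}^n)$ in its Fourier series over $\widetilde \Gamma$ and using that $\operatorname{rank} b(\mathbf{b}) = n$ for every $0 \ne \mathbf{b} \in \widetilde\Gamma$ (a direct consequence of \eqref{rank_alpha_ineq}) annihilates all non-zero Fourier modes, while the normalization $\int_\Omega \Lambda\,d\mathbf{x} = 0$ removes the constant mode. Hence $\Lambda\xi = 0$ for every $\xi$, i.e. $\Lambda \equiv 0$; substituting back into \eqref{equation_for_Lambda} gives $b(\mathbf{D})^* g \cdot \mathbf{1}_m = 0$, which column by column reads $b(\mathbf{D})^* \mathbf{g}_k = 0$, $k=1,\dots,m$.

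\textbf{Main obstacle.} The only subtle step is the derivation of the quadratic-form identity: one must be careful with the Hermitian structure (conjugating versus not conjugating $\xi$) and must justify testing the weak equation \eqref{equation_for_Lambda} against $\Lambda\xi$ itself, which is legitimate because $\Lambda\xi \in \widetilde H^1(\Omega;{\mathbb C}^n)$ is an admissible test function for the periodic problem. Once this identity is established, the Fourier-series argument exploiting the maximal-rank condition on the symbol $b(\mathbf{b})$, together with the zero-mean normalization of $\Lambda$, completes the equivalence with essentially no additional effort.
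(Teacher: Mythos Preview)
Your argument is correct. The paper does not actually prove this proposition here; it is stated without proof and attributed to \cite[Chapter~3, Proposition~1.6]{BSu1}. Your approach---deriving the energy identity $|\Omega|\langle(\overline g-g^0)\xi,\xi\rangle=\int_\Omega\langle g\,b(\mathbf D)\Lambda\xi,\,b(\mathbf D)\Lambda\xi\rangle\,d\mathbf x$ by testing the weak form of \eqref{equation_for_Lambda} against $\Lambda\xi$, and then reading off both implications---is the standard route and matches what one finds in the cited reference. One small point worth making explicit in a final write-up: the passage from $-\int_\Omega\langle g\,b(\mathbf D)\Lambda\xi,\xi\rangle$ to the energy integral uses that this quantity is real, which follows either because the energy on the right of the weak identity is manifestly real (so its negative conjugate equals itself) or because $g^0$ is Hermitian; you allude to this under ``Hermitian structure'' but do not spell it out.
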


\begin{proposition}
    The identity $g^0 = \underline{g}$ is equivalent to the representations
    \begin{equation}
    \label{g0=underline_g_relat}
    \mathbf{l}_k (\mathbf{x}) = \mathbf{l}^0_k + b(\mathbf{D}) \mathbf{w}_k(\mathbf{x}), \quad \mathbf{l}^0_k \in \mathbb{C}^m, \quad \mathbf{w}_k \in \widetilde{H}^1 (\Omega; \mathbb{C}^n), \quad k = 1, \ldots,m,
    \end{equation}
    where $\mathbf{l}_k (\mathbf{x}), \; k = 1, \ldots,m,$~are the columns of the matrix~$g (\mathbf{x})^{-1}$.
\end{proposition}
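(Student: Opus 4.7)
My approach is to derive a single integral identity for the effective matrix and then combine it with a Cauchy--Schwarz estimate. I first record
\begin{equation*}
|\Omega|\langle g^0 \xi, \xi\rangle = \int_{\Omega} \langle g(\mathbf{x})^{-1}\mathbf{h}(\mathbf{x}), \mathbf{h}(\mathbf{x})\rangle\, d\mathbf{x}, \qquad \mathbf{h}(\mathbf{x}) := \widetilde{g}(\mathbf{x})\xi,
\end{equation*}
which follows quickly: $g(\mathbf{x})^{-1}\mathbf{h} = \xi + b(\mathbf{D})(\Lambda \xi)$ by definition of $\widetilde{g}$, the second piece is $L_2$-orthogonal to $\mathbf{h}$ thanks to the equation $b(\mathbf{D})^*\mathbf{h}=0$ from \eqref{equation_for_Lambda}, and $\int_\Omega \mathbf{h}\, d\mathbf{x} = |\Omega|g^0\xi$ by definition of $g^0$. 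Applying Cauchy--Schwarz in $L_2(\Omega;\mathbb{C}^m)$ to $u=g^{-1/2}\eta$ and $v=g^{-1/2}\mathbf{h}$ with $\eta\in\mathbb{C}^m$ constant gives
\begin{equation*}
|\Omega|^2|\langle \eta,\xi\rangle|^2 = \Bigl|\int_\Omega \langle \eta, g^{-1}\mathbf{h}\rangle\, d\mathbf{x}\Bigr|^2 \le |\Omega|^2\langle \underline{g}^{-1}\eta,\eta\rangle \langle g^0\xi,\xi\rangle;
\end{equation*}
taking the supremum over $\eta$ yields $\langle \underline{g}\xi,\xi\rangle \le \langle g^0\xi,\xi\rangle$ (this is the Voigt--Reuss lower bound of~\eqref{Voigt_Reuss}). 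If $g^0=\underline{g}$, the sup is attained at $\eta=\underline{g}\xi$ and Cauchy--Schwarz must be an equality there; linear dependence of $u$ and $v$ together with the constancy of $\eta$ forces $\mathbf{h}(\mathbf{x})$ to be an $\mathbf{x}$-independent vector, and matching the mean value gives $\mathbf{h}(\mathbf{x})=g^0\xi$. Since this holds for each $\xi\in\mathbb{C}^m$, the matrix-valued function $\widetilde{g}(\mathbf{x})$ equals $\underline{g}$ everywhere. Rewriting $g(\mathbf{x})(\mathbf{1}_m+b(\mathbf{D})\Lambda)=\underline{g}$ as $g(\mathbf{x})^{-1}=\underline{g}^{-1}+b(\mathbf{D})(\Lambda(\mathbf{x})\underline{g}^{-1})$ and reading off columns gives \eqref{g0=underline_g_relat} with $\mathbf{l}_k^0=\underline{g}^{-1}\mathbf{e}_k$ and $\mathbf{w}_k=\Lambda\underline{g}^{-1}\mathbf{e}_k\in\widetilde{H}^1(\Omega;\mathbb{C}^n)$.

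For the reverse implication, I assemble the given vectors into an $m\times m$ constant matrix $L$ with columns $\mathbf{l}_k^0$ and an $n\times m$ periodic matrix $W(\mathbf{x})$ with columns $\mathbf{w}_k$, so that the representations in \eqref{g0=underline_g_relat} become $g(\mathbf{x})^{-1}=L+b(\mathbf{D})W(\mathbf{x})$. Averaging over $\Omega$ kills the second term by periodicity and yields $L=\underline{g}^{-1}$. Setting $\Lambda^\circ(\mathbf{x}):=W(\mathbf{x})L^{-1}$, I compute $g(\mathbf{x})(b(\mathbf{D})\Lambda^\circ+\mathbf{1}_m)=L^{-1}=\underline{g}$, a constant matrix, so automatically $b(\mathbf{D})^*g(b(\mathbf{D})\Lambda^\circ+\mathbf{1}_m)=0$. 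Subtracting its mean produces a normalized candidate; by uniqueness of the solution to \eqref{equation_for_Lambda} (established in \cite{BSu1}), this coincides with the $\Lambda$ appearing in the definition of $g^0$, and hence $\widetilde{g}\equiv\underline{g}$, giving $g^0=\underline{g}$.

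The main delicate point is the equality case of Cauchy--Schwarz in the necessity direction: the assumption $g^0=\underline{g}$ must be exploited simultaneously for every $\xi\in\mathbb{C}^m$ in order to conclude that the full matrix-valued function $\widetilde{g}(\mathbf{x})$ (and not just a single linear functional of it) is constant. This is handled by running the argument for $\xi$ ranging over a basis of $\mathbb{C}^m$ and intersecting the resulting full-measure sets where the pointwise identity holds. The sufficiency direction is otherwise a direct algebraic verification modulo the appeal to uniqueness of $\Lambda$.
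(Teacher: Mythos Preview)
The paper does not actually prove this proposition: it is quoted verbatim from \cite[Chapter~3, Proposition~1.7]{BSu1}, so there is no in-paper argument to compare against. Your proof is correct and self-contained. The key identity $|\Omega|\langle g^0\xi,\xi\rangle=\int_\Omega\langle g^{-1}\mathbf h,\mathbf h\rangle$ with $\mathbf h=\widetilde g\,\xi$ is verified exactly as you indicate (the cross term vanishes by the weak equation \eqref{equation_for_Lambda}); the Cauchy--Schwarz step recovers the lower Voigt--Reuss bound, and the equality case under $g^0=\underline g$ legitimately forces $g^{-1/2}\mathbf h$ to be a scalar multiple of $g^{-1/2}\eta$ in $L_2$, hence $\mathbf h$ is a.e.\ constant. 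Running this over a basis of $\xi$'s yields $\widetilde g\equiv\underline g$, and the column identities follow. The converse is the clean algebraic computation you give, and your appeal to uniqueness of $\Lambda$ is justified since $\Ker\widehat{\mathcal A}(0)=\widehat{\mathfrak N}$ consists of constants (see \eqref{Ker3}).

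One cosmetic remark: in the sufficiency direction you do not actually need to invoke uniqueness of $\Lambda$. Once you have $g(b(\mathbf D)\Lambda^\circ+\mathbf 1_m)=\underline g$, subtracting the mean of $\Lambda^\circ$ changes nothing in $b(\mathbf D)\Lambda^\circ$; and since any two solutions of \eqref{equation_for_Lambda} differ by a constant (which is exactly what uniqueness says), you could equally well observe directly that $\widetilde g=g(b(\mathbf D)\Lambda+\mathbf 1_m)=g(b(\mathbf D)\Lambda^\circ+\mathbf 1_m)=\underline g$ because $b(\mathbf D)$ kills the constant discrepancy. This is of course the same content, phrased without naming ``uniqueness.''
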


\subsection{The analytic branches of eigenvalues and eigenvectors}

The analytic (in $t$) branches of the eigenvalues~$\widehat{\lambda}_l (t, \boldsymbol{\theta})$ and the analytic branches of
the eigenvectors $\widehat{\varphi}_l (t, \boldsymbol{\theta})$ of $\widehat{A} (t, \boldsymbol{\theta})$ admit the power series expansions of the form~\eqref{abstr_A(t)_eigenvalues_series} and~\eqref{abstr_A(t)_eigenvectors_series}
with the coefficients depending on $\boldsymbol{\theta}$:
\begin{gather}
\label{hatA_eigenvalues_series}
\widehat{\lambda}_l (t, \boldsymbol{\theta}) = \widehat{\gamma}_l (\boldsymbol{\theta}) t^2 + \widehat{\mu}_l (\boldsymbol{\theta}) t^3 + \ldots, \quad l = 1, \ldots, n, \\
\label{hatA_eigenvectors_series}
\widehat{\varphi}_l (t, \boldsymbol{\theta}) = \widehat{\omega}_l (\boldsymbol{\theta}) + t \widehat{\psi}^{(1)}_l (\boldsymbol{\theta}) + \ldots, \quad l = 1, \ldots, n.
\end{gather}
(However,  we do not control the interval of convergence $t = |\mathbf{k}| \le t_* (\boldsymbol{\theta})$.)
According to~\eqref{abstr_S_eigenvectors}, $\widehat{\gamma}_l (\boldsymbol{\theta})$ and $\widehat{\omega}_l (\boldsymbol{\theta})$ are eigenvalues and eigenvectors of the germ:
$$
b(\boldsymbol{\theta})^* g^0 b(\boldsymbol{\theta}) \widehat{\omega}_l (\boldsymbol{\theta}) = \widehat{\gamma}_l (\boldsymbol{\theta}) \widehat{\omega}_l (\boldsymbol{\theta}), \quad l = 1, \ldots, n.
$$

\subsection{The operator $\widehat{N} (\boldsymbol{\theta})$}

We need to describe the operator~$N$ (which in abstract terms is defined in~Proposition~\ref{th1.3}).
According to~\cite[Section~4]{BSu3}, for the family $\widehat{A} (t, \boldsymbol{\theta})$ this operator takes the form
\begin{align}
\label{N(theta)}
\widehat{N} (\boldsymbol{\theta}) &= b(\boldsymbol{\theta})^* L(\boldsymbol{\theta}) b(\boldsymbol{\theta}) \widehat{P},
\\
\label{L(theta)}
L (\boldsymbol{\theta}) &:= | \Omega |^{-1} \int_{\Omega}  \bigl( \Lambda (\mathbf{x})^* b(\boldsymbol{\theta})^* \widetilde{g}(\mathbf{x}) + \widetilde{g}(\mathbf{x})^* b(\boldsymbol{\theta}) \Lambda (\mathbf{x})  \bigr) \, d \mathbf{x}.
\end{align}
Here $\Lambda (\mathbf{x})$ is the $\Gamma$-periodic solution of problem~\eqref{equation_for_Lambda}, and $\widetilde{g}(\mathbf{x})$~is
given by~\eqref{g_tilde}.

Some cases where~$\widehat{N}(\boldsymbol{\theta})=0$ were distinguished in~\cite[Section~4]{BSu3}.

\begin{proposition}
    \label{N=0_proposit}
    Suppose that at least one of the following conditions is fulfilled{\rm :}

 $1^\circ$.  $\widehat{\mathcal{A}} = \mathbf{D}^* g(\mathbf{x}) \mathbf{D}$, where $g(\mathbf{x})$~is a symmetric matrix with real entries.

 $2^\circ$. Relations~\emph{\eqref{g0=overline_g_relat}} are satisfied, i.~e., $g^0 = \overline{g}$.

 $3^\circ$. Relations~\emph{\eqref{g0=underline_g_relat}} are satisfied, i.~e., $g^0 = \underline{g}$.
\emph{(}If $m = n$, this is the case.\emph{)}

\noindent
    Then $\widehat{N} (\boldsymbol{\theta}) = 0$ for any $\boldsymbol{\theta} \in \mathbb{S}^{d-1}$.
\end{proposition}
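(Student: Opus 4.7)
The plan is to verify each of the three sufficient conditions separately, working from the explicit formulas \eqref{N(theta)}--\eqref{L(theta)}. Conditions $2^\circ$ and $3^\circ$ can be handled by direct calculation of the corrector $\Lambda$ (showing that either $\Lambda$ itself or the associated $\widetilde{g}$ is trivial enough to make $L(\boldsymbol{\theta})$ vanish), while condition $1^\circ$ needs an additional symmetry argument via complex conjugation.

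For condition $2^\circ$, I would rewrite \eqref{g0=overline_g_relat} as the matrix identity $b(\mathbf{D})^* g(\mathbf{x}) = 0$. Then the function $\Lambda \equiv 0$ manifestly solves \eqref{equation_for_Lambda} together with the zero-mean normalization, so by uniqueness $\Lambda = 0$; the integrand in \eqref{L(theta)} is zero, hence $L(\boldsymbol{\theta}) = 0$ and $\widehat{N}(\boldsymbol{\theta}) = 0$.

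For condition $3^\circ$, I would collect \eqref{g0=underline_g_relat} into the matrix identity $g(\mathbf{x})^{-1} = L^0 + b(\mathbf{D})W(\mathbf{x})$, where $W = [\mathbf{w}_1 \mid \cdots \mid \mathbf{w}_m]$ and $L^0 = [\mathbf{l}_1^0 \mid \cdots \mid \mathbf{l}_m^0]$; averaging gives $L^0 = \underline{g}^{-1} = (g^0)^{-1}$, so $b(\mathbf{D})W = g^{-1} - (g^0)^{-1}$. The key guess is the ansatz $\Lambda(\mathbf{x}) := W(\mathbf{x}) g^0 + C$, where the constant matrix $C$ is chosen so that $\int_\Omega \Lambda \, d\mathbf{x} = 0$ (this adjustment does not affect $b(\mathbf{D})\Lambda$). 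A one-line computation then gives
\[
g(\mathbf{x})\bigl(b(\mathbf{D})\Lambda + \mathbf{1}_m\bigr) = g\bigl((g^{-1}-(g^0)^{-1})g^0 + \mathbf{1}_m\bigr) = g \cdot g^{-1}g^0 = g^0,
\]
so $\widetilde{g}(\mathbf{x}) \equiv g^0$ is constant, in particular $b(\mathbf{D})^* \widetilde{g} = 0$, confirming that our $\Lambda$ is the corrector from \eqref{equation_for_Lambda}. With $\widetilde{g}$ constant, the outer factors in \eqref{L(theta)} can be pulled out of the integral, leaving $L(\boldsymbol{\theta}) = \langle \Lambda \rangle^{*} b(\boldsymbol{\theta})^* g^0 + (g^0)^* b(\boldsymbol{\theta}) \langle \Lambda \rangle = 0$ by the zero-mean normalization; thus $\widehat{N}(\boldsymbol{\theta}) = 0$.

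For condition $1^\circ$, I would argue indirectly. Here $n=1$, so Remark~\ref{abstr_N_remark}($2^\circ$) already gives $\widehat{N}_*(\boldsymbol{\theta}) = 0$, and Remark~\ref{abstr_N_remark}($1^\circ$) together with \eqref{hatA_eigenvalues_series} reduces matters to showing that the third-order coefficient $\widehat{\mu}(\boldsymbol{\theta})$ vanishes. For this I would use the symmetry from the real-valued coefficients: since $b$ has real entries and $g$ is real symmetric, the identity $\overline{\mathbf{D}u} = -\mathbf{D}\overline{u}$ gives $\overline{\widehat{\mathcal{A}}(\mathbf{k}) u} = \widehat{\mathcal{A}}(-\mathbf{k})\overline{u}$, so $\sigma(\widehat{A}(t,\boldsymbol{\theta})) = \sigma(\widehat{A}(t,-\boldsymbol{\theta}))$, and by simplicity of the branch at $t=0$ (ensured by $n=1$) we get $\widehat{\lambda}(t,\boldsymbol{\theta}) = \widehat{\lambda}(t,-\boldsymbol{\theta})$. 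Combined with the tautology $\widehat{\lambda}(-t,\boldsymbol{\theta}) = \widehat{\lambda}(t,-\boldsymbol{\theta})$ (both fiber operators are $\mathcal{A}(-t\boldsymbol{\theta})$), this forces $\widehat{\lambda}(t,\boldsymbol{\theta})$ to be even in $t$, so $\widehat{\mu}(\boldsymbol{\theta}) = 0$ and hence $\widehat{N}(\boldsymbol{\theta}) = 0$.

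The main obstacle is case $3^\circ$: while the final computation is essentially two lines, discovering the correct ansatz $\Lambda = Wg^0 + C$ (for which $\widetilde{g}$ collapses to a constant) is the only substantive step. Case $2^\circ$ is immediate, and in case $1^\circ$ the symmetry bookkeeping is routine once one pins down the action of complex conjugation on $\mathbf{D} = -i\nabla$.
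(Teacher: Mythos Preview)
Your proposal is correct in all three cases. The paper does not actually prove this proposition; it is quoted from \cite[Section~4]{BSu3}, so there is no in-paper proof to compare against line by line.

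That said, your arguments for $2^\circ$ and $3^\circ$ are exactly the ones used in \cite{BSu3}: in $2^\circ$ the condition $b(\mathbf{D})^* g=0$ forces $\Lambda\equiv 0$, and in $3^\circ$ the ansatz $\Lambda = W g^0 + C$ makes $\widetilde{g}\equiv g^0$ constant, so in both cases $L(\boldsymbol{\theta})=0$. For $1^\circ$ your route (anti-unitary conjugation gives $\sigma(\widehat{\mathcal A}(\mathbf{k}))=\sigma(\widehat{\mathcal A}(-\mathbf{k}))$, hence the unique branch $\widehat{\lambda}(t,\boldsymbol{\theta})$ is even in $t$) is a clean alternative to the argument implicit in the paper via Proposition~\ref{N_0=0_proposit} and Remark~\ref{abstr_N_remark}($2^\circ$): for $n=1$ the single embryo in $\widehat{\mathfrak N}$ is a constant scalar and can be taken real, so $\widehat{\mu}(\boldsymbol{\theta})=0$ and $\widehat{N}_*(\boldsymbol{\theta})=0$. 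The two arguments are essentially the same symmetry viewed at the level of the spectrum versus at the level of the eigenvector.
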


Recall (see Remark~\ref{abstr_N_remark}) that
$\widehat{N} (\boldsymbol{\theta}) = \widehat{N}_0 (\boldsymbol{\theta}) + \widehat{N}_* (\boldsymbol{\theta})$, where the operator $\widehat{N}_0 (\boldsymbol{\theta})$ is diagonal in the basis $\{ \widehat{\omega}_l (\boldsymbol{\theta})\}_{l=1}^n$,
while  the diagonal elements of the operator $\widehat{N}_* (\boldsymbol{\theta})$
are equal to zero. We have
\begin{equation*}
(\widehat{N} (\boldsymbol{\theta}) \widehat{\omega}_l (\boldsymbol{\theta}), \widehat{\omega}_l (\boldsymbol{\theta}))_{L_2 (\Omega)} = (\widehat{N}_0 (\boldsymbol{\theta}) \widehat{\omega}_l (\boldsymbol{\theta}), \widehat{\omega}_l (\boldsymbol{\theta}))_{L_2 (\Omega)} = \widehat{\mu}_l (\boldsymbol{\theta}), \quad l=1, \ldots, n.
\end{equation*}

In~\cite[Subsection~4.3]{BSu3}, the following statement was proved.

\begin{proposition}
    \label{N_0=0_proposit}
    Suppose that $b(\boldsymbol{\theta})$ and $g (\mathbf{x})$~have real entries. Suppose that in the expansions~\emph{\eqref{hatA_eigenvectors_series}}
the    \textquotedblleft embryos\textquotedblright \ $\widehat{\omega}_l (\boldsymbol{\theta}), \; l = 1, \ldots, n,$ can be chosen to be real. Then in~\emph{\eqref{hatA_eigenvalues_series}} we have $\widehat{\mu}_l (\boldsymbol{\theta}) = 0$, $l=1, \ldots, n,$ i.~e.,
    $\widehat{N}_0 (\boldsymbol{\theta}) = 0$.
\end{proposition}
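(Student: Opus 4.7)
The plan is to compute $\widehat{\mu}_l(\boldsymbol{\theta})$ directly from the invariant formulas~\eqref{N(theta)},~\eqref{L(theta)} for $\widehat{N}(\boldsymbol{\theta})$ and to show that the reality assumptions force the resulting quadratic form to vanish on real vectors. By Remark~\ref{abstr_N_remark} and the paragraph preceding the current statement, $\widehat{\mu}_l(\boldsymbol{\theta}) = (\widehat{N}(\boldsymbol{\theta})\widehat{\omega}_l(\boldsymbol{\theta}), \widehat{\omega}_l(\boldsymbol{\theta}))_{L_2(\Omega)}$. Writing $\widehat{\omega}_l(\boldsymbol{\theta}) = |\Omega|^{-1/2}\mathbf{e}_l(\boldsymbol{\theta})$ with $\mathbf{e}_l(\boldsymbol{\theta})\in\mathbb{C}^n$ orthonormal, a short calculation using~\eqref{N(theta)} and~\eqref{Phat_projector} yields
\[
\widehat{\mu}_l(\boldsymbol{\theta}) = \langle L(\boldsymbol{\theta})\mathbf{v}_l, \mathbf{v}_l\rangle_{\mathbb{C}^m},\qquad \mathbf{v}_l := b(\boldsymbol{\theta})\mathbf{e}_l(\boldsymbol{\theta}).
\]
By hypothesis $\mathbf{e}_l(\boldsymbol{\theta})$ may be chosen real, and $b(\boldsymbol{\theta})$ is real, so $\mathbf{v}_l\in\mathbb{R}^m$, and everything reduces to analyzing the matrix $L(\boldsymbol{\theta})$.

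Next I would show that $\Lambda(\mathbf{x})$ is purely imaginary. Since the entries of $b_l$ and of $g$ are real, the operator
\[
b(\mathbf{D})^* g(\mathbf{x}) b(\mathbf{D}) \;=\; -\sum_{l,j} b_l^T \partial_l\bigl(g(\mathbf{x}) b_j \partial_j\,\cdot\,\bigr)
\]
has real coefficients, whereas $b(\mathbf{D})^* g(\mathbf{x})\mathbf{1}_m = -i\sum_l b_l^T \partial_l g(\mathbf{x})$ is purely imaginary. Splitting $\Lambda = \Lambda_R + i\Lambda_I$ with $\Lambda_R,\Lambda_I$ real and matching real and imaginary parts in~\eqref{equation_for_Lambda}, the real part becomes the homogeneous equation for $\Lambda_R$ with zero mean. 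Since on the orthogonal complement of the constants in $\widetilde{H}^1(\Omega;\mathbb{C}^n)$ the form $\mathfrak{a}(0)$ is strictly positive (by~\eqref{rank_alpha_ineq},~\eqref{AO} and the uniform positivity of $g$), the only mean-zero solution is $\Lambda_R \equiv 0$, hence $\Lambda = i\Lambda_I$ with real $\Lambda_I$. Consequently $b(\mathbf{D})\Lambda = \sum_j b_j \partial_j \Lambda_I$ is real, and~\eqref{g_tilde} shows that $\widetilde{g}(\mathbf{x})$ is real-valued.

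Substituting $\Lambda = i\Lambda_I$ into~\eqref{L(theta)}, each of the two terms in the integrand is purely imaginary, and
\[
L(\boldsymbol{\theta}) \;=\; i\,|\Omega|^{-1}\int_{\Omega}\bigl(\widetilde{g}(\mathbf{x})^T b(\boldsymbol{\theta})\Lambda_I(\mathbf{x}) - \Lambda_I(\mathbf{x})^T b(\boldsymbol{\theta})^T \widetilde{g}(\mathbf{x})\bigr)\,d\mathbf{x} \;=:\; i A(\boldsymbol{\theta}),
\]
where the integrand is a real matrix minus its transpose, so $A(\boldsymbol{\theta})$ is real and antisymmetric (this is also consistent with the Hermiticity of $L(\boldsymbol{\theta})$, visible directly from~\eqref{L(theta)}). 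Combining with the first paragraph, for the real vector $\mathbf{v}_l$ one gets
\[
\widehat{\mu}_l(\boldsymbol{\theta}) \;=\; \langle L(\boldsymbol{\theta})\mathbf{v}_l, \mathbf{v}_l\rangle_{\mathbb{C}^m} \;=\; i\,\mathbf{v}_l^T A(\boldsymbol{\theta}) \mathbf{v}_l \;=\; 0,
\]
since a real antisymmetric quadratic form vanishes identically on $\mathbb{R}^m$. Hence $\widehat{\mu}_l(\boldsymbol{\theta}) = 0$ for every $l$, which by Remark~\ref{abstr_N_remark} is the asserted identity $\widehat{N}_0(\boldsymbol{\theta}) = 0$.

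The main technical point is the uniqueness argument in the second paragraph: one has to verify that the zero-mean condition, combined with $\Gamma$-periodicity, singles out a unique solution of~\eqref{equation_for_Lambda}, i.e., that $b(\mathbf{D})^* g(\mathbf{x}) b(\mathbf{D})$ has trivial kernel on mean-zero functions in $\widetilde{H}^1(\Omega;\mathbb{C}^n)$. Once this is in hand, the remainder is straightforward bookkeeping of real and imaginary parts through the formulas for $\Lambda$, $\widetilde{g}$ and $L(\boldsymbol{\theta})$, culminating in the identification $L(\boldsymbol{\theta}) = iA(\boldsymbol{\theta})$ with $A(\boldsymbol{\theta})$ real antisymmetric, from which the vanishing of the real quadratic form is immediate.
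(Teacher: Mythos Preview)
Your argument is correct. The paper itself does not prove this proposition; it merely cites it from~\cite[Subsection~4.3]{BSu3}. Your route---showing from the real-coefficient structure of~\eqref{equation_for_Lambda} that $\Lambda$ is purely imaginary (via uniqueness of the mean-zero periodic solution, which follows from the coercivity in~\eqref{a(k)_form_est} at $\mathbf{k}=0$ on the orthogonal complement of constants), hence $\widetilde{g}$ is real, hence $L(\boldsymbol{\theta})=iA(\boldsymbol{\theta})$ with $A(\boldsymbol{\theta})$ real antisymmetric---is a clean self-contained proof using only the formulas~\eqref{N(theta)}, \eqref{L(theta)} already recorded in the present paper. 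The only step worth stating a bit more explicitly is the conjugation argument: since $b(\mathbf{D})^* g(\mathbf{x}) b(\mathbf{D})$ commutes with complex conjugation while the inhomogeneous term $b(\mathbf{D})^* g(\mathbf{x})\mathbf{1}_m$ is odd under it, $-\overline{\Lambda}$ solves the same problem as $\Lambda$, and uniqueness gives $\Lambda=-\overline{\Lambda}$.
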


  In the \textquotedblleft real\textquotedblright \ case under consideration, the germ $\widehat{S} (\boldsymbol{\theta})$
  is a symmetric matrix with real entries. Clearly, if the eigenvalue $\widehat{\gamma}_j (\boldsymbol{\theta})$ of the germ is simple,
then the embryo $\widehat{\omega}_j (\boldsymbol{\theta})$ is defined uniquely up to a phase factor, and we can always choose
$\widehat{\omega}_j (\boldsymbol{\theta})$ to be real.  We arrive at the following corollary.

\begin{corollary}
    \label{S_spec_simple_coroll}
    Suppose that $b(\boldsymbol{\theta})$ and $g (\mathbf{x})$
    have real entries. Suppose that the spectrum of the germ~$\widehat{S} (\boldsymbol{\theta})$ is simple. Then
    $\widehat{N}_0 (\boldsymbol{\theta}) = 0$.
\end{corollary}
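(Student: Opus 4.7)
The plan is to reduce this to Proposition~\ref{N_0=0_proposit} by verifying that, under the simple-spectrum hypothesis, the embryos $\widehat{\omega}_l(\boldsymbol{\theta})$ can indeed be chosen to be real. Once this is established, the corollary is immediate.

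First, I would check that the effective matrix $g^0$ has real entries. The cell problem~\eqref{equation_for_Lambda} determining $\Lambda$ has real coefficients (since $b(\mathbf{D})$ has real constant coefficients and $g(\mathbf{x})$ has real entries), so by taking real parts on both sides we see that the real part of the solution also solves the problem; the uniqueness of the solution in $\widetilde{H}^1(\Omega)$ with zero mean then forces $\Lambda$ to be real. Consequently $\widetilde{g}(\mathbf{x}) = g(\mathbf{x})(b(\mathbf{D})\Lambda(\mathbf{x}) + \mathbf{1}_m)$ is real-valued, and by~\eqref{g0} so is $g^0$. Combined with the hypothesis that $b(\boldsymbol{\theta})$ has real entries, this means $\widehat{S}(\boldsymbol{\theta}) = b(\boldsymbol{\theta})^* g^0 b(\boldsymbol{\theta})$ is a real symmetric (positive definite, by Condition~\ref{nondeg}) $n\times n$ matrix.

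Now, for a real symmetric matrix each simple eigenvalue has a one-dimensional eigenspace, which is spanned by some nonzero real vector (obtained, for instance, as a real basis vector of the real kernel of $\widehat{S}(\boldsymbol{\theta}) - \widehat{\gamma}_l(\boldsymbol{\theta}) I$). Normalizing, we obtain a real unit embryo $\widehat{\omega}_l(\boldsymbol{\theta})$; this is possible for every $l$ precisely because all $\widehat{\gamma}_l(\boldsymbol{\theta})$ are simple. Applying Proposition~\ref{N_0=0_proposit} with this choice of embryos yields $\widehat{\mu}_l(\boldsymbol{\theta}) = 0$ for $l=1,\ldots,n$, i.e., $\widehat{N}_0(\boldsymbol{\theta}) = 0$.

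There is no real obstacle here; the only point deserving care is the verification that $g^0$ is real, which is a standard reality argument for the corrector equation. The corollary is essentially a bookkeeping consequence of Proposition~\ref{N_0=0_proposit} together with the elementary fact that real symmetric matrices with simple spectrum admit real eigenbases.
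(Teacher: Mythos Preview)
Your proof is correct and follows essentially the same approach as the paper: the paper simply observes (in the paragraph immediately preceding the corollary) that in the ``real'' case the germ $\widehat{S}(\boldsymbol{\theta})$ is a real symmetric matrix, so simple eigenvalues yield eigenvectors determined up to a phase factor that can be chosen real, and then invokes Proposition~\ref{N_0=0_proposit}. You supply the additional detail --- the reality of $g^0$ via the reality of $\Lambda$ --- that the paper leaves implicit, but the logical structure is identical.
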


\subsection{Multiplicities of the eigenvalues of the germ}
\label{eigenval_multipl_section}
Considerations of this subsection concern the case where $n\ge 2$. Now we return to the notation of Section~\ref{abstr_cluster_section}.
In general, the number $p(\boldsymbol{\theta})$ of different eigenvalues $\widehat{\gamma}^{\circ}_1 (\boldsymbol{\theta}), \ldots, \widehat{\gamma}^{\circ}_{p(\boldsymbol{\theta})} (\boldsymbol{\theta})$ of the spectral germ $\widehat{S}(\boldsymbol{\theta})$
and their multiplicities depend on the parameter $\boldsymbol{\theta} \in \mathbb{S}^{d-1}$. For a fixed $\boldsymbol{\theta}$ denote by
$\widehat{P}_j (\boldsymbol{\theta})$ the orthogonal projection of $L_2 (\Omega; \mathbb{C}^n)$ onto the eigenspace of the germ
$\widehat{S}(\boldsymbol{\theta})$ corresponding to the eigenvalue $\widehat{\gamma}_j^{\circ} (\boldsymbol{\theta})$. According to~\eqref{abstr_N_invar_repers}, the operators $\widehat{N}_0 (\boldsymbol{\theta})$ and $\widehat{N}_* (\boldsymbol{\theta})$
admit the following invariant representations:
\begin{equation}
\label{N0_invar_repr}
\widehat{N}_0 (\boldsymbol{\theta}) = \sum_{j=1}^{p(\boldsymbol{\theta})} \widehat{P}_j (\boldsymbol{\theta}) \widehat{N} (\boldsymbol{\theta}) \widehat{P}_j (\boldsymbol{\theta}), \\
\quad
\widehat{N}_* (\boldsymbol{\theta}) = \sum_{{1 \le j, l \le p(\boldsymbol{\theta}):\; j \ne l}} \widehat{P}_j (\boldsymbol{\theta}) \widehat{N} (\boldsymbol{\theta}) \widehat{P}_l (\boldsymbol{\theta}).
\end{equation}

In conclusion of this section, we give Example~8.7 from~\cite{Su4}.

\begin{example}[\cite{Su4}]
    \label{elast_exmpl_N0_ne_0}
Let $d=2$, $n=2$, and $m=3$. Let $\Gamma = (2\pi \mathbb{Z})^2$. Suppose that
    \begin{gather*}
    b(\mathbf{D})= \begin{pmatrix}
    D_1 & 0 \\
    \frac{1}{2}D_2 & \frac{1}{2}D_1 \\
    0 & D_2
    \end{pmatrix}, \quad
    g(\mathbf{x})= \begin{pmatrix}
    1 & 0 & 0 \\
    0 & g_2 (x_1) & 0\\
    0 & 0 & g_3 (x_1)
    \end{pmatrix},
    \end{gather*}
where $g_2(x_1)$ and $g_3(x_1)$~are $(2\pi)$-periodic bounded and positive definite functions of $x_1$, and $\underline{g_2}=4$, $\overline{g_3}=1$.
Then the effective matrix is given by~$g^0 = \diag \{1, 4, 1\}$.  The spectral germ takes the form
$\widehat{S} (\boldsymbol{\theta}) = \begin{pmatrix}
1 & \theta_1 \theta_2  \\ \theta_1 \theta_2 & 1 \end{pmatrix}$, $\boldsymbol{\theta} \in \mathbb{S}^1$.
The eigenvalues of the germ $\widehat{\gamma}_1(\boldsymbol{\theta}) = 1 + \theta_1 \theta_2$ and $\widehat{\gamma}_2(\boldsymbol{\theta}) = 1 - \theta_1 \theta_2$
coincide at four points~$\boldsymbol{\theta}^{(1)} = (0, 1)$, $\boldsymbol{\theta}^{(2)} = (0, -1)$, $\boldsymbol{\theta}^{(3)} = (1, 0)$,  $\boldsymbol{\theta}^{(4)} = (-1, 0)$.
The operator $\widehat{N}(\boldsymbol{\theta})$ is given by
\begin{equation*}
\widehat{N}(\boldsymbol{\theta})  = \frac{1}{2} \theta_2^3 \begin{pmatrix}
0 & \overline{\Lambda_{22}^* g_3} \\
\overline{\Lambda_{22} g_3} & 0
\end{pmatrix},
\end{equation*}
where $\Lambda_{22}(x_1)$~is the $(2\pi)$-periodic solution of the problem
$\frac{1}{2} D_1 \Lambda_{22}(x_1) + 1 = 4 (g_2 (x_1))^{-1}$,  $\overline{\Lambda_{22}} =0$.
Assume that~$\overline{\Lambda_{22} g_3} \ne 0$.
{\rm (}For instance, if $g_2(x_1) = 4\left( 1 + \tfrac{1}{2} \sin x_1\right)^{-1}$ and $g_3(x_1) = 1 + \tfrac{1}{2} \cos x_1$,
all the conditions are fulfilled.{\rm )}
For $\boldsymbol{\theta} \ne \boldsymbol{\theta}^{(j)}$, $j = 1, 2, 3, 4$, we have
$\widehat{\gamma}_1 (\boldsymbol{\theta}) \ne \widehat{\gamma}_2 (\boldsymbol{\theta})$ and then  $\widehat{N}(\boldsymbol{\theta}) = \widehat{N}_*(\boldsymbol{\theta}) \ne 0$. At the points $\boldsymbol{\theta}^{(1)}$ and $\boldsymbol{\theta}^{(2)}$ we have
$\widehat{\gamma}_1 (\boldsymbol{\theta}^{(j)}) = \widehat{\gamma}_2 (\boldsymbol{\theta}^{(j)}) = 1$ and
$\widehat{N}(\boldsymbol{\theta}^{(j)}) = \widehat{N}_0(\boldsymbol{\theta}^{(j)}) \ne 0$, $j = 1, 2$.
The numbers $\pm \mu$, where $\mu = \frac{1}{2} \left| \overline{\Lambda_{22} g_3} \right| $,
 are the eigenvalues of the operator~$\widehat{N}(\boldsymbol{\theta}^{(j)})$ for~$j = 1, 2$.
Then $\widehat{\lambda}_1 (t, \boldsymbol{\theta}^{(j)}) = t^2 + \mu t^3 + \ldots$ and
$\widehat{\lambda}_2 (t, \boldsymbol{\theta}^{(j)}) = t^2 - \mu t^3 + \ldots$ for $j = 1,2$.
In this case, the embryos~$\widehat{\omega}_1 (\boldsymbol{\theta}^{(j)})$, $\widehat{\omega}_2 (\boldsymbol{\theta}^{(j)})$
 in the expansions~\eqref{hatA_eigenvectors_series}  cannot be real {\rm (}see Proposition~\emph{\ref{N_0=0_proposit})}.
 At the points $\boldsymbol{\theta}^{(3)}$ and $\boldsymbol{\theta}^{(4)}$  we have
$\widehat{\gamma}_1 (\boldsymbol{\theta}^{(j)}) = \widehat{\gamma}_2 (\boldsymbol{\theta}^{(j)}) = 1$ and
$\widehat{N}(\boldsymbol{\theta}^{(j)}) = 0$, $j = 3, 4$.
\end{example}

\section{Approximations of the operators $\cos(\varepsilon^{-1} \tau \widehat{\mathcal{A}}(\mathbf{k})^{1/2})$ and $\widehat{\mathcal{A}}(\mathbf{k})^{-1/2}\sin(\varepsilon^{-1} \tau \widehat{\mathcal{A}}(\mathbf{k})^{1/2})$}

\subsection{The general case}

Consider the operator $\mathcal{H}_0 = -\Delta$ in $L_2 (\mathbb{R}^d; \mathbb{C}^n)$.
Under the Gelfand transformation, this operator expands in the direct integral of the operators
$\mathcal{H}_0 (\mathbf{k})$ acting in $L_2 (\Omega; \mathbb{C}^n)$ and given by
the differential expression $| \mathbf{D} + \mathbf{k} |^2$  with periodic boundary conditions.
Denote
\begin{equation}
\label{R(k, epsilon)}
\mathcal{R}(\mathbf{k}, \varepsilon) := \varepsilon^2 (\mathcal{H}_0 (\mathbf{k}) + \varepsilon^2 I)^{-1}.
\end{equation}
Obviously,
\begin{equation}
\label{R_P}
\mathcal{R}(\mathbf{k}, \varepsilon)^{s/2}\widehat{P} = \varepsilon^s (t^2 + \varepsilon^2)^{-s/2} \widehat{P}, \qquad s > 0.
\end{equation}

Note that for $ | \mathbf{k} | > \widehat{t}^{\,0}$ we have
\begin{equation}
\label{R_hatP_est}
\| \mathcal{R}(\mathbf{k}, \varepsilon)^{s/2}\widehat{P} \|_{L_2 (\Omega) \to L_2 (\Omega)} \le (\widehat{t}^{\,0})^{-s} \varepsilon^s, \quad \varepsilon > 0, \; \mathbf{k} \in \widetilde{\Omega}, \; | \mathbf{k} | > \widehat{t}^{\,0}.
\end{equation}
Next, using the discrete Fourier transformation, we see that
\begin{equation}
\label{R(k,eps)(I-P)_est}
\| \mathcal{R}(\mathbf{k}, \varepsilon)^{s/2} (I - \widehat{P}) \|_{L_2(\Omega) \to L_2 (\Omega) }  = \sup_{0 \ne \mathbf{b} \in \widetilde{\Gamma}} \varepsilon^s (|\mathbf{b} + \mathbf{k}|^2 + \varepsilon^2)^{-s/2} \le r_0^{-s} \varepsilon^s,\quad \varepsilon > 0,\ \mathbf{k} \in \widetilde{\Omega}.
\end{equation}

Denote
\begin{align}
\label{J(k,tau)}
\widehat{J}_1(\mathbf{k}, \tau) &:=
\cos ( \tau \widehat{\mathcal{A}}(\mathbf{k})^{1/2})  - \cos (\tau \widehat{\mathcal{A}}^0(\mathbf{k})^{1/2}),
\\
\label{J2(k,tau)}
\widehat{J}_2(\mathbf{k}, \tau) &:=
\widehat{\mathcal{A}}(\mathbf{k})^{-1/2} \sin ( \tau \widehat{\mathcal{A}}(\mathbf{k})^{1/2})  - \widehat{\mathcal{A}}^0(\mathbf{k})^{-1/2} \sin (\tau \widehat{\mathcal{A}}^0(\mathbf{k})^{1/2}).
\end{align}

We will apply theorems of Section~\ref{abstr_aprox_thrm_section} to the operator
 $\widehat{A}(t, \boldsymbol{\theta}) = \widehat{\mathcal{A}}(\mathbf{k})$.
 Due to Remark~\ref{rem_coeff}, we may trace the dependence of the constants in estimates on the problem data. Note that $\widehat{c}_*$, $\widehat{\delta}$, $\widehat{t}^{\,0}$
 do not depend on $\boldsymbol{\theta}$ (see \eqref{hatc_*}--\eqref{hatt0_fixation}).
 By \eqref{hatX_1_estmate}, the norm $\| \widehat{X}_1 (\boldsymbol{\theta}) \|$ can be replaced by $\alpha_1^{1/2} \| g \|_{L_{\infty}}^{1/2}$.
 Hence, the constants in Theorems~\ref{abstr_cos_general_thrm} and~\ref{abstr_cos_enchanced_thrm_1} (as applied to the operator $\widehat{\mathcal{A}}(\mathbf{k})$)
 will be independent of $\boldsymbol{\theta}$. They depend only on $\alpha_0$, $\alpha_1$, $\|g\|_{L_\infty}$, $\|g^{-1}\|_{L_\infty}$, and $r_0$.

\begin{theorem}
    \label{cos_general_thrm}
For $\tau \in \mathbb{R}$, $\varepsilon > 0$, and $\mathbf{k} \in \widetilde{\Omega}$ we have
 \begin{align*}
    \| \widehat{J}_1(\mathbf{k}, \varepsilon^{-1}\tau)
\mathcal{R}(\mathbf{k}, \varepsilon)\|_{L_2(\Omega) \to L_2 (\Omega) }  &\le \widehat{\mathcal{C}}_1 (1+ |\tau|) \varepsilon,
\\
    \| \widehat{J}_2(\mathbf{k}, \varepsilon^{-1}\tau)
\mathcal{R}(\mathbf{k}, \varepsilon)^{1/2} \|_{L_2(\Omega) \to L_2 (\Omega) }  &\le \widehat{\mathcal{C}}_2(1 + |\tau|).
    \end{align*}
The constants $\widehat{\mathcal{C}}_1$ and $\widehat{\mathcal{C}}_2$ depend only on
$\alpha_0$, $\alpha_1$, $\|g\|_{L_\infty}$, $\|g^{-1}\|_{L_\infty}$, and $r_0$.
\end{theorem}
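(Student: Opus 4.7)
The plan is to reduce the claim, via the identification $\widehat{\mathcal{A}}(\mathbf{k}) = \widehat{A}(t,\boldsymbol{\theta})$ with $\mathbf{k} = t\boldsymbol{\theta}$, to the abstract Theorem~\ref{abstr_cos_general_thrm}, together with a two-fold decomposition: the splitting $I = \widehat{P} + (I-\widehat{P})$ of the identity and a case distinction on whether $|\mathbf{k}|$ lies in the threshold region $|\mathbf{k}| \le \widehat{t}^{\,0}$ or in its complement in $\widetilde{\Omega}$. The essential point gluing the abstract and concrete setups is that $\widehat{P}$ commutes with $\mathcal{H}_0(\mathbf{k})$ (constants are eigenfunctions with eigenvalue $t^2$), so $\mathcal{R}(\mathbf{k},\varepsilon)^{s/2}\widehat{P} = \varepsilon^{s}(t^2+\varepsilon^2)^{-s/2}\widehat{P}$ by~\eqref{R_P}, and by~\eqref{hatS_P=hatA^0_P} the effective operator $\widehat{\mathcal{A}}^0(\mathbf{k})$ acts on the range of $\widehat{P}$ as multiplication by $t^2 \widehat{S}(\boldsymbol{\theta})$; thus the $\widehat{P}$-components of $\cos(\tau \widehat{\mathcal{A}}^0(\mathbf{k})^{1/2})$ and $\widehat{\mathcal{A}}^0(\mathbf{k})^{-1/2}\sin(\tau \widehat{\mathcal{A}}^0(\mathbf{k})^{1/2})$ coincide with the germ-based objects $\cos(\tau(t^2\widehat{S}(\boldsymbol{\theta}))^{1/2}\widehat{P})\widehat{P}$ and $(t^2\widehat{S}(\boldsymbol{\theta}))^{-1/2}\sin(\tau(t^2\widehat{S}(\boldsymbol{\theta}))^{1/2}\widehat{P})\widehat{P}$ of the abstract Theorem~\ref{abstr_cos_general_thrm}.

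The main step is the threshold case. On the range of $\widehat{P}$ with $|\mathbf{k}| \le \widehat{t}^{\,0}$, I apply Theorem~\ref{abstr_cos_general_thrm} directly to the family $\widehat{A}(t,\boldsymbol{\theta})$ and multiply by the scalar factor from~\eqref{R_P}. As explained in Section~5, this family fits the abstract scheme with constants $\widehat{c}_*, \widehat{\delta}, \widehat{t}^{\,0}$ independent of $\boldsymbol{\theta}$ (see~\eqref{hatc_*}--\eqref{hatt0_fixation}) and with $\|\widehat{X}_1(\boldsymbol{\theta})\|$ controlled uniformly by~\eqref{hatX_1_estmate}, so the inherited constants $C_1, C_5, C_{15}, C_{16}$ depend only on $\alpha_0, \alpha_1, \|g\|_{L_\infty}, \|g^{-1}\|_{L_\infty}, r_0$. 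This immediately yields
\begin{equation*}
\|\widehat{J}_1(\mathbf{k},\varepsilon^{-1}\tau)\mathcal{R}(\mathbf{k},\varepsilon)\widehat{P}\| \le (C_1 + C_5|\tau|)\varepsilon, \qquad \|\widehat{J}_2(\mathbf{k},\varepsilon^{-1}\tau)\mathcal{R}(\mathbf{k},\varepsilon)^{1/2}\widehat{P}\| \le C_{15} + C_{16}|\tau|.
\end{equation*}

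The three residual pieces I handle by elementary bounds. For the $(I-\widehat{P})$-components, the trivial estimate $\|\widehat{J}_1\| \le 2$ paired with~\eqref{R(k,eps)(I-P)_est} and the geometric-mean interpolation $\min(r_0^{-2}\varepsilon^2, 1) \le r_0^{-1}\varepsilon$ gives $\|\widehat{J}_1\mathcal{R}(I-\widehat{P})\| \le 2r_0^{-1}\varepsilon$; the sine part is then obtained from the identity $\widehat{J}_2(\mathbf{k},\tau) = \int_0^\tau \widehat{J}_1(\mathbf{k},s)\,ds$ (coming from $A^{-1/2}\sin(\tau A^{1/2}) = \int_0^\tau \cos(sA^{1/2})\,ds$), which transfers the previous bound under an integral in $s$ and produces the $\varepsilon$-independent estimate $2r_0^{-1}|\tau|$. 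For the $\widehat{P}$-component with $|\mathbf{k}| > \widehat{t}^{\,0}$, the cosine part follows from $\|\widehat{J}_1\| \le 2$ combined with~\eqref{R_hatP_est} (again interpolated with $\|\mathcal{R}\widehat{P}\| \le 1$), while for the sine part the uniform lower bound $\widehat{\mathcal{A}}(\mathbf{k}) \ge \widehat{c}_* t^2 I$ from~\eqref{A(k)_nondegenerated_and_c_*}, its analog for $\widehat{\mathcal{A}}^0(\mathbf{k})$ (immediate from the positive definiteness of $g^0$ and~\eqref{rank_alpha_ineq}), and the pointwise inequality $|x^{-1/2}\sin(\tau x^{1/2})| \le x^{-1/2}$ yield $\|\widehat{J}_2\mathcal{R}^{1/2}\widehat{P}\| \le C$. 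Summing the four contributions produces the asserted inequalities with constants $\widehat{\mathcal{C}}_1, \widehat{\mathcal{C}}_2$ depending only on the quantities listed. I do not expect any genuine obstacle here: the crucial $\boldsymbol{\theta}$-uniformity of the abstract constants has already been arranged in Section~5, and only bookkeeping of the four pieces remains.
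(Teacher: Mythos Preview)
Your proposal is correct and follows essentially the same route as the paper, which simply states that Theorem~\ref{cos_general_thrm} is deduced from Theorem~\ref{abstr_cos_general_thrm} together with relations~\eqref{hatS_P=hatA^0_P} and \eqref{R_P}--\eqref{R(k,eps)(I-P)_est}; you have spelled out the four-piece bookkeeping that this entails. The only minor deviation is your treatment of the $(I-\widehat{P})$ piece for $\widehat{J}_2$: you use the integral identity $\widehat{J}_2(\mathbf{k},\tau)=\int_0^\tau \widehat{J}_1(\mathbf{k},s)\,ds$, whereas in the closely analogous proof of Theorem~\ref{cos_enchanced_thrm_11} the paper instead bounds $\|\widehat{\mathcal{A}}(\mathbf{k})^{-1/2}(I-\widehat{P})\|$ directly via the decomposition through $\widehat{F}(\mathbf{k})$; both arguments are valid and equally short.
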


 Theorem \ref{cos_general_thrm} is deduced from Theorem~\ref{abstr_cos_general_thrm} and relations \eqref{hatS_P=hatA^0_P}, \eqref{R_P}--\eqref{R(k,eps)(I-P)_est}.
 This result was known before (see \cite[Theorem 7.2]{BSu5} and \cite[Subsection 7.3]{M}).

\subsection{The case where~$\widehat{N}(\boldsymbol{\theta}) = 0$}
Now, we improve the result under the additional assumptions.

\begin{theorem}
    \label{cos_enchanced_thrm_11}
Let   $\widehat{N}(\boldsymbol{\theta})$ be the operator defined by~\emph{(\ref{N(theta)})}.
Suppose that~$\widehat{N}(\boldsymbol{\theta}) = 0$ for any $\boldsymbol{\theta} \in \mathbb{S}^{d-1}$.
Then for $\tau \in \mathbb{R}$, $\varepsilon > 0$, and $\mathbf{k} \in \widetilde{\Omega}$ we have
    \begin{align}
    \label{th8.2est1}
    \| \widehat{J}_1(\mathbf{k}, \varepsilon^{-1}\tau)
\mathcal{R}(\mathbf{k}, \varepsilon)^{3/4}\|_{L_2(\Omega) \to L_2 (\Omega) }
&\le \widehat{\mathcal{C}}_3(1 + |\tau|) \varepsilon,
\\
\label{th8.2est2}
    \| \widehat{J}_2(\mathbf{k}, \varepsilon^{-1}\tau)
\mathcal{R}(\mathbf{k}, \varepsilon)^{1/4}\|_{L_2(\Omega) \to L_2 (\Omega) }
&\le \widehat{\mathcal{C}}_4(1 + |\tau|).
    \end{align}
The constants $\widehat{\mathcal{C}}_3$ and $\widehat{\mathcal{C}}_4$
depend only on $\alpha_0$, $\alpha_1$, $\|g\|_{L_\infty}$, $\|g^{-1}\|_{L_\infty}$, and $r_0$.
\end{theorem}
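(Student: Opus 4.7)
The plan is to apply the abstract Theorem~\ref{abstr_cos_enchanced_thrm_1} to the family $\widehat{A}(t,\boldsymbol{\theta})=\widehat{\mathcal{A}}(\mathbf{k})$ with $\mathbf{k}=t\boldsymbol{\theta}$, $t=|\mathbf{k}|$, and then to assemble the $\widehat{P}$- and $(I-\widehat{P})$-parts. The hypothesis $\widehat{N}(\boldsymbol{\theta})=0$ for every $\boldsymbol{\theta}\in\mathbb{S}^{d-1}$ is exactly the abstract condition $N=0$; the parameters $\widehat{c}_*$, $\widehat{\delta}$, $\widehat{t}^{\,0}$, and $\|\widehat{X}_1(\boldsymbol{\theta})\|$ are uniform in $\boldsymbol{\theta}$ by \eqref{hatc_*}--\eqref{hatt0_fixation} and \eqref{hatX_1_estmate}, so the resulting abstract constants depend only on $\alpha_0$, $\alpha_1$, $\|g\|_{L_\infty}$, $\|g^{-1}\|_{L_\infty}$, $r_0$, as claimed.

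Since $\mathcal{R}(\mathbf{k},\varepsilon)$ is a Fourier multiplier it commutes with $\widehat{P}$, and I decompose $\widehat{J}_i\,\mathcal{R}^{s/2}=\widehat{J}_i\widehat{P}\,\mathcal{R}^{s/2}+\widehat{J}_i(I-\widehat{P})\mathcal{R}^{s/2}$ with $s=3/2$ for $i=1$ and $s=1/2$ for $i=2$. For $|\mathbf{k}|\le\widehat{t}^{\,0}$ on the $\widehat{P}$-part, Theorem~\ref{abstr_cos_enchanced_thrm_1} combined with \eqref{hatS_P=hatA^0_P} (rewriting $(t^2\widehat{S}(\boldsymbol{\theta}))\widehat{P}$ as $\widehat{\mathcal{A}}^0(\mathbf{k})\widehat{P}$) and \eqref{R_P} yields precisely \eqref{th8.2est1}, \eqref{th8.2est2} on $\widehat{P}L_2(\Omega)$. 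For $|\mathbf{k}|>\widehat{t}^{\,0}$ everything is uniform: the bounds \eqref{R_hatP_est}, \eqref{R(k,eps)(I-P)_est} provide smoothing of order~$\varepsilon^{s}$, while $\widehat{\mathcal{A}}(\mathbf{k})$ and $\widehat{\mathcal{A}}^0(\mathbf{k})$ are bounded below by a constant of order $(\widehat{t}^{\,0})^{2}$, so $\widehat{J}_1$ is trivially bounded by $2$ and $\widehat{J}_2$ by an absolute constant. For the $(I-\widehat{P})$-part of the cosine estimate with $|\mathbf{k}|\le\widehat{t}^{\,0}$, the bound $\|\widehat{J}_1\|\le 2$ together with \eqref{R(k,eps)(I-P)_est} contributes at most $O(\varepsilon^{3/2})$, absorbed by~$\varepsilon$ when $\varepsilon\le 1$; for $\varepsilon\ge 1$ the trivial estimate $\|\widehat{J}_1\mathcal{R}^{3/4}\|\le 2\le 2\varepsilon$ closes the cosine argument.

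The main obstacle is the $(I-\widehat{P})$-part of the sine estimate on $|\mathbf{k}|\le\widehat{t}^{\,0}$: the naive bound $\|\widehat{J}_2(\mathbf{k},\varepsilon^{-1}\tau)\|\le 2\varepsilon^{-1}|\tau|$ paired with $\|\mathcal{R}^{1/4}(I-\widehat{P})\|\le r_0^{-1/2}\varepsilon^{1/2}$ produces an inadmissible factor $\varepsilon^{-1/2}$. I circumvent this by proving that $\|\widehat{J}_2(\mathbf{k},\varepsilon^{-1}\tau)(I-\widehat{P})\|$ is bounded uniformly in $\mathbf{k}$, $\tau$, $\varepsilon$. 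The effective summand is handled by observing that $\widehat{\mathcal{A}}^0(\mathbf{k})$ is a Fourier multiplier commuting with $\widehat{P}$ and satisfies $\widehat{\mathcal{A}}^0(\mathbf{k})\ge C\,r_0^{2}\,I$ on $(I-\widehat{P})L_2(\Omega)$ (using $|\mathbf{k}+\mathbf{b}|\ge r_0$ for $\mathbf{k}\in\widetilde{\Omega}$, $\mathbf{0}\ne\mathbf{b}\in\widetilde{\Gamma}$, together with the uniform positivity of $g^0$ via the Voigt--Reuss bracket \eqref{Voigt_Reuss}); hence $\widehat{\mathcal{A}}^0(\mathbf{k})^{-1/2}\sin(\cdot)(I-\widehat{P})$ is uniformly bounded. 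For the unperturbed summand I decompose $I-\widehat{P}=(I-\widehat{F}(\mathbf{k}))+(\widehat{F}(\mathbf{k})-\widehat{P})$. On $(I-\widehat{F}(\mathbf{k}))L_2(\Omega)$ the operator $\widehat{\mathcal{A}}(\mathbf{k})$ commutes with $\widehat{F}(\mathbf{k})$ and is bounded below by $3\widehat{\delta}$, so the corresponding piece of the sine operator is controlled by $(3\widehat{\delta})^{-1/2}$. On the range of $\widehat{F}(\mathbf{k})-\widehat{P}$ the decisive cancellation occurs between the threshold bound $\|\widehat{F}(\mathbf{k})-\widehat{P}\|\le \widehat{C}_1|\mathbf{k}|$ from \eqref{abstr_F(t)_threshold_1} and the lower bound $\widehat{\mathcal{A}}(\mathbf{k})\ge\widehat{c}_*|\mathbf{k}|^{2} I$ from \eqref{A(k)_nondegenerated_and_c_*} (yielding $\|\widehat{\mathcal{A}}(\mathbf{k})^{-1/2}\sin(\cdot)\|\le\widehat{c}_*^{-1/2}|\mathbf{k}|^{-1}$); the singular factor $|\mathbf{k}|^{-1}$ is absorbed by $|\mathbf{k}|$, giving a bound uniform in $\mathbf{k}$. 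Combining this uniform estimate with $\|\mathcal{R}^{1/4}\|\le 1$ completes the proof of \eqref{th8.2est2}, and the final constants $\widehat{\mathcal{C}}_3$, $\widehat{\mathcal{C}}_4$ depend only on the allowed data since all ingredients do.
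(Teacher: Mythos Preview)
Your proof is correct and follows essentially the same route as the paper: apply Theorem~\ref{abstr_cos_enchanced_thrm_1} on the $\widehat{P}$-part for $|\mathbf{k}|\le \widehat{t}^{\,0}$ via \eqref{hatS_P=hatA^0_P} and \eqref{R_P}, and for the $(I-\widehat{P})$-part of $\widehat{J}_2$ use the splitting $I-\widehat{P}=\widehat{F}(\mathbf{k})^\perp+(\widehat{F}(\mathbf{k})-\widehat{P})$ together with \eqref{abstr_F(t)_threshold_1} and \eqref{A(k)_nondegenerated_and_c_*}, exactly as in the paper. The only cosmetic difference is your $\varepsilon\le 1$ versus $\varepsilon\ge 1$ case split for the cosine $(I-\widehat{P})$-contribution; the paper avoids this by noting directly that $\varepsilon^{3/2}(a^2+\varepsilon^2)^{-3/4}\le a^{-1}\varepsilon$ for all $a,\varepsilon>0$ (which is just weighted AM--GM), turning the $r_0^{-3/2}\varepsilon^{3/2}$ and $(\widehat{t}^{\,0})^{-3/2}\varepsilon^{3/2}$ bounds into $r_0^{-1}\varepsilon$ and $(\widehat{t}^{\,0})^{-1}\varepsilon$ without splitting.
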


\begin{proof}
From \eqref{abstr_cos_enchanced_est_1}, \eqref{hatS_P=hatA^0_P}, and~\eqref{R_P}
it follows that for $|{\mathbf k}| \le \widehat{t}^{\,0}$ the term
$ \| \widehat{J}_1(\mathbf{k}, \varepsilon^{-1}\tau) \mathcal{R}(\mathbf{k}, \varepsilon)^{3/4} \widehat{P}\|$
satisfies the required estimate of the form  \eqref{th8.2est1}.
By \eqref{R_hatP_est}, for $\mathbf{k} \in \widetilde{\Omega}$ and $|{\mathbf k}| > \widehat{t}^{\,0}$ this term does not
exceed $2 (\widehat{t}^{\,0})^{-1}\varepsilon$.
Finally, by \eqref{R(k,eps)(I-P)_est}, the term
$ \| \widehat{J}_1(\mathbf{k}, \varepsilon^{-1}\tau) \mathcal{R}(\mathbf{k}, \varepsilon)^{3/4} (I - \widehat{P})\|$
does not exceed $2 r_0^{-1} \varepsilon$ for $\mathbf{k} \in \widetilde{\Omega}$. We arrive at \eqref{th8.2est1}.

Let us check \eqref{th8.2est2}.
Relations \eqref{abstr_sin_enchanced_est_1}, \eqref{hatS_P=hatA^0_P}, and~\eqref{R_P}
imply the required estimate for the term
$ \| \widehat{J}_2(\mathbf{k}, \varepsilon^{-1}\tau) \mathcal{R}(\mathbf{k}, \varepsilon)^{1/4} \widehat{P}\|$
for $|{\mathbf k}| \le \widehat{t}^{\,0}$.

Next, in order to estimate the term
$ \| \widehat{J}_2(\mathbf{k}, \varepsilon^{-1}\tau) \mathcal{R}(\mathbf{k}, \varepsilon)^{1/4} (I - \widehat{P})\|$
for $|{\mathbf k}| \le \widehat{t}^{\,0}$, we use the identity
$$
\widehat{\mathcal A}({\mathbf k})^{-1/2} (I - \widehat{P}) = \widehat{\mathcal A}({\mathbf k})^{-1/2} \widehat{F}({\mathbf k})^\perp
+ \widehat{\mathcal A}({\mathbf k})^{-1/2} (\widehat{F}({\mathbf k}) - \widehat{P}),
$$
where $\widehat{F}({\mathbf k})$ is the spectral projection of the operator $\widehat{\mathcal A}({\mathbf k})$ for the interval $[0,\widehat{\delta}]$.
By \eqref{abstr_F(t)_threshold_1} and \eqref{A(k)_nondegenerated_and_c_*} (for $\widehat{\mathcal A}({\mathbf k})$),
 it follows that
the norm $\|\widehat{\mathcal A}({\mathbf k})^{-1/2} (I - \widehat{P})\|$ is uniformly bounded for $|{\mathbf k}| \le \widehat{t}^{\,0}$.
The same is true for $\|\widehat{\mathcal A}^0({\mathbf k})^{-1/2} (I - \widehat{P})\|$. Hence,
the term $ \| \widehat{J}_2(\mathbf{k}, \varepsilon^{-1}\tau) \mathcal{R}(\mathbf{k}, \varepsilon)^{1/4} (I - \widehat{P})\|$
does not exceed a constant (independent of $\tau$, $\varepsilon$, and ${\mathbf k}$) for $|{\mathbf k}| \le \widehat{t}^{\,0}$.

Finally, for $\mathbf{k} \in \widetilde{\Omega}$ and $|{\mathbf k}| > \widehat{t}^{\,0}$
the left-hand side of \eqref{th8.2est2} does not exceed $2 \widehat{c}_*^{-1/2} (\widehat{t}^0)^{-1}$ due to
\eqref{A(k)_nondegenerated_and_c_*} (for $\widehat{\mathcal A}({\mathbf k})$) 
and a similar estimate for $\widehat{\mathcal A}^0({\mathbf k})$.
As a result, we obtain \eqref{th8.2est2}.
\end{proof}

\subsection{The case where~$\widehat{N}_0(\boldsymbol{\theta}) = 0$}
\label{ench_approx2_section}

Now, we reject the assumption of Theorem~\ref{cos_enchanced_thrm_11}, but instead we assume that~$\widehat{N}_0(\boldsymbol{\theta}) = 0$
for any~$\boldsymbol{\theta}$.
We would like to apply  Theorem~\ref{abstr_cos_enchanced_thrm_2}.
However, there is an additional difficulty:
the multiplicities of the eigenvalues of the germ~$\widehat{S} (\boldsymbol{\theta})$
 may change at some points~$\boldsymbol{\theta}$.
 Near such points the distance between some pair of different eigenvalues
 tends to zero, and we are not able to choose the parameters $\widehat{c}^{\circ}_{jl}$ and $\widehat{t}^{\,00}_{jl}$ to be independent of~$\boldsymbol{\theta}$. Therefore, we are forced to impose an additional condition.
We have to take care only about those pairs of eigenvalues for which the corresponding term in~the second formula in \eqref{N0_invar_repr}
is not zero.  Now it is more convenient to use the initial enumeration of the eigenvalues of $\widehat{S} (\boldsymbol{\theta})$:
each eigenvalue is repeated according to its multiplicity and
$\widehat{\gamma}_1 (\boldsymbol{\theta}) \le  \ldots  \le \widehat{\gamma}_n (\boldsymbol{\theta})$.
Denote by $\widehat{P}^{(k)} (\boldsymbol{\theta})$ the orthogonal projection of~$L_2 (\Omega; \mathbb{C}^n)$
onto the eigenspace of $\widehat{S} (\boldsymbol{\theta})$ corresponding to the eigenvalue $\widehat{\gamma}_k (\boldsymbol{\theta})$.
Clearly, for each $\boldsymbol{\theta}$ the operator $\widehat{P}^{(k)} (\boldsymbol{\theta})$ coincides with one of the projections
$\widehat{P}_j (\boldsymbol{\theta})$       introduced in Subsection~\ref{eigenval_multipl_section} (but the number $j$ may depend on~$\boldsymbol{\theta}$).

\begin{condition}
    \label{cond9}

    $1^\circ$. $\widehat{N}_0(\boldsymbol{\theta})=0$ for any $\boldsymbol{\theta} \in \mathbb{S}^{d-1}$.
$2^\circ$.  For any pair of indices $(k,l), 1 \le k,l \le n, k \ne l$, such that $\widehat{\gamma}_k (\boldsymbol{\theta}_0) = \widehat{\gamma}_l (\boldsymbol{\theta}_0) $ for some $\boldsymbol{\theta}_0 \in \mathbb{S}^{d-1}$, we have
$\widehat{P}^{(k)} (\boldsymbol{\theta}) \widehat{N} (\boldsymbol{\theta}) \widehat{P}^{(l)} (\boldsymbol{\theta}) = 0$ for any
\hbox{$\boldsymbol{\theta} \in \mathbb{S}^{d-1}$}.
\end{condition}

  Condition $2^\circ$ can be reformulated
    as follows: we assume that, for the
 \textquotedblleft blocks\textquotedblright \ $\widehat{P}^{(k)} (\boldsymbol{\theta}) \widehat{N} (\boldsymbol{\theta}) \widehat{P}^{(l)} (\boldsymbol{\theta})$ of the operator $\widehat{N} (\boldsymbol{\theta})$  that are not
    identically zero, the corresponding branches of the eigenvalues  $\widehat{\gamma}_k (\boldsymbol{\theta})$ and  $\widehat{\gamma}_l (\boldsymbol{\theta})$   do not intersect.

 Obviously, Condition~\ref{cond9} is ensured by the following more restrictive condition.

\begin{condition}
     \label{cond99}

         $1^\circ$.
$\widehat{N}_0(\boldsymbol{\theta})=0$ for any $\boldsymbol{\theta} \in \mathbb{S}^{d-1}$.
$2^\circ$.  The number $p$ of different eigenvalues of the spectral germ $\widehat{S}(\boldsymbol{\theta})$
does not depend on $\boldsymbol{\theta} \in \mathbb{S}^{d-1}$.
        \end{condition}

Under Condition~\ref{cond99},  denote different eigenvalues of the germ enumerated in the increasing order
 by $\widehat{\gamma}^{\circ}_1(\boldsymbol{\theta}), \ldots, \widehat{\gamma}^{\circ}_p(\boldsymbol{\theta})$.
Then their multiplicities $k_1, \ldots, k_p$ do not depend on $\boldsymbol{\theta} \in \mathbb{S}^{d-1}$.

\begin{remark}\label{rem9.5}
$1^\circ$. Assumption $2^\circ$ of Condition~\emph{\ref{cond99}} is a fortiori satisfied, if the spectrum of the germ $\widehat{S}(\boldsymbol{\theta})$
is simple for any $\boldsymbol{\theta} \in \mathbb{S}^{d-1}$.
$2^\circ$. From Corollary {\rm \ref{S_spec_simple_coroll}} it follows that Condition~\emph{\ref{cond99}} is satisfied, if $b(\boldsymbol{\theta})$ and $g(\mathbf{x})$
have real entries, and the spectrum of the germ $\widehat{S}(\boldsymbol{\theta})$ is simple for any  $\boldsymbol{\theta} \in \mathbb{S}^{d-1}$.
\end{remark}

Under Condition~\ref{cond9}, let
$\widehat{\mathcal{K}} := \{ (k,l) \colon 1 \le k,l \le n, \; k \ne l, \;  \widehat{P}^{(k)} (\boldsymbol{\theta}) \widehat{N} (\boldsymbol{\theta}) \widehat{P}^{(l)} (\boldsymbol{\theta}) \not\equiv 0 \}$.
Denote
$\widehat{c}^{\circ}_{kl} (\boldsymbol{\theta}) := \min \{\widehat{c}_*, n^{-1} |\widehat{\gamma}_k (\boldsymbol{\theta}) - \widehat{\gamma}_l (\boldsymbol{\theta})| \}$, $(k,l) \in \widehat{\mathcal{K}}$.
Since $\widehat{S} (\boldsymbol{\theta})$  is continuous in $\boldsymbol{\theta} \in \mathbb{S}^{d-1}$,
then the perturbation theory of discrete spectrum implies that the functions
  $\widehat{\gamma}_j (\boldsymbol{\theta})$~are continuous  on~$\mathbb{S}^{d-1}$.
By Condition~\ref{cond9}($2^\circ$), for $(k,l) \in \widehat{\mathcal{K}}$ we
have~$|\widehat{\gamma}_k (\boldsymbol{\theta}) - \widehat{\gamma}_l (\boldsymbol{\theta})| > 0$ for any $\boldsymbol{\theta} \in \mathbb{S}^{d-1}$, whence
$\widehat{c}^{\circ}_{kl}:= \min_{\boldsymbol{\theta} \in \mathbb{S}^{d-1}} \widehat{c}^{\circ}_{kl} (\boldsymbol{\theta}) > 0$ for $(k,l) \in \widehat{\mathcal{K}}$. We put
\begin{equation}
\label{hatc^circ}
\widehat{c}^{\circ} := \min_{(k,l) \in \widehat{\mathcal{K}}} \widehat{c}^{\circ}_{kl}.
\end{equation}
Clearly, the number~\eqref{hatc^circ}~ is a realization of~\eqref{abstr_c^circ} chosen independently of $\boldsymbol{\theta}$.
Under Condition~\ref{cond9}, the number $\widehat{t}^{\,00}$ subject to~\eqref{abstr_t00}
also can be chosen independently of $\boldsymbol{\theta} \in \mathbb{S}^{d-1}$. Taking~\eqref{hatdelta_fixation} and~\eqref{hatX_1_estmate} into account, we put
\begin{equation*}
\widehat{t}^{\,00} = (8 \beta_2)^{-1} r_0 \alpha_1^{-3/2} \alpha_0^{1/2} \| g\|_{L_{\infty}}^{-3/2} \| g^{-1}\|_{L_{\infty}}^{-1/2} \widehat{c}^{\circ}.
\end{equation*}
The condition $\widehat{t}^{\,00} \le \widehat{t}^{\,0}$
 is valid, since $\widehat{c}^{\circ} \le \| \widehat{S} (\boldsymbol{\theta}) \| \le \alpha_1 \|g\|_{L_{\infty}}$.

\begin{remark}
Unlike $\widehat{t}^{\,0}$ \emph{(}see~\emph{\eqref{hatt0_fixation}}\emph{)}  that is controlled only in terms of
$r_0$, $\alpha_0$, $\alpha_1$, $\|g\|_{L_{\infty}}$, and $\|g^{-1}\|_{L_{\infty}}$, the number $\widehat{t}^{\,00}$
depends on the spectral characteristics of the germ, namely, on the  minimal distance between its different eigenvalues
$\widehat{\gamma}_k (\boldsymbol{\theta})$ and $ \widehat{\gamma}_l (\boldsymbol{\theta})$ \emph{(}where $(k,l)$
runs through $\widehat{\mathcal{K}}$\emph{)}.
\end{remark}

Under Condition~\ref{cond9}, we apply Theorem~\ref{abstr_cos_enchanced_thrm_2} and
deduce the following result, similarly to the proof of Theorem~\ref{cos_enchanced_thrm_11}.
We have to take into account that now (see Remark~\ref{rem_coeff}) the constants
will depend not only on $\alpha_0$, $\alpha_1$, $\|g\|_{L_\infty}$, $\|g^{-1}\|_{L_\infty}$, and $r_0$, but also on $\widehat{c}^{\circ}$ and $n$.

\begin{theorem}
    \label{cos_enchanced_thrm_2}
     Suppose that Condition~\emph{\ref{cond9}} \emph{(}or more restrictive Condition~\emph{\ref{cond99}}\emph{)}
is satisfied. Then for $\tau \in \mathbb{R}$, $\varepsilon > 0$, and $\mathbf{k} \in \widetilde{\Omega}$ we have
     \begin{align*}
     \| \widehat{J}_1(\mathbf{k}, \varepsilon^{-1} \tau)
\mathcal{R}(\mathbf{k}, \varepsilon)^{3/4}\|_{L_2(\Omega) \to L_2 (\Omega) }
\le \widehat{\mathcal{C}}_5(1 + |\tau|) \varepsilon,
\\
     \| \widehat{J}_2(\mathbf{k}, \varepsilon^{-1} \tau)
\mathcal{R}(\mathbf{k}, \varepsilon)^{1/4}\|_{L_2(\Omega) \to L_2 (\Omega) }
\le \widehat{\mathcal{C}}_{6} (1 + |\tau|).
     \end{align*}
The constants $\widehat{\mathcal{C}}_5$ and $\widehat{\mathcal{C}}_{6}$
  depend only on $\alpha_0$, $\alpha_1$, $\|g\|_{L_\infty}$, $\|g^{-1}\|_{L_\infty}$, $r_0$, $n$, and $\widehat{c}^{\circ}$.
\end{theorem}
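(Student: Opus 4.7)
The plan is to follow the same pattern used for Theorem~\ref{cos_enchanced_thrm_11}, with the abstract input now coming from Theorem~\ref{abstr_cos_enchanced_thrm_2} in place of Theorem~\ref{abstr_cos_enchanced_thrm_1}. The first step is to verify that Condition~\ref{cond9} ensures the abstract hypotheses hold uniformly in $\boldsymbol{\theta}\in\mathbb{S}^{d-1}$. Condition~\ref{cond9}($1^\circ$) is exactly the assumption $N_0=0$ for the family $\widehat{A}(t,\boldsymbol{\theta})=\widehat{\mathcal{A}}(t\boldsymbol{\theta})$, while Condition~\ref{cond9}($2^\circ$), as already discussed, guarantees $\widehat{c}^\circ>0$ and allows the threshold $\widehat{t}^{\,00}$ to be chosen independently of $\boldsymbol{\theta}$. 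By Remark~\ref{rem_coeff}, the constants $C'_{12},C_{13},C'_{18},C_{19}$ produced by the abstract theorem depend only on $\alpha_0,\alpha_1,\|g\|_{L_\infty},\|g^{-1}\|_{L_\infty},r_0,n,\widehat{c}^\circ$.

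Next I would split the Brillouin zone into $|\mathbf{k}|\le \widehat{t}^{\,00}$ and $|\mathbf{k}|>\widehat{t}^{\,00}$, and on each region further decompose the operator into its $\widehat{P}$ and $I-\widehat{P}$ pieces. In the inner region, the $\widehat{P}$-part is handled directly by Theorem~\ref{abstr_cos_enchanced_thrm_2}: using~\eqref{hatS_P=hatA^0_P} to identify $(t^2\widehat{S}(\boldsymbol{\theta}))^{1/2}\widehat{P}$ with $\widehat{\mathcal{A}}^0(\mathbf{k})^{1/2}\widehat{P}$, and the identity~\eqref{R_P} to rewrite the smoothing factor, one immediately reads off the two desired estimates for $\widehat{J}_1(\mathbf{k},\varepsilon^{-1}\tau)\mathcal{R}(\mathbf{k},\varepsilon)^{3/4}\widehat{P}$ and $\widehat{J}_2(\mathbf{k},\varepsilon^{-1}\tau)\mathcal{R}(\mathbf{k},\varepsilon)^{1/4}\widehat{P}$.

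In the outer region, the $\widehat{P}$-part is controlled by the trivial bounds $\|\widehat{J}_1(\mathbf{k},\cdot)\|\le 2$ and $\|\widehat{J}_2(\mathbf{k},\cdot)\|\le 2\widehat{c}_*^{-1/2}|\mathbf{k}|^{-1}$ combined with the decay $\|\mathcal{R}(\mathbf{k},\varepsilon)^{s/2}\widehat{P}\|\le (\widehat{t}^{\,00})^{-s}\varepsilon^{s}$ that follows from $|\mathbf{k}|\ge \widehat{t}^{\,00}$ (writing $\varepsilon^{s}(|\mathbf{k}|^2+\varepsilon^2)^{-s/2}\le\min(\varepsilon^{s}|\mathbf{k}|^{-s},1)$ and comparing with $\varepsilon/\widehat{t}^{\,00}$ in the two cases $\varepsilon\lessgtr\widehat{t}^{\,00}$). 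The $I-\widehat{P}$-part of the first estimate is immediate from $\|\widehat{J}_1\|\le 2$ and~\eqref{R(k,eps)(I-P)_est}. For the $I-\widehat{P}$-part of the second estimate, where $\widehat{\mathcal{A}}(\mathbf{k})^{-1/2}$ carries a possible singularity at $\mathbf{k}=0$, I would repeat verbatim the argument from the proof of Theorem~\ref{cos_enchanced_thrm_11}: decompose $\widehat{\mathcal{A}}(\mathbf{k})^{-1/2}(I-\widehat{P})=\widehat{\mathcal{A}}(\mathbf{k})^{-1/2}\widehat{F}(\mathbf{k})^{\perp}+\widehat{\mathcal{A}}(\mathbf{k})^{-1/2}(\widehat{F}(\mathbf{k})-\widehat{P})$, bound the first summand by $\widehat{\delta}^{-1/2}$ and the second by $\widehat{C}_1\widehat{c}_*^{-1/2}$ via~\eqref{abstr_F(t)_threshold_1} and Condition~\ref{nondeg}, and treat $\widehat{\mathcal{A}}^0(\mathbf{k})$ identically.

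The main obstacle I anticipate is not analytical but bookkeeping: one has to ensure that every constant ultimately depends only on the listed parameters $\alpha_0,\alpha_1,\|g\|_{L_\infty},\|g^{-1}\|_{L_\infty},r_0,n,\widehat{c}^\circ$. The genuine potential difficulty---the nonuniformity in $\boldsymbol{\theta}$ of the abstract $t^{00}_{jl}$ when different branches $\widehat{\gamma}_k(\boldsymbol{\theta})$ cross---has already been neutralized by Condition~\ref{cond9}($2^\circ$), which forbids crossings within each pair $(k,l)\in\widehat{\mathcal{K}}$ contributing to $\widehat{N}_*$, and by the ensuing uniform choice of $\widehat{c}^\circ$ and $\widehat{t}^{\,00}$.
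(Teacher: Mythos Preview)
Your proposal is correct and matches the paper's approach exactly: the paper does not write out a separate proof but simply says the result is deduced from Theorem~\ref{abstr_cos_enchanced_thrm_2} ``similarly to the proof of Theorem~\ref{cos_enchanced_thrm_11}'', with the constants now also depending on $n$ and $\widehat{c}^\circ$ via Remark~\ref{rem_coeff}. You have correctly identified that the only change from that earlier proof is replacing the threshold $\widehat{t}^{\,0}$ by $\widehat{t}^{\,00}$ in the region where the abstract estimate is invoked, and that Condition~\ref{cond9} is precisely what makes $\widehat{t}^{\,00}$ and $\widehat{c}^\circ$ uniform in $\boldsymbol{\theta}$.
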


\subsection{The sharpness of the result in the general case}

Application of Theorem~\ref{abstr_s<2_general_thrm} allows us to confirm the sharpness of the result of Theorem~\ref{cos_general_thrm}.

\begin{theorem}
    \label{hat_s<2_thrm}
 Let $\widehat{N}_0 (\boldsymbol{\theta})$ be the operator defined by \emph{\eqref{N0_invar_repr}}.
Suppose that  $\widehat{N}_0 (\boldsymbol{\theta}_0) \ne 0$ for some $\boldsymbol{\theta}_0 \in \mathbb{S}^{d-1}$.

\noindent
$1^\circ$. Let $0 \ne \tau \in \mathbb{R}$ and $0 \le s < 2$. Then there does not exist a constant $\mathcal{C} (\tau) > 0$
such that the estimate
    \begin{equation}
    \label{hat_s<2_est_imp}
     \| ( \cos (\varepsilon^{-1} \tau \widehat{\mathcal{A}}(\mathbf{k})^{1/2})  - \cos (\varepsilon^{-1} \tau \widehat{\mathcal{A}}^0(\mathbf{k})^{1/2})) \mathcal{R}(\mathbf{k}, \varepsilon)^{s/2}\|_{L_2(\Omega) \to L_2 (\Omega) }  \le \mathcal{C} (\tau) \varepsilon
    \end{equation}
    holds for almost all $\mathbf{k} = t \boldsymbol{\theta} \in \widetilde{\Omega}$ and sufficiently small $\varepsilon > 0$.

\noindent
$2^\circ$. Let $0 \ne \tau \in \mathbb{R}$ and $0 \le s < 1$. Then there does not exist a constant $\mathcal{C} (\tau) > 0$
such that the estimate
    \begin{equation*}
         \| ( \widehat{\mathcal{A}}(\mathbf{k})^{-1/2} \sin (\varepsilon^{-1} \tau \widehat{\mathcal{A}}(\mathbf{k})^{1/2})  -
     \widehat{\mathcal{A}}^0(\mathbf{k})^{-1/2} \sin (\varepsilon^{-1} \tau \widehat{\mathcal{A}}^0(\mathbf{k})^{1/2})) \mathcal{R}(\mathbf{k}, \varepsilon)^{s/2}\|_{L_2(\Omega) \to L_2 (\Omega) }  \le \mathcal{C} (\tau)
    \end{equation*}
    holds for almost all $\mathbf{k} = t \boldsymbol{\theta} \in \widetilde{\Omega}$ and sufficiently small $\varepsilon > 0$.
\end{theorem}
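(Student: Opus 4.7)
The plan is to reduce the DO statement to the abstract sharpness result, Theorem~\ref{abstr_s<2_general_thrm}, via the incorporation of $\widehat{\mathcal{A}}(\mathbf{k})$ in the abstract pattern of Chapter~1 as described in Section~5. By~\eqref{hatS_P=hatA^0_P} one has $\widehat{\mathcal{A}}^0(\mathbf{k})\widehat{P} = t^2\widehat{S}(\boldsymbol{\theta})\widehat{P}$, so every spectral function of $\widehat{\mathcal{A}}^0(\mathbf{k})$ composed with $\widehat{P}$ reduces to the corresponding function of the finite-rank operator $t^2\widehat{S}(\boldsymbol{\theta})\widehat{P}$; combined with~\eqref{R_P}, composing the operator in~\eqref{hat_s<2_est_imp} by $\widehat{P}$ from the right yields precisely the abstract estimate of the form~\eqref{abstr_s<2_est_imp} with $A(t)=\widehat{A}(t,\boldsymbol{\theta})$, $S=\widehat{S}(\boldsymbol{\theta})$ and $P=\widehat{P}$. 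The operator $N_0$ associated with this abstract family is exactly $\widehat{N}_0(\boldsymbol{\theta})$, which at $\boldsymbol{\theta}=\boldsymbol{\theta}_0$ is non-zero by hypothesis.

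To prove assertion $1^\circ$ I argue by contradiction. Suppose there exists $\mathcal{C}(\tau)>0$ such that~\eqref{hat_s<2_est_imp} holds for almost all $\mathbf{k}\in\widetilde{\Omega}$ and all sufficiently small $\varepsilon>0$. I first check that the operator $\widehat{J}_1(\mathbf{k},\varepsilon^{-1}\tau)\mathcal{R}(\mathbf{k},\varepsilon)^{s/2}\widehat{P}$ is continuous in $\mathbf{k}$ in the operator-norm topology of $L_2(\Omega;\mathbb{C}^n)$: the factor $\mathcal{R}(\mathbf{k},\varepsilon)^{s/2}\widehat{P}$ is explicitly given by~\eqref{R_P}, and the cosine of $\widehat{\mathcal{A}}(\mathbf{k})^{1/2}$ composed with the finite-rank projection $\widehat{P}$ is continuous in $\mathbf{k}$ in operator norm because $\widehat{\mathcal{A}}(\mathbf{k})$ is an analytic quadratic-form family and the range of $\widehat{P}$ is finite-dimensional. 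Hence the a.e. bound propagates to every $\mathbf{k}\in\widetilde{\Omega}\setminus\{0\}$. Specializing to the ray $\mathbf{k}=t\boldsymbol{\theta}_0$, $0<t\le\widehat{t}^{\,0}$, and invoking the reduction of the previous paragraph, we obtain the abstract estimate~\eqref{abstr_s<2_est_imp} for $A(t)=\widehat{A}(t,\boldsymbol{\theta}_0)$, for which $N_0=\widehat{N}_0(\boldsymbol{\theta}_0)\ne 0$. This contradicts Theorem~\ref{abstr_s<2_general_thrm}$(1^\circ)$.

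Assertion $2^\circ$ is proved in the same way: compose the operator under the norm sign with $\widehat{P}$ from the right, use the identity $\widehat{\mathcal{A}}^0(\mathbf{k})^{-1/2}\widehat{P}=|t|^{-1}\widehat{S}(\boldsymbol{\theta})^{-1/2}\widehat{P}$ on the range of $\widehat{P}$, reduce to the abstract inequality for $A(t)^{-1/2}\sin(\varepsilon^{-1}\tau A(t)^{1/2})P$ along $\mathbf{k}=t\boldsymbol{\theta}_0$, and invoke Theorem~\ref{abstr_s<2_general_thrm}$(2^\circ)$. The main technical point I anticipate is the verification of the operator-norm continuity of the map $\mathbf{k}\mapsto \widehat{J}_i(\mathbf{k},\varepsilon^{-1}\tau)\mathcal{R}(\mathbf{k},\varepsilon)^{s/2}\widehat{P}$ on $\widetilde{\Omega}\setminus\{0\}$, since this is what converts the a.e. hypothesis in $\widetilde{\Omega}$ into a pointwise statement along the fixed direction $\boldsymbol{\theta}_0$ that the abstract sharpness argument requires; everything else is a direct translation of the threshold analysis of Chapter~1 through the projection $\widehat{P}$.
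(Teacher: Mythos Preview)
Your proposal is correct and follows the same overall strategy as the paper: argue by contradiction, compose with $\widehat{P}$ on the right (using~\eqref{R_P} and~\eqref{hatS_P=hatA^0_P}), pass from an a.e.\ bound to a pointwise bound along the ray $\mathbf{k}=t\boldsymbol{\theta}_0$ via continuity in $\mathbf{k}$, and then invoke Theorem~\ref{abstr_s<2_general_thrm}.

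The one noteworthy difference is in how the continuity step is handled. You argue directly that $\mathbf{k}\mapsto\cos(\varepsilon^{-1}\tau\widehat{\mathcal{A}}(\mathbf{k})^{1/2})\widehat{P}$ is norm-continuous, essentially because norm-resolvent continuity of $\widehat{\mathcal{A}}(\mathbf{k})$ yields strong continuity of bounded continuous spectral functions, and strong convergence composed with the fixed finite-rank projection $\widehat{P}$ upgrades to norm convergence. This is valid and arguably cleaner. The paper instead inserts the spectral projection $\widehat{F}(\mathbf{k})$ (via $\|\widehat{F}(\mathbf{k})-\widehat{P}\|\le\widehat{C}_1|\mathbf{k}|$), proves a quantitative Lipschitz estimate for $\cos(\tau\widehat{\mathcal{A}}(\mathbf{k})^{1/2})\widehat{F}(\mathbf{k})$ and $\widehat{\mathcal{A}}(\mathbf{k})^{-1/2}\sin(\tau\widehat{\mathcal{A}}(\mathbf{k})^{1/2})\widehat{F}(\mathbf{k})$ in $\mathbf{k}$ (Lemma~\ref{Lipschitz_lemma}), and then undoes the substitution. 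The paper's route gives an explicit modulus of continuity but requires the extra lemma and the back-and-forth between $\widehat{P}$ and $\widehat{F}(\mathbf{k})$; your route is softer but entirely sufficient for the contradiction, since only qualitative continuity is needed to transport the a.e.\ inequality to the specific direction $\boldsymbol{\theta}_0$.
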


For the proof we need the following lemma which is similar to Lemma~9.9 from~\cite{Su4}.

\begin{lemma}
    \label{Lipschitz_lemma}
Let $\widehat{\delta}$ and $\widehat{t}^{\,0}$  be given by~\emph{\eqref{hatdelta_fixation}} and~\emph{\eqref{hatt0_fixation}}, respectively.
Let $\widehat{F} (\mathbf{k}) = \widehat{F} (t, \boldsymbol{\theta})$~be the spectral projection of the operator $\widehat{\mathcal{A}}(\mathbf{k})$ for the interval $[0, \widehat{\delta}]$. Then for $|\mathbf{k}| \le \widehat{t}^{\,0}$ and $|\mathbf{k}_0| \le \widehat{t}^{\,0}$ we have
    \begin{gather}
    \label{F(k) - F(k_0)}
    \| \widehat{F} (\mathbf{k}) - \widehat{F} (\mathbf{k}_0)\|_{L_2(\Omega) \to L_2 (\Omega) } \le \widehat{C}' | \mathbf{k} - \mathbf{k}_0|, \\
    \label{hatA^1/2 (k) - hatA^1/2 (k_0)}
    \| \widehat{\mathcal{A}}(\mathbf{k})^{1/2} \widehat{F} (\mathbf{k}) - \widehat{\mathcal{A}}(\mathbf{k}_0)^{1/2} \widehat{F} (\mathbf{k}_0)\|_{L_2(\Omega) \to L_2 (\Omega) } \le \widehat{C}'' | \mathbf{k} - \mathbf{k}_0|, \\
    \label{cos_hatA^1/2 (k) - cos_hatA^1/2 (k_0)}
  \| \cos(\tau \widehat{\mathcal{A}}(\mathbf{k})^{1/2}) \widehat{F} (\mathbf{k}) - \cos(\tau \widehat{\mathcal{A}}(\mathbf{k}_0)^{1/2}) \widehat{F} (\mathbf{k}_0)\|_{L_2(\Omega) \to L_2 (\Omega) }
\le (2 \widehat{C}' + \widehat{C}'' |\tau|) | \mathbf{k} - \mathbf{k}_0|,
\\
\label{7.13a}
\begin{split}
  \| \widehat{\mathcal{A}}(\mathbf{k})^{-1/2} \sin(\tau \widehat{\mathcal{A}}(\mathbf{k})^{1/2}) \widehat{F} (\mathbf{k}) -
  \widehat{\mathcal{A}}(\mathbf{k}_0)^{-1/2} \sin(\tau \widehat{\mathcal{A}}(\mathbf{k}_0)^{1/2}) \widehat{F} (\mathbf{k}_0)\|_{L_2(\Omega) \to L_2 (\Omega) }
\\
\le (2\widehat{C}'|\tau| + \widehat{C}'' \tau^2 /2)   | \mathbf{k} - \mathbf{k}_0|.
\end{split}   
 \end{gather}
\end{lemma}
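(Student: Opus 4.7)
The plan is to establish the four estimates in sequence: \eqref{F(k) - F(k_0)} and \eqref{hatA^1/2 (k) - hatA^1/2 (k_0)} via Riesz contour integrals and the second resolvent identity; \eqref{cos_hatA^1/2 (k) - cos_hatA^1/2 (k_0)} by a Duhamel argument bootstrapped from the first two; and \eqref{7.13a} by time-integration of the cosine estimate. The template is Lemma~9.9 of~\cite{Su4}, adjusted for the operator functions $\cos$ and $\widehat{\mathcal{A}}^{-1/2}\sin$.

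\emph{Resolvent step.} Since $8\widehat\delta<\widehat d^{\,0}$ and $\widehat F(\mathbf{k};[0,\widehat\delta])=\widehat F(\mathbf{k};[0,3\widehat\delta])$, the spectrum of $\widehat{\mathcal{A}}(\mathbf{k})$ lies in $[0,\widehat\delta]\cup[3\widehat\delta,\infty)$ uniformly for $|\mathbf{k}|\le\widehat t^{\,0}$. Fix a simple closed contour $\gamma$ encircling $[0,\widehat\delta]$, lying in this common gap and in $\mathbb{C}\setminus(-\infty,0]$ so that the principal branch of $\zeta^{1/2}$ is analytic near $\gamma$. Then
\begin{equation*}
\widehat F(\mathbf{k})=\frac{1}{2\pi i}\oint_\gamma(\zeta I-\widehat{\mathcal{A}}(\mathbf{k}))^{-1}d\zeta,\qquad \widehat{\mathcal{A}}(\mathbf{k})^{1/2}\widehat F(\mathbf{k})=\frac{1}{2\pi i}\oint_\gamma\zeta^{1/2}(\zeta I-\widehat{\mathcal{A}}(\mathbf{k}))^{-1}d\zeta.
\end{equation*}
The second resolvent identity turns the difference (for $\mathbf{k}$ and $\mathbf{k}_0$) into $\frac{1}{2\pi i}\oint_\gamma \zeta^{\alpha/2}(\zeta-A)^{-1}(A-B)(\zeta-B)^{-1}d\zeta$ with $\alpha=0,1$, where $A:=\widehat{\mathcal{A}}(\mathbf{k})$, $B:=\widehat{\mathcal{A}}(\mathbf{k}_0)$. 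Writing $A-B=\widehat X(\mathbf{k})^*Y+Y^*\widehat X(\mathbf{k}_0)$ with $Y:=\widehat X(\mathbf{k})-\widehat X(\mathbf{k}_0)=hb(\mathbf{k}-\mathbf{k}_0)$, $\|Y\|\le\alpha_1^{1/2}\|h\|_{L_\infty}|\mathbf{k}-\mathbf{k}_0|$, and using the uniform bound $\|\widehat X(\mathbf{k})(\zeta I-A)^{-1}\|\le C$ on $\gamma$ (which follows from $\|\widehat X(\mathbf{k})u\|^2=(Au,u)$ together with $\|(\zeta I-A)^{-1}\|\le C$), the integrand has norm $O(|\mathbf{k}-\mathbf{k}_0|)$ and \eqref{F(k) - F(k_0)}, \eqref{hatA^1/2 (k) - hatA^1/2 (k_0)} follow.

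\emph{Cosine step.} Split
\begin{equation*}
\cos(\tau A^{1/2})\widehat F(\mathbf{k})-\cos(\tau B^{1/2})\widehat F(\mathbf{k}_0)=\cos(\tau A^{1/2})[\widehat F(\mathbf{k})-\widehat F(\mathbf{k}_0)]+[\cos(\tau A^{1/2})-\cos(\tau B^{1/2})]\widehat F(\mathbf{k}_0).
\end{equation*}
The first summand is bounded by $\widehat C'|\mathbf{k}-\mathbf{k}_0|$ by \eqref{F(k) - F(k_0)} and $\|\cos(\tau A^{1/2})\|\le 1$. For the second, set $w(\tau):=[\cos(\tau A^{1/2})-\cos(\tau B^{1/2})]\widehat F(\mathbf{k}_0)\phi$; then $w(0)=w'(0)=0$ and $w''+Aw=(B-A)v$ with $v(\tau):=\cos(\tau B^{1/2})\widehat F(\mathbf{k}_0)\phi$, giving
\begin{equation*}
w(\tau)=\int_0^\tau A^{-1/2}\sin((\tau-s)A^{1/2})(B-A)v(s)\,ds.
\end{equation*}
Substitute $B-A=-\widehat X(\mathbf{k})^*Y-Y^*\widehat X(\mathbf{k}_0)$. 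For the first piece, the polar decomposition $\widehat X(\mathbf{k})=UA^{1/2}$ (with $U$ a partial isometry) yields $A^{-1/2}\sin((\tau-s)A^{1/2})\widehat X(\mathbf{k})^*=\sin((\tau-s)A^{1/2})U^*$, a contraction, so the integrand is $O(\|Y\|\|\phi\|)=O(|\mathbf{k}-\mathbf{k}_0|\|\phi\|)$ and integrates to $O(|\tau||\mathbf{k}-\mathbf{k}_0|\|\phi\|)$. For the second piece, integrate by parts in~$s$: using $\widehat X(\mathbf{k}_0)=U_0 B^{1/2}$ and $\int_0^s B^{1/2}\cos(s'B^{1/2})\widehat F(\mathbf{k}_0)\phi\,ds'=\sin(sB^{1/2})\widehat F(\mathbf{k}_0)\phi$, the antiderivative $F(s):=\int_0^s Y^*\widehat X(\mathbf{k}_0)v(s')\,ds'=Y^*U_0\sin(sB^{1/2})\widehat F(\mathbf{k}_0)\phi$ satisfies $\|F(s)\|\le\|Y\|\|\phi\|=O(|\mathbf{k}-\mathbf{k}_0|\|\phi\|)$; combined with $\tfrac{d}{ds}[A^{-1/2}\sin((\tau-s)A^{1/2})]=-\cos((\tau-s)A^{1/2})$ and vanishing boundary terms, the second piece reduces to $\int_0^\tau\cos((\tau-s)A^{1/2})F(s)\,ds$, again of norm $O(|\tau||\mathbf{k}-\mathbf{k}_0|\|\phi\|)$. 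Collecting the contributions yields \eqref{cos_hatA^1/2 (k) - cos_hatA^1/2 (k_0)}.

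\emph{Sine step.} Apply the functional-calculus identity $A^{-1/2}\sin(\tau A^{1/2})=\int_0^\tau\cos(sA^{1/2})\,ds$ (valid because $\sin(\tau\lambda^{1/2})/\lambda^{1/2}=\int_0^\tau\cos(s\lambda^{1/2})\,ds$) and its analog for~$B$ to rewrite the left-hand side of~\eqref{7.13a} as $\int_0^\tau[\cos(sA^{1/2})\widehat F(\mathbf{k})-\cos(sB^{1/2})\widehat F(\mathbf{k}_0)]\,ds$. Inserting the estimate~\eqref{cos_hatA^1/2 (k) - cos_hatA^1/2 (k_0)} and integrating $(2\widehat C'+\widehat C''|s|)|\mathbf{k}-\mathbf{k}_0|$ over $s\in[0,\tau]$ produces exactly $(2\widehat C'|\tau|+\widehat C''\tau^2/2)|\mathbf{k}-\mathbf{k}_0|$. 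The main obstacle is achieving the linear (rather than quadratic) $|\tau|$-dependence in the cosine estimate: a naive bound $\|A^{-1/2}\sin(\sigma A^{1/2})\|\le|\sigma|$ applied to the $Y^*\widehat X(\mathbf{k}_0)$-piece yields only $O(\tau^2|\mathbf{k}-\mathbf{k}_0|)$, and the polar-decomposition identity together with the integration-by-parts manoeuvre above is what restores the sharp linear bound.
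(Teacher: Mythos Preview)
Your contour argument for \eqref{hatA^1/2 (k) - hatA^1/2 (k_0)} has a genuine gap. A simple closed curve in the simply connected domain $\mathbb{C}\setminus(-\infty,0]$ cannot encircle the origin, so no contour $\gamma$ satisfying your requirements exists. Even if one relaxes the location of $\gamma$ and lets it cross the negative real axis, the Dunford--Riesz formula $\frac{1}{2\pi i}\oint_\gamma \zeta^{1/2}(\zeta-A)^{-1}\,d\zeta=A^{1/2}\widehat F(\mathbf{k})$ requires $\zeta^{1/2}$ to be analytic on an open set containing the part of $\sigma(A)$ inside $\gamma$, not merely on $\gamma$ itself; since $0\in\sigma(\widehat{\mathcal A}(0))$ and $\zeta^{1/2}$ has a branch point at $0$, the formula fails at $\mathbf{k}=0$. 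For $\mathbf{k},\mathbf{k}_0\ne 0$ one could shrink $\gamma$ to avoid $(-\infty,0]$, but then the resolvent bounds on $\gamma$ degenerate as $|\mathbf{k}|,|\mathbf{k}_0|\to 0$, and uniformity of $\widehat C''$ is lost. The paper avoids this by using the representation $A^{1/2}F=\pi^{-1}\int_0^\infty \zeta^{-1/2}(A+\zeta I)^{-1}AF\,d\zeta$, writing the difference as $\Omega_1+\Omega_2+\Omega_3$ where $\Omega_1$ carries the form difference $\widehat{\mathfrak a}(\mathbf{k})-\widehat{\mathfrak a}(\mathbf{k}_0)$ sandwiched between projections, and $\Omega_2,\Omega_3$ use \eqref{F(k) - F(k_0)} together with the bound $\|\widehat{\mathcal A}(\mathbf{k})^{1/2}\widehat F(\mathbf{k})\|\lesssim|\mathbf{k}|$; the case $\mathbf{k}_0=0$ is then handled separately from that same bound.

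Your cosine step is correct but takes a detour. Once \eqref{hatA^1/2 (k) - hatA^1/2 (k_0)} is in hand, the paper obtains \eqref{cos_hatA^1/2 (k) - cos_hatA^1/2 (k_0)} much more directly: with $A=\widehat{\mathcal A}(\mathbf{k})$, $B=\widehat{\mathcal A}(\mathbf{k}_0)$, write
\[
e^{i\tau A^{1/2}}\widehat F(\mathbf{k})-e^{i\tau B^{1/2}}\widehat F(\mathbf{k}_0)
= e^{i\tau A^{1/2}}\widehat F(\mathbf{k})(\widehat F(\mathbf{k})-\widehat F(\mathbf{k}_0))
+(\widehat F(\mathbf{k})-\widehat F(\mathbf{k}_0))e^{i\tau B^{1/2}}\widehat F(\mathbf{k}_0)+\Xi,
\]
where $\Xi=e^{i\tau A^{1/2}}\Sigma$ and $\Sigma(\tau)=\widehat F(\mathbf{k})\widehat F(\mathbf{k}_0)-e^{-i\tau A^{1/2}}\widehat F(\mathbf{k})e^{i\tau B^{1/2}}\widehat F(\mathbf{k}_0)$. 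Then $\Sigma(0)=0$ and $\Sigma'(\tau)=i\,\widehat F(\mathbf{k})e^{-i\tau A^{1/2}}\bigl(A^{1/2}\widehat F(\mathbf{k})-B^{1/2}\widehat F(\mathbf{k}_0)\bigr)e^{i\tau B^{1/2}}\widehat F(\mathbf{k}_0)$, so $\|\Xi\|\le\widehat C''|\tau||\mathbf{k}-\mathbf{k}_0|$ directly from \eqref{hatA^1/2 (k) - hatA^1/2 (k_0)}. This delivers exactly the constant $2\widehat C'+\widehat C''|\tau|$ with the \emph{same} $\widehat C',\widehat C''$ as in the first two estimates, which your Duhamel route with $A-B$, polar decomposition, and integration by parts does not (you get some linear-in-$|\tau|$ bound, but not tied to $\widehat C''$). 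Your sine step, by integration in $\tau$, matches the paper exactly.
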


\begin{proof}
Estimate~(\ref{F(k) - F(k_0)}) has been proved in~\cite[Lemma~9.9]{Su4}.

Let us prove~\eqref{hatA^1/2 (k) - hatA^1/2 (k_0)}. Let $\mathbf{k}, \mathbf{k}_0 \ne 0$.
Using representation of the form~\eqref{abstr_A_sqrt_repres} for $\widehat{\mathcal{A}}(\mathbf{k})^{1/2} \widehat{F} (\mathbf{k})$, we have
$$
    \widehat{\mathcal{A}}(\mathbf{k})^{1/2} \widehat{F} (\mathbf{k}) - \widehat{\mathcal{A}}(\mathbf{k}_0)^{1/2} \widehat{F} (\mathbf{k}_0) =  \frac{1}{\pi} \int_{0}^{\infty} \zeta^{-1/2}\Upsilon(\zeta,\mathbf{k},\mathbf{k}_0)\, d\zeta,
$$
where
$\Upsilon(\zeta,\mathbf{k},\mathbf{k}_0):=
(\widehat{\mathcal{A}}(\mathbf{k}) \widehat{F} (\mathbf{k}) + \zeta I)^{-1} \widehat{\mathcal{A}}(\mathbf{k}) \widehat{F} (\mathbf{k}) - (\widehat{\mathcal{A}}(\mathbf{k}_0) \widehat{F} (\mathbf{k}_0) + \zeta I)^{-1} \widehat{\mathcal{A}}(\mathbf{k}_0) \widehat{F} (\mathbf{k}_0)$.
It is easily seen that
\begin{multline*}
\Upsilon(\zeta, \mathbf{k},\mathbf{k}_0)=
 \zeta (\widehat{\mathcal{A}}(\mathbf{k}) \widehat{F} (\mathbf{k}) + \zeta I)^{-1} \left( \widehat{\mathcal{A}}(\mathbf{k}) \widehat{F} (\mathbf{k}) - \widehat{\mathcal{A}}(\mathbf{k}_0) \widehat{F} (\mathbf{k}_0)\right)  (\widehat{\mathcal{A}}(\mathbf{k}_0) \widehat{F} (\mathbf{k}_0) + \zeta I)^{-1}
\\
 =    \Upsilon_1(\zeta,\mathbf{k}, \mathbf{k}_0) + \Upsilon_2(\zeta,\mathbf{k}, \mathbf{k}_0) + \Upsilon_3(\zeta, \mathbf{k}, \mathbf{k}_0),
\end{multline*}
where
\begin{align*}
 \Upsilon_1(\zeta, \mathbf{k}, \mathbf{k}_0) =& \,\zeta (\widehat{\mathcal{A}}(\mathbf{k})  + \zeta I)^{-1}
\left( \widehat{F} (\mathbf{k}) \widehat{\mathcal{A}}(\mathbf{k}) \widehat{F} (\mathbf{k}_0)  - \widehat{F} (\mathbf{k}) \widehat{\mathcal{A}}(\mathbf{k}_0) \widehat{F} (\mathbf{k}_0)\right)  (\widehat{\mathcal{A}}(\mathbf{k}_0) + \zeta I)^{-1},
\\
 \Upsilon_2(\zeta,\mathbf{k}, \mathbf{k}_0) =&
 - (I - \widehat{F} (\mathbf{k})) \widehat{\mathcal{A}}(\mathbf{k}_0) \widehat{F} (\mathbf{k}_0) (\widehat{\mathcal{A}}(\mathbf{k}_0) + \zeta I)^{-1}
=  (\widehat{F} (\mathbf{k}) - \widehat{F} (\mathbf{k}_0))
 \widehat{\mathcal{A}}(\mathbf{k}_0) \widehat{F} (\mathbf{k}_0) (\widehat{\mathcal{A}}(\mathbf{k}_0) + \zeta I)^{-1},
\\
\Upsilon_3(\zeta, \mathbf{k}, \mathbf{k}_0) =&\,
 (\widehat{\mathcal{A}}(\mathbf{k})  + \zeta I)^{-1} \widehat{F} (\mathbf{k}) \widehat{\mathcal{A}}(\mathbf{k}) (I-\widehat{F} (\mathbf{k}_0))
=  (\widehat{\mathcal{A}}(\mathbf{k})  + \zeta I)^{-1} \widehat{F} (\mathbf{k}) \widehat{\mathcal{A}}(\mathbf{k}) (\widehat{F} (\mathbf{k})-\widehat{F} (\mathbf{k}_0)).
\end{align*}
Hence,
$
\widehat{\mathcal{A}}(\mathbf{k})^{1/2} \widehat{F} (\mathbf{k}) - \widehat{\mathcal{A}}(\mathbf{k}_0)^{1/2} \widehat{F} (\mathbf{k}_0) =
\Omega_1(\mathbf{k}, \mathbf{k}_0)+ \Omega_2(\mathbf{k}, \mathbf{k}_0) + \Omega_3(\mathbf{k}, \mathbf{k}_0),
$
where
$$
\Omega_j(\mathbf{k}, \mathbf{k}_0) = \frac{1}{\pi} \int_{0}^{\infty} \zeta^{-1/2} \Upsilon_j(\zeta,\mathbf{k}, \mathbf{k}_0)
\,d\zeta,\quad j=1,2,3.
$$

To estimate $\Omega_1(\mathbf{k}, \mathbf{k}_0)$,
consider the difference of the sesquilinear forms of the operators
$\widehat{\mathcal{A}}(\mathbf{k})$ and $\widehat{\mathcal{A}}(\mathbf{k}_0)$
on the elements  $\mathbf{u}, \mathbf{v} \in \widetilde{H}^1 (\Omega; \mathbb{C}^n)$:
\begin{equation*}
\widehat{\mathfrak{a}} (\mathbf{k}) [\mathbf{u}, \mathbf{v}] - \widehat{\mathfrak{a}} (\mathbf{k}_0) [\mathbf{u}, \mathbf{v}] = 
( g b(\mathbf{k} - \mathbf{k}_0) \mathbf{u}, b(\mathbf{D} + \mathbf{k}_0) \mathbf{v} )_{L_2 (\Omega)} + ( g b(\mathbf{D} + \mathbf{k}) \mathbf{u}, b(\mathbf{k} - \mathbf{k}_0) \mathbf{v})_{L_2 (\Omega)}.
\end{equation*}
Combining this with~\eqref{rank_alpha_ineq}, we see that
\begin{equation}
\label{9.23a}
|\widehat{\mathfrak{a}} (\mathbf{k}) [\mathbf{u}, \mathbf{v}] - \widehat{\mathfrak{a}} (\mathbf{k}_0) [\mathbf{u}, \mathbf{v}]| \le 
\alpha_1^{1/2} \| g \|_{L_\infty}^{1/2}  |\mathbf{k}-\mathbf{k}_0| \left( \| \mathbf{u} \|_{L_2}  \| \widehat{\mathcal{A}}(\mathbf{k}_0)^{1/2} \mathbf{v} \|_{L_2} +  \| \widehat{\mathcal{A}}(\mathbf{k})^{1/2} \mathbf{u} \|_{L_2}  \|\mathbf{v} \|_{L_2}\right).
\end{equation}
Substituting $\mathbf{u} = \widehat{F} (\mathbf{k}) \boldsymbol{\varphi}$ and $\mathbf{v} =  \widehat{F} (\mathbf{k}_0) \boldsymbol{\psi}$
with $\boldsymbol{\varphi}, \boldsymbol{\psi} \in L_2 (\Omega; \mathbb{C}^n)$ in \eqref{9.23a},
and taking into account that
\begin{equation}
\label{hat_sqrtA(k)F(k)_up_est}
\| \widehat{\mathcal{A}}(\mathbf{k})^{1/2} \widehat{F} (\mathbf{k}) \|_{L_2(\Omega) \to L_2 (\Omega) } \le
(1+ \beta_2)^{1/2} \alpha_1^{1/2} \| g \|_{L_{\infty}}^{1/2} |\mathbf{k}|, \qquad |\mathbf{k}| \le \widehat{t}^{\,0},
\end{equation}
 (which follows from~\eqref{1.6b} and~\eqref{hatX_1_estmate}),
we have
\begin{equation*}
\| \widehat{F} (\mathbf{k}) \widehat{\mathcal{A}}(\mathbf{k}) \widehat{F} (\mathbf{k}_0)  - \widehat{F} (\mathbf{k}) \widehat{\mathcal{A}}(\mathbf{k}_0) \widehat{F} (\mathbf{k}_0) \|_{L_2(\Omega) \to L_2 (\Omega) }
\le  (1+\beta_2)^{1/2} \alpha_1 \| g \|_{L_{\infty}}   (|\mathbf{k}| + |\mathbf{k}_0|) |\mathbf{k} - \mathbf{k}_0|
\end{equation*}
for $|\mathbf{k}|,|\mathbf{k}_0|  \le \widehat{t}^{\,0}$.
Combining this with~\eqref{A(k)_nondegenerated_and_c_*}, we obtain
\begin{equation*}
\| \Omega_1(\mathbf{k}, \mathbf{k}_0) \|_{L_2(\Omega) \to L_2 (\Omega) } \le
(1+\beta_2)^{1/2}  \alpha_1 \| g \|_{L_{\infty}} \widehat{c}_*^{-1/2} |\mathbf{k} - \mathbf{k}_0|, \quad  0 <  |\mathbf{k}|,|\mathbf{k}_0|  \le \widehat{t}^{\,0}.
\end{equation*}
Next, from~\eqref{A(k)_nondegenerated_and_c_*}, \eqref{F(k) - F(k_0)}, and~\eqref{hat_sqrtA(k)F(k)_up_est} we deduce
\begin{equation*}
\| \Omega_2(\mathbf{k}, \mathbf{k}_0) \|_{L_2(\Omega) \to L_2 (\Omega) } \le
\widehat{C}' (1+\beta_2) \alpha_1 \| g \|_{L_{\infty}} \widehat{c}_*^{-1/2} \widehat{t}^{\,0} |\mathbf{k} - \mathbf{k}_0|, \quad  0 <  |\mathbf{k}|,|\mathbf{k}_0|  \le \widehat{t}^{\,0}.
\end{equation*}
 The term $\Omega_3(\mathbf{k}, \mathbf{k}_0)$
satisfies the same estimate. Thus, 
\eqref{hatA^1/2 (k) - hatA^1/2 (k_0)} is proved in the case where $\mathbf{k}, \mathbf{k}_0 \ne 0$.
If, for instance, $\mathbf{k}_0 = 0$, then~\eqref{hatA^1/2 (k) - hatA^1/2 (k_0)} follows directly from~\eqref{hat_sqrtA(k)F(k)_up_est} and  the relation $\widehat{\mathcal{A}}(0)^{1/2} \widehat{F} (0) = 0$.

Let us prove estimate~\eqref{cos_hatA^1/2 (k) - cos_hatA^1/2 (k_0)}. We have
\begin{multline}
\label{cos_k_k0_f1}
e^{i \tau \widehat{\mathcal{A}}(\mathbf{k})^{1/2}} \widehat{F} (\mathbf{k}) - e^{i\tau \widehat{\mathcal{A}}(\mathbf{k}_0)^{1/2}} \widehat{F} (\mathbf{k}_0) = e^{i \tau \widehat{\mathcal{A}}(\mathbf{k})^{1/2}} \widehat{F} (\mathbf{k}) (\widehat{F} (\mathbf{k}) - \widehat{F} (\mathbf{k}_0)) + \\ + (\widehat{F} (\mathbf{k}) - \widehat{F} (\mathbf{k}_0)) e^{i \tau \widehat{\mathcal{A}}(\mathbf{k}_0)^{1/2}} \widehat{F} (\mathbf{k}_0) + \Xi (\tau, \mathbf{k}, \mathbf{k}_0),
\end{multline}
where
$\Xi (\tau, \mathbf{k}, \mathbf{k}_0) = e^{i \tau \widehat{\mathcal{A}}(\mathbf{k})^{1/2}} \widehat{F} (\mathbf{k}) \widehat{F} (\mathbf{k}_0) - \widehat{F} (\mathbf{k}) e^{i\tau \widehat{\mathcal{A}}(\mathbf{k}_0)^{1/2}} \widehat{F} (\mathbf{k}_0)$.
The sum of the first two terms in~\eqref{cos_k_k0_f1} does not exceed $2 \widehat{C}' | \mathbf{k}- \mathbf{k}_0|$,
in view of~(\ref{F(k) - F(k_0)}). The third term can be written as
$\Xi (\tau, \mathbf{k}, \mathbf{k}_0) = e^{i \tau \widehat{\mathcal{A}}(\mathbf{k})^{1/2}} \Sigma(\tau, \mathbf{k}, \mathbf{k}_0)$,
where 
$\Sigma(\tau, \mathbf{k}, \mathbf{k}_0) = \widehat{F} (\mathbf{k}) \widehat{F} (\mathbf{k}_0) - e^{-i\tau \widehat{\mathcal{A}}(\mathbf{k})^{1/2}} \widehat{F} (\mathbf{k}) e^{i \tau \widehat{\mathcal{A}}(\mathbf{k}_0)^{1/2}} \widehat{F}(\mathbf{k}_0).$
Obviously, $\Sigma(0, \mathbf{k}, \mathbf{k}_0) = 0$, and
 $\Sigma'(\tau, \mathbf{k}, \mathbf{k}_0) = \frac{d \Sigma(\tau, \mathbf{k}, \mathbf{k}_0)}{d \tau}$ is given by
\begin{equation*}
\Sigma'(\tau, \mathbf{k}, \mathbf{k}_0)
= i \widehat{F} (\mathbf{k}) e^{-i\tau \widehat{\mathcal{A}}(\mathbf{k})^{1/2}} (\widehat{\mathcal{A}}(\mathbf{k})^{1/2} \widehat{F} (\mathbf{k}) - \widehat{\mathcal{A}}(\mathbf{k_0})^{1/2} \widehat{F} (\mathbf{k_0}))
e^{i\tau \widehat{\mathcal{A}}(\mathbf{k_0})^{1/2}} \widehat{F} (\mathbf{k_0}).
\end{equation*}
Integrating over the interval $[0, \tau]$ and taking~\eqref{hatA^1/2 (k) - hatA^1/2 (k_0)} into account, we obtain
\begin{equation*}
\| \Xi (\tau, \mathbf{k}, \mathbf{k}_0) \|_{L_2(\Omega) \to L_2 (\Omega) } = \|\Sigma(\tau, \mathbf{k}, \mathbf{k}_0)\|_{L_2(\Omega) \to L_2 (\Omega) } \le \widehat{C}'' |\tau| | \mathbf{k} - \mathbf{k}_0|.
\end{equation*}
We arrive at~\eqref{cos_hatA^1/2 (k) - cos_hatA^1/2 (k_0)}.

Estimate \eqref{7.13a} follows from \eqref{cos_hatA^1/2 (k) - cos_hatA^1/2 (k_0)} by integration over the interval $(0,\tau)$. 
\end{proof}

\noindent \textbf{Proof of Theorem~\ref{hat_s<2_thrm}}.
Let us prove assertion $1^\circ$.  It suffices to consider $1 \le s < 2 $.
We prove by contradiction.
Assume that for some $\tau \ne 0$ and $1 \le s < 2$ there exists a constant $\mathcal{C}(\tau) > 0$  such that estimate~\eqref{hat_s<2_est_imp}
holds for almost every $\mathbf{k} \in \widetilde{\Omega}$ and sufficiently small $\varepsilon > 0$.
Multiplying the operator under the norm sign in ~\eqref{hat_s<2_est_imp} by $\widehat{P}$ and using~\eqref{R_P},
 we see that
\begin{equation}
\label{hat_s<2_proof_f1}
\| \bigl( \cos (\varepsilon^{-1} \tau \widehat{\mathcal{A}}(\mathbf{k})^{1/2})  - \cos (\varepsilon^{-1} \tau \widehat{\mathcal{A}}^0(\mathbf{k})^{1/2}) \bigr) \widehat{P}\|_{L_2(\Omega) \to L_2 (\Omega) } \varepsilon^s (|\mathbf{k}|^2 + \varepsilon^2)^{-s/2}
 \le {\mathcal{C}}(\tau) \varepsilon
\end{equation}
for almost every $\mathbf{k} \in \widetilde{\Omega}$ and sufficiently small $\varepsilon > 0$.

Now, let $|\mathbf{k}| \le \widehat{t}^{\,0}$. By \eqref{abstr_F(t)_threshold_1},
\begin{equation}
\label{hat_s<2_proof_f2}
\| \widehat{F} (\mathbf{k}) - \widehat{P} \|_{L_2 (\Omega) \to L_2 (\Omega)} \le \widehat{C}_1 |\mathbf{k}|, \quad |\mathbf{k}| \le \widehat{t}^{\,0}.
\end{equation}
From~\eqref{hat_s<2_proof_f1} and~\eqref{hat_s<2_proof_f2}
 it follows that there exists a constant $\widetilde{\mathcal C}(\tau)>0$ such that
\begin{equation}
\label{hat_s<2_proof_f3}
\|  \cos (\varepsilon^{-1} \tau \widehat{\mathcal{A}}(\mathbf{k})^{1/2}) \widehat{F} (\mathbf{k})  - \cos (\varepsilon^{-1} \tau \widehat{\mathcal{A}}^0(\mathbf{k})^{1/2}) \widehat{P}\|_{L_2(\Omega) \to L_2 (\Omega) } \varepsilon^s (|\mathbf{k}|^2 + \varepsilon^2)^{-s/2}
\le \widetilde{\mathcal{C}}(\tau) \varepsilon
\end{equation}
for almost every $\mathbf{k}$ in the ball $|\mathbf{k}| \le \widehat{t}^{\,0}$
 and sufficiently small $\varepsilon > 0$.

Observe that  $\widehat{P}$ is the spectral projection of the operator $\widehat{\mathcal{A}}^0 (\mathbf{k})$ for the interval $[0, \widehat{\delta}]$. Applying Lemma~\ref{Lipschitz_lemma} to $\widehat{\mathcal{A}} (\mathbf{k})$ and $\widehat{\mathcal{A}}^0 (\mathbf{k})$,
 we conclude that for fixed $\tau$ and $\varepsilon$ the operator under the norm sign in~\eqref{hat_s<2_proof_f3} is continuous with respect to $\mathbf{k}$ in the ball $|\mathbf{k}| \le \widehat{t}^{\,0}$. Consequently, estimate~\eqref{hat_s<2_proof_f3} holds for
all $\mathbf{k}$ in that ball. In particular, it holds at the point $\mathbf{k} = t\boldsymbol{\theta}_0$ if $t \le \widehat{t}^{\,0}$. Applying~\eqref{hat_s<2_proof_f2} once more, we see that there exists a constant  $\check{\mathcal{C}}(\tau)>0$ such that
\begin{equation}
\label{hat_s<2_proof_f4}
\|  \bigl( \cos (\varepsilon^{-1} \tau \widehat{\mathcal{A}}(t\boldsymbol{\theta}_0)^{1/2}) - \cos (\varepsilon^{-1} \tau \widehat{\mathcal{A}}^0(t\boldsymbol{\theta}_0)^{1/2}) \bigr) \widehat{P}\|_{L_2(\Omega) \to L_2 (\Omega) } \varepsilon^s (t^2 + \varepsilon^2)^{-s/2}
\le \check{\mathcal{C}}(\tau) \varepsilon
\end{equation}
for all $t \le \widehat{t}^{\,0}$ and sufficiently small $\varepsilon$.
Estimate~\eqref{hat_s<2_proof_f4} corresponds to the abstract estimate~\eqref{abstr_s<2_est_imp}.
Since $\widehat{N}_0 (\boldsymbol{\theta}_0) \ne 0$, applying Theorem~\ref{abstr_s<2_general_thrm}($1^\circ$), we
arrive at a contradiction.

Assertion $2^\circ$ is deduced from Theorem~\ref{abstr_s<2_general_thrm}($2^\circ$) in a similar way.
   $\square$

\section{The operator $\mathcal{A} (\mathbf{k})$. Application of the scheme of Section~\ref{abstr_sandwiched_section}}

\subsection{The operator $\mathcal{A} (\mathbf{k})$}

We apply the scheme of Section~\ref{abstr_sandwiched_section} to study the operator
$\mathcal{A} (\mathbf{k}) = f^* \widehat{\mathcal{A}} (\mathbf{k}) f$.
Now  $\mathfrak{H} = \widehat{\mathfrak{H}} = L_2 (\Omega; \mathbb{C}^n)$, $\mathfrak{H}_* = L_2 (\Omega; \mathbb{C}^m)$, the role of  $A(t)$ is played by $A(t, \boldsymbol{\theta}) = \mathcal{A}(\mathbf{k})$, the role of $\widehat{A}(t)$ is played by $\widehat{A}(t, \boldsymbol{\theta}) = \widehat{\mathcal{A}}(\mathbf{k})$. Next, the  isomorphism $M$
is the operator of multiplication by the matrix-valued function $f(\mathbf{x})$.
The operator $Q$ is the operator of multiplication by
$Q(\mathbf{x}) = (f (\mathbf{x}) f (\mathbf{x})^*)^{-1}$.
The block of $Q$ in the subspace $\widehat{\mathfrak{N}}$ (see~\eqref{Ker3}) is the operator of multiplication by the constant matrix
$\overline{Q} = (\underline{f f^*})^{-1} = |\Omega|^{-1} \int_{\Omega} (f (\mathbf{x}) f (\mathbf{x})^*)^{-1} d \mathbf{x}$.
Next, $M_0$ is the operator of multiplication by the matrix
\begin{equation}
\label{f_0}
f_0 = (\overline{Q})^{-1/2} = (\underline{f f^*})^{1/2}.
\end{equation}
Note that $| f_0 | \le \| f \|_{L_{\infty}}$ and $| f_0^{-1} | \le \| f^{-1} \|_{L_{\infty}}$.

In $L_2 (\mathbb{R}^d; \mathbb{C}^n)$,  we define the operator
\begin{equation}
\label{A0}
\mathcal{A}^0 := f_0 \widehat{\mathcal{A}}^0 f_0 = f_0 b(\mathbf{D})^* g^0 b(\mathbf{D}) f_0.
\end{equation}
Let $\mathcal{A}^0 (\mathbf{k})$~be the corresponding family of operators in $L_2 (\Omega; \mathbb{C}^n)$. Then
$\mathcal{A}^0 (\mathbf{k}) = f_0 \widehat{\mathcal{A}}^0 (\mathbf{k}) f_0$.
By~\eqref{Ker3} and~\eqref{hatS_P=hatA^0_P}, we have
\begin{equation}
\label{f_0 hatS f_0 P = A^0}
f_0 \widehat{S} (\mathbf{k}) f_0 \widehat{P} = \mathcal{A}^0 (\mathbf{k}) \widehat{P}.
\end{equation}

\subsection{The analytic branches of eigenvalues and eigenvectors}
\label{sndw_eigenvalues_and_eigenvectors_section}
According to~\eqref{abstr_S_and_S_hat_relation},  the spectral germ $S(\boldsymbol{\theta})$ of the operator $A (t, \boldsymbol{\theta})$  acting in the subspace $\mathfrak{N}$ (see \eqref{Ker2}) is represented as
$S(\boldsymbol{\theta}) = P f^* b(\boldsymbol{\theta})^* g^0 b(\boldsymbol{\theta}) f|_{\mathfrak{N}}$,
where $P$~is the orthogonal projection of $L_2 (\Omega; \mathbb{C}^n)$ onto $\mathfrak{N}$.

The analytic (in $t$) branches of the eigenvalues
$\lambda_l (t, \boldsymbol{\theta})$ and the eigenvectors $\varphi_l (t, \boldsymbol{\theta})$ of $A (t, \boldsymbol{\theta})$
 admit the power series expansions of the form~\eqref{abstr_A(t)_eigenvalues_series}, \eqref{abstr_A(t)_eigenvectors_series}
 with the coefficients depending on $\boldsymbol{\theta}$:
\begin{gather}
\label{A_eigenvalues_series}
\lambda_l (t, \boldsymbol{\theta}) = \gamma_l (\boldsymbol{\theta}) t^2 + \mu_l (\boldsymbol{\theta}) t^3 + \ldots, \qquad l = 1, \ldots, n,
\\
\label{A_eigenvectors_series}
\varphi_l (t, \boldsymbol{\theta}) = \omega_l (\boldsymbol{\theta}) + t \psi^{(1)}_l (\boldsymbol{\theta}) + \ldots, \qquad l = 1, \ldots, n.
\end{gather}
The vectors $\omega_1 (\boldsymbol{\theta}), \ldots, \omega_n (\boldsymbol{\theta})$ form an orthonormal basis in the subspace~$\mathfrak{N}$, and the vectors
$\zeta_l (\boldsymbol{\theta}) = f \omega_l (\boldsymbol{\theta})$, $l = 1, \ldots, n$,
form a basis in $\widehat{\mathfrak{N}}$~(see~\eqref{Ker3})  orthonormal with the weight $\overline{Q}$.
The numbers $\gamma_l (\boldsymbol{\theta})$ and the elements $\omega_l (\boldsymbol{\theta})$
are eigenvalues and eigenvectors of the spectral germ $S(\boldsymbol{\theta})$.
According to~\eqref{abstr_hatS_gener_spec_problem},
\begin{equation}
\label{hatS_gener_spec_problem}
b(\boldsymbol{\theta})^* g^0 b(\boldsymbol{\theta}) \zeta_l (\boldsymbol{\theta}) = \gamma_l (\boldsymbol{\theta}) \overline{Q} \zeta_l (\boldsymbol{\theta}), \qquad l = 1, \ldots, n.
\end{equation}

\subsection{The operator $\widehat{N}_Q (\boldsymbol{\theta})$}
We need to describe the operator $\widehat{N}_Q$ (see Subsection~\ref{abstr_hatZ_Q_and_hatN_Q_section}).
Let $\Lambda_Q(\mathbf{x})$ be the $\Gamma$-periodic solution of the problem
\begin{equation*}
b(\mathbf{D})^* g(\mathbf{x}) (b(\mathbf{D}) \Lambda_Q(\mathbf{x}) + \mathbf{1}_m) = 0, \qquad \int_{\Omega} Q(\mathbf{x}) \Lambda_Q(\mathbf{x}) \, d \mathbf{x} = 0.
\end{equation*}
Clearly,  $\Lambda_Q(\mathbf{x}) = \Lambda(\mathbf{x}) -(\overline{Q})^{-1} (\overline{Q \Lambda})$.
As shown in~\cite[Section~5]{BSu3}, the operator $\widehat{N}_Q (\boldsymbol{\theta})$ takes the form
\begin{align}
\label{N_Q(theta)}
\widehat{N}_Q (\boldsymbol{\theta}) &= b(\boldsymbol{\theta})^* L_Q (\boldsymbol{\theta}) b(\boldsymbol{\theta}) \widehat{P},
\\
\nonumber
L_Q (\boldsymbol{\theta}) &:= | \Omega |^{-1} \int_{\Omega} \bigl(\Lambda_Q(\mathbf{x})^*b(\boldsymbol{\theta})^* \widetilde{g} (\mathbf{x}) + \widetilde{g} (\mathbf{x})^* b(\boldsymbol{\theta}) \Lambda_Q(\mathbf{x}) \bigr)\, d \mathbf{x}.
\end{align}

Some cases where $\widehat{N}_Q (\boldsymbol{\theta}) = 0$ were distinguished
in~\cite[Section~5]{BSu3}.

\begin{proposition}
    \label{N_Q=0_proposit}
   Suppose that at least one of the following conditions is fulfilled{\rm :}

$1^\circ$.
$\mathcal{A} = f(\mathbf{x})^*\mathbf{D}^* g(\mathbf{x}) \mathbf{D}f(\mathbf{x})$, where $g(\mathbf{x})$~ is a symmetric matrix with real entries.

$2^\circ$. Relations~\emph{\eqref{g0=overline_g_relat}} are satisfied, i.~e., $g^0 = \overline{g}$.

\noindent
    Then $\widehat{N}_Q (\boldsymbol{\theta}) = 0$ for any $\boldsymbol{\theta} \in \mathbb{S}^{d-1}$.
\end{proposition}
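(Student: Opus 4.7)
The plan is to prove both assertions by direct analysis of the explicit formula~\eqref{N_Q(theta)}, namely $\widehat{N}_Q(\boldsymbol{\theta}) = b(\boldsymbol{\theta})^* L_Q(\boldsymbol{\theta}) b(\boldsymbol{\theta}) \widehat{P}$, where $L_Q(\boldsymbol{\theta})$ is expressed through the auxiliary function $\Lambda_Q(\mathbf{x}) = \Lambda(\mathbf{x}) - (\overline{Q})^{-1}(\overline{Q\Lambda})$ and $\widetilde{g}(\mathbf{x}) = g(\mathbf{x})(b(\mathbf{D})\Lambda(\mathbf{x}) + \mathbf{1}_m)$. In each case, the goal is to exhibit structural vanishing; the two cases use quite different mechanisms.

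Case $2^\circ$ is immediate. The assumption $g^0 = \overline{g}$ is equivalent to~\eqref{g0=overline_g_relat}, i.~e., $b(\mathbf{D})^* \mathbf{g}_k = 0$ for every column of $g(\mathbf{x})$, or equivalently $b(\mathbf{D})^* g(\mathbf{x}) = 0$. Consequently, $\Lambda \equiv 0$ is a solution of~\eqref{equation_for_Lambda} with vanishing mean; by the uniqueness of such a solution (the equation for $\Lambda$ is uniquely solvable modulo additive constants, which are fixed by the mean condition), $\Lambda \equiv 0$. Then $\Lambda_Q = 0 - (\overline{Q})^{-1}\overline{Q\cdot 0} = 0$, hence $L_Q(\boldsymbol{\theta}) = 0$ and $\widehat{N}_Q(\boldsymbol{\theta}) = 0$ for every $\boldsymbol{\theta} \in \mathbb{S}^{d-1}$.

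Case $1^\circ$ is more delicate and uses a reality/parity argument. With $b(\mathbf{D}) = \mathbf{D}$ and $g(\mathbf{x})$ real symmetric, equation~\eqref{equation_for_Lambda} reads $\mathbf{D}^* g(\mathbf{D}\Lambda + \mathbf{1}_m) = 0$. The ansatz $\Lambda(\mathbf{x}) = i \Psi(\mathbf{x})$ with a real-valued periodic $\Psi$ transforms this into the standard real elliptic system $\mathrm{div}\,(g(\nabla\Psi + \mathbf{1}_m)) = 0$, which is uniquely solvable (with zero mean) by the Lax--Milgram lemma and the reality-symmetry of $g$. By uniqueness, $\Lambda$ itself is purely imaginary, so $\Lambda_Q = i\Psi_Q$ with $\Psi_Q$ real, while $\widetilde{g}(\mathbf{x}) = g(\nabla\Psi + \mathbf{1}_m)$ is real. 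Substituting into the formula for $L_Q$ and using that $b(\boldsymbol{\theta})$ has real entries yields
\begin{equation*}
L_Q(\boldsymbol{\theta}) = i \bigl( A(\boldsymbol{\theta}) - A(\boldsymbol{\theta})^T \bigr), \quad A(\boldsymbol{\theta}) = |\Omega|^{-1} \int_\Omega \widetilde{g}(\mathbf{x})^T b(\boldsymbol{\theta}) \Psi_Q(\mathbf{x}) \, d\mathbf{x},
\end{equation*}
i.~e., $L_Q = iK(\boldsymbol{\theta})$ with a real antisymmetric matrix $K(\boldsymbol{\theta})$. Then $b(\boldsymbol{\theta})^* L_Q(\boldsymbol{\theta}) b(\boldsymbol{\theta}) = i\, b(\boldsymbol{\theta})^T K(\boldsymbol{\theta}) b(\boldsymbol{\theta})$, and in the acoustics setting of case~$1^\circ$ ($n = 1$, $b(\boldsymbol{\theta}) = \boldsymbol{\theta}$) this is $i \boldsymbol{\theta}^T K(\boldsymbol{\theta}) \boldsymbol{\theta} = 0$ by antisymmetry, so $\widehat{N}_Q(\boldsymbol{\theta}) = 0$.

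I expect the main obstacle to be case $1^\circ$: identifying the purely imaginary structure of $\Lambda$ in the $\mathcal{A}$ framework (which differs from the $\widehat{\mathcal{A}}$ framework since the averaging weight $Q$ enters through $\Lambda_Q$) and keeping careful track of the real/imaginary and symmetric/antisymmetric decompositions under sandwiching by $b(\boldsymbol{\theta})$. By contrast, case $2^\circ$ is essentially a triviality once the characterization~\eqref{g0=overline_g_relat} is invoked. As an alternative route for case~$1^\circ$, one could use Lemma~\ref{abstr_N_and_Nhat_lemma} to reduce $\widehat{N}_Q = 0$ to $N = 0$ for the family $A(t,\boldsymbol{\theta}) = \mathcal{A}(\mathbf{k})$, combine the scalar relation $N = N_0$ (Remark~\ref{abstr_N_remark}($2^\circ$)) with Proposition~\ref{N_0=0_proposit} applied to the sandwiched family, but the direct computation via~\eqref{N_Q(theta)} is more transparent.
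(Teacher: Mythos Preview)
Your proof is correct. The paper does not prove this proposition in the text; it is quoted from \cite[Section~5]{BSu3}. You have supplied a valid self-contained argument via the explicit formula~\eqref{N_Q(theta)}.

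One small point deserves to be made explicit in case~$1^\circ$: you pass from ``$\Lambda$ purely imaginary'' to ``$\Lambda_Q$ purely imaginary'' without comment. This requires $Q$ to be real, which is not assumed on $f$ in the statement. However, since $b(\mathbf{D})=\mathbf{D}$ forces $n=1$, the matrix $f(\mathbf{x})$ is scalar and $Q(\mathbf{x})=(f(\mathbf{x})f(\mathbf{x})^*)^{-1}=|f(\mathbf{x})|^{-2}$ is automatically real and positive; hence $\overline{Q\Lambda}$ is purely imaginary and so is $\Lambda_Q=\Lambda-(\overline{Q})^{-1}\overline{Q\Lambda}$. With this observation in hand, your antisymmetry computation $\boldsymbol{\theta}^T K(\boldsymbol{\theta})\boldsymbol{\theta}=0$ goes through exactly as written. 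Case~$2^\circ$ is indeed a triviality: $b(\mathbf{D})^*g=0$ makes $\Lambda\equiv 0$ the unique mean-zero solution of~\eqref{equation_for_Lambda}, whence $\Lambda_Q=0$ and $L_Q=0$.
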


Recall that (see Subsection~\ref{abstr_hatZ_Q_and_hatN_Q_section}) $\widehat{N}_Q (\boldsymbol{\theta}) = \widehat{N}_{0, Q} (\boldsymbol{\theta}) + \widehat{N}_{*,Q} (\boldsymbol{\theta})$. By~\eqref{abstr_hatN_0Q_N_*Q},
$$
\widehat{N}_{0, Q} (\boldsymbol{\theta}) = \sum_{l=1}^{n} \mu_l (\boldsymbol{\theta}) (\cdot, \overline{Q} \zeta_l(\boldsymbol{\theta}))_{L_2(\Omega)} \overline{Q} \zeta_l(\boldsymbol{\theta}).
$$
We have
\begin{equation*}
(\widehat{N}_Q (\boldsymbol{\theta}) \zeta_l (\boldsymbol{\theta}), \zeta_l (\boldsymbol{\theta}))_{L_2 (\Omega)} = (\widehat{N}_{0,Q} (\boldsymbol{\theta}) \zeta_l (\boldsymbol{\theta}), \zeta_l (\boldsymbol{\theta}))_{L_2 (\Omega)} = \mu_l (\boldsymbol{\theta}), \  l=1, \ldots, n.
\end{equation*}

In \cite[Proposition 5.2]{BSu3}, the following statement was proved.

\begin{proposition}
    Suppose that the matrices $b(\boldsymbol{\theta})$, $g (\mathbf{x})$, and $Q(\mathbf{x})$~have real entries. Suppose that in~\emph{\eqref{A_eigenvectors_series}}  the \textquotedblleft embryos\textquotedblright \ $\omega_l (\boldsymbol{\theta}), \; l = 1, \ldots, n,$ can be chosen so that the vectors  $\zeta_l (\boldsymbol{\theta}) = f \omega_l (\boldsymbol{\theta})$ are real. Then in~\emph{\eqref{A_eigenvalues_series}} we have $\mu_l (\boldsymbol{\theta}) = 0$,  $l=1, \ldots, n,$
i.~e., $\widehat{N}_{0,Q} (\boldsymbol{\theta}) = 0$ for any $\boldsymbol{\theta} \in \mathbb{S}^{d-1}$.
\end{proposition}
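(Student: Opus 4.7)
The plan is to adapt the symmetry argument of Proposition~\ref{N_0=0_proposit} to the sandwiched setting by passing to the generalized spectral problem for $\widehat{A}(t,\boldsymbol{\theta})$ with weight $Q$. Since $M = f$ is an isomorphism, the equation $A(t,\boldsymbol{\theta})\varphi_l = \lambda_l(t,\boldsymbol{\theta})\varphi_l$ with $(\varphi_l,\varphi_l)_{L_2(\Omega)}=1$ is equivalent to
\begin{equation*}
\widehat{A}(t,\boldsymbol{\theta})\,\hat{\varphi}_l = \lambda_l(t,\boldsymbol{\theta})\,Q\,\hat{\varphi}_l, \qquad (Q\hat{\varphi}_l,\hat{\varphi}_l)_{L_2(\Omega)}=1,
\end{equation*}
where $\hat{\varphi}_l := f\varphi_l$ admits the expansion $\hat{\varphi}_l(t,\boldsymbol{\theta}) = \zeta_l + t\chi_l^{(1)} + t^2\chi_l^{(2)} + \ldots$ with $\chi_l^{(j)} := f\psi_l^{(j)}$. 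Writing $\widehat{A}(t,\boldsymbol{\theta}) = \widehat{A}_0 + t\widehat{A}_1(\boldsymbol{\theta}) + t^2\widehat{A}_2(\boldsymbol{\theta})$ in the obvious way, substitution yields the standard cascade of perturbation equations at each order in~$t$.

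The central tool is a conjugation symmetry. Let $C$ denote complex conjugation on $L_2(\Omega;\mathbb{C}^n)$. From $D_j = -i\partial_j$ we have $CD_j = -D_j C$, and the assumption that $b(\boldsymbol{\theta})$, $g(\mathbf{x})$, and $Q(\mathbf{x})$ have real entries yields
\begin{equation*}
C\widehat{A}_0 = \widehat{A}_0 C, \quad C\widehat{A}_1(\boldsymbol{\theta}) = -\widehat{A}_1(\boldsymbol{\theta})C, \quad C\widehat{A}_2(\boldsymbol{\theta}) = \widehat{A}_2(\boldsymbol{\theta})C, \quad CQ = QC.
\end{equation*}
Thus $\widehat{A}_0$, $\widehat{A}_2$, and multiplication by $Q$ preserve the decomposition of functions into real and purely imaginary parts, while $\widehat{A}_1$ swaps these two classes. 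Starting from the hypothesis that $\zeta_l = f\omega_l$ is real, the order-$t$ equation $\widehat{A}_0\chi_l^{(1)} = -\widehat{A}_1\zeta_l$ has purely imaginary right-hand side; exploiting the freedom in $\Ker \widehat{A}_0 = \widehat{\mathfrak{N}}$ (which is consistent with the identity $2\Re(Q\chi_l^{(1)},\zeta_l)_{L_2(\Omega)}=0$ produced at order $t$ from the normalization, together with the analogous cross-relations for $k \ne l$), I would choose $\chi_l^{(1)}(\boldsymbol{\theta})$ purely imaginary for every $l$. Substituting into the order-$t^2$ equation then makes its right-hand side real, so $\chi_l^{(2)}(\boldsymbol{\theta})$ can be taken real as well.

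It remains to read off $\mu_l(\boldsymbol{\theta})$ from the order-$t^3$ identity
\begin{equation*}
\widehat{A}_0\chi_l^{(3)} + \widehat{A}_1\chi_l^{(2)} + \widehat{A}_2\chi_l^{(1)} = \gamma_l(\boldsymbol{\theta})\,Q\chi_l^{(1)} + \mu_l(\boldsymbol{\theta})\,Q\zeta_l.
\end{equation*}
Taking the $L_2(\Omega)$ inner product with $\zeta_l$ annihilates the $\chi_l^{(3)}$ term, since $\widehat{A}_0\zeta_l = 0$, and together with the normalization $(Q\zeta_l,\zeta_l)_{L_2(\Omega)} = 1$ yields
\begin{equation*}
\mu_l(\boldsymbol{\theta}) = (\widehat{A}_1\chi_l^{(2)}, \zeta_l)_{L_2(\Omega)} + (\widehat{A}_2\chi_l^{(1)}, \zeta_l)_{L_2(\Omega)} - \gamma_l(\boldsymbol{\theta})(Q\chi_l^{(1)}, \zeta_l)_{L_2(\Omega)}.
\end{equation*}
Each term on the right is purely imaginary by the conjugation calculus: $\widehat{A}_1\chi_l^{(2)}$ is imaginary because $\chi_l^{(2)}$ is real, while $\widehat{A}_2\chi_l^{(1)}$ and $Q\chi_l^{(1)}$ are imaginary because $\chi_l^{(1)}$ is imaginary, and pairing with the real vector $\zeta_l$ preserves this property. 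Consequently $\mu_l(\boldsymbol{\theta}) \in i\mathbb{R}$; but $\mu_l(\boldsymbol{\theta}) \in \mathbb{R}$ by~\eqref{A_eigenvalues_series}, so $\mu_l(\boldsymbol{\theta}) = 0$, and the conclusion $\widehat{N}_{0,Q}(\boldsymbol{\theta}) = 0$ then follows immediately from the formula for $\widehat{N}_{0,Q}$ in~\eqref{abstr_hatN_0Q_N_*Q}. The step I expect to demand the most care is justifying that $\chi_l^{(1)}(\boldsymbol{\theta})$ may simultaneously be chosen purely imaginary for all $l$; this is delicate when $\widehat{S}(\boldsymbol{\theta})$ has multiple eigenvalues, but the hypothesis that \emph{all} the embryos can be arranged so that $\{\zeta_l\}$ are real provides exactly the flexibility needed to coordinate the choices within each eigenspace.
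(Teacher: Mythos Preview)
The paper does not actually prove this proposition: it merely cites \cite[Proposition~5.2]{BSu3}. So there is no ``paper's own proof'' to compare against directly. Your approach via the perturbation cascade and the conjugation symmetry $C\widehat{A}(t,\boldsymbol{\theta})=\widehat{A}(-t,\boldsymbol{\theta})C$, $CQ=QC$ is sound and would yield a complete proof once the point you flag is settled.

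That said, there is a shorter route that sidesteps the multiplicity issue entirely, and it uses only objects already assembled in the paper. The paper records the identity $\mu_l(\boldsymbol{\theta})=(\widehat{N}_Q(\boldsymbol{\theta})\zeta_l,\zeta_l)_{L_2(\Omega)}$ together with the explicit formula $\widehat{N}_Q(\boldsymbol{\theta})=b(\boldsymbol{\theta})^*L_Q(\boldsymbol{\theta})b(\boldsymbol{\theta})\widehat{P}$. Under the reality assumptions one checks, exactly by your conjugation calculus, that $\Lambda_Q$ is purely imaginary (since $-C\Lambda_Q$ solves the same problem with the same normalisation $\int_\Omega Q\Lambda_Q=0$) and that $\widetilde g$ is real; hence $L_Q(\boldsymbol{\theta})$, and therefore the constant matrix representing $\widehat{N}_Q(\boldsymbol{\theta})$ on $\widehat{\mathfrak N}$, has purely imaginary entries. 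Pairing against the real vector $\zeta_l$ then gives $\mu_l(\boldsymbol{\theta})\in i\mathbb{R}\cap\mathbb{R}=\{0\}$ for every $l$ at once, with no need to coordinate choices of $\chi_l^{(1)}$ across a degenerate eigenspace. This is almost certainly the argument in \cite{BSu3}; your expansion-based proof is correct but works harder than necessary precisely at the step you identified as delicate.
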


In the \textquotedblleft real\textquotedblright \ case
$\widehat{S} (\boldsymbol{\theta})$  and
 $\overline{Q}$ are symmetric matrices with real entries.
 Clearly,  if the eigenvalue $\gamma_j (\boldsymbol{\theta})$ of the
problem~\eqref{hatS_gener_spec_problem}
is simple, then the eigenvector $\zeta_j (\boldsymbol{\theta}) = f \omega_j (\boldsymbol{\theta})$
 is defined uniquely up to a phase factor, and we can always choose it to be real.  We arrive at the following corollary.

\begin{corollary}
    \label{sndw_simple_spec_N0Q=0_cor}
Suppose that   the matrices $b(\boldsymbol{\theta})$, $g (\mathbf{x})$, and $Q(\mathbf{x})$~ have real entries.
Suppose that the spectrum of the generalized spectral problem~\emph{\eqref{hatS_gener_spec_problem}}
is simple. Then $\widehat{N}_{0,Q} (\boldsymbol{\theta}) = 0$ for any $\boldsymbol{\theta} \in \mathbb{S}^{d-1}$.
\end{corollary}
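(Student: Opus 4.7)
\medskip

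\noindent\textbf{Proof proposal.} The strategy is to reduce the corollary to the preceding Proposition by verifying its ``reality'' hypothesis on the embryos. Since $b(\boldsymbol{\theta})$, $g(\mathbf{x})$, and $Q(\mathbf{x})$ are real, the effective matrix $g^0$ defined by \eqref{g0}--\eqref{g_tilde} has real entries as well (one checks this via problem \eqref{equation_for_Lambda}, whose coefficients are real, so the $\Gamma$-periodic solution $\Lambda(\mathbf{x})$ may be taken real, and hence $\widetilde{g}(\mathbf{x})$ is real). Similarly, $\overline{Q} = |\Omega|^{-1}\int_\Omega Q(\mathbf{x})\,d\mathbf{x}$ is real. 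Therefore the pencil in \eqref{hatS_gener_spec_problem} is a generalized symmetric eigenproblem with real symmetric operator $b(\boldsymbol{\theta})^* g^0 b(\boldsymbol{\theta})$ and real positive definite weight $\overline{Q}$.

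Next I would invoke the simplicity hypothesis: if $\gamma_j(\boldsymbol{\theta})$ is a simple eigenvalue of \eqref{hatS_gener_spec_problem}, then the corresponding eigenspace is one-dimensional, so $\zeta_j(\boldsymbol{\theta})$ is determined up to a non-zero scalar multiple. Because the eigenproblem has real coefficients, one checks that both $\Re \zeta_j(\boldsymbol{\theta})$ and $\Im \zeta_j(\boldsymbol{\theta})$ satisfy the same equation and hence each lies in the one-dimensional eigenspace; consequently $\zeta_j(\boldsymbol{\theta})$ is a complex multiple of a single real vector, and after rescaling by a phase factor we may take $\zeta_j(\boldsymbol{\theta})$ real. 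Doing this for $j=1,\dots,n$ provides a basis of $\widehat{\mathfrak{N}}$ (orthonormal with the weight $\overline{Q}$) consisting of real vectors $\zeta_j(\boldsymbol{\theta})$.

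Finally, we set $\omega_j(\boldsymbol{\theta}) := f^{-1} \zeta_j(\boldsymbol{\theta})$ as the embryos in the expansions \eqref{A_eigenvectors_series}; this choice is legitimate since in the simple-spectrum situation the embryo is uniquely defined up to a scalar factor. The reality hypothesis of the preceding Proposition is then fulfilled, whence $\mu_l(\boldsymbol{\theta}) = 0$ for all $l=1,\dots,n$ and $\widehat{N}_{0,Q}(\boldsymbol{\theta}) = 0$ by the representation $\widehat{N}_{0,Q}(\boldsymbol{\theta}) = \sum_{l=1}^n \mu_l(\boldsymbol{\theta})(\,\cdot\,,\overline{Q}\zeta_l(\boldsymbol{\theta}))_{L_2(\Omega)}\overline{Q}\zeta_l(\boldsymbol{\theta})$ recorded just above the corollary.

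The argument is essentially routine; the only point that requires a brief justification is the legitimacy of choosing a real $\zeta_j(\boldsymbol{\theta})$, which is the content of the sentence preceding the corollary in the excerpt. There is no substantial obstacle: the corollary is a direct specialization of the preceding Proposition under a hypothesis (simple spectrum) that automatically guarantees the existence of real embryos.
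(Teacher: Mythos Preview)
Your proposal is correct and follows essentially the same route as the paper: both arguments observe that in the real case the generalized eigenproblem \eqref{hatS_gener_spec_problem} has real symmetric data, so simple eigenvectors $\zeta_j(\boldsymbol{\theta})$ can be taken real (up to a phase), after which the preceding Proposition applies directly. Your write-up just adds a little more detail (why $g^0$ is real, and the standard real/imaginary-part argument for real eigenvectors) that the paper leaves implicit.
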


\subsection{Multiplicities of the eigenvalues of the germ}
\label{sndw_eigenval_multipl_section}

This subsection concerns the case where $n \ge 2$. We return to the notation of Section~\ref{abstr_cluster_section}.
In general, the number $p(\boldsymbol{\theta})$ of different eigenvalues $\gamma^{\circ}_1 (\boldsymbol{\theta}), \ldots, \gamma^{\circ}_{p(\boldsymbol{\theta})} (\boldsymbol{\theta})$ of the germ $S(\boldsymbol{\theta})$ (or of 
problem~\eqref{hatS_gener_spec_problem})
and their multiplicities depend on the parameter $\boldsymbol{\theta} \in \mathbb{S}^{d-1}$.  For a fixed $\boldsymbol{\theta}$, let $\mathfrak{N}_j (\boldsymbol{\theta})$ be the eigenspace  of the germ $S (\boldsymbol{\theta})$ corresponding to the eigenvalue $\gamma^{\circ}_j (\boldsymbol{\theta})$. Then $f \mathfrak{N}_j (\boldsymbol{\theta})$~ is the eigenspace of the problem~\eqref{hatS_gener_spec_problem}  corresponding to the same eigenvalue $\gamma^{\circ}_j (\boldsymbol{\theta})$.
Let $\mathcal{P}_j (\boldsymbol{\theta})$ denote the \textquotedblleft skew\textquotedblright \  projection of $L_2(\Omega; \mathbb{C}^n)$
onto the subspace $f \mathfrak{N}_j (\boldsymbol{\theta})$; $\mathcal{P}_j (\boldsymbol{\theta})$  is orthogonal with respect to the inner product with the weight $\overline{Q}$. Then, by~\eqref{abstr_hatN_0Q_N_*Q_invar_repr},  we have
\begin{equation}
\label{N0Q_invar_repr}
\widehat{N}_{0,Q} (\boldsymbol{\theta}) = \sum_{j =1}^{p(\boldsymbol{\theta})} \mathcal{P}_j (\boldsymbol{\theta})^* \widehat{N}_Q (\boldsymbol{\theta}) \mathcal{P}_j (\boldsymbol{\theta}), \quad
\widehat{N}_{*,Q} (\boldsymbol{\theta}) = \sum_{{1 \le j, l \le p(\boldsymbol{\theta}):\; j \ne l}} \mathcal{P}_j (\boldsymbol{\theta})^* \widehat{N}_Q (\boldsymbol{\theta}) \mathcal{P}_l (\boldsymbol{\theta}).
\end{equation}

\section{ Approximations of the sandwiched operators \\
$\cos(\varepsilon^{-1} \tau \mathcal{A}(\mathbf{k})^{1/2})$ and $\mathcal{A}(\mathbf{k})^{-1/2} \sin(\varepsilon^{-1} \tau \mathcal{A}(\mathbf{k})^{1/2})$}

\subsection{The general case}

Denote
\begin{align*}
&{J}_1(\mathbf{k},\tau):= f \cos (\tau \mathcal{A}(\mathbf{k})^{1/2}) f^{-1} -
 f_0\cos (\tau \mathcal{A}^0(\mathbf{k})^{1/2})f_0^{-1},
 \\
& {J}_2(\mathbf{k},\tau):= f \mathcal{A}(\mathbf{k})^{-1/2} \sin (\tau \mathcal{A}(\mathbf{k})^{1/2}) f^* -
 f_0 \mathcal{A}^0(\mathbf{k})^{-1/2} \sin (\tau \mathcal{A}^0(\mathbf{k})^{1/2})f_0.
\end{align*}

We will apply theorems of Section~\ref{abstr_sandwiched_section} to the operator
 ${A}(t, \boldsymbol{\theta}) = {\mathcal{A}}(\mathbf{k}) = f^* \widehat{\mathcal{A}}(\mathbf{k}) f$.
 Due to Remark~\ref{rem_coeff}, we may trace the dependence of the constants in estimates on the problem data.
 Note that ${c}_*$, ${\delta}$, ${t}^{\,0}$
 do not depend on $\boldsymbol{\theta}$ (see \eqref{c*}, \eqref{delta_fixation}, \eqref{t0_fixation}).
 By \eqref{X_1_estimate}, the norm $\| {X}_1 (\boldsymbol{\theta}) \|$ can be replaced by $\alpha_1^{1/2} \| g \|_{L_{\infty}}^{1/2} \| f \|_{L_{\infty}}$.
 Hence, the constants in Theorems~\ref{abstr_cos_sandwiched_general_thrm} and~\ref{abstr_cos_sandwiched_enchanced_thrm_1} (as applied to the operator ${\mathcal{A}}(\mathbf{k})$)
 will be independent of $\boldsymbol{\theta}$. They depend only on $\alpha_0$, $\alpha_1$, $\|g\|_{L_\infty}$, $\|g^{-1}\|_{L_\infty}$,
 $\|f\|_{L_\infty}$, $\|f^{-1}\|_{L_\infty}$, and $r_0$.

\begin{theorem}
    \label{sndw_cos_general_thrm}
 For $\tau \in \mathbb{R}$, $\varepsilon > 0$, and $\mathbf{k} \in \widetilde{\Omega}$ we have
    \begin{align*}
    &\|  {J}_1(\mathbf{k},\varepsilon^{-1}\tau)
\mathcal{R}(\mathbf{k}, \varepsilon)\|_{L_2(\Omega) \to L_2 (\Omega) }  \le \mathcal{C}_1(1 + |\tau|) \varepsilon,
\\
    &\|  {J}_2(\mathbf{k},\varepsilon^{-1}\tau)
\mathcal{R}(\mathbf{k}, \varepsilon)^{1/2}\|_{L_2(\Omega) \to L_2 (\Omega) }  \le \mathcal{C}_2(1 + |\tau|).
    \end{align*}
The constants ${\mathcal{C}}_1$ and ${\mathcal{C}}_2$ depend only on
$\alpha_0$, $\alpha_1$, $\|g\|_{L_\infty}$, $\|g^{-1}\|_{L_\infty}$, $\|f\|_{L_\infty}$, $\|f^{-1}\|_{L_\infty}$, and $r_0$.
\end{theorem}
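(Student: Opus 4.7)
The plan is to deduce Theorem \ref{sndw_cos_general_thrm} from the abstract Theorem \ref{abstr_cos_sandwiched_general_thrm} in the same way that Theorem \ref{cos_general_thrm} is deduced from Theorem \ref{abstr_cos_general_thrm}. I work in the framework of Section \ref{abstr_sandwiched_section} with $M$ equal to multiplication by $f(\mathbf{x})$, so the role of $A(t,\boldsymbol{\theta})$ is played by $\mathcal{A}(\mathbf{k})$ and the role of $\widehat{A}(t,\boldsymbol{\theta})$ by $\widehat{\mathcal{A}}(\mathbf{k})$. By \eqref{c*}, \eqref{delta_fixation}, \eqref{t0_fixation}, and \eqref{X_1_estimate}, the quantities $c_*$, $\delta$, $t^0$ and the bound for $\|X_1(\boldsymbol{\theta})\|$ can be chosen uniformly in $\boldsymbol{\theta}$, so Remark \ref{rem_coeff} guarantees that the constants obtained depend only on $\alpha_0,\alpha_1,\|g\|_{L_\infty},\|g^{-1}\|_{L_\infty},\|f\|_{L_\infty},\|f^{-1}\|_{L_\infty}$ and $r_0$.

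Since $\mathcal{H}_0(\mathbf{k})$ preserves constants, $\mathcal{R}(\mathbf{k},\varepsilon)$ commutes with $\widehat{P}$, and I split
$J_j(\mathbf{k},\varepsilon^{-1}\tau)\mathcal{R}(\mathbf{k},\varepsilon)^{s/2} = J_j\mathcal{R}^{s/2}\widehat{P} + J_j\mathcal{R}^{s/2}(I-\widehat{P})$
(with $s=2$ for $j=1$ and $s=1$ for $j=2$). The key observation is that $\mathcal{A}^0(\mathbf{k})$ also preserves constants and, by \eqref{f_0 hatS f_0 P = A^0}, acts on $\widehat{\mathfrak{N}}$ as $f_0\widehat{S}(\mathbf{k})f_0 = t^2 M_0\widehat{S}(\boldsymbol{\theta})M_0$; together with $M_0\widehat{P}=f_0\widehat{P}$ this identifies $J_1(\mathbf{k},\tau)\widehat{P}$ and $J_2(\mathbf{k},\tau)\widehat{P}$ with the abstract operators \eqref{Jdef} and \eqref{J22def}. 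For $|\mathbf{k}|\le t^0$, Theorem \ref{abstr_cos_sandwiched_general_thrm} applied with $\tau$ replaced by $\varepsilon^{-1}\tau$, combined with \eqref{R_P}, yields the required bound on $J_j\mathcal{R}^{s/2}\widehat{P}$. For $|\mathbf{k}|>t^0$ I use the crude estimate $\|\mathcal{R}(\mathbf{k},\varepsilon)^{s/2}\widehat{P}\|\le(t^0)^{-s}\varepsilon^{s}$ from \eqref{R_hatP_est} together with the uniform a priori bounds $\|J_1(\mathbf{k},\tau)\|\le 2\|f\|_{L_\infty}\|f^{-1}\|_{L_\infty}$ (since $\|\cos(\cdot)\|\le 1$) and $\|J_2(\mathbf{k},\varepsilon^{-1}\tau)\|\le 2\|f\|_{L_\infty}^2\varepsilon^{-1}|\tau|$ (from the spectral estimate $\|A^{-1/2}\sin(\sigma A^{1/2})\|\le|\sigma|$).

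The $(I-\widehat{P})$ contribution is handled by \eqref{R(k,eps)(I-P)_est} together with the same a priori bounds. For $J_1$: the factor $\|\mathcal{R}(\mathbf{k},\varepsilon)(I-\widehat{P})\|\le r_0^{-2}\varepsilon^2$ times a bounded operator gives $O(\varepsilon)$ for $\varepsilon\le 1$, while for $\varepsilon\ge 1$ the desired right-hand side $O((1+|\tau|)\varepsilon)$ is trivially dominated by the uniform bound on $J_1$. For $J_2$: the factor $\|\mathcal{R}(\mathbf{k},\varepsilon)^{1/2}(I-\widehat{P})\|\le r_0^{-1}\varepsilon$ combined with $\|J_2(\mathbf{k},\varepsilon^{-1}\tau)\|\le 2\|f\|_{L_\infty}^2\varepsilon^{-1}|\tau|$ yields the required $O(|\tau|)$ bound. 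Assembling the four pieces gives the two estimates. The only genuinely delicate point is the identification of $J_j(\mathbf{k},\tau)\widehat{P}$ with the abstract operators \eqref{Jdef}, \eqref{J22def}, which rests on the commutation of $\mathcal{A}^0(\mathbf{k})$ with $\widehat{P}$ and on \eqref{f_0 hatS f_0 P = A^0}; once this is verified, the rest is bookkeeping that closely parallels the proof of Theorem \ref{cos_general_thrm}.
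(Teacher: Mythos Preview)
Your proposal is correct and follows essentially the same route as the paper, which deduces Theorem~\ref{sndw_cos_general_thrm} from Theorem~\ref{abstr_cos_sandwiched_general_thrm} via the splitting $\widehat{P}/(I-\widehat{P})$ and relations \eqref{R_P}--\eqref{R(k,eps)(I-P)_est}, \eqref{f_0 hatS f_0 P = A^0}. One minor remark: for the $(I-\widehat{P})$ piece of $J_2$ you use the crude bound $\|A^{-1/2}\sin(\sigma A^{1/2})\|\le|\sigma|$, which is perfectly adequate here (since $s=1$), whereas the paper's more refined argument showing that $\mathcal{A}(\mathbf{k})^{-1/2}f^*(I-\widehat{P})$ is uniformly bounded (via $Pf^*(I-\widehat{P})=0$) only becomes necessary in the enhanced Theorem~\ref{sndw_cos_enchanced_thrm_1}, where $s=1/2$ and your crude bound would no longer close.
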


 Theorem \ref{sndw_cos_general_thrm} is deduced from Theorem~\ref{abstr_cos_sandwiched_general_thrm} and relations \eqref{R_P}--\eqref{R(k,eps)(I-P)_est}, \eqref{f_0 hatS f_0 P = A^0}.
 This result was known before (see \cite[Theorem 8.2]{BSu5} and \cite[Subsection~8.2]{M}).

\subsection{The case where $\widehat{N}_Q(\boldsymbol{\theta}) = 0$}
Now we assume that $\widehat{N}_Q(\boldsymbol{\theta}) \equiv  0$ and apply
Theorem~\ref{abstr_cos_sandwiched_enchanced_thrm_1}.

\begin{theorem}
    \label{sndw_cos_enchanced_thrm_1}
 Let $\widehat{N}_Q(\boldsymbol{\theta})$ be the operator defined by~\emph{\eqref{N_Q(theta)}}.
 Suppose that  $\widehat{N}_Q(\boldsymbol{\theta})=0$ for any $\boldsymbol{\theta} \in \mathbb{S}^{d-1}$.
Then for $\tau \in \mathbb{R}$, $\varepsilon > 0$, and $\mathbf{k} \in \widetilde{\Omega}$ we have
    \begin{align}
    \label{9.5}
        \|   {J}_1(\mathbf{k},\varepsilon^{-1}\tau)
\mathcal{R}(\mathbf{k}, \varepsilon)^{3/4}\|_{L_2(\Omega) \to L_2 (\Omega) }  &\le \mathcal{C}_3 (1 + |\tau|) \varepsilon,
\\
\label{9.6}
        \|   {J}_2(\mathbf{k},\varepsilon^{-1}\tau)
\mathcal{R}(\mathbf{k}, \varepsilon)^{1/4}\|_{L_2(\Omega) \to L_2 (\Omega) }  &\le \mathcal{C}_4 (1 + |\tau|).
    \end{align}
The constants ${\mathcal{C}}_3$ and ${\mathcal{C}}_4$ depend only on
$\alpha_0$, $\alpha_1$, $\|g\|_{L_\infty}$, $\|g^{-1}\|_{L_\infty}$, $\|f\|_{L_\infty}$, $\|f^{-1}\|_{L_\infty}$, and $r_0$.
\end{theorem}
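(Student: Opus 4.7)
The plan is to mirror the three-zone argument used in the proof of Theorem~\ref{cos_enchanced_thrm_11}, replacing the abstract inputs from Theorem~\ref{abstr_cos_enchanced_thrm_1} by the sandwiched inputs from Theorem~\ref{abstr_cos_sandwiched_enchanced_thrm_1}. Concretely, I take $M$ to be multiplication by $f$ and $M_0$ to be multiplication by $f_0 = (\underline{ff^*})^{1/2}$, so that the abstract objects $J_1(t,\tau)$ and $J_2(t,\tau)$ from \eqref{Jdef}, \eqref{J22def} specialize exactly to ${J}_1(\mathbf{k},\tau)\widehat{P}$ and ${J}_2(\mathbf{k},\tau)\widehat{P}$ after applying \eqref{f_0 hatS f_0 P = A^0}. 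The assumption $\widehat{N}_Q(\boldsymbol{\theta})\equiv 0$ is precisely the hypothesis of Theorem~\ref{abstr_cos_sandwiched_enchanced_thrm_1}, so estimates \eqref{4.19a} and \eqref{4.19b} apply, uniformly in $\boldsymbol{\theta}$, with constants depending only on the parameters listed in the statement (cf.\ Remark~\ref{rem_coeff} and the uniform bounds \eqref{X_1_estimate}, \eqref{delta_fixation}, \eqref{t0_fixation}).

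For \eqref{9.5}, I split ${J}_1(\mathbf{k},\varepsilon^{-1}\tau)\mathcal{R}(\mathbf{k},\varepsilon)^{3/4}$ into its $\widehat{P}$ and $(I-\widehat{P})$ parts. In the ball $|\mathbf{k}|\le t^0$, the $\widehat{P}$-part equals $J_1(t,\varepsilon^{-1}\tau)$ times $\mathcal{R}(\mathbf{k},\varepsilon)^{3/4}\widehat{P}$, so \eqref{4.19a} together with the identity \eqref{R_P} yields the bound $\|M\|\|M^{-1}\|(C_1'+C_9|\tau|)\varepsilon$. For $|\mathbf{k}|>t^0$ the bound \eqref{R_hatP_est} gives $\|\mathcal{R}(\mathbf{k},\varepsilon)^{3/4}\widehat{P}\|=O(\varepsilon^{3/2})$, and since both $f\cos(\varepsilon^{-1}\tau\mathcal{A}(\mathbf{k})^{1/2})f^{-1}$ and its effective counterpart are uniformly bounded by $\|f\|_{L_\infty}\|f^{-1}\|_{L_\infty}$, this piece is $o(\varepsilon)$. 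Finally, on the $(I-\widehat{P})$ subspace each summand of $J_1$ is again uniformly bounded, and \eqref{R(k,eps)(I-P)_est} supplies $\|\mathcal{R}(\mathbf{k},\varepsilon)^{3/4}(I-\widehat{P})\|\le r_0^{-3/2}\varepsilon^{3/2}=O(\varepsilon)$, completing \eqref{9.5}.

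For \eqref{9.6} the scheme is the same with $\mathcal{R}^{1/4}$ in place of $\mathcal{R}^{3/4}$: the $\widehat{P}$-part for $|\mathbf{k}|\le t^0$ is handled by \eqref{4.19b} together with \eqref{R_P}, and the large-$\mathbf{k}$ zone $|\mathbf{k}|>t^0$ uses \eqref{R_hatP_est} for the weight plus the uniform bound $\|\mathcal{A}(\mathbf{k})^{-1/2}\widehat{P}\|\le \widehat{c}_*^{-1/2}(t^0)^{-1}$ from \eqref{A(k)_nondegenerated_and_c_*}. The main subtlety, which I expect to be the principal obstacle, is the $(I-\widehat{P})$-piece of $\mathcal{A}(\mathbf{k})^{-1/2}\sin$ for small $\mathbf{k}$, since $\widehat{P}$ is \emph{not} the spectral projection of $\mathcal{A}(\mathbf{k})$. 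I overcome this exactly as in the proof of Theorem~\ref{cos_enchanced_thrm_11}: decompose
\begin{equation*}
\mathcal{A}(\mathbf{k})^{-1/2}(I-\widehat{P})=\mathcal{A}(\mathbf{k})^{-1/2}F(\mathbf{k})^\perp+\mathcal{A}(\mathbf{k})^{-1/2}(F(\mathbf{k})-P)+\mathcal{A}(\mathbf{k})^{-1/2}(P-\widehat{P}),
\end{equation*}
and estimate the first summand by $\delta^{-1/2}$ (spectral gap above $\delta$), the second by $C_1 c_*^{-1/2}$ (via \eqref{abstr_F(t)_threshold_1} and Condition~\ref{nondeg}), and the third by exploiting the relation $PM^*=M^{-1}Q_{\widehat{\mathfrak{N}}}^{-1}\widehat{P}$ from \eqref{abstr_P_and_P_hat_relation}, which expresses $P-\widehat{P}$ in terms of the multiplication operators $f,f_0$ and the projection $\widehat{P}$, yielding a uniform bound. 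The same analysis applies to $\mathcal{A}^0(\mathbf{k})$. Combining these with $\|\mathcal{R}(\mathbf{k},\varepsilon)^{1/4}(I-\widehat{P})\|=O(\varepsilon^{1/2})$ from \eqref{R(k,eps)(I-P)_est} yields an $O(1)$ contribution, finishing \eqref{9.6}.
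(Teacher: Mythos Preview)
Your treatment of \eqref{9.5} and of the $\widehat{P}$-part and large-$|\mathbf{k}|$ zone of \eqref{9.6} is essentially the paper's argument. The gap is in the $(I-\widehat{P})$-piece of $J_2$ for $|\mathbf{k}|\le t^0$. You have silently dropped the factor $f^*$: the operator you must control is $\mathcal{A}(\mathbf{k})^{-1/2}\,f^*\,(I-\widehat{P})$, not $\mathcal{A}(\mathbf{k})^{-1/2}(I-\widehat{P})$. Your proposed decomposition $I-\widehat{P}=F(\mathbf{k})^\perp+(F(\mathbf{k})-P)+(P-\widehat{P})$ therefore does not apply as written, and in fact the third term $\mathcal{A}(\mathbf{k})^{-1/2}(P-\widehat{P})$ is \emph{not} uniformly bounded: for any $u$ with $\widehat{P}u=0$ one has $(P-\widehat{P})u=Pu\in\mathfrak{N}$, and $\|\mathcal{A}(\mathbf{k})^{-1/2}Pu\|$ blows up like $|\mathbf{k}|^{-1}$. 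The identity $PM^*=M^{-1}Q_{\widehat{\mathfrak N}}^{-1}\widehat{P}$ that you cite says something about $Pf^*$, not about $P-\widehat{P}$, so it does not rescue this term.

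The correct use of that identity (and the paper's argument) keeps the $f^*$ in place: from $Pf^*=f^{-1}(\overline{Q})^{-1}\widehat{P}$ one gets $Pf^*(I-\widehat{P})=0$, whence
\[
\mathcal{A}(\mathbf{k})^{-1/2}f^*(I-\widehat{P})=\mathcal{A}(\mathbf{k})^{-1/2}(I-P)\,f^*(I-\widehat{P}),
\]
and now $\mathcal{A}(\mathbf{k})^{-1/2}(I-P)$ is uniformly bounded via $(I-P)=F(\mathbf{k})^\perp+(F(\mathbf{k})-P)$ together with \eqref{abstr_F(t)_threshold_1} and \eqref{A(k)_nondegenerated_and_c_*}. (For the $\mathcal{A}^0(\mathbf{k})$ summand the analogous bound is immediate since $f_0$ is constant and $\widehat{P}$ is the spectral projection of $\mathcal{A}^0(\mathbf{k})$ near zero.) So the fix is a one-line change: insert $I=(I-P)+P$ \emph{between} $\mathcal{A}(\mathbf{k})^{-1/2}$ and $f^*$, and kill the $P$-part with the identity above, rather than decomposing $I-\widehat{P}$ on the right.
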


\begin{proof}
Estimate \eqref{9.5} follows from \eqref{4.19a}, \eqref{R_P}--\eqref{R(k,eps)(I-P)_est}, and \eqref{f_0 hatS f_0 P = A^0};
cf. the proof of inequality \eqref{th8.2est1}.

Let us check \eqref{9.6}. Estimate \eqref{4.19b} implies the required inequality for the term
$\|   {J}_2(\mathbf{k},\varepsilon^{-1}\tau) \mathcal{R}(\mathbf{k}, \varepsilon)^{1/4} \widehat{P}\|$ for $|{\mathbf k}|\le t^0$.

Now, we consider the term $\|  {J}_2(\mathbf{k},\varepsilon^{-1}\tau) \mathcal{R}(\mathbf{k}, \varepsilon)^{1/4} (I - \widehat{P})\|$.
We need to estimate the operators $\mathcal{A}(\mathbf{k})^{-1/2} f^* (I - \widehat{P})$ and
$\mathcal{A}^0(\mathbf{k})^{-1/2} (I - \widehat{P})$. Obviously, the second one is uniformly bounded.
To estimate the first one, note that $P f^* (I- \widehat{P})=0$ due to the identity
$P f^* = f^{-1}(\overline{Q})^{-1} \widehat{P}$ (see \eqref{abstr_P_and_P_hat_relation}).
Here $P$ is the orthogonal projection onto $\mathfrak N$ (see \eqref{Ker2}). Consequently, we have
$\mathcal{A}(\mathbf{k})^{-1/2} f^* (I - \widehat{P})= \mathcal{A}(\mathbf{k})^{-1/2} (I-P) f^* (I - \widehat{P})$.
By \eqref{abstr_F(t)_threshold_1} and \eqref{A(k)_nondegenerated_and_c_*}, this operator is uniformly bounded for $|{\mathbf k}| \le t^0$.
It follows that the term $\|  {J}_2(\mathbf{k},\varepsilon^{-1}\tau) \mathcal{R}(\mathbf{k}, \varepsilon)^{1/4} (I - \widehat{P})\|$
is uniformly bounded for $|{\mathbf k}| \le t^0$.

For $|{\mathbf k}| > t^0$ estimate \eqref{9.6} is a consequence of \eqref{A(k)_nondegenerated_and_c_*}
and a similar estimate for $\mathcal{A}^0(\mathbf{k})$.
\end{proof}

\subsection{The case where $\widehat{N}_{0,Q}(\boldsymbol{\theta}) = 0$}
Now we reject the assumption of Theorem~\ref{sndw_cos_enchanced_thrm_1}, but instead we assume that $\widehat{N}_{0,Q}(\boldsymbol{\theta}) = 0$ for any $\boldsymbol{\theta}$.
As in Subsection~\ref{ench_approx2_section}, in order to apply Theorem~\ref{abstr_cos_sandwiched_enchanced_thrm_2}, we have to impose an additional condition. We use the initial enumeration of the eigenvalues
$\gamma_1 (\boldsymbol{\theta})\le \ldots \le \gamma_n (\boldsymbol{\theta})$ of the germ $S (\boldsymbol{\theta})$.
They  are also the eigenvalues of the generalized spectral problem~\eqref{hatS_gener_spec_problem}. For each $\boldsymbol{\theta}$, let $\mathcal{P}^{(k)} (\boldsymbol{\theta})$ be the
\textquotedblleft skew\textquotedblright \ projection (orthogonal with the weight $\overline{Q}$) of $L_2 (\Omega; \mathbb{C}^n)$
 onto the eigenspace of the problem~\eqref{hatS_gener_spec_problem} corresponding to the eigenvalue
 $\gamma_k (\boldsymbol{\theta})$. Clearly,  $\mathcal{P}^{(k)} (\boldsymbol{\theta})$ coincides with one of the projections
$\mathcal{P}_j (\boldsymbol{\theta})$   introduced in Subsection~\ref{sndw_eigenval_multipl_section} (but the number $j$ may depend on $\boldsymbol{\theta}$).

\begin{condition}
    \label{sndw_cond1}

$1^\circ$. $\widehat{N}_{0,Q}(\boldsymbol{\theta})=0$ for any $\boldsymbol{\theta} \in \mathbb{S}^{d-1}$.
 $2^\circ$. For any pair of indices $(k,l)$, $1\le k,l \le n$, $k\ne l$, such that
$\gamma_k (\boldsymbol{\theta}_0) = \gamma_l (\boldsymbol{\theta}_0) $ for some $\boldsymbol{\theta}_0 \in \mathbb{S}^{d-1}$, we have  $(\mathcal{P}^{(k)} (\boldsymbol{\theta}))^* \widehat{N}_Q (\boldsymbol{\theta}) \mathcal{P}^{(l)} (\boldsymbol{\theta}) = 0$ for any $\boldsymbol{\theta} \in \mathbb{S}^{d-1}$.
\end{condition}

   Condition~$2^\circ$ can be reformulated as follows: it is assumed that, for the \textquotedblleft blocks\textquotedblright \ $(\mathcal{P}^{(k)} (\boldsymbol{\theta}))^* \widehat{N}_Q (\boldsymbol{\theta}) \mathcal{P}^{(l)} (\boldsymbol{\theta})$ of the operator $\widehat{N}_Q (\boldsymbol{\theta})$ that are not identically zero,
    the corresponding branches of the eigenvalues $\gamma_k (\boldsymbol{\theta})$ and  $\gamma_l (\boldsymbol{\theta})$ do not intersect.

 Condition~\ref{sndw_cond1} is ensured by the following more restrictive condition.

\begin{condition}
    \label{sndw_cond2}

        $1^\circ$. $\widehat{N}_{0,Q}(\boldsymbol{\theta})=0$ for any  $\boldsymbol{\theta} \in \mathbb{S}^{d-1}$.
$2^\circ$. Suppose that the number $p$ of different eigenvalues of the generalized spectral problem~\emph{\eqref{hatS_gener_spec_problem}} does not depend on $\boldsymbol{\theta} \in \mathbb{S}^{d-1}$.
    \end{condition}

Under Condition~\ref{sndw_cond2},
denote the different eigenvalues of the germ enumerated in the increasing order by
$\gamma^{\circ}_1(\boldsymbol{\theta}), \ldots, \gamma^{\circ}_p(\boldsymbol{\theta})$.
Then their multiplicities $k_1, \ldots, k_p$ do not depend on $\boldsymbol{\theta} \in \mathbb{S}^{d-1}$.

\begin{remark}
    \label{sndw_simple_spec_remark}
 $1^\circ$.   Assumption $2^\circ$ of Condition~\emph{\ref{sndw_cond2}} is a fortiori valid if the spectrum of the problem~\emph{\eqref{hatS_gener_spec_problem}} is simple for any $\boldsymbol{\theta} \in \mathbb{S}^{d-1}$.
$2^\circ$. From Corollary {\rm \ref{sndw_simple_spec_N0Q=0_cor}} it follows that Condition~\emph{\ref{sndw_cond2}} is satisfied,
 if $b(\boldsymbol{\theta})$, $g(\mathbf{x})$, and $Q(\mathbf{x})$
have real entries, and the spectrum of problem \emph{\eqref{hatS_gener_spec_problem}}
is simple for any  $\boldsymbol{\theta} \in \mathbb{S}^{d-1}$.
\end{remark}

Under Condition~\ref{sndw_cond1}, let
$    \mathcal{K} := \{ (k,l) \colon 1 \le k,l \le n, \; k \ne l, \;  (\mathcal{P}^{(k)} (\boldsymbol{\theta}))^* \widehat{N}_Q (\boldsymbol{\theta}) \mathcal{P}^{(l)} (\boldsymbol{\theta}) \not\equiv 0 \}$.
Denote
    $c^{\circ}_{kl} (\boldsymbol{\theta}) := \min \{c_*, n^{-1} |\gamma_k (\boldsymbol{\theta}) - \gamma_l (\boldsymbol{\theta})| \}$,
$(k,l) \in \mathcal{K}$.
Since  $S (\boldsymbol{\theta})$  is continuous with respect to \hbox{$\boldsymbol{\theta} \in \mathbb{S}^{d-1}$},
then $\gamma_j (\boldsymbol{\theta})$~are continuous functions on $\mathbb{S}^{d-1}$.
By Condition~\ref{sndw_cond1}($2^\circ$), for $(k,l) \in \mathcal{K}$ we have \hbox{$|\gamma_k (\boldsymbol{\theta}) - \gamma_l(\boldsymbol{\theta})| > 0$} for any $\boldsymbol{\theta} \in \mathbb{S}^{d-1}$, whence
$ c^{\circ}_{kl} := \min_{\boldsymbol{\theta} \in \mathbb{S}^{d-1}} c^{\circ}_{kl} (\boldsymbol{\theta}) > 0$ for $(k,l) \in \mathcal{K}$.
We put
\begin{equation}
    \label{c^circ}
    c^{\circ} := \min_{(k,l) \in \mathcal{K}} c^{\circ}_{kl}.
\end{equation}
Clearly, the number~\eqref{c^circ}~ plays the role of the number~\eqref{abstr_c^circ}
chosen  independently of $\boldsymbol{\theta}$.
The number $t^{00}$ subject to~\eqref{abstr_t00}
 also can be chosen independently of $\boldsymbol{\theta} \in \mathbb{S}^{d-1}$.
Taking~\eqref{delta_fixation} and~\eqref{X_1_estimate} into account, we put
\begin{equation*}
    t^{00} = (8 \beta_2)^{-1} r_0 \alpha_1^{-3/2} \alpha_0^{1/2} \| g\|_{L_{\infty}}^{-3/2} \| g^{-1}\|_{L_{\infty}}^{-1/2} \|f\|_{L_\infty}^{-3} \|f^{-1}\|_{L_\infty}^{-1} c^{\circ}.
\end{equation*}
(The condition $t^{00} \le t^{0}$ is valid, since $c^{\circ} \le \| S (\boldsymbol{\theta}) \| \le \alpha_1 \|g\|_{L_{\infty}}\|f\|_{L_\infty}^2$.)

Under Condition~\ref{sndw_cond1}, we apply Theorem~\ref{abstr_cos_sandwiched_enchanced_thrm_2}
and deduce the following result.
We have to take into account that now (see Remark~\ref{rem_coeff}) the constants
will depend not only on $\alpha_0$, $\alpha_1$, $\|g\|_{L_\infty}$, $\|g^{-1}\|_{L_\infty}$, $\|f\|_{L_\infty}$, $\|f^{-1}\|_{L_\infty}$,
and $r_0$, but also on ${c}^{\circ}$ and $n$.

\begin{theorem}
    \label{sndw_cos_enchanced_thrm_2}
   Suppose that Condition~\emph{\ref{sndw_cond1}} \emph{(}or more restrictive Condition~\emph{\ref{sndw_cond2}}\emph{)}
is satisfied. Then for $\tau \in \mathbb{R}$, $\varepsilon > 0$, and $\mathbf{k} \in \widetilde{\Omega}$ we have
    \begin{align*}
    &\|  {J}_1(\mathbf{k},\varepsilon^{-1}\tau)
 \mathcal{R}(\mathbf{k}, \varepsilon)^{3/4}\|_{L_2(\Omega) \to L_2 (\Omega) }  \le \mathcal{C}_5(1 + |\tau|) \varepsilon,
 \\
    &\|  {J}_2(\mathbf{k},\varepsilon^{-1}\tau)
 \mathcal{R}(\mathbf{k}, \varepsilon)^{1/4}\|_{L_2(\Omega) \to L_2 (\Omega) }  \le \mathcal{C}_{6}(1 + |\tau|).
    \end{align*}
The constants ${\mathcal{C}}_{5}$ and ${\mathcal{C}}_{6}$ depend only on
$\alpha_0$, $\alpha_1$, $\|g\|_{L_\infty}$, $\|g^{-1}\|_{L_\infty}$, $\|f\|_{L_\infty}$, $\|f^{-1}\|_{L_\infty}$, $r_0$, $n$, and $c^\circ$.
\end{theorem}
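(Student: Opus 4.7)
The plan is to apply the abstract Theorem~\ref{abstr_cos_sandwiched_enchanced_thrm_2} to the operator family $\mathcal{A}(\mathbf{k})=f^*\widehat{\mathcal{A}}(\mathbf{k})f$ within the sandwiched scheme of Section~\ref{abstr_sandwiched_section} (taking $M=f$, $M_0=f_0$, $A(t,\boldsymbol{\theta})=\mathcal{A}(t\boldsymbol{\theta})$, $\widehat{A}(t,\boldsymbol{\theta})=\widehat{\mathcal{A}}(t\boldsymbol{\theta})$) and then to patch the resulting small-$\mathbf{k}$ estimate with trivial bounds outside a ball of radius $t^{00}$. This is the same template as the proof of Theorem~\ref{sndw_cos_enchanced_thrm_1}, with Theorem~\ref{abstr_cos_sandwiched_enchanced_thrm_1} replaced by Theorem~\ref{abstr_cos_sandwiched_enchanced_thrm_2} in order to accommodate the weaker assumption $\widehat{N}_{0,Q}=0$ in place of $\widehat{N}_Q=0$.

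Step~1 (uniformity in $\boldsymbol{\theta}$). Under Condition~\ref{sndw_cond1} (or the more restrictive Condition~\ref{sndw_cond2}), the quantity $c^\circ$ defined in \eqref{c^circ} and hence the threshold $t^{00}$ displayed immediately after it are $\boldsymbol{\theta}$-independent; the data $c_*,\delta,t^0$ are already uniform, and $\|X_1(\boldsymbol{\theta})\|$ is controlled by \eqref{X_1_estimate}. Remark~\ref{rem_coeff} then certifies that the constants $C_{12}',C_{13},C_{18}',C_{19}$ produced by Theorem~\ref{abstr_cos_sandwiched_enchanced_thrm_2} depend only on the data listed in the statement (plus $n$ and $c^\circ$).

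Step~2 (the $\widehat{P}$ part). I split $\mathcal{R}(\mathbf{k},\varepsilon)^{s/2}=\mathcal{R}(\mathbf{k},\varepsilon)^{s/2}\widehat{P}+\mathcal{R}(\mathbf{k},\varepsilon)^{s/2}(I-\widehat{P})$ with $s=3/2$ and $s=1/2$ respectively. For $|\mathbf{k}|\le t^{00}$, identity \eqref{f_0 hatS f_0 P = A^0} identifies the abstract operators from Lemma~\ref{abstr_cos_sandwiched_est_lemma} with ${J}_1(\mathbf{k},\cdot)\widehat{P}$ and ${J}_2(\mathbf{k},\cdot)\widehat{P}$, and Theorem~\ref{abstr_cos_sandwiched_enchanced_thrm_2} combined with \eqref{R_P} yields the claimed estimates. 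For $|\mathbf{k}|>t^{00}$, I use $\|{J}_1(\mathbf{k},\varepsilon^{-1}\tau)\|\le 2\|f\|_{L_\infty}\|f^{-1}\|_{L_\infty}$ together with $\|\mathcal{R}(\mathbf{k},\varepsilon)^{3/4}\widehat{P}\|\le\min\{1,(t^{00})^{-3/2}\varepsilon^{3/2}\}\le\max\{1,(t^{00})^{-3/2}\}\varepsilon$; for ${J}_2(\mathbf{k},\varepsilon^{-1}\tau)\widehat{P}$, the uniform lower bound $\mathcal{A}(\mathbf{k})\ge c_*(t^{00})^2 I$ from \eqref{A(k)_nondegenerated_and_c_*} and its $\mathcal{A}^0$ analogue give $\|\mathcal{A}(\mathbf{k})^{-1/2}\sin(\varepsilon^{-1}\tau\mathcal{A}(\mathbf{k})^{1/2})\|\le c_*^{-1/2}(t^{00})^{-1}$, while $\|\mathcal{R}^{1/4}\|\le 1$ closes the estimate.

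Step~3 (the $(I-\widehat{P})$ part) and expected obstacles. For ${J}_1$, the bound $\|\mathcal{R}(\mathbf{k},\varepsilon)^{3/4}(I-\widehat{P})\|\le\max\{1,r_0^{-3/2}\}\varepsilon$ combined with the trivial norm bound on ${J}_1$ gives the $O(\varepsilon)$ estimate. For ${J}_2$, I repeat the argument from the proof of Theorem~\ref{sndw_cos_enchanced_thrm_1}: relation \eqref{abstr_P_and_P_hat_relation} yields $Pf^*=f^{-1}(\overline{Q})^{-1}\widehat{P}$, hence $Pf^*(I-\widehat{P})=0$ and $\mathcal{A}(\mathbf{k})^{-1/2}f^*(I-\widehat{P})=\mathcal{A}(\mathbf{k})^{-1/2}(I-P)f^*(I-\widehat{P})$; splitting $I-P=F(\mathbf{k})^\perp+(F(\mathbf{k})-P)$ and using \eqref{abstr_F(t)_threshold_1}, \eqref{A(k)_nondegenerated_and_c_*}, together with $\mathcal{A}(\mathbf{k})|_{F(\mathbf{k})^\perp}\ge\delta I$, produces a uniform bound for $|\mathbf{k}|\le t^0$, while \eqref{A(k)_nondegenerated_and_c_*} handles $|\mathbf{k}|>t^0$ directly; the same reasoning applies to $\mathcal{A}^0(\mathbf{k})^{-1/2}(I-\widehat{P})$. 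The only real subtlety is tracking the $\varepsilon\le 1$ versus $\varepsilon>1$ regimes in the outer-$\mathbf{k}$ estimates so that the result is $O(\varepsilon)$ (resp.\ $O(1)$) uniformly in $\varepsilon>0$; no new analytic ingredient beyond Theorem~\ref{abstr_cos_sandwiched_enchanced_thrm_2} is required, so I do not expect any genuine obstacle.
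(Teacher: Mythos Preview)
Your proposal is correct and follows essentially the same approach as the paper: the paper states that Theorem~\ref{sndw_cos_enchanced_thrm_2} is deduced from the abstract Theorem~\ref{abstr_cos_sandwiched_enchanced_thrm_2} in the same way that Theorem~\ref{sndw_cos_enchanced_thrm_1} is deduced from Theorem~\ref{abstr_cos_sandwiched_enchanced_thrm_1}, with the constants now depending additionally on $n$ and $c^\circ$ (cf.\ Remark~\ref{rem_coeff}). Your three-step decomposition (uniformity in $\boldsymbol{\theta}$ of $c^\circ$ and $t^{00}$; the $\widehat{P}$ part via the abstract theorem for $|\mathbf{k}|\le t^{00}$ and trivial bounds for $|\mathbf{k}|>t^{00}$; the $(I-\widehat{P})$ part via the identity $Pf^*(I-\widehat{P})=0$ and the spectral splitting $I-P=F(\mathbf{k})^\perp+(F(\mathbf{k})-P)$) is exactly the argument the paper leaves implicit.
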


\subsection{ The sharpness of the result in the general case}

Application of Theorem~\ref{abstr_sndwchd_s<2_general_thrm}  allows us to confirm the sharpness of the result of Theorem~\ref{sndw_cos_general_thrm}.

\begin{theorem}
    \label{s<2_thrm}
 Let $\widehat{N}_{0,Q} (\boldsymbol{\theta})$ be the operator defined by~\emph{\eqref{N0Q_invar_repr}}.
Suppose that $\widehat{N}_{0,Q} (\boldsymbol{\theta}_0) \ne 0$ for some $\boldsymbol{\theta}_0 \in \mathbb{S}^{d-1}$.

\noindent
$1^\circ$. Let $0 \ne \tau \in \mathbb{R}$ and $0 \le s < 2$. Then there does not exist a constant $\mathcal{C} (\tau) > 0$ such that
the estimate
    \begin{equation}
        \label{sndw_s<2_est_imp}
        \| J_1(\mathbf{k},\varepsilon^{-1}\tau)
\mathcal{R}(\mathbf{k}, \varepsilon)^{s/2}\|_{L_2(\Omega) \to L_2 (\Omega) }  \le \mathcal{C} (\tau) \varepsilon
    \end{equation}
   holds for almost every $\mathbf{k} = t \boldsymbol{\theta} \in \widetilde{\Omega}$ and sufficiently small $\varepsilon > 0$.

\noindent
$2^\circ$. Let $0 \ne \tau \in \mathbb{R}$ and $0 \le s < 1$. Then there does not exist a constant $\mathcal{C} (\tau) > 0$ such that
the estimate
    \begin{equation}
\label{9.4a}
        \| {J}_2(\mathbf{k},\varepsilon^{-1}\tau)
\mathcal{R}(\mathbf{k}, \varepsilon)^{s/2}\|_{L_2(\Omega) \to L_2 (\Omega) }  \le \mathcal{C} (\tau)
   \end{equation}
   holds for almost every $\mathbf{k} = t \boldsymbol{\theta} \in \widetilde{\Omega}$ and sufficiently small $\varepsilon > 0$.
\end{theorem}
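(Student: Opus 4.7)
The plan is to prove Theorem~\ref{s<2_thrm} by contradiction, following the template of Theorem~\ref{hat_s<2_thrm} but with the sandwiched abstract sharpness result, Theorem~\ref{abstr_sndwchd_s<2_general_thrm}, in place of Theorem~\ref{abstr_s<2_general_thrm}. For assertion $1^\circ$ it suffices to consider $1\le s<2$. Assume, contrary to the claim, that \eqref{sndw_s<2_est_imp} holds for some $0\ne\tau$, some $s\in[1,2)$, almost every $\mathbf k\in\widetilde\Omega$, and sufficiently small $\varepsilon>0$. Multiplying $J_1(\mathbf k,\varepsilon^{-1}\tau)$ on the right by $\widehat P$ and applying~\eqref{R_P} converts the ``smoothing factor'' to a scalar weight, giving
\begin{equation*}
\| J_1(\mathbf k,\varepsilon^{-1}\tau)\widehat P\|_{L_2(\Omega)\to L_2(\Omega)}\,\varepsilon^s(|\mathbf k|^2+\varepsilon^2)^{-s/2}\le\mathcal C(\tau)\,\varepsilon
\end{equation*}
for a.e.\ $\mathbf k\in\widetilde\Omega$ and all small $\varepsilon$.

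The next step is to establish the sandwiched analog of Lemma~\ref{Lipschitz_lemma}, namely Lipschitz continuity in $\mathbf k$ (for $|\mathbf k|,|\mathbf k_0|\le t^0$) of the spectral projection $F(\mathbf k)$ of $\mathcal A(\mathbf k)$ onto $[0,\delta]$, of $\mathcal A(\mathbf k)^{1/2}F(\mathbf k)$, and hence of $f\cos(\tau\mathcal A(\mathbf k)^{1/2})F(\mathbf k)f^{-1}$ and $f\mathcal A(\mathbf k)^{-1/2}\sin(\tau\mathcal A(\mathbf k)^{1/2})F(\mathbf k)f^*$, together with their effective analogs for $\mathcal A^0(\mathbf k)=f_0\widehat{\mathcal A}^0(\mathbf k)f_0$ (whose spectral projection on $[0,\delta]$ is precisely $\widehat P$). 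The derivation repeats the proof of Lemma~\ref{Lipschitz_lemma} verbatim: the difference of sesquilinear forms now takes the form $(gb(\mathbf k-\mathbf k_0)f\mathbf u,b(\mathbf D+\mathbf k_0)f\mathbf v) + (gb(\mathbf D+\mathbf k)f\mathbf u, b(\mathbf k-\mathbf k_0)f\mathbf v)$, and~\eqref{rank_alpha_ineq}, \eqref{h_f_L_inf} yield the analog of~\eqref{9.23a}; bounded multiplications by $f^{\pm 1}$ and $f_0^{\pm 1}$ preserve the Lipschitz bounds. Combining the Lipschitz continuity with the threshold approximation $F(\mathbf k)-P=O(|\mathbf k|)$ from~\eqref{abstr_F(t)_threshold_1} (applied to both $\mathcal A(\mathbf k)$ and $\mathcal A^0(\mathbf k)$), the a.e.\ estimate is promoted to hold for \emph{every} $\mathbf k$ in the ball $|\mathbf k|\le t^0$; in particular, at $\mathbf k=t\boldsymbol\theta_0$, $0<t\le t^0$.

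The resulting pointwise inequality coincides, by~\eqref{f_0 hatS f_0 P = A^0} and the definitions~\eqref{Jdef}, \eqref{J22def}, with the abstract bound~\eqref{abstr_sndwchd_s<2_est_imp} applied to $A(t,\boldsymbol\theta_0)=M^*\widehat{\mathcal A}(t\boldsymbol\theta_0)M$ with $M=f$ (possibly after absorbing $\|f\|_{L_\infty},\|f^{-1}\|_{L_\infty}$ into the constant and using~\eqref{abstr_F(t)_threshold_1} once more to replace $\widehat P$ by the projection occurring in the abstract definition of $J_1$). Since by hypothesis $\widehat N_{0,Q}(\boldsymbol\theta_0)\ne 0$, Theorem~\ref{abstr_sndwchd_s<2_general_thrm}$(1^\circ)$ delivers a contradiction. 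Assertion $2^\circ$ proceeds identically, employing the Lipschitz estimate for the sandwiched sine-type operator (derived from the cosine estimate by integration over $[0,\tau]$ and the uniform boundedness of $\mathcal A(\mathbf k)^{-1/2}F(\mathbf k)$, cf.~\eqref{7.13a}) and Theorem~\ref{abstr_sndwchd_s<2_general_thrm}$(2^\circ)$. The main technical obstacle is the Lipschitz lemma at the degenerate point $\mathbf k_0=0$: treating $\mathcal A(\mathbf k)^{-1/2}\sin(\tau\mathcal A(\mathbf k)^{1/2})(I-P)$ for small $\mathbf k$ requires splitting via $F(\mathbf k)^\perp$ and $(F(\mathbf k)-P)$ and using Condition~\ref{nondeg}; the remainder of the argument is routine once that lemma is in place.
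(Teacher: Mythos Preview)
Your proposal is correct and follows essentially the same route as the paper: multiply by $\widehat P$ via \eqref{R_P}, replace $P$ by $F(\mathbf k)$ using \eqref{abstr_F(t)_threshold_1}, invoke the sandwiched Lipschitz lemma (stated in the paper as Lemma~\ref{sndw_Lipschitz_lemma} without proof) to upgrade from a.e.\ to all $\mathbf k$ in the ball, evaluate at $t\boldsymbol\theta_0$, and contradict Theorem~\ref{abstr_sndwchd_s<2_general_thrm}. Two minor points of comparison: the paper handles the passage between the concrete operator $J_1(\mathbf k,\cdot)\widehat P$ and the abstract one via the explicit identity $f^{-1}\widehat P = Pf^*\overline Q$ from \eqref{abstr_P_and_P_hat_relation}, which is cleaner than your absorption argument; and for assertion $2^\circ$ the paper splits off $F(\mathbf k)^\perp$ directly using $\|\mathcal A(\mathbf k)^{-1/2}F(\mathbf k)^\perp\|\le\delta^{-1/2}$ rather than worrying about $\mathbf k_0=0$ in the Lipschitz lemma, so your ``main technical obstacle'' is in fact sidestepped.
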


The following lemma is an analog of  Lemma~\ref{Lipschitz_lemma}. We omit the proof.

\begin{lemma}
    \label{sndw_Lipschitz_lemma}
 Let $\delta$ and $t^0$ be given by~\emph{\eqref{delta_fixation}} and~\emph{\eqref{t0_fixation}}, respectively.
Let $F (\mathbf{k}) = F (t, \boldsymbol{\theta})$~be the spectral projection of the operator $\mathcal{A}(\mathbf{k})$ for the interval $[0, \delta]$. Then for $|\mathbf{k}| \le t^0$ and $|\mathbf{k}_0| \le t^0$ we have
    \begin{gather*}
          \| \cos(\tau \mathcal{A}(\mathbf{k})^{1/2}) F (\mathbf{k}) - \cos(\tau \mathcal{A}(\mathbf{k}_0)^{1/2}) F (\mathbf{k}_0)\|_{L_2(\Omega) \to L_2 (\Omega) }
\le C'(\tau) | \mathbf{k} - \mathbf{k}_0|,
\\
          \| \mathcal{A}(\mathbf{k})^{-1/2} \sin(\tau \mathcal{A}(\mathbf{k})^{1/2}) F (\mathbf{k}) -
          \mathcal{A}(\mathbf{k}_0)^{-1/2} \sin(\tau \mathcal{A}(\mathbf{k}_0)^{1/2}) F (\mathbf{k}_0)\|_{L_2(\Omega) \to L_2 (\Omega) }
\le C''(\tau) | \mathbf{k} - \mathbf{k}_0|.
    \end{gather*}
\end{lemma}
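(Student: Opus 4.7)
The plan is to mirror the three-step structure of the proof of Lemma~\ref{Lipschitz_lemma}, adapting each step to the sandwiched family $\mathcal{A}(\mathbf{k}) = f^* \widehat{\mathcal{A}}(\mathbf{k}) f$. All the structural identities carry over; only the quantitative constants need to be enlarged to absorb the factors $\|f\|_{L_\infty}$, $\|f^{-1}\|_{L_\infty}$. Throughout, the numbers $c_*, \delta, t^0$ from \eqref{c*}, \eqref{delta_fixation}, \eqref{t0_fixation} are $\boldsymbol{\theta}$-independent, and $\|X_1(\boldsymbol{\theta})\|$ is uniformly bounded by~\eqref{X_1_estimate}, so all estimates will be uniform in $\boldsymbol{\theta}$.

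First I would establish the auxiliary Lipschitz estimate
\begin{equation*}
\|F(\mathbf{k}) - F(\mathbf{k}_0)\|_{L_2(\Omega) \to L_2(\Omega)} \le C' |\mathbf{k} - \mathbf{k}_0|, \qquad |\mathbf{k}|, |\mathbf{k}_0| \le t^0,
\end{equation*}
and the norm bound $\|\mathcal{A}(\mathbf{k})^{1/2} F(\mathbf{k})\| \le (1+\beta_2)^{1/2}\alpha_1^{1/2}\|g\|_{L_\infty}^{1/2}\|f\|_{L_\infty} |\mathbf{k}|$ (from \eqref{1.6b} and \eqref{X_1_estimate}). The projection estimate follows from the Riesz representation $F(\mathbf{k}) = -(2\pi i)^{-1} \oint_\gamma (\mathcal{A}(\mathbf{k}) - \zeta I)^{-1} d\zeta$ on a contour at distance $\sim \delta$ from the spectrum, the second resolvent identity, and the form-difference bound
\begin{equation*}
| \mathfrak{a}(\mathbf{k})[\mathbf{u},\mathbf{v}] - \mathfrak{a}(\mathbf{k}_0)[\mathbf{u},\mathbf{v}] | \le \alpha_1^{1/2} \|g\|_{L_\infty}^{1/2} \|f\|_{L_\infty} |\mathbf{k}-\mathbf{k}_0| \bigl( \|\mathbf{u}\|_{L_2} \|\mathcal{A}(\mathbf{k}_0)^{1/2}\mathbf{v}\|_{L_2} + \|\mathcal{A}(\mathbf{k})^{1/2}\mathbf{u}\|_{L_2} \|\mathbf{v}\|_{L_2} \bigr),
\end{equation*}
which is the analogue of \eqref{9.23a}; it is obtained from the identity $b(\mathbf{D}+\mathbf{k})f\mathbf{u} - b(\mathbf{D}+\mathbf{k}_0)f\mathbf{u} = b(\mathbf{k}-\mathbf{k}_0)f\mathbf{u}$ together with \eqref{rank_alpha_ineq}, \eqref{h_f_L_inf}, and \eqref{a(k)_form_est}.

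Next I would prove the analog of \eqref{hatA^1/2 (k) - hatA^1/2 (k_0)},
\begin{equation*}
\|\mathcal{A}(\mathbf{k})^{1/2} F(\mathbf{k}) - \mathcal{A}(\mathbf{k}_0)^{1/2} F(\mathbf{k}_0)\|_{L_2(\Omega)\to L_2(\Omega)} \le C'' |\mathbf{k} - \mathbf{k}_0|,
\end{equation*}
using the integral representation~\eqref{abstr_A_sqrt_repres} for both $\mathbf{k}$ and $\mathbf{k}_0$. Subtracting and splitting the resulting integrand into the three pieces $\Upsilon_1, \Upsilon_2, \Upsilon_3$ exactly as in the hat-case proof, the terms $\Upsilon_2, \Upsilon_3$ are controlled by the $F$-Lipschitz bound and the uniform bound on $\mathcal{A}(\mathbf{k})^{1/2} F(\mathbf{k})$ above, while $\Upsilon_1$ is controlled by the form-difference estimate from Step~1 together with Condition~\ref{nondeg} and \eqref{A(k)_nondegenerated_and_c_*}. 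The limiting case $\mathbf{k}_0 = 0$ is handled directly using $\mathcal{A}(0)^{1/2} F(0) = 0$.

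Finally, the cosine estimate is obtained from the decomposition
\begin{equation*}
e^{i\tau \mathcal{A}(\mathbf{k})^{1/2}} F(\mathbf{k}) - e^{i\tau \mathcal{A}(\mathbf{k}_0)^{1/2}} F(\mathbf{k}_0)
= e^{i\tau \mathcal{A}(\mathbf{k})^{1/2}} F(\mathbf{k})(F(\mathbf{k}) - F(\mathbf{k}_0)) + (F(\mathbf{k}) - F(\mathbf{k}_0)) e^{i\tau \mathcal{A}(\mathbf{k}_0)^{1/2}} F(\mathbf{k}_0) + \Xi(\tau,\mathbf{k},\mathbf{k}_0),
\end{equation*}
where the remainder $\Xi$ is factored as $e^{i\tau\mathcal{A}(\mathbf{k})^{1/2}} \Sigma(\tau)$ with $\Sigma(0) = 0$ and $\|\Sigma'(\tau)\| \le C'' |\mathbf{k}-\mathbf{k}_0|$ by Step~2; this yields $C'(\tau) = 2C' + C''|\tau|$. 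The sine estimate then follows by integrating the cosine estimate over $(0,\tau)$, producing $C''(\tau) = 2C'|\tau| + C''\tau^2/2$; this uses that $\mathcal{A}(\mathbf{k})^{-1/2}\sin(\tau\mathcal{A}(\mathbf{k})^{1/2}) F(\mathbf{k}) = \int_0^\tau \cos(\tilde{\tau}\mathcal{A}(\mathbf{k})^{1/2}) F(\mathbf{k})\, d\tilde\tau$, which is well-defined since $\mathcal{A}(\mathbf{k}) \ge c_* |\mathbf{k}|^2 I$ on $\Ran F(\mathbf{k})$.

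The main obstacle is Step~2: the splitting of the resolvent-difference integrand into $\Upsilon_1, \Upsilon_2, \Upsilon_3$ is the only place where one must combine the form-difference estimate, the projection-difference estimate, and the uniform $\|\mathcal{A}(\mathbf{k})^{1/2}F(\mathbf{k})\|$-bound simultaneously, and verify that every factor is controlled uniformly in $\zeta \in (0,\infty)$ so that the $\zeta^{-1/2}$-integral converges. The rest is essentially bookkeeping, and the constants $C'(\tau), C''(\tau)$ end up as polynomials in $|\tau|$ whose coefficients depend only on $\alpha_0, \alpha_1, \|g\|_{L_\infty}, \|g^{-1}\|_{L_\infty}, \|f\|_{L_\infty}, \|f^{-1}\|_{L_\infty}, r_0$.
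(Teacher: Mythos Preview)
Your proposal is correct and follows exactly the approach the paper intends: the paper states that Lemma~\ref{sndw_Lipschitz_lemma} ``is an analog of Lemma~\ref{Lipschitz_lemma}'' and omits the proof, so the expected argument is precisely the step-by-step adaptation of the proof of Lemma~\ref{Lipschitz_lemma} to $\mathcal{A}(\mathbf{k}) = f^* \widehat{\mathcal{A}}(\mathbf{k}) f$ that you outline, with the constants enlarged by factors of $\|f\|_{L_\infty}$ and $\|f^{-1}\|_{L_\infty}$.
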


\noindent \textbf{Proof of Theorem~\ref{s<2_thrm}}.
Let us check assertion $1^\circ$.
It suffices to consider $1 \le s < 2$. We prove by contradiction. Fix $\tau \ne 0$.  Suppose that for some $1 \le s < 2$ there exists a constant   $\mathcal{C}(\tau) > 0$  such that estimate~\eqref{sndw_s<2_est_imp} holds for almost every $\mathbf{k} \in \widetilde{\Omega}$ and sufficiently small $\varepsilon > 0$. Then, by~\eqref{R_P},
\begin{equation}
    \label{s<2_proof_f1}
    \|  {J}_1(\mathbf{k},\varepsilon^{-1}\tau)
\widehat{P}\|_{L_2(\Omega) \to L_2 (\Omega) } \varepsilon^s (|\mathbf{k}|^2 + \varepsilon^2)^{-s/2}  \le {\mathcal{C}}(\tau) \varepsilon
\end{equation}
for almost every  $\mathbf{k} \in \widetilde{\Omega}$  and sufficiently small $\varepsilon > 0$.

By~\eqref{abstr_P_and_P_hat_relation}, we have $f^{-1} \widehat{P} = P f^* \overline{Q}$.
Then the operator under the norm sign in~\eqref{s<2_proof_f1} can be written as  $ f \cos (\varepsilon^{-1} \tau {\mathcal{A}}(\mathbf{k})^{1/2}) P f^* \overline{Q} -
f_0 \cos (\varepsilon^{-1} \tau {\mathcal{A}}^0(\mathbf{k})^{1/2}) f_0^{-1} \widehat{P}$.

Now, let $|\mathbf{k}| \le t^0$. By \eqref{abstr_F(t)_threshold_1},
\begin{equation}
    \label{s<2_proof_f2}
    \| F (\mathbf{k}) - P \|_{L_2 (\Omega) \to L_2 (\Omega)} \le C_1 |\mathbf{k}|, \quad |\mathbf{k}| \le t^0.
\end{equation}
By~\eqref{s<2_proof_f1} and~\eqref{s<2_proof_f2},
 there exists a constant $\widetilde{\mathcal C}(\tau)$ such that
\begin{equation}
    \label{s<2_proof_f3}
    \|  f \cos (\varepsilon^{-1} \tau \mathcal{A}(\mathbf{k})^{1/2}) F (\mathbf{k}) f^* \overline{Q} - f_0 \cos (\varepsilon^{-1} \tau \mathcal{A}^0(\mathbf{k})^{1/2}) f_0^{-1} \widehat{P}\|_{L_2(\Omega) \to L_2 (\Omega) }
\varepsilon^s (|\mathbf{k}|^2 + \varepsilon^2)^{-s/2}  \le \widetilde{\mathcal{C}}(\tau) \varepsilon
\end{equation}
for almost every $\mathbf{k} $ in the ball $|\mathbf{k}| \le t^0$ and sufficiently small $\varepsilon > 0$.

Observe that $\widehat{P}$  is the spectral projection of the operator $\mathcal{A}^0 (\mathbf{k})$ for the interval $[0, \delta]$.
Therefore, Lemma~\ref{sndw_Lipschitz_lemma} (applied to $\mathcal{A} (\mathbf{k})$ and $\mathcal{A}^0 (\mathbf{k})$)
 implies that for fixed $\tau$ and $\varepsilon$  the operator under the norm sign in~\eqref{s<2_proof_f3}
 is continuous with respect to $\mathbf{k}$ in the ball $|\mathbf{k}| \le t^0$. Hence, estimate~\eqref{s<2_proof_f3} holds for all $\mathbf{k}$
in this ball, in particular, for $\mathbf{k} = t\boldsymbol{\theta}_0$ if $t \le t^0$.
Applying once more the inequality~\eqref{s<2_proof_f2} and the identity $P f^* \overline{Q} = f^{-1} \widehat{P}$,
we see that
\begin{equation}
    \label{s<2_proof_f4}
    \| {J}_1(t\boldsymbol{\theta}_0,\varepsilon^{-1}\tau)
  \widehat{P}\|_{L_2(\Omega) \to L_2 (\Omega) } \varepsilon^s (t^2 + \varepsilon^2)^{-s/2}  \le \check{\mathcal{C}}(\tau) \varepsilon
\end{equation}
for all $t \le t^0$  and sufficiently small $\varepsilon$.
In abstract terms, estimate~\eqref{s<2_proof_f4} corresponds to the inequality~\eqref{abstr_sndwchd_s<2_est_imp}.
Since $\widehat{N}_{0,Q} (\boldsymbol{\theta}_0) \ne 0$, applying Theorem~\ref{abstr_sndwchd_s<2_general_thrm}($1^\circ$),
 we arrive at a contradiction.

Let us check assertion $2^\circ$.
 Suppose that for some $\tau \ne 0$ and $1 \le s < 1$ there exists a constant   $\mathcal{C}(\tau) > 0$  such that estimate~\eqref{9.4a} holds for almost every $\mathbf{k} \in \widetilde{\Omega}$ and sufficiently small $\varepsilon > 0$. Then
\begin{equation}
    \label{s<1_proof_f1}
    \|  {J}_2(\mathbf{k},\varepsilon^{-1}\tau)
\widehat{P}\|_{L_2(\Omega) \to L_2 (\Omega) } \varepsilon^s (|\mathbf{k}|^2 + \varepsilon^2)^{-s/2}  \le {\mathcal{C}}(\tau) 
\end{equation}
for almost every  $\mathbf{k} \in \widetilde{\Omega}$  and sufficiently small $\varepsilon > 0$.
Now, let $|\mathbf{k}| \le t^0$. Note that 
\begin{equation}
    \label{s<1_proof_f2}
    \|\mathcal{A} (\mathbf{k})^{-1/2} F(\mathbf{k})^\perp  \|_{L_2(\Omega) \to L_2 (\Omega) }   \le \delta^{-1/2}.  
\end{equation}
Hence, there exists a constant $\widetilde{\mathcal{C}}(\tau) >0$ such that 
\begin{equation}
    \label{s<1_proof_f3}
\begin{split}    
\|  f \mathcal{A}(\mathbf{k})^{-1/2} \sin (\varepsilon^{-1} \tau \mathcal{A}(\mathbf{k})^{1/2}) F (\mathbf{k}) f^* \widehat{P}  
- f_0 \mathcal{A}^0(\mathbf{k})^{-1/2} \sin (\varepsilon^{-1} \tau \mathcal{A}^0(\mathbf{k})^{1/2}) f_0 \widehat{P}\|_{L_2(\Omega) \to L_2 (\Omega) } 
\\
\times \varepsilon^s (|\mathbf{k}|^2 + \varepsilon^2)^{-s/2}  \le \widetilde{\mathcal{C}}(\tau) 
\end{split}
\end{equation}
for almost every $\mathbf{k} $ in the ball $|\mathbf{k}| \le t^0$ and sufficiently small $\varepsilon > 0$.

By  Lemma~\ref{sndw_Lipschitz_lemma} (applied to $\mathcal{A} (\mathbf{k})$ and $\mathcal{A}^0 (\mathbf{k})$),
the operator  under the norm sign in~\eqref{s<1_proof_f3}
 is continuous in $\mathbf{k}$ in the ball $|\mathbf{k}| \le t^0$. Hence, estimate~\eqref{s<1_proof_f3} holds for all $\mathbf{k}$
in this ball, in particular, for $\mathbf{k} = t\boldsymbol{\theta}_0$ with $t \le t^0$.
Applying~\eqref{s<1_proof_f2} again, we see that
\begin{equation}
    \label{s<1_proof_f4}
    \| {J}_2(t\boldsymbol{\theta}_0,\varepsilon^{-1}\tau)
  \widehat{P}\|_{L_2(\Omega) \to L_2 (\Omega) } \varepsilon^s (t^2 + \varepsilon^2)^{-s/2}  \le \check{\mathcal{C}}(\tau) 
\end{equation}
for all $t \le t^0$  and sufficiently small $\varepsilon$.
In abstract terms, estimate~\eqref{s<1_proof_f4} corresponds to the inequality~\eqref{4.23}.
Since $\widehat{N}_{0,Q} (\boldsymbol{\theta}_0) \ne 0$, applying Theorem~\ref{abstr_sndwchd_s<2_general_thrm}($2^\circ$),
 we arrive at a contradiction.
 $\square$

\section{Approximation of the operators $\cos(\varepsilon^{-1} \tau \mathcal{A}^{1/2})$ and ${\mathcal{A}}^{-1/2} \sin(\varepsilon^{-1}\tau {\mathcal{A}}^{1/2})$}

\subsection{Approximation of the operators $\cos(\varepsilon^{-1} \tau \widehat{\mathcal{A}}^{1/2})$
and $\widehat{\mathcal{A}}^{-1/2} \sin(\varepsilon^{-1}\tau \widehat{\mathcal{A}}^{1/2})$}

In $L_2 (\mathbb{R}^d; \mathbb{C}^n)$, we consider the operator~\eqref{hatA}. Let $\widehat{\mathcal{A}}^0$~ be the effective operator~\eqref{hatA0}. Denote
\begin{align*}
\widehat{J}_1(\tau) &:= \cos(\tau \widehat{\mathcal{A}}^{1/2}) - \cos(\tau (\widehat{\mathcal{A}}^0)^{1/2}),
\\
\widehat{J}_2(\tau) &:= \widehat{\mathcal{A}}^{-1/2} \sin(\tau \widehat{\mathcal{A}}^{1/2}) - (\widehat{\mathcal{A}}^0)^{-1/2} \sin(\tau (\widehat{\mathcal{A}}^0)^{1/2}).
\end{align*}

Recall the notation $\mathcal{H}_0 = - \Delta$ and put
\begin{equation}
\label{R(epsilon)}
\mathcal{R} (\varepsilon) := \varepsilon^2 (\mathcal{H}_0 + \varepsilon^2 I)^{-1}.
\end{equation}
The operator $\mathcal{R} (\varepsilon)$  expands in the direct integral of the operators~\eqref{R(k, epsilon)}:
\begin{equation*}
\mathcal{R} (\varepsilon) = \mathcal{U}^{-1} \left( \int_{\widetilde{\Omega}} \oplus  \mathcal{R} (\mathbf{k}, \varepsilon) \, d \mathbf{k}  \right) \mathcal{U}.
\end{equation*}

Recall the notation \eqref{J(k,tau)} and \eqref{J2(k,tau)}.
By expansion~\eqref{decompose} for $\widehat{\mathcal{A}}$ and $\widehat{\mathcal{A}}^0$, we see that the operator
$\widehat{J}_l(\varepsilon^{-1} \tau) \mathcal{R} (\varepsilon)^{s/2}$ expands in the direct integral of the operators
$\widehat{J}_l ({\mathbf k}, \varepsilon^{-1} \tau) {\mathcal R} ({\mathbf k},\varepsilon)^{s/2}$, $l=1,2$.
 Hence,
\begin{equation}
\label{norms_and_Gelfand_transf}
\| \widehat{J}_l (\varepsilon^{-1} \tau) {\mathcal R} (\varepsilon)^{s/2} \|_{L_2(\mathbb{R}^d) \to L_2(\mathbb{R}^d)}
 = \underset{\mathbf{k} \in \widetilde{\Omega}}{\esssup}
\| \widehat{J}_l ({\mathbf k}, \varepsilon^{-1} \tau)
\mathcal{R} (\mathbf{k},\varepsilon)^{s/2} \|_{L_2(\Omega) \to L_2(\Omega)},\quad l=1,2.
\end{equation}
Therefore, we deduce the following results from Theorems~\ref{cos_general_thrm},~\ref{cos_enchanced_thrm_11}, and~\ref{cos_enchanced_thrm_2}.

\begin{theorem}
    \label{cos_thrm_1}
    For $\tau \in \mathbb{R}$ and $\varepsilon > 0$ we have
    \begin{align*}
    &\| \widehat{J}_1 (\varepsilon^{-1} \tau) {\mathcal R} (\varepsilon)
    \|_{L_2(\mathbb{R}^d) \to L_2(\mathbb{R}^d)} \le \widehat{\mathcal{C}}_1(1 + |\tau|) \varepsilon,
    \\
    &\| \widehat{J}_2 (\varepsilon^{-1} \tau) {\mathcal R} (\varepsilon)^{1/2}
    \|_{L_2(\mathbb{R}^d) \to L_2(\mathbb{R}^d)} \le \widehat{\mathcal{C}}_2(1 + |\tau|).
    \end{align*}
    The constants $\widehat{\mathcal{C}}_1$ and $\widehat{\mathcal{C}}_2$
    depend only on $\alpha_0$, $\alpha_1$, $\|g\|_{L_\infty}$, $\|g^{-1}\|_{L_\infty}$, and $r_0$.
\end{theorem}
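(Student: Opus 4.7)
The plan is to transfer the fiber-wise estimates already established in Theorem~\ref{cos_general_thrm} to the full operator on $L_2(\mathbb{R}^d;\mathbb{C}^n)$ via the Gelfand transformation, exactly along the lines indicated by the authors in equations~\eqref{decompose}--\eqref{norms_and_Gelfand_transf}. Since all the nontrivial work has been carried out at the fiber level in Section~7, this final step is essentially a packaging argument.

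First, I would recall that, by the direct integral decomposition~\eqref{decompose} applied to both $\widehat{\mathcal{A}}$ and its effective operator $\widehat{\mathcal{A}}^0$, each of the operators $\cos(\tau \widehat{\mathcal{A}}^{1/2})$, $\cos(\tau (\widehat{\mathcal{A}}^0)^{1/2})$, $\widehat{\mathcal{A}}^{-1/2}\sin(\tau \widehat{\mathcal{A}}^{1/2})$, $(\widehat{\mathcal{A}}^0)^{-1/2}\sin(\tau (\widehat{\mathcal{A}}^0)^{1/2})$ expands, under $\mathcal{U}$, into the direct integral of the corresponding operator-valued functions of $\widehat{\mathcal{A}}(\mathbf{k})$ and $\widehat{\mathcal{A}}^0(\mathbf{k})$. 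This follows from the general principle that the Borel functional calculus commutes with direct integral decompositions of selfadjoint operators. Similarly, $\mathcal{R}(\varepsilon)$ defined in~\eqref{R(epsilon)} expands into the direct integral of the fiber resolvents $\mathcal{R}(\mathbf{k},\varepsilon)$ introduced in~\eqref{R(k, epsilon)}, and hence so does $\mathcal{R}(\varepsilon)^{s/2}$ for any $s>0$.

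Next, taking differences, the operator $\widehat{J}_l(\varepsilon^{-1}\tau)\mathcal{R}(\varepsilon)^{s/2}$ is unitarily equivalent (via $\mathcal{U}$) to the direct integral of the fiber operators $\widehat{J}_l(\mathbf{k},\varepsilon^{-1}\tau)\mathcal{R}(\mathbf{k},\varepsilon)^{s/2}$, where $\widehat{J}_1$ and $\widehat{J}_2$ were defined in~\eqref{J(k,tau)} and~\eqref{J2(k,tau)}. The standard identity for the norm of a direct integral of bounded operators then yields~\eqref{norms_and_Gelfand_transf}, i.e.,
\begin{equation*}
\| \widehat{J}_l (\varepsilon^{-1} \tau) {\mathcal R} (\varepsilon)^{s/2} \|_{L_2(\mathbb{R}^d) \to L_2(\mathbb{R}^d)}
 = \underset{\mathbf{k} \in \widetilde{\Omega}}{\esssup}\,
\| \widehat{J}_l ({\mathbf k}, \varepsilon^{-1} \tau)
\mathcal{R} (\mathbf{k},\varepsilon)^{s/2} \|_{L_2(\Omega) \to L_2(\Omega)}, \quad l=1,2.
\end{equation*}

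Finally, I would apply Theorem~\ref{cos_general_thrm} with $s=2$ for $l=1$ and with $s=1$ for $l=2$. Since the constants $\widehat{\mathcal{C}}_1,\widehat{\mathcal{C}}_2$ from that theorem are independent of $\mathbf{k} \in \widetilde{\Omega}$, taking the essential supremum in $\mathbf{k}$ preserves the bounds, producing exactly the two inequalities claimed. The constants remain dependent only on $\alpha_0$, $\alpha_1$, $\|g\|_{L_\infty}$, $\|g^{-1}\|_{L_\infty}$, and $r_0$. There is no genuine obstacle here: the whole content of the theorem is already hidden in the uniform-in-$\mathbf{k}$ fiber estimates of Theorem~\ref{cos_general_thrm}, and what remains is a mechanical application of the direct integral formalism.
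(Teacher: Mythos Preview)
Your proposal is correct and follows exactly the paper's approach: the authors establish the identity~\eqref{norms_and_Gelfand_transf} via the direct integral expansion and then deduce Theorem~\ref{cos_thrm_1} immediately from the uniform-in-$\mathbf{k}$ fiber estimates of Theorem~\ref{cos_general_thrm}. The only cosmetic remark is that Theorem~\ref{cos_general_thrm} is not stated with a parameter $s$; it already gives the two estimates with $\mathcal{R}(\mathbf{k},\varepsilon)$ and $\mathcal{R}(\mathbf{k},\varepsilon)^{1/2}$ directly, so there is no ``choice of $s$'' to make.
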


\begin{theorem}
     \label{cos_thrm_2}
Let $\widehat{N}(\boldsymbol{\theta})$
be the operator defined by~\emph{\eqref{N(theta)}}. Suppose that $\widehat{N}(\boldsymbol{\theta}) = 0$ for any $\boldsymbol{\theta} \in \mathbb{S}^{d-1}$. Then for $\tau \in \mathbb{R}$ and $\varepsilon > 0$ we have
    \begin{align*}
    &\| \widehat{J}_1 (\varepsilon^{-1} \tau) {\mathcal R} (\varepsilon)^{3/4}
    \|_{L_2(\mathbb{R}^d) \to L_2(\mathbb{R}^d)} \le \widehat{\mathcal{C}}_3(1 + |\tau|) \varepsilon,
    \\
    &\| \widehat{J}_2 (\varepsilon^{-1} \tau) {\mathcal R} (\varepsilon)^{1/4}
    \|_{L_2(\mathbb{R}^d) \to L_2(\mathbb{R}^d)} \le \widehat{\mathcal{C}}_4(1 + |\tau|).
    \end{align*}
The constants $\widehat{\mathcal{C}}_3$ and $\widehat{\mathcal{C}}_4$
    depend only on $\alpha_0$, $\alpha_1$, $\|g\|_{L_\infty}$, $\|g^{-1}\|_{L_\infty}$, and $r_0$.
    \end{theorem}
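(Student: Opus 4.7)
The plan is to deduce Theorem~\ref{cos_thrm_2} from Theorem~\ref{cos_enchanced_thrm_11} in exactly the same manner as Theorem~\ref{cos_thrm_1} is deduced from Theorem~\ref{cos_general_thrm}. The key is the direct integral decomposition \eqref{decompose} applied simultaneously to $\widehat{\mathcal{A}}$, $\widehat{\mathcal{A}}^0$, and $\mathcal{H}_0$: under the Gelfand transformation $\mathcal{U}$, each of these operators becomes a direct integral over $\widetilde{\Omega}$ of the fiber operators $\widehat{\mathcal{A}}(\mathbf{k})$, $\widehat{\mathcal{A}}^0(\mathbf{k})$, and $\mathcal{H}_0(\mathbf{k})$, respectively. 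Consequently, the bounded Borel functions $\cos(\varepsilon^{-1}\tau \widehat{\mathcal{A}}^{1/2})$, $\cos(\varepsilon^{-1}\tau (\widehat{\mathcal{A}}^0)^{1/2})$ and the (bounded) operators $\widehat{\mathcal{A}}^{-1/2}\sin(\varepsilon^{-1}\tau \widehat{\mathcal{A}}^{1/2})$, $(\widehat{\mathcal{A}}^0)^{-1/2}\sin(\varepsilon^{-1}\tau (\widehat{\mathcal{A}}^0)^{1/2})$, together with the smoothing factor $\mathcal{R}(\varepsilon)^{s/2}$, also decompose as direct integrals of the corresponding fiber operators. Hence identity \eqref{norms_and_Gelfand_transf} holds for any $s > 0$ and $l = 1,2$.

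Given this, I would argue as follows. For the first estimate, apply \eqref{norms_and_Gelfand_transf} with $l = 1$ and $s = 3/2$. Under the standing hypothesis $\widehat{N}(\boldsymbol{\theta}) = 0$ for all $\boldsymbol{\theta} \in \mathbb{S}^{d-1}$, Theorem~\ref{cos_enchanced_thrm_11} yields the fiberwise bound
\[
\| \widehat{J}_1(\mathbf{k},\varepsilon^{-1}\tau)\,\mathcal{R}(\mathbf{k},\varepsilon)^{3/4}\|_{L_2(\Omega)\to L_2(\Omega)} \le \widehat{\mathcal{C}}_3(1 + |\tau|)\varepsilon
\]
with the constant $\widehat{\mathcal{C}}_3$ depending only on $\alpha_0$, $\alpha_1$, $\|g\|_{L_\infty}$, $\|g^{-1}\|_{L_\infty}$, $r_0$, and, crucially, not on $\mathbf{k}$. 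Taking the essential supremum over $\mathbf{k} \in \widetilde{\Omega}$ on the right of \eqref{norms_and_Gelfand_transf} then produces the required $(L_2 \to L_2)$-bound on $\mathbb{R}^d$.

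For the second estimate, I would argue in exactly the same way with $l = 2$ and $s = 1/2$, invoking the second inequality of Theorem~\ref{cos_enchanced_thrm_11} (estimate \eqref{th8.2est2}) which gives a uniform (in $\mathbf{k}$) bound of the order $\widehat{\mathcal{C}}_4(1+|\tau|)$ on the fiber norm. Taking the essential supremum completes the proof.

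There is no real obstacle: all the work has already been done at the fiber level in Theorem~\ref{cos_enchanced_thrm_11}, where the hardest point was achieving the uniformity of the constants in $\boldsymbol{\theta}$ (and handling separately the ``elliptic'' range $|\mathbf{k}| > \widehat{t}^{\,0}$ where the scheme of Chapter~1 does not apply). The only things to check here are routine: that the direct-integral decomposition is compatible with the functional calculus for $\widehat{\mathcal{A}}$, $\widehat{\mathcal{A}}^0$ and with multiplication by $\mathcal{R}(\varepsilon)^{s/2}$ (which follows because all three operators commute with $\mathcal{U}^{-1}(\int_{\widetilde{\Omega}}\oplus\,\cdot\,d\mathbf{k})\mathcal{U}$), and that the identity \eqref{norms_and_Gelfand_transf} is valid for $s = 3/2$ and $s = 1/2$ (which is the standard fact that the norm of a decomposable operator equals the essential supremum of the fiber norms).
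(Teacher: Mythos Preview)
Your proposal is correct and follows exactly the paper's approach: the paper states that the operators $\widehat{J}_l(\varepsilon^{-1}\tau)\mathcal{R}(\varepsilon)^{s/2}$ decompose in the direct integral of the fiber operators $\widehat{J}_l(\mathbf{k},\varepsilon^{-1}\tau)\mathcal{R}(\mathbf{k},\varepsilon)^{s/2}$, establishes identity \eqref{norms_and_Gelfand_transf}, and then deduces Theorems~\ref{cos_thrm_1}, \ref{cos_thrm_2}, and \ref{cos_thrm_3} simultaneously from Theorems~\ref{cos_general_thrm}, \ref{cos_enchanced_thrm_11}, and \ref{cos_enchanced_thrm_2}, respectively, by taking the essential supremum over $\mathbf{k}\in\widetilde{\Omega}$.
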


\begin{theorem}
    \label{cos_thrm_3}
   Suppose  that Condition~\emph{\ref{cond9}} \emph{(}or more restrictive Condition~\emph{\ref{cond99}}\emph{)} is satisfied. Then for $\tau \in \mathbb{R}$ and $\varepsilon > 0$ we have
    \begin{align*}
    &\| \widehat{J}_1 (\varepsilon^{-1} \tau) {\mathcal R} (\varepsilon)^{3/4}
    \|_{L_2(\mathbb{R}^d) \to L_2(\mathbb{R}^d)} \le \widehat{\mathcal{C}}_5 (1 + |\tau|) \varepsilon,
    \\
    &\| \widehat{J}_2 (\varepsilon^{-1} \tau) {\mathcal R} (\varepsilon)^{1/4}
    \|_{L_2(\mathbb{R}^d) \to L_2(\mathbb{R}^d)} \le \widehat{\mathcal{C}}_{6}(1 + |\tau|).
    \end{align*}
The constants $\widehat{\mathcal{C}}_5$ and $\widehat{\mathcal{C}}_{6}$
    depend only on $\alpha_0$, $\alpha_1$, $\|g\|_{L_\infty}$, $\|g^{-1}\|_{L_\infty}$, $r_0$, $n$, and $\widehat{c}^\circ$.
    \end{theorem}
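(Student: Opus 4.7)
\smallskip
\noindent\textbf{Proof proposal for Theorem~\ref{cos_thrm_3}.}
The plan is to reduce the global $(L_2(\mathbb{R}^d)\to L_2(\mathbb{R}^d))$-estimate to a uniform-in-$\mathbf{k}$ fiber estimate via the Gelfand transformation, and then invoke Theorem~\ref{cos_enchanced_thrm_2}, which has already been proved under Condition~\ref{cond9}. The key point is that all three operators entering $\widehat{J}_l(\varepsilon^{-1}\tau)\mathcal{R}(\varepsilon)^{s/2}$ admit simultaneous direct integral decompositions: by \eqref{decompose}, $\widehat{\mathcal{A}}$ decomposes as $\int_{\widetilde\Omega}\oplus\widehat{\mathcal{A}}(\mathbf{k})\,d\mathbf{k}$ and $\widehat{\mathcal{A}}^0$ as $\int_{\widetilde\Omega}\oplus\widehat{\mathcal{A}}^0(\mathbf{k})\,d\mathbf{k}$; meanwhile $\mathcal{R}(\varepsilon)$ from~\eqref{R(epsilon)} decomposes as $\int_{\widetilde\Omega}\oplus\mathcal{R}(\mathbf{k},\varepsilon)\,d\mathbf{k}$ (both $\widehat{\mathcal{A}}$ and $\mathcal{H}_0$ are periodic DOs commuting with $\Gamma$-translations).

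Consequently, by the functional calculus applied fiberwise, the operators $\cos(\varepsilon^{-1}\tau\widehat{\mathcal{A}}^{1/2})$, $\cos(\varepsilon^{-1}\tau(\widehat{\mathcal{A}}^0)^{1/2})$, and their sine-analogues multiplied by $\widehat{\mathcal{A}}^{-1/2}$ or $(\widehat{\mathcal{A}}^0)^{-1/2}$, expand in direct integrals of the corresponding fiber operators. Therefore $\widehat{J}_l(\varepsilon^{-1}\tau)\mathcal{R}(\varepsilon)^{s/2}$, for $l=1,2$ and $s=3/2$ or $s=1/2$, is the direct integral of the fiber operators $\widehat{J}_l(\mathbf{k},\varepsilon^{-1}\tau)\mathcal{R}(\mathbf{k},\varepsilon)^{s/2}$. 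This gives the fundamental identity~\eqref{norms_and_Gelfand_transf}, which expresses the global operator norm as the essential supremum of fiber operator norms over $\mathbf{k}\in\widetilde\Omega$.

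The final step is then immediate: apply Theorem~\ref{cos_enchanced_thrm_2}, which furnishes estimates for $\|\widehat{J}_1(\mathbf{k},\varepsilon^{-1}\tau)\mathcal{R}(\mathbf{k},\varepsilon)^{3/4}\|$ and $\|\widehat{J}_2(\mathbf{k},\varepsilon^{-1}\tau)\mathcal{R}(\mathbf{k},\varepsilon)^{1/4}\|$ that are \emph{uniform} in $\mathbf{k}\in\widetilde\Omega$, with constants $\widehat{\mathcal{C}}_5,\widehat{\mathcal{C}}_6$ depending only on $\alpha_0,\alpha_1,\|g\|_{L_\infty},\|g^{-1}\|_{L_\infty},r_0,n,\widehat{c}^\circ$. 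Taking the essential supremum in $\mathbf{k}$ in these fiber bounds and using~\eqref{norms_and_Gelfand_transf} yields the two asserted inequalities.

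I do not expect any serious obstacle: the difficult analytic work—the threshold expansions in Chapter~1, the clustering arguments of Subsection~\ref{abstr_cluster_section}, the $\boldsymbol{\theta}$-uniform choice of $\widehat{t}^{\,00}$ in Subsection~\ref{ench_approx2_section}, and the fiberwise proof of Theorem~\ref{cos_enchanced_thrm_2} itself—has already been carried out. The only nontrivial point is verifying that the Gelfand transformation truly sends all the relevant functions of $\widehat{\mathcal{A}}$, $\widehat{\mathcal{A}}^0$, and $\mathcal{R}(\varepsilon)$ into direct integrals of the fiber functional calculi, so that the norm identity \eqref{norms_and_Gelfand_transf} applies verbatim; this is a standard consequence of the spectral theorem for decomposable selfadjoint operators, and was used already in the proofs of Theorems~\ref{cos_thrm_1} and~\ref{cos_thrm_2} in exactly the same way.
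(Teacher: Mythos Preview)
Your proposal is correct and follows exactly the paper's own argument: the paper deduces Theorem~\ref{cos_thrm_3} directly from Theorem~\ref{cos_enchanced_thrm_2} via the norm identity~\eqref{norms_and_Gelfand_transf}, just as you outline.
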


Theorem~\ref{cos_thrm_1} was known before (see \cite[Theorem 9.2]{BSu5} and \cite[Section 9]{M}).

Applying Theorem~\ref{hat_s<2_thrm},  we confirm the sharpness of the result of Theorem~\ref{cos_thrm_1}.

\begin{theorem}
     \label{s<2_cos_thrm}
Let $\widehat{N}_0 (\boldsymbol{\theta})$
be the operator defined by~\emph{(\ref{N0_invar_repr})}. Suppose that $\widehat{N}_0 (\boldsymbol{\theta}_0) \ne 0$ for some  $\boldsymbol{\theta}_0 \in \mathbb{S}^{d-1}$.

\noindent
$1^\circ$. Let $0 \ne \tau \in \mathbb{R}$ and $0 \le s < 2$.
Then there does not exist a constant $\mathcal{C}(\tau) > 0$ such that the estimate
    \begin{equation}
    \label{s<2_cos_est}
    \| \widehat{J}_1 (\varepsilon^{-1} \tau)\mathcal{R}(\varepsilon)^{s/2}\|_{L_2(\mathbb{R}^d) \to L_2 (\mathbb{R}^d) }  \le \mathcal{C}(\tau) \varepsilon
    \end{equation}
  holds for all sufficiently small $\varepsilon > 0$.

  \noindent
$2^\circ$. Let $0 \ne \tau \in \mathbb{R}$ and $0 \le s < 1$.
Then there does not exist a constant $\mathcal{C}(\tau) > 0$ such that the estimate
    \begin{equation*}
    \| \widehat{J}_2 (\varepsilon^{-1} \tau)\mathcal{R}(\varepsilon)^{s/2}\|_{L_2(\mathbb{R}^d) \to L_2 (\mathbb{R}^d) }  \le \mathcal{C}(\tau)
    \end{equation*}
  holds for all sufficiently small $\varepsilon > 0$.
\end{theorem}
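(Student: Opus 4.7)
The approach is a direct reduction to Theorem~\ref{hat_s<2_thrm} via the direct integral decomposition. The key tool is the identity~\eqref{norms_and_Gelfand_transf}, which expresses the $L_2(\mathbb{R}^d)$-operator norm of $\widehat{J}_l(\varepsilon^{-1}\tau)\mathcal{R}(\varepsilon)^{s/2}$ as the essential supremum over $\mathbf{k} \in \widetilde{\Omega}$ of the fiber norms of $\widehat{J}_l(\mathbf{k},\varepsilon^{-1}\tau)\mathcal{R}(\mathbf{k},\varepsilon)^{s/2}$.

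To prove assertion $1^\circ$, I argue by contradiction: assume that for some $0 \ne \tau \in \mathbb{R}$ and $0 \le s <2$ there is a constant $\mathcal{C}(\tau)>0$ such that \eqref{s<2_cos_est} holds for all sufficiently small $\varepsilon>0$. Invoking~\eqref{norms_and_Gelfand_transf} with $l=1$, this hypothesis implies that for all sufficiently small $\varepsilon>0$ and for almost every $\mathbf{k} \in \widetilde{\Omega}$ we have
\begin{equation*}
\| \widehat{J}_1(\mathbf{k},\varepsilon^{-1}\tau)\mathcal{R}(\mathbf{k},\varepsilon)^{s/2}\|_{L_2(\Omega) \to L_2(\Omega)} \le \mathcal{C}(\tau)\varepsilon.
\end{equation*}
This is exactly the inequality~\eqref{hat_s<2_est_imp}. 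But Theorem~\ref{hat_s<2_thrm}$(1^\circ)$, applied with the given $\boldsymbol{\theta}_0 \in \mathbb{S}^{d-1}$ for which $\widehat{N}_0(\boldsymbol{\theta}_0) \ne 0$, asserts that no such constant can exist, yielding a contradiction.

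Assertion $2^\circ$ is proved identically: the hypothetical bound lifts via~\eqref{norms_and_Gelfand_transf} (with $l=2$) to a fiber-wise bound of the form forbidden by Theorem~\ref{hat_s<2_thrm}$(2^\circ)$.

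The proof is essentially a one-line reduction, and the genuine work has already been done in Theorem~\ref{hat_s<2_thrm}. The only point requiring mild care is the direction of the quantifiers: one must observe that the standing assumption (a bound valid for all small $\varepsilon$) passes cleanly through the $\esssup_{\mathbf{k}}$, producing the almost-everywhere statement required by Theorem~\ref{hat_s<2_thrm}; this is immediate from~\eqref{norms_and_Gelfand_transf}. Thus there is no real obstacle, and no additional constructions such as those appearing in Lemma~\ref{Lipschitz_lemma} are needed — the difficult continuity-in-$\mathbf{k}$ argument has already been absorbed into the proof of Theorem~\ref{hat_s<2_thrm}.
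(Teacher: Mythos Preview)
Your proof is correct and follows exactly the same approach as the paper: argue by contradiction, use the direct integral identity~\eqref{norms_and_Gelfand_transf} to pass from the global bound to the fiber-wise bound~\eqref{hat_s<2_est_imp}, and then invoke Theorem~\ref{hat_s<2_thrm} to obtain a contradiction. Your observation that the continuity-in-$\mathbf{k}$ work has already been absorbed into Theorem~\ref{hat_s<2_thrm} is also accurate.
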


\begin{proof}
Let us check assertion $1^\circ$. We prove by contradiction.
Suppose that for some $\tau \ne 0$ and $0 \le s < 2$  there exists a constant $\mathcal{C}(\tau) > 0$ such that \eqref{s<2_cos_est}
 holds for all sufficiently small $\varepsilon > 0$. By~\eqref{norms_and_Gelfand_transf},  this means that for almost every $\mathbf{k} \in \widetilde{\Omega}$ and sufficiently small $\varepsilon$ estimate~\eqref{hat_s<2_est_imp} holds.
But this contradicts the statement of Theorem~\ref{hat_s<2_thrm}($1^\circ$).

Assertion $2^\circ$ is deduced from Theorem~\ref{hat_s<2_thrm}($2^\circ$) similarly.
\end{proof}

\subsection{ Approximation of the sandwiched operators $\cos(\varepsilon^{-1} \tau \mathcal{A}^{1/2})$ and ${\mathcal{A}}^{-1/2} \sin(\varepsilon^{-1}\tau {\mathcal{A}}^{1/2})$}
In $L_2 (\mathbb{R}^d; \mathbb{C}^n)$, consider the operator~\eqref{A}.
 Let $f_0$~be the matrix~\eqref{f_0}, and let $\mathcal{A}^0$ be the operator~\eqref{A0}. Denote
\begin{align*}
{J}_1(\tau) &:= f \cos(\tau \mathcal{A}^{1/2}) f^{-1} - f_0 \cos(\tau (\mathcal{A}^0)^{1/2}) f_0^{-1},
\\
{J}_2(\tau) &:= f {\mathcal{A}}^{-1/2} \sin(\tau {\mathcal{A}}^{1/2}) f^* - f_0 ({\mathcal{A}}^0)^{-1/2} \sin(\tau ({\mathcal{A}}^0)^{1/2}) f_0.
\end{align*}

Similarly to \eqref{norms_and_Gelfand_transf}, by the direct integral expansions, we obtain
\begin{equation*}
\| {J}_l (\varepsilon^{-1} \tau) {\mathcal R} (\varepsilon)^{s/2} \|_{L_2(\mathbb{R}^d) \to L_2(\mathbb{R}^d)}
 = \underset{\mathbf{k} \in \widetilde{\Omega}}{\esssup}
\| {J}_l ({\mathbf k}, \varepsilon^{-1} \tau)
\mathcal{R} (\mathbf{k},\varepsilon)^{s/2} \|_{L_2(\Omega) \to L_2(\Omega)},\quad l=1,2.
\end{equation*}

Therefore, Theorems~\ref{sndw_cos_general_thrm},~\ref{sndw_cos_enchanced_thrm_1}, and~\ref{sndw_cos_enchanced_thrm_2}
imply the following results.

\begin{theorem}
    \label{sndw_cos_thrm_1}
    For $\tau \in \mathbb{R}$ and $\varepsilon > 0$ we have
  \begin{align*}
    &\|   {J}_1 (\varepsilon^{-1} \tau) \mathcal{R} (\varepsilon) \|_{L_2(\mathbb{R}^d) \to L_2(\mathbb{R}^d)}
 \le \mathcal{C}_1 (1+ |\tau|) \varepsilon,
 \\
    &\| {J}_2 (\varepsilon^{-1} \tau) \mathcal{R} (\varepsilon)^{1/2} \|_{L_2(\mathbb{R}^d) \to L_2(\mathbb{R}^d)}
 \le \mathcal{C}_2(1 + |\tau|).
    \end{align*}
  The constants $\mathcal{C}_1$ and $\mathcal{C}_2$
depend only on $\alpha_0$, $\alpha_1$,  $\|g\|_{L_\infty}$, $\|g^{-1}\|_{L_\infty}$,  $\|f\|_{L_\infty}$, $\|f^{-1}\|_{L_\infty}$, and $r_0$.
\end{theorem}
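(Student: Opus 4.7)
The strategy is to deduce Theorem~\ref{sndw_cos_thrm_1} from the fiberwise estimates already established in Theorem~\ref{sndw_cos_general_thrm}, exactly in parallel with the deduction of Theorems~\ref{cos_thrm_1}--\ref{cos_thrm_3} from Theorems~\ref{cos_general_thrm}--\ref{cos_enchanced_thrm_2} in the previous subsection. The Gelfand transformation is the vehicle that transforms an essential-supremum statement over the Brillouin zone into a norm bound on $L_2(\mathbb{R}^d)$.

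First, I would verify that $J_1(\varepsilon^{-1}\tau)\mathcal{R}(\varepsilon)$ and $J_2(\varepsilon^{-1}\tau)\mathcal{R}(\varepsilon)^{1/2}$ decompose as direct integrals. The operators $f$, $f^{-1}$, $f^*$ are multiplications by $\Gamma$-periodic matrix-valued functions, so under the Gelfand partition $\mathcal{U}$ they act fiberwise as multiplications by $f|_\Omega$, $f^{-1}|_\Omega$, $f^*|_\Omega$ on $L_2(\Omega;\mathbb{C}^n)$; the constant matrices $f_0, f_0^{-1}$ of course also act fiberwise. Combining this with the direct integral expansion \eqref{decompose} applied to both $\mathcal{A}$ and $\mathcal{A}^0$, and using the analogous expansion for $\mathcal{H}_0$ into the operators $\mathcal{H}_0(\mathbf{k})$, one obtains
\begin{equation*}
\mathcal{U}\, J_l(\varepsilon^{-1}\tau)\,\mathcal{R}(\varepsilon)^{s/2}\,\mathcal{U}^{-1}
=\int_{\widetilde{\Omega}}\oplus J_l(\mathbf{k},\varepsilon^{-1}\tau)\,\mathcal{R}(\mathbf{k},\varepsilon)^{s/2}\,d\mathbf{k},\qquad l=1,2.
\end{equation*}

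Second, the $L_2(\mathbb{R}^d)\!\to\! L_2(\mathbb{R}^d)$ norm of a direct-integral operator equals the essential supremum of the fiber norms, so, exactly as in \eqref{norms_and_Gelfand_transf},
\begin{equation*}
\| J_l(\varepsilon^{-1}\tau)\,\mathcal{R}(\varepsilon)^{s/2}\|_{L_2(\mathbb{R}^d)\to L_2(\mathbb{R}^d)}
=\esssup_{\mathbf{k}\in\widetilde{\Omega}}
\| J_l(\mathbf{k},\varepsilon^{-1}\tau)\,\mathcal{R}(\mathbf{k},\varepsilon)^{s/2}\|_{L_2(\Omega)\to L_2(\Omega)},\quad l=1,2.
\end{equation*}

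Finally, I would invoke Theorem~\ref{sndw_cos_general_thrm} (with $s=2$ for $l=1$ and $s=1$ for $l=2$), which supplies $\mathbf{k}$-uniform bounds $\mathcal{C}_1(1+|\tau|)\varepsilon$ and $\mathcal{C}_2(1+|\tau|)$ respectively, the constants depending only on $\alpha_0$, $\alpha_1$, $\|g\|_{L_\infty}$, $\|g^{-1}\|_{L_\infty}$, $\|f\|_{L_\infty}$, $\|f^{-1}\|_{L_\infty}$, $r_0$. Taking the essential supremum in $\mathbf{k}\in\widetilde{\Omega}$ produces the two inequalities of the theorem with the same constants. There is no real obstacle: all of the analytic work has been absorbed into the assembly of Theorem~\ref{sndw_cos_general_thrm} with $\mathbf{k}$-independent constants (which in turn rests on the abstract Theorem~\ref{abstr_cos_sandwiched_general_thrm} together with the $\boldsymbol{\theta}$-uniform choice of $c_*$, $\delta$, $t^0$ in Section~5.5); the only routine verification is the decomposability of $f$, $f^{-1}$, $f^*$ under the fibration, which is standard for periodic multipliers.
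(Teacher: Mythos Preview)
Your proposal is correct and follows essentially the same approach as the paper: the paper establishes the direct-integral identity $\| J_l(\varepsilon^{-1}\tau)\mathcal{R}(\varepsilon)^{s/2}\|_{L_2(\mathbb{R}^d)\to L_2(\mathbb{R}^d)} = \esssup_{\mathbf{k}\in\widetilde{\Omega}} \| J_l(\mathbf{k},\varepsilon^{-1}\tau)\mathcal{R}(\mathbf{k},\varepsilon)^{s/2}\|_{L_2(\Omega)\to L_2(\Omega)}$ (in analogy with \eqref{norms_and_Gelfand_transf}) and then reads off the result from Theorem~\ref{sndw_cos_general_thrm}. Your additional remark about the decomposability of the periodic multipliers $f$, $f^{-1}$, $f^*$ under the Gelfand transform is the only step the paper leaves implicit, and it is indeed routine.
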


\begin{theorem}
   \label{sndw_cos_thrm_2}
Let $\widehat{N}_Q(\boldsymbol{\theta})$
be the operator defined by~\emph{\eqref{N_Q(theta)}}. Suppose that $\widehat{N}_Q(\boldsymbol{\theta}) = 0$ for any
$\boldsymbol{\theta} \in \mathbb{S}^{d-1}$. Then for $\tau \in \mathbb{R}$ and $\varepsilon > 0$ we have
\begin{align*}
    &\|   {J}_1 (\varepsilon^{-1} \tau) \mathcal{R} (\varepsilon)^{3/4} \|_{L_2(\mathbb{R}^d) \to L_2(\mathbb{R}^d)}
 \le \mathcal{C}_3 (1+  |\tau|) \varepsilon,
 \\
    &\|   {J}_2 (\varepsilon^{-1} \tau) \mathcal{R} (\varepsilon)^{1/4} \|_{L_2(\mathbb{R}^d) \to L_2(\mathbb{R}^d)}
 \le \mathcal{C}_4 (1 + |\tau|).
    \end{align*}
  The constants $\mathcal{C}_3$ and $\mathcal{C}_4$
depend only on $\alpha_0$, $\alpha_1$,  $\|g\|_{L_\infty}$, $\|g^{-1}\|_{L_\infty}$,  $\|f\|_{L_\infty}$, $\|f^{-1}\|_{L_\infty}$, and $r_0$.
\end{theorem}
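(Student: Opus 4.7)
The plan is to deduce Theorem~\ref{sndw_cos_thrm_2} from the fiberwise result, Theorem~\ref{sndw_cos_enchanced_thrm_1}, by exactly the same direct integral argument that was used to prove Theorem~\ref{sndw_cos_thrm_1} from Theorem~\ref{sndw_cos_general_thrm}. First, I would verify that the sandwiched operator families expand in the Gelfand direct integral. Since $\mathcal{A}$ and $\mathcal{A}^0$ both admit the expansion \eqref{decompose}, and the multiplication by the constant matrix $f_0$ commutes with $\mathcal{U}$, while multiplication by the $\Gamma$-periodic matrix $f(\mathbf{x})$ acts fiberwise on $L_2(\Omega;\mathbb{C}^n)$ under the Gelfand transform, both $f \cos(\tau\mathcal{A}^{1/2}) f^{-1}$ and $f \mathcal{A}^{-1/2}\sin(\tau \mathcal{A}^{1/2}) f^*$ expand in the direct integrals of the corresponding fiber operators $f \cos(\tau \mathcal{A}(\mathbf{k})^{1/2}) f^{-1}$ and $f \mathcal{A}(\mathbf{k})^{-1/2} \sin(\tau \mathcal{A}(\mathbf{k})^{1/2}) f^*$, respectively. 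The same holds for the operators with $\mathcal{A}^0$ in place of $\mathcal{A}$ and $f_0$ in place of $f$. Finally, $\mathcal{R}(\varepsilon)$ decomposes into the fibers $\mathcal{R}(\mathbf{k},\varepsilon)$.

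Once this direct integral structure is in place, I would use the identity
\begin{equation*}
\| J_l(\varepsilon^{-1}\tau)\, \mathcal{R}(\varepsilon)^{s/2} \|_{L_2(\mathbb{R}^d)\to L_2(\mathbb{R}^d)} = \esssup_{\mathbf{k}\in\widetilde{\Omega}} \| J_l(\mathbf{k},\varepsilon^{-1}\tau)\, \mathcal{R}(\mathbf{k},\varepsilon)^{s/2} \|_{L_2(\Omega)\to L_2(\Omega)}, \quad l=1,2,
\end{equation*}
which is the analog of \eqref{norms_and_Gelfand_transf} in the sandwiched setting. The hypothesis of Theorem~\ref{sndw_cos_thrm_2}, namely $\widehat{N}_Q(\boldsymbol{\theta})=0$ for all $\boldsymbol{\theta}\in\mathbb{S}^{d-1}$, is precisely the assumption of Theorem~\ref{sndw_cos_enchanced_thrm_1}. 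Applying that fiberwise theorem with $s=3/2$ for $J_1$ and $s=1/2$ for $J_2$ yields, uniformly in $\mathbf{k}\in\widetilde{\Omega}$, the bounds \eqref{9.5} and \eqref{9.6} with constants $\mathcal{C}_3$, $\mathcal{C}_4$ depending only on $\alpha_0$, $\alpha_1$, $\|g\|_{L_\infty}$, $\|g^{-1}\|_{L_\infty}$, $\|f\|_{L_\infty}$, $\|f^{-1}\|_{L_\infty}$, and $r_0$. Taking the essential supremum over $\mathbf{k}\in\widetilde{\Omega}$ then gives the desired global estimates.

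There is essentially no obstacle beyond unwinding the direct integral correspondence, since the entire technical content — both the threshold approximations in the abstract setting (Theorems~\ref{abstr_cos_sandwiched_enchanced_thrm_1}) and the $\mathbf{k}$-uniform control of all the constants ($c_*$, $\delta$, $t^0$, $\|X_1(\boldsymbol{\theta})\|$) achieved in Section~\ref{A_oper_subsect} and in the proof of Theorem~\ref{sndw_cos_enchanced_thrm_1} — has already been established. The only minor point to be careful about is the treatment of the $(I-\widehat{P})$ part of the second estimate, but this has already been handled in the proof of Theorem~\ref{sndw_cos_enchanced_thrm_1}, and the bound there is $\mathbf{k}$-uniform, so the direct integral reduction preserves it without further work.
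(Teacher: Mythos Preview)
Your proposal is correct and follows essentially the same approach as the paper: the paper states the direct integral identity $\| J_l(\varepsilon^{-1}\tau)\,\mathcal{R}(\varepsilon)^{s/2}\|_{L_2(\mathbb{R}^d)\to L_2(\mathbb{R}^d)} = \esssup_{\mathbf{k}\in\widetilde{\Omega}} \| J_l(\mathbf{k},\varepsilon^{-1}\tau)\,\mathcal{R}(\mathbf{k},\varepsilon)^{s/2}\|_{L_2(\Omega)\to L_2(\Omega)}$ and then simply invokes Theorem~\ref{sndw_cos_enchanced_thrm_1} fiberwise. Your additional remarks about why the direct integral expansion holds for the sandwiched operators and about the $(I-\widehat{P})$ part are accurate and already absorbed in the cited fiberwise theorem.
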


\begin{theorem}
    \label{sndw_cos_thrm_3}
Suppose that Condition~\emph{\ref{sndw_cond1}} \emph{(}or more restrictive Condition~\emph{\ref{sndw_cond2}}\emph{)} is satisfied. Then for $\tau \in \mathbb{R}$ and $\varepsilon > 0$ we have
\begin{align*}
    &\|   {J}_1 (\varepsilon^{-1} \tau) \mathcal{R} (\varepsilon)^{3/4} \|_{L_2(\mathbb{R}^d) \to L_2(\mathbb{R}^d)}
 \le \mathcal{C}_5(1 +  |\tau|) \varepsilon,
 \\
    &\|   {J}_2 (\varepsilon^{-1} \tau) \mathcal{R} (\varepsilon)^{1/4} \|_{L_2(\mathbb{R}^d) \to L_2(\mathbb{R}^d)}
 \le \mathcal{C}_{6}(1 + |\tau|).
    \end{align*}
  The constants $\mathcal{C}_5$ and $\mathcal{C}_{6}$
depend only on $\alpha_0$, $\alpha_1$,  $\|g\|_{L_\infty}$, $\|g^{-1}\|_{L_\infty}$,  $\|f\|_{L_\infty}$, $\|f^{-1}\|_{L_\infty}$, $r_0$, $n$, and on the number $c^{\circ}$
defined by~\emph{\eqref{c^circ}}.
\end{theorem}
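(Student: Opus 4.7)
The plan is to deduce Theorem~\ref{sndw_cos_thrm_3} from its fiberwise analog Theorem~\ref{sndw_cos_enchanced_thrm_2} by exploiting the direct integral decomposition afforded by the Gelfand transformation, in exact parallel with the way Theorems~\ref{sndw_cos_thrm_1} and~\ref{sndw_cos_thrm_2} are obtained from Theorems~\ref{sndw_cos_general_thrm} and~\ref{sndw_cos_enchanced_thrm_1}, respectively. The hypotheses coincide (Condition~\ref{sndw_cond1} or Condition~\ref{sndw_cond2}), so the only work is to transfer uniform fiber estimates to a global estimate on $L_2(\mathbb{R}^d;\mathbb{C}^n)$.

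First I would invoke \eqref{decompose}: under $\mathcal{U}$ the operators $\mathcal{A}$, $\mathcal{A}^0$, and $\mathcal{H}_0$ expand as direct integrals over $\widetilde{\Omega}$ of the fibers $\mathcal{A}(\mathbf{k})$, $\mathcal{A}^0(\mathbf{k})$, and $\mathcal{H}_0(\mathbf{k})$. Since $f(\mathbf{x})$ is $\Gamma$-periodic and $f_0$ is constant, both commute with $\mathcal{U}$ in the sense that they act fiberwise as multiplication operators on $L_2(\Omega;\mathbb{C}^n)$. Consequently, the smoothing factor $\mathcal{R}(\varepsilon)$ from \eqref{R(epsilon)} decomposes as $\int_{\widetilde{\Omega}}\oplus\mathcal{R}(\mathbf{k},\varepsilon)\,d\mathbf{k}$, and the composite operators $J_1(\varepsilon^{-1}\tau)\mathcal{R}(\varepsilon)^{3/4}$ and $J_2(\varepsilon^{-1}\tau)\mathcal{R}(\varepsilon)^{1/4}$ decompose as direct integrals of the fiber operators $J_1(\mathbf{k},\varepsilon^{-1}\tau)\mathcal{R}(\mathbf{k},\varepsilon)^{3/4}$ and $J_2(\mathbf{k},\varepsilon^{-1}\tau)\mathcal{R}(\mathbf{k},\varepsilon)^{1/4}$ studied in Section~9. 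Using that the norm of a direct integral of bounded operators equals the essential supremum of the fiber norms, one gets
\begin{equation*}
\| J_l(\varepsilon^{-1}\tau)\mathcal{R}(\varepsilon)^{s_l/2}\|_{L_2(\mathbb{R}^d)\to L_2(\mathbb{R}^d)}
= \esssup_{\mathbf{k}\in\widetilde{\Omega}}\| J_l(\mathbf{k},\varepsilon^{-1}\tau)\mathcal{R}(\mathbf{k},\varepsilon)^{s_l/2}\|_{L_2(\Omega)\to L_2(\Omega)},
\end{equation*}
with $s_1=3/2$ and $s_2=1/2$. Applying Theorem~\ref{sndw_cos_enchanced_thrm_2} fiberwise and then taking the essential supremum over $\mathbf{k}$ yields the stated bounds with the same constants $\mathcal{C}_5,\mathcal{C}_{6}$.

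There is no substantive obstacle: the genuine analytic content is already packaged inside Theorem~\ref{sndw_cos_enchanced_thrm_2}, and the direct integral transfer is routine. The one point that must be acknowledged is that the constants furnished by Theorem~\ref{sndw_cos_enchanced_thrm_2} are $\boldsymbol{\theta}$-independent: this was secured in Section~9 by choosing $c_*$, $\delta$, $t^0$ independently of $\boldsymbol{\theta}$, by invoking Remark~\ref{rem_coeff} to track the polynomial dependence of all abstract constants on the problem data, and, crucially, by using Condition~\ref{sndw_cond1}($2^\circ$) together with the continuity of the eigenvalues of $S(\boldsymbol{\theta})$ on the compact sphere $\mathbb{S}^{d-1}$ to extract the uniform lower bound $c^\circ>0$ in~\eqref{c^circ}, which in turn allows $t^{00}$ to be fixed independently of $\boldsymbol{\theta}$. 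Once this is noted, the essential-supremum bound delivers the theorem.
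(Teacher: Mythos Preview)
Your proposal is correct and matches the paper's own argument essentially verbatim: the paper establishes the identity
\[
\| J_l(\varepsilon^{-1}\tau)\mathcal{R}(\varepsilon)^{s/2}\|_{L_2(\mathbb{R}^d)\to L_2(\mathbb{R}^d)}
= \esssup_{\mathbf{k}\in\widetilde{\Omega}}\| J_l(\mathbf{k},\varepsilon^{-1}\tau)\mathcal{R}(\mathbf{k},\varepsilon)^{s/2}\|_{L_2(\Omega)\to L_2(\Omega)}
\]
via the direct integral expansion and then simply invokes Theorem~\ref{sndw_cos_enchanced_thrm_2}. Your remarks on the $\boldsymbol{\theta}$-independence of the constants (secured in Section~9 through Condition~\ref{sndw_cond1} and the choice of $c^\circ$, $t^{00}$) correctly identify where the real work was already done.
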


Theorem \ref{sndw_cos_thrm_1} was known before (see \cite[Theorem 9.2]{BSu5} and \cite[Section 10]{M}).

By analogy with the proof of Theorem~\ref{s<2_cos_thrm},   we deduce the following result from Theorem~\ref{s<2_thrm};
this confirms that the result of Theorem~\ref{sndw_cos_thrm_1} is sharp.

\begin{theorem}
    \label{sndw_s<2_cos_thrm}
 Let $\widehat{N}_{0,Q}(\boldsymbol{\theta})$  be the operator defined by~\emph{\eqref{N0Q_invar_repr}}. Suppose that $\widehat{N}_{0,Q}(\boldsymbol{\theta}_0) \ne 0$ for some $\boldsymbol{\theta}_0 \in \mathbb{S}^{d-1}$.

 \noindent
 $1^\circ$. Let $0 \ne \tau \in \mathbb{R}$ and $0 \le s < 2$. Then there does not exist a constant $\mathcal{C}(\tau) > 0$ such that the estimate
    \begin{equation*}
    \| {J}_1 (\varepsilon^{-1} \tau) \mathcal{R} (\varepsilon)^{s/2}\|_{L_2(\mathbb{R}^d) \to L_2(\mathbb{R}^d)}
 \le \mathcal{C}(\tau) \varepsilon
    \end{equation*}
holds  for all sufficiently small $\varepsilon > 0$.

 \noindent
 $2^\circ$. Let $0 \ne \tau \in \mathbb{R}$ and $0 \le s < 1$. Then there does not exist a constant $\mathcal{C}(\tau) > 0$ such that the estimate
    \begin{equation*}
        \| {J}_2 (\varepsilon^{-1} \tau) \mathcal{R} (\varepsilon)^{s/2}\|_{L_2(\mathbb{R}^d) \to L_2(\mathbb{R}^d)}
 \le \mathcal{C}(\tau)
    \end{equation*}
holds  for all sufficiently small $\varepsilon > 0$.
\end{theorem}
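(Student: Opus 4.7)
The plan is to reduce the global (operator-norm on $L_2({\mathbb R}^d;{\mathbb C}^n)$) sharpness assertion to the fiberwise sharpness already established in Theorem~\ref{s<2_thrm}, mirroring exactly the contradiction argument used in the proof of Theorem~\ref{s<2_cos_thrm}. The tool that makes this passage work is the direct integral decomposition \eqref{decompose} applied to both ${\mathcal A}$ and ${\mathcal A}^0$, which, together with the analogous expansion for the smoothing factor $\mathcal{R}(\varepsilon)$, yields the identity
\begin{equation*}
\| {J}_l (\varepsilon^{-1} \tau) {\mathcal R} (\varepsilon)^{s/2} \|_{L_2(\mathbb{R}^d) \to L_2(\mathbb{R}^d)}
 = \underset{\mathbf{k} \in \widetilde{\Omega}}{\esssup}
\| {J}_l ({\mathbf k}, \varepsilon^{-1} \tau)
\mathcal{R} (\mathbf{k},\varepsilon)^{s/2} \|_{L_2(\Omega) \to L_2(\Omega)},\quad l=1,2,
\end{equation*}
exactly as noted right before Theorem~\ref{sndw_cos_thrm_1}. (Here $f$ and $f_0$ act as bounded multiplication operators that commute with the Gelfand transformation since they are $\Gamma$-periodic, which is the only verification needed before invoking the identity.)

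First I would prove assertion $1^\circ$ by contradiction. Assume that, for some $0\ne\tau\in{\mathbb R}$ and some $0\le s<2$, there exists $\mathcal{C}(\tau)>0$ such that
\begin{equation*}
\| {J}_1 (\varepsilon^{-1} \tau) \mathcal{R} (\varepsilon)^{s/2}\|_{L_2(\mathbb{R}^d) \to L_2(\mathbb{R}^d)} \le \mathcal{C}(\tau)\varepsilon
\end{equation*}
for all sufficiently small $\varepsilon>0$. By the direct integral identity above, this means that for every such $\varepsilon$ and for almost every $\mathbf{k}\in\widetilde{\Omega}$ the fiberwise estimate
\begin{equation*}
\| {J}_1(\mathbf{k},\varepsilon^{-1}\tau) \mathcal{R}(\mathbf{k}, \varepsilon)^{s/2}\|_{L_2(\Omega) \to L_2 (\Omega) }  \le \mathcal{C} (\tau) \varepsilon
\end{equation*}
holds, i.~e.\ \eqref{sndw_s<2_est_imp} is valid. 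But, by hypothesis, $\widehat{N}_{0,Q}(\boldsymbol{\theta}_0)\ne 0$, and Theorem~\ref{s<2_thrm}$(1^\circ)$ precisely asserts that such a fiberwise inequality cannot hold on a full-measure set of $\mathbf{k}=t\boldsymbol{\theta}$ in $\widetilde{\Omega}$ for sufficiently small $\varepsilon$. This contradiction establishes $1^\circ$.

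Assertion $2^\circ$ follows by the same scheme with $J_1$ replaced by $J_2$, the exponent range $0\le s<2$ replaced by $0\le s<1$, and the bound $\mathcal{C}(\tau)\varepsilon$ replaced by $\mathcal{C}(\tau)$: assuming the putative global estimate and applying the direct integral identity produces the fiberwise inequality \eqref{9.4a}, which is ruled out by Theorem~\ref{s<2_thrm}$(2^\circ)$. There is no real obstacle in this argument; the only thing to be careful about is the order of quantifiers. The direct integral identity delivers the fiber estimate \emph{for a.e.}\ $\mathbf{k}$ \emph{for each fixed small $\varepsilon$}, and Theorem~\ref{s<2_thrm} needs the estimate \emph{for a.e.}\ $\mathbf{k}$ \emph{and} sufficiently small $\varepsilon$; the two quantifications match, so the reduction goes through. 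Thus the argument is a verbatim analog of the proof of Theorem~\ref{s<2_cos_thrm} and requires no further computation.
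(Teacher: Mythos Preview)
Your proposal is correct and follows exactly the paper's own route: the paper states that Theorem~\ref{sndw_s<2_cos_thrm} is deduced from Theorem~\ref{s<2_thrm} ``by analogy with the proof of Theorem~\ref{s<2_cos_thrm}'', i.e.\ via the direct integral identity for $J_l(\varepsilon^{-1}\tau)\mathcal{R}(\varepsilon)^{s/2}$ and a contradiction argument, which is precisely what you do. Your cautionary remark on the quantifier order is well placed but harmless: the continuity argument (Lemma~\ref{sndw_Lipschitz_lemma}) inside the proof of Theorem~\ref{s<2_thrm} upgrades the ``a.e.\ $\mathbf{k}$ for each fixed $\varepsilon$'' statement to ``all $\mathbf{k}$ in the ball for each fixed $\varepsilon$'', which is what the abstract sharpness theorem actually consumes.
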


\section*{Chapter 3. Homogenization problems for hyperbolic equations}

\section{Approximation of the operators $\cos(\tau \mathcal{A}_\varepsilon^{1/2})$ and $\mathcal{A}_\varepsilon^{-1/2} \sin(\tau \mathcal{A}_\varepsilon^{1/2})$}

\subsection{The operators $\widehat{\mathcal{A}}_\varepsilon$ and $\mathcal{A}_\varepsilon$. The scaling transformation}

If $\psi(\mathbf{x})$~is a $\Gamma$-periodic measurable function in  $\mathbb{R}^d$, we  denote $\psi^{\varepsilon}(\mathbf{x}) := \psi(\varepsilon^{-1} \mathbf{x}), \; \varepsilon > 0$. \emph{Our main objects} are  the operators $\widehat{\mathcal{A}}_\varepsilon$ and $\mathcal{A}_\varepsilon$ acting in $L_2 (\mathbb{R}^d; \mathbb{C}^n)$ and formally given by
\begin{align}
\label{Ahat_eps}
\widehat{\mathcal{A}}_\varepsilon &:= b(\mathbf{D})^* g^{\varepsilon}(\mathbf{x}) b(\mathbf{D}), \\
\label{A_eps}
\mathcal{A}_\varepsilon &:= (f^{\varepsilon}(\mathbf{x}))^* b(\mathbf{D})^* g^{\varepsilon}(\mathbf{x}) b(\mathbf{D}) f^{\varepsilon}(\mathbf{x}).
\end{align}
The precise definitions are given in terms of the corresponding quadratic forms (cf. Subsection~\ref{A_oper_subsect}).

Let $T_{\varepsilon}$~be the \emph{unitary scaling transformation in $L_2 (\mathbb{R}^d; \mathbb{C}^n)$} defined by
$(T_{\varepsilon} \mathbf{u})(\mathbf{x}) = \varepsilon^{d/2} \mathbf{u} (\varepsilon \mathbf{x})$,  $\varepsilon > 0$.
Then $\mathcal{A}_\varepsilon = \varepsilon^{-2}T_{\varepsilon}^* \mathcal{A} T_{\varepsilon}$. Hence,
\begin{equation}
\label{cos_and_scale_transform}
\cos(\tau \mathcal{A}_\varepsilon^{1/2}) = T_{\varepsilon}^* \cos(\varepsilon^{-1} \tau \mathcal{A}^{1/2}) T_{\varepsilon},
\quad
\mathcal{A}_\varepsilon^{-1/2} \sin(\tau \mathcal{A}_\varepsilon^{1/2}) = \varepsilon \, T_{\varepsilon}^* \mathcal{A}^{-1/2} \sin(\varepsilon^{-1} \tau \mathcal{A}^{1/2}) T_{\varepsilon}.
\end{equation}
The operator $\widehat{\mathcal{A}}_\varepsilon$ satisfies similar relations.
Next, applying the scaling transformation to the resolvent of the operator $\mathcal{H}_0 = - \Delta$, we obtain
\begin{equation}
\label{H0_resolv_and_scale_transform}
(\mathcal{H}_0 + I)^{-1} = \varepsilon^2 T_\varepsilon^* (\mathcal{H}_0 + \varepsilon^2 I)^{-1} T_\varepsilon = T_\varepsilon^*
\mathcal{R} (\varepsilon) T_\varepsilon,
\end{equation}
where $\mathcal{R} (\varepsilon)$ is given by~\eqref{R(epsilon)}.
Finally, if $\psi(\mathbf{x})$~is a $\Gamma$-periodic function, then
\begin{equation}
\label{mult_op_and_scale_transform}
[\psi^{\varepsilon}] = T_\varepsilon^* [\psi] T_\varepsilon.
\end{equation}

\subsection{Approximation of the operators $\cos( \tau \widehat{\mathcal{A}}_\varepsilon^{1/2})$ and
$\widehat{\mathcal{A}}_\varepsilon^{-1/2}\sin( \tau \widehat{\mathcal{A}}_\varepsilon^{1/2})$}

We start with the simpler operator \eqref{Ahat_eps}. Let $\widehat{\mathcal{A}}^0$~be the effective operator~\eqref{hatA0}. Using relations of the form~\eqref{cos_and_scale_transform} (for the operators $\widehat{\mathcal{A}}_\varepsilon$ and $\widehat{\mathcal{A}}^0$) and identity~\eqref{H0_resolv_and_scale_transform}, we obtain
\begin{align}
\label{cos-cos_and_scale_transform}
&\bigl(\cos( \tau \widehat{\mathcal{A}}_\varepsilon^{1/2}) - \cos( \tau (\widehat{\mathcal{A}}^0)^{1/2}) \bigr) (\mathcal{H}_0 + I)^{-s/2} =
T_{\varepsilon}^*  \widehat{J}_1 (\varepsilon^{-1} \tau) \mathcal{R} (\varepsilon)^{s/2} T_{\varepsilon},\quad \varepsilon > 0,
\\
\label{sin-sin_and_scale_transform}
&\bigl( \widehat{\mathcal{A}}_\varepsilon^{-1/2} \sin( \tau \widehat{\mathcal{A}}_\varepsilon^{1/2}) -
(\widehat{\mathcal{A}}^0)^{-1/2} \sin( \tau (\widehat{\mathcal{A}}^0)^{1/2}) \bigr) (\mathcal{H}_0 + I)^{-s/2}
 = \varepsilon\, T_{\varepsilon}^*  \widehat{J}_2 (\varepsilon^{-1} \tau) \mathcal{R} (\varepsilon)^{s/2} T_{\varepsilon},\quad \varepsilon > 0.
\end{align}

In the general case, the following result holds.

\begin{theorem}
    \label{cos_sin_thrm1_H^s_L2}
Let $\widehat{\mathcal{A}}_{\varepsilon}$~be the operator~\emph{\eqref{Ahat_eps}} and let  $\widehat{\mathcal{A}}^0$~be
the effective operator~\emph{\eqref{hatA0}}. Then for $0 \le s \le 2$, $0 \le r \le 1$, $\tau \in \mathbb{R}$, and $\varepsilon > 0$ we have
    \begin{gather}
    \label{cos_thrm1_H^s_L2_est}
    \| \cos( \tau \widehat{\mathcal{A}}_{\varepsilon}^{1/2}) - \cos( \tau (\widehat{\mathcal{A}}^0)^{1/2}) \|_{H^s (\mathbb{R}^d) \to L_2 (\mathbb{R}^d)}
     \le \widehat{\mathfrak{C}}_1(s) (1+ |\tau|)^{s/2} \varepsilon^{s/2},
    \\
    \label{sin_thrm1_H^s_L2_est}
    \|\widehat{\mathcal{A}}_\varepsilon^{-1/2} \sin( \tau \widehat{\mathcal{A}}_\varepsilon^{1/2}) - 
(\widehat{\mathcal{A}}^0)^{-1/2} \sin( \tau (\widehat{\mathcal{A}}^0)^{1/2}) \|_{H^r (\mathbb{R}^d) \to L_2 (\mathbb{R}^d)} \le \widehat{\mathfrak{C}}_2 (r) (1+ |\tau|)  \varepsilon^{r},
    \end{gather}
   where $\widehat{\mathfrak{C}}_1 (s) = 2^{1-s/2} \widehat{\mathcal{C}}_1^{s/2}$ and
    $\widehat{\mathfrak{C}}_2 (r) =  2^{1-r} \widehat{\mathcal{C}}_2^{r}$.
     The constants $\widehat{\mathcal{C}}_1$ and $\widehat{\mathcal{C}}_2$
depend only on $\alpha_0$, $\alpha_1$, $\|g\|_{L_\infty}$, $\|g^{-1}\|_{L_\infty}$, and $r_0$.
\end{theorem}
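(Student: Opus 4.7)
The plan is to derive \eqref{cos_thrm1_H^s_L2_est} and \eqref{sin_thrm1_H^s_L2_est} by interpolating in the Sobolev scale between the ``hard'' endpoints ($s=2$ for the cosine, $r=1$ for the sine), where Theorem~\ref{cos_thrm_1}, converted via the scaling identities \eqref{cos-cos_and_scale_transform}--\eqref{sin-sin_and_scale_transform}, delivers the required $O(\varepsilon)$ bounds, and the ``trivial'' endpoints $s=r=0$, where the functional calculus gives uniform $L_2 \to L_2$ bounds.

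First, I would record the endpoint bounds. Inserting $s=2$ in \eqref{cos-cos_and_scale_transform}, using the unitarity of $T_\varepsilon$ and the tautology $\|B\|_{H^s \to L_2} = \|B(\mathcal H_0 + I)^{-s/2}\|_{L_2 \to L_2}$, yields
\[
\|\cos(\tau \widehat{\mathcal A}_\varepsilon^{1/2}) - \cos(\tau(\widehat{\mathcal A}^0)^{1/2})\|_{H^2 \to L_2} = \|\widehat J_1(\varepsilon^{-1}\tau)\mathcal R(\varepsilon)\|_{L_2 \to L_2} \le \widehat{\mathcal C}_1(1+|\tau|)\varepsilon
\]
by Theorem~\ref{cos_thrm_1}. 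The same reasoning applied to \eqref{sin-sin_and_scale_transform} with $s=1$ gives an $H^1 \to L_2$ bound of $\widehat{\mathcal C}_2\varepsilon(1+|\tau|)$ for the sine difference. At the opposite endpoint, the functional calculus for $\widehat{\mathcal A}_\varepsilon, \widehat{\mathcal A}^0 \ge 0$ gives $\|\cos(\tau \widehat{\mathcal A}_\varepsilon^{1/2})\|_{L_2 \to L_2} \le 1$ and, via $|\sin x/x| \le 1$, $\|\widehat{\mathcal A}_\varepsilon^{-1/2}\sin(\tau \widehat{\mathcal A}_\varepsilon^{1/2})\|_{L_2 \to L_2} \le |\tau|$ (and the same for $\widehat{\mathcal A}^0$); the triangle inequality then yields $L_2 \to L_2$ bounds of $2$ and $2|\tau|$ for the two differences.

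Next, I would interpolate. By complex interpolation of the Sobolev scale, $[L_2(\mathbb R^d), H^{s_0}(\mathbb R^d)]_{s/s_0} = H^{s}(\mathbb R^d)$ (equivalently, by the three-line lemma applied to the analytic family $z \mapsto B(\mathcal H_0 + I)^{-z/2}$), any operator $B$ satisfies
\[
\|B\|_{H^s \to L_2} \le \|B\|_{L_2 \to L_2}^{\,1-s/s_0}\, \|B\|_{H^{s_0}\to L_2}^{\,s/s_0}, \qquad 0 \le s \le s_0.
\]
Taking $s_0=2$ for the cosine difference produces
\[
\|\cos(\tau \widehat{\mathcal A}_\varepsilon^{1/2}) - \cos(\tau(\widehat{\mathcal A}^0)^{1/2})\|_{H^s \to L_2} \le 2^{\,1-s/2}\bigl(\widehat{\mathcal C}_1(1+|\tau|)\varepsilon\bigr)^{\,s/2},
\]
which is \eqref{cos_thrm1_H^s_L2_est} with $\widehat{\mathfrak C}_1(s) = 2^{\,1-s/2}\widehat{\mathcal C}_1^{\,s/2}$. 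Taking $s_0=1$ for the sine difference gives the upper bound $(2|\tau|)^{1-r}\bigl(\widehat{\mathcal C}_2(1+|\tau|)\varepsilon\bigr)^{r}$; the elementary inequality $|\tau|^{\,1-r}(1+|\tau|)^{r} \le 1+|\tau|$ (from $|\tau| \le 1+|\tau|$) converts this into $2^{\,1-r}\widehat{\mathcal C}_2^{\,r}(1+|\tau|)\varepsilon^{r}$, giving \eqref{sin_thrm1_H^s_L2_est} with $\widehat{\mathfrak C}_2(r) = 2^{\,1-r}\widehat{\mathcal C}_2^{\,r}$.

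Once Theorem~\ref{cos_thrm_1} is in hand, the argument is essentially bookkeeping; no substantive obstacle is anticipated. The only mildly nonobvious point is absorbing the factor $|\tau|^{1-r}$ produced at the $r=0$ sine endpoint into the $(1+|\tau|)$ of the advertised estimate, which is precisely what the elementary inequality above accomplishes.
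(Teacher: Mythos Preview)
Your proposal is correct and follows essentially the same approach as the paper. The paper does not spell out the proof of this particular theorem (noting only that it ``can be deduced from Theorem~\ref{cos_thrm_1} and relations \eqref{cos-cos_and_scale_transform}, \eqref{sin-sin_and_scale_transform}''), but the detailed proof it gives for the analogous Theorem~\ref{cos_sin_thrm2_H^s_L2} matches your argument step for step: scaling to obtain the hard endpoint, the trivial $L_2\to L_2$ bounds \eqref{cos-cos_le_2}--\eqref{sin-sin_le_2}, and interpolation. The only cosmetic difference is that the paper replaces $2|\tau|$ by $2(1+|\tau|)$ \emph{before} interpolating in the sine case, whereas you interpolate first and then absorb $|\tau|^{1-r}$ into $(1+|\tau|)$; the outcome is identical.
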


Theorem~\ref{cos_sin_thrm1_H^s_L2} can be deduced from Theorem~\ref{cos_thrm_1} and relations \eqref{cos-cos_and_scale_transform}, \eqref{sin-sin_and_scale_transform}.
This result was known before (see~\cite[Theorem~13.2]{BSu5} and \cite[Theorem 11.2]{M}).

Now, we show that the result can be improved under the additional assumptions.

\begin{theorem}
    \label{cos_sin_thrm2_H^s_L2}
   Suppose that the assumptions of Theorem~\emph{\ref{cos_sin_thrm1_H^s_L2}} are satisfied.
Let $\widehat{N}(\boldsymbol{\theta})$ be the operator defined by~\emph{\eqref{N(theta)}}. Suppose that $\widehat{N}(\boldsymbol{\theta}) = 0$ for any $\boldsymbol{\theta} \in \mathbb{S}^{d-1}$. Then for $0 \le s \le 3/2$, $0\le r \le 1/2$,
$\tau \in \mathbb{R}$, and $\varepsilon > 0$ we have
    \begin{align}
    \label{ccc}
    &\| \cos( \tau \widehat{\mathcal{A}}_{\varepsilon}^{1/2}) - \cos( \tau (\widehat{\mathcal{A}}^0)^{1/2}) \|_{H^s (\mathbb{R}^d) \to L_2 (\mathbb{R}^d)} \le
    \widehat{\mathfrak{C}}_3 (s) (1+|\tau|)^{2s/3} \varepsilon^{2s/3},
    \\
    \label{sss}
    &\|\widehat{\mathcal{A}}_\varepsilon^{-1/2} \sin( \tau \widehat{\mathcal{A}}_\varepsilon^{1/2}) - (\widehat{\mathcal{A}}^0)^{-1/2} \sin( \tau (\widehat{\mathcal{A}}^0)^{1/2}) \|_{H^r (\mathbb{R}^d) \to L_2 (\mathbb{R}^d)} \le \widehat{\mathfrak{C}}_4 (r) (1+|\tau|)  \varepsilon^{2r},
    \end{align}
where $\widehat{\mathfrak{C}}_3 (s) = 2^{1-2s/3} \widehat{\mathcal{C}}_3^{2s/3}$ and $\widehat{\mathfrak{C}}_4 (r) =  2^{1-2r} \widehat{\mathcal{C}}_4^{2r}$.
The constants $\widehat{\mathcal{C}}_3$ and $\widehat{\mathcal{C}}_4$
depend only on $\alpha_0$, $\alpha_1$,  $\|g\|_{L_\infty}$, $\|g^{-1}\|_{L_\infty}$, and $r_0$.
\end{theorem}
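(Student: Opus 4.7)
The plan is to reduce \eqref{ccc} and \eqref{sss} to Theorem~\ref{cos_thrm_2} via the scaling identities \eqref{cos-cos_and_scale_transform}, \eqref{sin-sin_and_scale_transform}, and then to interpolate between the endpoint cases $s=3/2$ (resp.\ $r=1/2$) and $s=0$ (resp.\ $r=0$). The assumption $\widehat{N}(\boldsymbol{\theta})\equiv 0$ is needed exclusively to invoke the improved estimates of Theorem~\ref{cos_thrm_2}.

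First I would handle the \emph{upper endpoint}. Since $\|T\|_{H^s\to L_2}=\|T(\mathcal{H}_0+I)^{-s/2}\|_{L_2\to L_2}$ and $T_{\varepsilon}$ is unitary, \eqref{cos-cos_and_scale_transform} with $s=3/2$ gives
\begin{equation*}
\|\cos(\tau\widehat{\mathcal{A}}_\varepsilon^{1/2})-\cos(\tau(\widehat{\mathcal{A}}^0)^{1/2})\|_{H^{3/2}(\mathbb{R}^d)\to L_2(\mathbb{R}^d)}
=\|\widehat{J}_1(\varepsilon^{-1}\tau)\mathcal{R}(\varepsilon)^{3/4}\|_{L_2(\mathbb{R}^d)\to L_2(\mathbb{R}^d)},
\end{equation*}
which by Theorem~\ref{cos_thrm_2} is bounded by $\widehat{\mathcal{C}}_3(1+|\tau|)\varepsilon$. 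Analogously, \eqref{sin-sin_and_scale_transform} with $s=1/2$ together with Theorem~\ref{cos_thrm_2} yields
\begin{equation*}
\|\widehat{\mathcal{A}}_\varepsilon^{-1/2}\sin(\tau\widehat{\mathcal{A}}_\varepsilon^{1/2})-(\widehat{\mathcal{A}}^0)^{-1/2}\sin(\tau(\widehat{\mathcal{A}}^0)^{1/2})\|_{H^{1/2}\to L_2}\le \widehat{\mathcal{C}}_4(1+|\tau|)\varepsilon.
\end{equation*}

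Next I would record the \emph{lower endpoint} bounds, which are completely elementary. The spectral theorem gives $\|\cos(\tau B^{1/2})\|_{L_2\to L_2}\le 1$ for any non-negative selfadjoint $B$, hence the left-hand side of \eqref{ccc} at $s=0$ is at most $2$. Similarly, $\|B^{-1/2}\sin(\tau B^{1/2})\|_{L_2\to L_2}\le|\tau|$ (since $|\sin(\tau x)/x|\le|\tau|$ for $x\ge 0$), so the left-hand side of \eqref{sss} at $r=0$ is at most $2|\tau|\le 2(1+|\tau|)$.

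Finally, I would interpolate. Using the standard identification $H^s(\mathbb{R}^d;\mathbb{C}^n)=[L_2,H^{3/2}]_{2s/3}$ (respectively $H^r=[L_2,H^{1/2}]_{2r}$) for complex interpolation, applied to the bounded analytic family of operators on the strip, the bilinear-style estimate $\|T\|_{H^s\to L_2}\le\|T\|_{L_2\to L_2}^{1-\theta}\|T\|_{H^{3/2}\to L_2}^{\theta}$ with $\theta=2s/3$ (and $\theta=2r$ for the sine) yields
\begin{equation*}
\|\cdot\|_{H^s\to L_2}\le 2^{1-2s/3}\bigl(\widehat{\mathcal{C}}_3(1+|\tau|)\varepsilon\bigr)^{2s/3}=\widehat{\mathfrak{C}}_3(s)(1+|\tau|)^{2s/3}\varepsilon^{2s/3},
\end{equation*}
and, because the $(1+|\tau|)$-factor appears with the same exponent in both endpoint bounds for the sine,
\begin{equation*}
\|\cdot\|_{H^r\to L_2}\le 2^{1-2r}(1+|\tau|)\bigl(\widehat{\mathcal{C}}_4\varepsilon\bigr)^{2r}=\widehat{\mathfrak{C}}_4(r)(1+|\tau|)\varepsilon^{2r},
\end{equation*}
which are exactly \eqref{ccc} and \eqref{sss}. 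No serious obstacle is anticipated: the only step that requires a little care is the interpolation, but the operators in question are uniformly bounded on $L_2$ for all $\tau$ and $\varepsilon$, so the standard complex-interpolation machinery for the scale $H^s(\mathbb{R}^d;\mathbb{C}^n)$ applies directly.
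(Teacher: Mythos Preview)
Your proposal is correct and follows essentially the same approach as the paper: reduce to Theorem~\ref{cos_thrm_2} via the scaling identities \eqref{cos-cos_and_scale_transform}, \eqref{sin-sin_and_scale_transform} to get the endpoint estimates at $s=3/2$ and $r=1/2$, record the trivial $L_2\to L_2$ bounds at $s=r=0$, and interpolate. The only cosmetic difference is that the paper phrases the interpolation in terms of the $(L_2\to L_2)$-norm of the operator composed with $(\mathcal{H}_0+I)^{-s/2}$ and then invokes the isometry $(\mathcal{H}_0+I)^{s/2}\colon H^s\to L_2$, whereas you interpolate the $H^s\to L_2$ norms directly; these are equivalent.
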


\begin{proof}
Since $T_{\varepsilon}$  is unitary, it follows from Theorem~\ref{cos_thrm_2} and
\eqref{cos-cos_and_scale_transform}, \eqref{sin-sin_and_scale_transform} that
\begin{align}
\label{13.12}
&\| \bigl( \cos( \tau \widehat{\mathcal{A}}_\varepsilon^{1/2}) - \cos( \tau (\widehat{\mathcal{A}}^0)^{1/2}) \bigr) (\mathcal{H}_0 + I)^{-3/4}
\|_{L_2 (\mathbb{R}^d) \to L_2 (\mathbb{R}^d)} \le \widehat{\mathcal{C}}_3(1 + |\tau|)\varepsilon,
\\
\label{13.12a}
&\| \bigl( \widehat{\mathcal{A}}_\varepsilon^{-1/2} \sin( \tau \widehat{\mathcal{A}}_\varepsilon^{1/2}) -
(\widehat{\mathcal{A}}^0)^{-1/2} \sin( \tau (\widehat{\mathcal{A}}^0)^{1/2}) \bigr) (\mathcal{H}_0 + I)^{-1/4}
\|_{L_2 (\mathbb{R}^d) \to L_2 (\mathbb{R}^d)} \le \widehat{\mathcal{C}}_4(1 + |\tau|)\varepsilon,
\end{align}
for $\tau \in \mathbb{R}$ and $\varepsilon >0$.
Obviously,
\begin{align}
\label{cos-cos_le_2}
&\| \cos( \tau \widehat{\mathcal{A}}_{\varepsilon}^{1/2}) - \cos( \tau (\widehat{\mathcal{A}}^0)^{1/2}) \|_{L_2 (\mathbb{R}^d) \to L_2 (\mathbb{R}^d)} \le 2,
\\
\label{sin-sin_le_2}
&\| \widehat{\mathcal{A}}_\varepsilon^{-1/2} \sin( \tau \widehat{\mathcal{A}}_\varepsilon^{1/2}) -
(\widehat{\mathcal{A}}^0)^{-1/2} \sin( \tau (\widehat{\mathcal{A}}^0)^{1/2})
\|_{L_2 (\mathbb{R}^d) \to L_2 (\mathbb{R}^d)} \le 2 |\tau| \le 2(1 + |\tau|),
\end{align}
for $\tau \in \mathbb{R}$ and $\varepsilon >0$.
Interpolating between~\eqref{cos-cos_le_2} and~\eqref{13.12}, and between~\eqref{sin-sin_le_2} and~\eqref{13.12a},
for $ 0 \le s \le 3/2$ and $0\le r\le 1/2$ we obtain
\begin{align}
\label{cos_interp_1}
&\| \bigl( \cos( \tau \widehat{\mathcal{A}}_{\varepsilon}^{1/2}) - \cos( \tau (\widehat{\mathcal{A}}^0)^{1/2}) \bigr)  (\mathcal{H}_0 + I)^{-s/2}
\|_{L_2 (\mathbb{R}^d) \to L_2 (\mathbb{R}^d)}
\le \widehat{\mathfrak{C}}_3 (s) (1+ |\tau|)^{2s/3}  \varepsilon^{2s/3},
\\
\label{sin_interp_1}
&\| \bigl( \widehat{\mathcal{A}}_{\varepsilon}^{-1/2} \sin( \tau \widehat{\mathcal{A}}_{\varepsilon}^{1/2})
- (\widehat{\mathcal{A}}^0)^{-1/2} \sin( \tau (\widehat{\mathcal{A}}^0)^{1/2}) \bigr)  (\mathcal{H}_0 + I)^{-r/2}
\|_{L_2 (\mathbb{R}^d) \to L_2 (\mathbb{R}^d)}
\le \widehat{\mathfrak{C}}_4 (r) (1+ |\tau|) \varepsilon^{2r},
\end{align}
for $\tau \in \mathbb{R}$ and $\varepsilon > 0$.
The operator $(\mathcal{H}_0 + I)^{s/2}$  is an isometric isomorphism of $H^s(\mathbb{R}^d; \mathbb{C}^n)$ onto  $L_2(\mathbb{R}^d; \mathbb{C}^n)$. Therefore, \eqref{cos_interp_1} is equivalent to \eqref{ccc}, and \eqref{sin_interp_1} is equivalent to \eqref{sss}.
\end{proof}

Recall that some sufficient conditions for $\widehat{N}(\boldsymbol{\theta}) \equiv 0$ are given in
Proposition~\ref{N=0_proposit}.

Finally, Theorem~\ref{cos_thrm_3} together with~\eqref{cos-cos_and_scale_transform}, \eqref{sin-sin_and_scale_transform}, \eqref{cos-cos_le_2}, and \eqref{sin-sin_le_2} leads to the following result.

\begin{theorem}
    \label{cos_sin_thrm3_H^s_L2}
  Suppose that the assumptions of Theorem~\emph{\ref{cos_sin_thrm1_H^s_L2}} are satisfied.
 Suppose that Condition~\emph{\ref{cond9}}
\emph{(}or more restrictive Condition~\emph{\ref{cond99})} is satisfied.
Then for $0 \le s \le 3/2$, $0 \le r \le 1/2$, $\tau \in \mathbb{R}$,
and $\varepsilon > 0$ we have
 \begin{align*}
    &\| \cos( \tau \widehat{\mathcal{A}}_{\varepsilon}^{1/2}) - \cos( \tau (\widehat{\mathcal{A}}^0)^{1/2}) \|_{H^s (\mathbb{R}^d) \to L_2 (\mathbb{R}^d)} \le
    \widehat{\mathfrak{C}}_5 (s) (1+|\tau|)^{2s/3} \varepsilon^{2s/3},
    \\
    &\|\widehat{\mathcal{A}}_\varepsilon^{-1/2} \sin( \tau \widehat{\mathcal{A}}_\varepsilon^{1/2}) - (\widehat{\mathcal{A}}^0)^{-1/2} \sin( \tau (\widehat{\mathcal{A}}^0)^{1/2}) \|_{H^r (\mathbb{R}^d) \to L_2 (\mathbb{R}^d)} \le \widehat{\mathfrak{C}}_6 (r) (1+|\tau|)  \varepsilon^{2r},
    \end{align*}
where $\widehat{\mathfrak{C}}_5 (s) = 2^{1-2s/3} \widehat{\mathcal{C}}_5^{2s/3}$ and
      $\widehat{\mathfrak{C}}_6 (r) =  2^{1-2r} \widehat{\mathcal{C}}_6^{2r}$.
The constants $\widehat{\mathcal{C}}_5$ and $\widehat{\mathcal{C}}_6$
depend only on $\alpha_0$, $\alpha_1$,  $\|g\|_{L_\infty}$, $\|g^{-1}\|_{L_\infty}$, $r_0$, $n$, and on the number $\widehat{c}^{\circ}$
given by~\emph{\eqref{hatc^circ}}.
\end{theorem}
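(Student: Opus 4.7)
The proof will follow the template of Theorem~\ref{cos_sin_thrm2_H^s_L2} almost verbatim, with Theorem~\ref{cos_thrm_2} replaced by Theorem~\ref{cos_thrm_3}. The plan is as follows.

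First I would combine Theorem~\ref{cos_thrm_3} with the scaling identities~\eqref{cos-cos_and_scale_transform} and~\eqref{sin-sin_and_scale_transform}. Since $T_\varepsilon$ is unitary, this yields the base estimates
\begin{align*}
\| \bigl( \cos( \tau \widehat{\mathcal{A}}_\varepsilon^{1/2}) - \cos( \tau (\widehat{\mathcal{A}}^0)^{1/2}) \bigr) (\mathcal{H}_0 + I)^{-3/4} \|_{L_2(\mathbb{R}^d) \to L_2(\mathbb{R}^d)} &\le \widehat{\mathcal{C}}_5 (1 + |\tau|) \varepsilon, \\
\| \bigl( \widehat{\mathcal{A}}_\varepsilon^{-1/2} \sin( \tau \widehat{\mathcal{A}}_\varepsilon^{1/2}) - (\widehat{\mathcal{A}}^0)^{-1/2} \sin( \tau (\widehat{\mathcal{A}}^0)^{1/2}) \bigr) (\mathcal{H}_0 + I)^{-1/4} \|_{L_2(\mathbb{R}^d) \to L_2(\mathbb{R}^d)} &\le \widehat{\mathcal{C}}_6 (1 + |\tau|) \varepsilon,
\end{align*}
valid for all $\tau \in \mathbb{R}$ and $\varepsilon > 0$. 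These correspond, respectively, to the endpoint cases $s = 3/2$ and $r = 1/2$ after identifying $(\mathcal{H}_0+I)^{-s/2}$ with the isometric isomorphism $H^s \to L_2$.

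Next I would couple these with the trivial uniform bounds \eqref{cos-cos_le_2} and \eqref{sin-sin_le_2}, which give the $s = 0$ and $r = 0$ endpoints (with constant $2$ and $2(1+|\tau|)$, respectively). Applying complex interpolation in the smoothing exponent between the endpoints $s=0$ and $s=3/2$ (for the cosine family) and between $r=0$ and $r=1/2$ (for the sine family), I obtain
\begin{align*}
\| \bigl( \cos( \tau \widehat{\mathcal{A}}_\varepsilon^{1/2}) - \cos( \tau (\widehat{\mathcal{A}}^0)^{1/2}) \bigr) (\mathcal{H}_0 + I)^{-s/2} \|_{L_2 \to L_2} &\le 2^{1-2s/3} \widehat{\mathcal{C}}_5^{2s/3} (1+|\tau|)^{2s/3} \varepsilon^{2s/3}, \\
\| \bigl( \widehat{\mathcal{A}}_\varepsilon^{-1/2} \sin( \tau \widehat{\mathcal{A}}_\varepsilon^{1/2}) - (\widehat{\mathcal{A}}^0)^{-1/2} \sin( \tau (\widehat{\mathcal{A}}^0)^{1/2}) \bigr) (\mathcal{H}_0 + I)^{-r/2} \|_{L_2 \to L_2} &\le 2^{1-2r} \widehat{\mathcal{C}}_6^{2r} (1+|\tau|) \varepsilon^{2r},
\end{align*}
for $0\le s \le 3/2$ and $0\le r\le 1/2$. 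Unwinding $(\mathcal{H}_0+I)^{s/2}$ as the isometric isomorphism from $H^s(\mathbb{R}^d;\mathbb{C}^n)$ onto $L_2(\mathbb{R}^d;\mathbb{C}^n)$ delivers the stated bounds with constants $\widehat{\mathfrak{C}}_5(s) = 2^{1-2s/3} \widehat{\mathcal{C}}_5^{2s/3}$ and $\widehat{\mathfrak{C}}_6(r) = 2^{1-2r} \widehat{\mathcal{C}}_6^{2r}$.

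There is no real obstacle here: the argument is mechanically parallel to the proof of Theorem~\ref{cos_sin_thrm2_H^s_L2}, and Theorem~\ref{cos_thrm_3} already carries all of the new spectral information (namely, the consequences of Condition~\ref{cond9}, including the $\boldsymbol{\theta}$-independent choice of $\widehat{c}^{\circ}$ and $\widehat{t}^{\,00}$ discussed in Subsection~\ref{ench_approx2_section}). The only care point is to verify that the dependence of constants tracked in Remark~\ref{rem_coeff} survives the scaling and interpolation, so that $\widehat{\mathfrak{C}}_5$ and $\widehat{\mathfrak{C}}_6$ depend only on $\alpha_0$, $\alpha_1$, $\|g\|_{L_\infty}$, $\|g^{-1}\|_{L_\infty}$, $r_0$, $n$, and $\widehat{c}^{\circ}$; this is automatic because the scaling is unitary and the interpolation constants are absolute.
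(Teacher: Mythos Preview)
Your proposal is correct and matches the paper's approach exactly: the paper states that Theorem~\ref{cos_thrm_3} together with~\eqref{cos-cos_and_scale_transform}, \eqref{sin-sin_and_scale_transform}, \eqref{cos-cos_le_2}, and \eqref{sin-sin_le_2} yields the result, which is precisely the scaling-plus-interpolation argument you outline (parallel to the proof of Theorem~\ref{cos_sin_thrm2_H^s_L2}).
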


Recall that some sufficient conditions ensuring that Condition~\ref{cond99} is valid, are given in
Remark~\ref{rem9.5}.

\begin{remark}
\label{remS11}
Theorems \emph{\ref{cos_sin_thrm1_H^s_L2}}, \emph{\ref{cos_sin_thrm2_H^s_L2}}, and \emph{\ref{cos_sin_thrm3_H^s_L2}}
allow us to obtain qualified estimates for small $\varepsilon$ and large $|\tau|$, which is of independent interest.
For instance, under the assumptions of Theorems \emph{\ref{cos_sin_thrm1_H^s_L2}},
if $0< \varepsilon \le 1$ and $|\tau|=\varepsilon^{-\alpha}$ with $0 < \alpha <1$,
then the left-hand side of  \eqref{cos_thrm1_H^s_L2_est} is of order $O(\varepsilon^{s(1-\alpha)/2})$\emph{;}
if $0< \varepsilon \le 1$ and $|\tau|=\varepsilon^{-\alpha}$ with $0 < \alpha <r$,
then the left-hand side of  \eqref{sin_thrm1_H^s_L2_est} is of order $O(\varepsilon^{r -\alpha})$.
\end{remark}

Applying Theorem~\ref{s<2_cos_thrm},  we confirm the sharpness of the result of Theorem~\ref{cos_sin_thrm1_H^s_L2}
in the general case.

\begin{theorem}
    \label{s<2_cos_thrm_Rd}
 Suppose that the assumptions of Theorem~\emph{\ref{cos_sin_thrm1_H^s_L2}} are satisfied. Let $\widehat{N}_0 (\boldsymbol{\theta})$
 be the operator defined by~\emph{\eqref{N0_invar_repr}}. Suppose that $\widehat{N}_0 (\boldsymbol{\theta}_0) \ne 0$ for some $\boldsymbol{\theta}_0 \in \mathbb{S}^{d-1}$.

 \noindent $1^\circ$. Let $0 \ne \tau \in \mathbb{R}$ and $0 \le s < 2$.
Then there does not exist a constant $\mathcal{C}(\tau) > 0$ such that the estimate
    \begin{equation}
    \label{s<2_cos_est1}
    \| \bigl( \cos (\tau \widehat{\mathcal{A}}_\varepsilon^{1/2})  - \cos (\tau (\widehat{\mathcal{A}}^0)^{1/2}) \bigr)(\mathcal{H}_0  + I)^{-s/2}\|_{L_2(\mathbb{R}^d) \to L_2 (\mathbb{R}^d) }  \le \mathcal{C}(\tau) \varepsilon
    \end{equation}
holds for all sufficiently small $\varepsilon > 0$.

 \noindent $2^\circ$. Let $0 \ne \tau \in \mathbb{R}$ and $0 \le s < 1$.
Then there does not exist a constant $\mathcal{C}(\tau) > 0$ such that the estimate
    \begin{equation*}
    \| \bigl( \widehat{\mathcal{A}}_\varepsilon^{-1/2} \sin (\tau \widehat{\mathcal{A}}_\varepsilon^{1/2})
     - (\widehat{\mathcal{A}}^0)^{-1/2} \sin (\tau (\widehat{\mathcal{A}}^0)^{1/2}) \bigr)(\mathcal{H}_0  + I)^{-s/2}\|_{L_2(\mathbb{R}^d) \to L_2 (\mathbb{R}^d) }  \le \mathcal{C}(\tau) \varepsilon
    \end{equation*}
holds for all sufficiently small $\varepsilon > 0$.
\end{theorem}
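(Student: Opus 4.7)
The plan is to prove the theorem by contradiction, reducing it to the already-established sharpness result on the fibers, namely Theorem~\ref{s<2_cos_thrm}, through the scaling transformation and the direct integral decomposition of $\widehat{\mathcal{A}}$. This exactly mirrors the argument used to deduce Theorem~\ref{s<2_cos_thrm} from Theorem~\ref{hat_s<2_thrm}.

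For assertion $1^\circ$, I would suppose that \eqref{s<2_cos_est1} holds for some $\tau\ne 0$, some $0\le s<2$, and a constant $\mathcal{C}(\tau)$, for all sufficiently small $\varepsilon>0$. Applying the scaling identity \eqref{cos-cos_and_scale_transform} together with the unitarity of $T_\varepsilon$, the left-hand side of \eqref{s<2_cos_est1} is equal to
\begin{equation*}
\| \widehat{J}_1(\varepsilon^{-1}\tau)\,\mathcal{R}(\varepsilon)^{s/2}\|_{L_2(\mathbb{R}^d)\to L_2(\mathbb{R}^d)}.
\end{equation*}
By the direct integral identity \eqref{norms_and_Gelfand_transf}, this norm equals the essential supremum over $\mathbf{k}\in\widetilde{\Omega}$ of $\|\widehat{J}_1(\mathbf{k},\varepsilon^{-1}\tau)\,\mathcal{R}(\mathbf{k},\varepsilon)^{s/2}\|_{L_2(\Omega)\to L_2(\Omega)}$. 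Hence, for every sufficiently small $\varepsilon>0$, the bound $\|\widehat{J}_1(\mathbf{k},\varepsilon^{-1}\tau)\,\mathcal{R}(\mathbf{k},\varepsilon)^{s/2}\|\le \mathcal{C}(\tau)\varepsilon$ holds for a.e. $\mathbf{k}\in\widetilde{\Omega}$. This is precisely estimate \eqref{hat_s<2_est_imp} from Theorem~\ref{hat_s<2_thrm}, and since $\widehat{N}_0(\boldsymbol{\theta}_0)\ne 0$, we obtain a contradiction with Theorem~\ref{hat_s<2_thrm}($1^\circ$).

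Assertion $2^\circ$ is proved identically, using \eqref{sin-sin_and_scale_transform} in place of \eqref{cos-cos_and_scale_transform}: the presence of the extra factor $\varepsilon$ there turns the desired upper bound $\mathcal{C}(\tau)$ on the sine difference into a bound of the form $\mathcal{C}(\tau)$ (without an extra $\varepsilon$) on $\|\widehat{J}_2(\varepsilon^{-1}\tau)\mathcal{R}(\varepsilon)^{s/2}\|$, matching the form of the sine estimate in Theorem~\ref{hat_s<2_thrm}($2^\circ$), whose conclusion is then violated.

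There is no real obstacle here: every tool needed is already in place (the scaling relations, the isometric identification with the fiber essential supremum, and the fiber-level Theorem~\ref{hat_s<2_thrm}). The only point to be careful about is the measure-theoretic quantifier order in \eqref{norms_and_Gelfand_transf}: for each fixed small $\varepsilon$, the fiber bound holds outside a $\mathbf{k}$-null set, and one must argue that this is enough to contradict Theorem~\ref{hat_s<2_thrm}, whose statement precisely says that no such \emph{a.e.}-valid bound can exist. Since the statement of Theorem~\ref{hat_s<2_thrm} is formulated with exactly this \textquotedblleft almost all $\mathbf{k}=t\boldsymbol{\theta}\in\widetilde{\Omega}$ and sufficiently small $\varepsilon$\textquotedblright\ quantifier, no further measure-theoretic refinement is required, and the proof is complete in a few lines.
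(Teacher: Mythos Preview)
Your proof is correct and follows essentially the same approach as the paper: contradiction via the scaling identity \eqref{cos-cos_and_scale_transform} (resp.\ \eqref{sin-sin_and_scale_transform}) and the unitarity of $T_\varepsilon$. The only cosmetic difference is that the paper stops at the intermediate Theorem~\ref{s<2_cos_thrm} rather than unrolling one more step (via \eqref{norms_and_Gelfand_transf}) to Theorem~\ref{hat_s<2_thrm} as you do, but since Theorem~\ref{s<2_cos_thrm} is itself proved exactly by that direct-integral reduction, the two arguments are the same.
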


\begin{proof} Let us check assertion $1^\circ$. We prove by contradiction. Suppose that for some $\tau \ne 0$ and $0 \le s < 2$ there exists a constant
   $\mathcal{C}(\tau) > 0$ such that estimate~\eqref{s<2_cos_est1}  holds for all sufficiently small $\varepsilon > 0$.
   Then, by~\eqref{cos-cos_and_scale_transform}, for sufficiently small $\varepsilon$ estimate~\eqref{s<2_cos_est} is also valid.
   But this contradicts the statement of Theorem~\ref{s<2_cos_thrm}($1^\circ$).
   Assertion $2^\circ$ is proved similarly with the help of Theorem~\ref{s<2_cos_thrm}($2^\circ$).
    \end{proof}

\subsection{Approximation of the sandwiched operators $\cos(\tau \mathcal{A}_\varepsilon^{1/2})$
and $\mathcal{A}_\varepsilon^{-1/2} \sin(\tau \mathcal{A}_\varepsilon^{1/2})$}
Now we proceed to the operator $\mathcal{A}_\varepsilon$ (see~\eqref{A_eps}).
Let $\mathcal{A}^0$~be defined by~\eqref{A0}. Using relations of the form~\eqref{cos_and_scale_transform}
for the operators $\mathcal{A}_\varepsilon$ and $\mathcal{A}^0$, and taking~\eqref{H0_resolv_and_scale_transform} and \eqref{mult_op_and_scale_transform} into account, we obtain
\begin{align}
\label{sndw_cos-cos_and_scale_transform}
&\bigl(f^\varepsilon \cos( \tau \mathcal{A}_\varepsilon^{1/2}) (f^\varepsilon)^{-1} - f_0 \cos( \tau (\mathcal{A}^0)^{1/2}) f_0^{-1} \bigr) (\mathcal{H}_0 + I)^{-s/2} =
 T_{\varepsilon}^* J_1 (\varepsilon^{-1} \tau)  \mathcal{R} (\varepsilon)^{s/2} T_{\varepsilon},
\\
\label{sndw_sin-sin_and_scale_transform}
&\bigl(f^\varepsilon \mathcal{A}_\varepsilon^{-1/2} \sin( \tau \mathcal{A}_\varepsilon^{1/2}) (f^\varepsilon)^* -
f_0 (\mathcal{A}^0)^{-1/2} \sin( \tau (\mathcal{A}^0)^{1/2}) f_0 \bigr) (\mathcal{H}_0 + I)^{-s/2} =
\varepsilon T_{\varepsilon}^* J_2 (\varepsilon^{-1} \tau)  \mathcal{R} (\varepsilon)^{s/2} T_{\varepsilon}.
\end{align}

In the general case the following result holds.

\begin{theorem}
    \label{sndw_cos_sin_thrm1_H^s_L2}
    Let $\mathcal{A}_{\varepsilon}$~and~$\mathcal{A}^0$ be the operators defined by~\emph{\eqref{A_eps}} and~\emph{\eqref{A0}}, respectively.
 Then for $0 \le s \le 2$,   $0\le r \le 1$, $\tau \in \mathbb{R}$, and $\varepsilon > 0$ we have
    \begin{align*}
    &\| f^\varepsilon \cos( \tau \mathcal{A}_\varepsilon^{1/2})  (f^\varepsilon)^{-1}  - f_0 \cos( \tau (\mathcal{A}^0)^{1/2}) f_0^{-1}  \|_{H^s (\mathbb{R}^d) \to L_2 (\mathbb{R}^d)} \le \mathfrak{C}_1 (s) (1+ |\tau|)^{s/2} \varepsilon^{s/2},
    \\
    &\| f^\varepsilon \mathcal{A}_\varepsilon^{-1/2} \sin( \tau \mathcal{A}_\varepsilon^{1/2}) (f^\varepsilon)^*
    -  f_0 (\mathcal{A}^0)^{-1/2} \sin( \tau (\mathcal{A}^0)^{1/2}) f_0 \|_{H^r (\mathbb{R}^d) \to L_2 (\mathbb{R}^d)}
    \le \mathfrak{C}_2 (r) (1+ |\tau|)  \varepsilon^{r},
    \end{align*}
    where $\mathfrak{C}_1 (s) = (2 \| f \|_{L_\infty} \| f^{-1} \|_{L_\infty})^{1-s/2} \mathcal{C}_1^{s/2}$ and
    $\mathfrak{C}_2 (r) =  (2 \| f \|^2_{L_\infty})^{1- r} \mathcal{C}_2^{r}$.
   The constants $\mathcal{C}_1$ and $\mathcal{C}_2$
 depend only on $\alpha_0$, $\alpha_1$, $\|g\|_{L_\infty}$, $\|g^{-1}\|_{L_\infty}$, $\| f \|_{L_\infty}$, $\| f^{-1} \|_{L_\infty}$, and $r_0$.
\end{theorem}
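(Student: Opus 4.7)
The plan is to mimic the proof of Theorem~\ref{cos_sin_thrm1_H^s_L2}: obtain the estimates at the extreme values $s=2$ (respectively $r=1$) via Theorem~\ref{sndw_cos_thrm_1} combined with the scaling identities \eqref{sndw_cos-cos_and_scale_transform}, \eqref{sndw_sin-sin_and_scale_transform}, establish trivial bounds at $s=0$ ($r=0$), and then interpolate.

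First I would apply Theorem~\ref{sndw_cos_thrm_1} with $\tau$ replaced by $\varepsilon^{-1}\tau$. Since $T_\varepsilon$ is unitary in $L_2(\mathbb{R}^d;\mathbb{C}^n)$, the identities \eqref{sndw_cos-cos_and_scale_transform} and \eqref{sndw_sin-sin_and_scale_transform} (with $s=2$ and $s=1$ respectively) yield
\begin{align*}
\bigl\| \bigl( f^\varepsilon \cos(\tau \mathcal{A}_\varepsilon^{1/2})(f^\varepsilon)^{-1} - f_0 \cos(\tau (\mathcal{A}^0)^{1/2}) f_0^{-1} \bigr)(\mathcal{H}_0+I)^{-1}\bigr\|_{L_2\to L_2} &\le \mathcal{C}_1 (1+|\tau|)\varepsilon,\\
\bigl\| \bigl( f^\varepsilon \mathcal{A}_\varepsilon^{-1/2}\sin(\tau \mathcal{A}_\varepsilon^{1/2})(f^\varepsilon)^* - f_0 (\mathcal{A}^0)^{-1/2}\sin(\tau (\mathcal{A}^0)^{1/2}) f_0 \bigr)(\mathcal{H}_0+I)^{-1/2}\bigr\|_{L_2\to L_2} &\le \mathcal{C}_2 (1+|\tau|)\varepsilon.
\end{align*}
These are the sharp endpoints in $s=2$ and $r=1$.

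Next I would record the trivial endpoints at $s=0$, $r=0$. Using $\|\cos(\tau B^{1/2})\|_{L_2\to L_2}\le 1$ for any nonnegative selfadjoint $B$, the bounds $|f_0|\le\|f\|_{L_\infty}$, $|f_0^{-1}|\le\|f^{-1}\|_{L_\infty}$ (see \eqref{f_0}), and the triangle inequality, we get
\begin{equation*}
\bigl\| f^\varepsilon \cos(\tau\mathcal{A}_\varepsilon^{1/2})(f^\varepsilon)^{-1} - f_0\cos(\tau(\mathcal{A}^0)^{1/2})f_0^{-1}\bigr\|_{L_2\to L_2} \le 2\|f\|_{L_\infty}\|f^{-1}\|_{L_\infty}.
\end{equation*}
Similarly, using $\|B^{-1/2}\sin(\tau B^{1/2})\|_{L_2\to L_2}\le|\tau|$ and $|f_0|\le\|f\|_{L_\infty}$,
\begin{equation*}
\bigl\| f^\varepsilon \mathcal{A}_\varepsilon^{-1/2}\sin(\tau\mathcal{A}_\varepsilon^{1/2})(f^\varepsilon)^* - f_0(\mathcal{A}^0)^{-1/2}\sin(\tau(\mathcal{A}^0)^{1/2})f_0\bigr\|_{L_2\to L_2} \le 2\|f\|_{L_\infty}^{2}\,|\tau|\le 2\|f\|_{L_\infty}^2(1+|\tau|).
\end{equation*}

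Finally I would interpolate. Writing both estimates as bounds on the composition with $(\mathcal{H}_0+I)^{-s/2}$ (or $(\mathcal{H}_0+I)^{-r/2}$), interpolation between the endpoint pair $\{s=0,\ s=2\}$ with weights $1-s/2$ and $s/2$ produces the constant $\mathfrak{C}_1(s) = (2\|f\|_{L_\infty}\|f^{-1}\|_{L_\infty})^{1-s/2}\mathcal{C}_1^{s/2}$ and the factor $(1+|\tau|)^{s/2}\varepsilon^{s/2}$; similarly, interpolation between $\{r=0,\ r=1\}$ gives $\mathfrak{C}_2(r) = (2\|f\|_{L_\infty}^2)^{1-r}\mathcal{C}_2^r$ with factor $(1+|\tau|)\varepsilon^r$. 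Passing from the smoothing factor $(\mathcal{H}_0+I)^{-s/2}$ to the $H^s\to L_2$ norm is automatic because $(\mathcal{H}_0+I)^{s/2}$ is an isometric isomorphism of $H^s(\mathbb{R}^d;\mathbb{C}^n)$ onto $L_2(\mathbb{R}^d;\mathbb{C}^n)$. There is no real obstacle here — everything is an immediate consequence of Theorem~\ref{sndw_cos_thrm_1} and real interpolation — the only thing to watch is that the trivial $L_2\to L_2$ bound at the low endpoint is expressed with the correct $\|f\|_{L_\infty}$-dependent constants so that the interpolated constants match exactly the stated $\mathfrak{C}_1(s)$ and $\mathfrak{C}_2(r)$.
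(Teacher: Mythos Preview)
Your proposal is correct and matches the paper's approach exactly: the paper states that Theorem~\ref{sndw_cos_sin_thrm1_H^s_L2} is deduced from Theorem~\ref{sndw_cos_thrm_1} together with the scaling identities \eqref{sndw_cos-cos_and_scale_transform}, \eqref{sndw_sin-sin_and_scale_transform}, and the trivial $L_2\to L_2$ bounds you wrote down are precisely \eqref{cl2} and \eqref{sl2}. One small wording slip: Theorem~\ref{sndw_cos_thrm_1} is already stated for $J_l(\varepsilon^{-1}\tau)$, so no replacement of $\tau$ by $\varepsilon^{-1}\tau$ is needed.
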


Theorem \ref{sndw_cos_sin_thrm1_H^s_L2} can be deduced from Theorem~\ref{sndw_cos_thrm_1}
and relations \eqref{sndw_cos-cos_and_scale_transform}, \eqref{sndw_sin-sin_and_scale_transform}.
This result was known before (see~\cite[Theorem~13.4]{BSu5} and \cite[Theorem 12.2]{M}).

Now we improve the results under the additional assumptions.
 Theorem~\ref{sndw_cos_thrm_2}, relations \eqref{sndw_cos-cos_and_scale_transform}, \eqref{sndw_sin-sin_and_scale_transform},
and obvious estimates
    \begin{align}
\label{cl2}
    &\| f^\varepsilon \cos( \tau \mathcal{A}_\varepsilon^{1/2})  (f^\varepsilon)^{-1}  - f_0 \cos( \tau (\mathcal{A}^0)^{1/2}) f_0^{-1}  \|_{L_2 (\mathbb{R}^d) \to L_2 (\mathbb{R}^d)} \le 2 \|f\|_{L_\infty} \|f^{-1}\|_{L_\infty},
    \\
\label{sl2}
    &\| f^\varepsilon \mathcal{A}_\varepsilon^{-1/2} \sin( \tau \mathcal{A}_\varepsilon^{1/2}) (f^\varepsilon)^*
    - f_0 (\mathcal{A}^0)^{-1/2} \sin( \tau (\mathcal{A}^0)^{1/2}) f_0 \|_{L_2 (\mathbb{R}^d) \to L_2 (\mathbb{R}^d)}
       \le 2 \|f\|_{L_\infty}^2 |\tau|
    \end{align}
imply the following result.

\begin{theorem}
    \label{sndw_cos_sin_thrm2_H^s_L2}
    Suppose that the assumptions of Theorem~\emph{\ref{sndw_cos_sin_thrm1_H^s_L2}} are satisfied.
Let  $\widehat{N}_Q (\boldsymbol{\theta})$  be the operator defined by~\emph{\eqref{N_Q(theta)}}. Suppose that
    $\widehat{N}_Q (\boldsymbol{\theta}) = 0$ for any $\boldsymbol{\theta} \in \mathbb{S}^{d-1}$.
Then for $0 \le s \le 3/2$, $0\le r \le 1/2$, $\tau \in \mathbb{R}$, and $\varepsilon > 0$ we have
    \begin{align*}
    &\| f^\varepsilon \cos( \tau \mathcal{A}_\varepsilon^{1/2})  (f^\varepsilon)^{-1}  - f_0 \cos( \tau (\mathcal{A}^0)^{1/2}) f_0^{-1}  \|_{H^s (\mathbb{R}^d) \to L_2 (\mathbb{R}^d)} \le \mathfrak{C}_3 (s) (1+ |\tau|)^{2s/3} \varepsilon^{2s/3},
    \\
    &\| f^\varepsilon \mathcal{A}_\varepsilon^{-1/2} \sin( \tau \mathcal{A}_\varepsilon^{1/2}) (f^\varepsilon)^* - f_0 (\mathcal{A}^0)^{-1/2} \sin( \tau (\mathcal{A}^0)^{1/2}) f_0 \|_{H^r (\mathbb{R}^d) \to L_2 (\mathbb{R}^d)}
       \le \mathfrak{C}_4 (r) (1+| \tau|)  \varepsilon^{2r},
    \end{align*}
    where
    $\mathfrak{C}_3 (s) = (2 \| f \|_{L_\infty} \| f^{-1} \|_{L_\infty})^{1-2s/3} \mathcal{C}_3^{2s/3}$ and
    $\mathfrak{C}_4 (r) = (2 \| f \|_{L_\infty}^2 )^{1-2r} \mathcal{C}_4^{2r}$.
    The constants $\mathcal{C}_3$ and $\mathcal{C}_4$ depend only on $\alpha_0$, $\alpha_1$, $\|g\|_{L_\infty}$, $\|g^{-1}\|_{L_\infty}$, $\|f\|_{L_\infty}$, $\|f^{-1}\|_{L_\infty}$, and $r_0$.
\end{theorem}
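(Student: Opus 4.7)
The plan is to mirror the proof of Theorem~\ref{cos_sin_thrm2_H^s_L2}: secure a sharp $L_2\to L_2$ endpoint with a suitable smoothing factor, pair it with the trivial unsmoothed $L_2\to L_2$ bound, and interpolate. The hypothesis $\widehat{N}_Q(\boldsymbol{\theta})\equiv 0$ is exactly what brings Theorem~\ref{sndw_cos_thrm_2} into play, and this is the only substantive input needed from the sandwiched fibrewise theory.

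The sharp endpoints come from the scaling identities \eqref{sndw_cos-cos_and_scale_transform} with $s=3/2$ and \eqref{sndw_sin-sin_and_scale_transform} with $s=1/2$. Since $T_\varepsilon$ is unitary on $L_2(\mathbb{R}^d;\mathbb{C}^n)$, Theorem~\ref{sndw_cos_thrm_2} transfers to
\begin{align*}
&\bigl\| \bigl(f^\varepsilon \cos(\tau \mathcal{A}_\varepsilon^{1/2})(f^\varepsilon)^{-1} - f_0 \cos(\tau (\mathcal{A}^0)^{1/2}) f_0^{-1}\bigr) (\mathcal{H}_0 + I)^{-3/4}\bigr\|_{L_2 \to L_2} \le \mathcal{C}_3 (1+|\tau|) \varepsilon, \\
&\bigl\| \bigl(f^\varepsilon \mathcal{A}_\varepsilon^{-1/2} \sin(\tau \mathcal{A}_\varepsilon^{1/2})(f^\varepsilon)^* - f_0 (\mathcal{A}^0)^{-1/2} \sin(\tau (\mathcal{A}^0)^{1/2}) f_0\bigr) (\mathcal{H}_0 + I)^{-1/4}\bigr\|_{L_2 \to L_2} \le \mathcal{C}_4 (1+|\tau|) \varepsilon,
\end{align*}
where the extra $\varepsilon$ in the sine bound originates from the prefactor $\varepsilon$ already present in \eqref{sndw_sin-sin_and_scale_transform}. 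The opposite endpoints are the trivial functional-calculus bounds \eqref{cl2} and \eqref{sl2}, giving $2\|f\|_{L_\infty}\|f^{-1}\|_{L_\infty}$ for the cosine difference and $2\|f\|_{L_\infty}^2(1+|\tau|)$ for the sine difference (using $|\tau|\le 1+|\tau|$).

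Complex interpolation applied to the analytic family $(\mathcal{H}_0+I)^{-z}$ between these two endpoints gives, for $s\in[0,3/2]$ and $r\in[0,1/2]$, interpolation exponents $\theta = 2s/3$ and $\theta = 2r$ respectively, so that the corresponding smoothed $L_2\to L_2$ norms are bounded by $(2\|f\|_{L_\infty}\|f^{-1}\|_{L_\infty})^{1-2s/3}\mathcal{C}_3^{2s/3}(1+|\tau|)^{2s/3}\varepsilon^{2s/3}$ and $(2\|f\|_{L_\infty}^2)^{1-2r}\mathcal{C}_4^{2r}(1+|\tau|)\varepsilon^{2r}$, which are precisely $\mathfrak{C}_3(s)(1+|\tau|)^{2s/3}\varepsilon^{2s/3}$ and $\mathfrak{C}_4(r)(1+|\tau|)\varepsilon^{2r}$. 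Note that the $(1+|\tau|)$-factor behaves differently in the two cases: for the sine it is present at both endpoints and therefore survives to the first power, while for the cosine only the refined endpoint carries it, producing the fractional exponent $2s/3$. Since $(\mathcal{H}_0+I)^{s/2}$ is an isometric isomorphism $H^s(\mathbb{R}^d;\mathbb{C}^n)\to L_2(\mathbb{R}^d;\mathbb{C}^n)$, these smoothed $L_2\to L_2$ bounds are equivalent to the claimed $H^s\to L_2$ and $H^r\to L_2$ estimates. No essential obstacle arises; the argument is routine bookkeeping once Theorem~\ref{sndw_cos_thrm_2} is available and parallels word-for-word the deduction of Theorem~\ref{cos_sin_thrm2_H^s_L2} from Theorem~\ref{cos_thrm_2}.
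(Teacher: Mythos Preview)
Your proof is correct and follows exactly the approach sketched in the paper: apply Theorem~\ref{sndw_cos_thrm_2} via the scaling identities \eqref{sndw_cos-cos_and_scale_transform}, \eqref{sndw_sin-sin_and_scale_transform} to get the sharp smoothed endpoints, combine with the trivial bounds \eqref{cl2}, \eqref{sl2}, and interpolate. The tracking of constants and $(1+|\tau|)$-powers is accurate.
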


Recall that some sufficient conditions for $\widehat{N}_Q (\boldsymbol{\theta}) \equiv 0$ are given by
Proposition~\ref{N_Q=0_proposit}.

Finally, Theorem~\ref{sndw_cos_thrm_3} and relations \eqref{sndw_cos-cos_and_scale_transform}--\eqref{sl2}  imply the following result.

\begin{theorem}
    \label{sndw_cos_sin_thrm3_H^s_L2}
    Suppose that the assumptions of Theorem~\emph{\ref{sndw_cos_sin_thrm1_H^s_L2}} are satisfied.
Suppose that Condition~\emph{\ref{sndw_cond1}} \emph{(}or more restrictive Condition~\emph{\ref{sndw_cond2})} is satisfied.
Then for $0 \le s \le 3/2$, $0\le r \le 1/2$, $\tau \in \mathbb{R}$, and $\varepsilon > 0$ we have
   \begin{align*}
   &\| f^\varepsilon \cos( \tau \mathcal{A}_\varepsilon^{1/2})  (f^\varepsilon)^{-1}  - f_0 \cos( \tau (\mathcal{A}^0)^{1/2}) f_0^{-1}  \|_{H^s (\mathbb{R}^d) \to L_2 (\mathbb{R}^d)} \le \mathfrak{C}_5 (s) (1+|\tau|)^{2s/3} \varepsilon^{2s/3},
   \\
   &\| f^\varepsilon \mathcal{A}_\varepsilon^{-1/2} \sin( \tau \mathcal{A}_\varepsilon^{1/2}) (f^\varepsilon)^*
   - f_0 (\mathcal{A}^0)^{-1/2} \sin( \tau (\mathcal{A}^0)^{1/2}) f_0 \|_{H^r (\mathbb{R}^d) \to L_2 (\mathbb{R}^d)}
   \le \mathfrak{C}_6 (r) (1+ |\tau|)  \varepsilon^{2r},
   \end{align*}
   where
   $\mathfrak{C}_5 (s) = (2 \| f \|_{L_\infty} \| f^{-1} \|_{L_\infty})^{1-2s/3} \mathcal{C}_5^{2s/3}$ and
   $\mathfrak{C}_6 (r) =  (2 \| f \|_{L_\infty}^2)^{1-2r} \mathcal{C}_5^{2r}$.
    The constants $\mathcal{C}_5$ and $\mathcal{C}_6$
    depend only on $\alpha_0$, $\alpha_1$,  $\|g\|_{L_\infty}$, $\|g^{-1}\|_{L_\infty}$,  $\|f\|_{L_\infty}$, $\|f^{-1}\|_{L_\infty}$, $r_0$, $n$,
    and on the number $c^{\circ}$ defined by~\emph{\eqref{c^circ}}.
\end{theorem}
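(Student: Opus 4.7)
The plan is to mimic the derivation of Theorem~\ref{sndw_cos_sin_thrm2_H^s_L2} from Theorem~\ref{sndw_cos_thrm_2}, substituting Theorem~\ref{sndw_cos_thrm_3} as the input. All the nontrivial work has already been done at the level of operator families $\mathcal{A}(\mathbf{k})$: the uniformity in $\boldsymbol{\theta}$ afforded by Condition~\ref{sndw_cond1} (or Condition~\ref{sndw_cond2}) enters only through $c^{\circ}$, and that has already been absorbed into the constants $\mathcal{C}_5, \mathcal{C}_{6}$.

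First, starting from the scaling identities \eqref{sndw_cos-cos_and_scale_transform} and \eqref{sndw_sin-sin_and_scale_transform} with $s=3/2$ for the cosine and $s=1/2$ for the sine, and using the unitarity of $T_\varepsilon$, I would transfer the bounds of Theorem~\ref{sndw_cos_thrm_3} to the scaled operators. This yields the two endpoint $(L_2 \to L_2)$-estimates
\begin{equation*}
\bigl\| \bigl(f^\varepsilon \cos(\tau \mathcal{A}_\varepsilon^{1/2}) (f^\varepsilon)^{-1} - f_0 \cos(\tau (\mathcal{A}^0)^{1/2}) f_0^{-1}\bigr) (\mathcal{H}_0 + I)^{-3/4} \bigr\|_{L_2 \to L_2} \le \mathcal{C}_5 (1+|\tau|)\varepsilon,
\end{equation*}
\begin{equation*}
\bigl\| \bigl(f^\varepsilon \mathcal{A}_\varepsilon^{-1/2} \sin(\tau \mathcal{A}_\varepsilon^{1/2}) (f^\varepsilon)^{*} - f_0 (\mathcal{A}^0)^{-1/2} \sin(\tau (\mathcal{A}^0)^{1/2}) f_0\bigr) (\mathcal{H}_0 + I)^{-1/4} \bigr\|_{L_2 \to L_2} \le \mathcal{C}_{6} (1+|\tau|)\varepsilon.
\end{equation*}
Second, I would use the trivial uniform $(L_2 \to L_2)$ bounds \eqref{cl2} and \eqref{sl2}, which give $2\|f\|_{L_\infty}\|f^{-1}\|_{L_\infty}$ and $2\|f\|_{L_\infty}^2 |\tau| \le 2\|f\|_{L_\infty}^2(1+|\tau|)$ respectively.

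Third, I would interpolate between these endpoint bounds. For the cosine, interpolation of the $L_2$-trivial bound with the $(\mathcal{H}_0+I)^{-3/4}$-smoothed bound produces, for $0\le s \le 3/2$, the estimate
\begin{equation*}
\bigl\| \bigl(f^\varepsilon \cos(\tau \mathcal{A}_\varepsilon^{1/2})(f^\varepsilon)^{-1} - f_0 \cos(\tau (\mathcal{A}^0)^{1/2}) f_0^{-1}\bigr) (\mathcal{H}_0 + I)^{-s/2} \bigr\|_{L_2 \to L_2} \le \mathfrak{C}_5(s)(1+|\tau|)^{2s/3}\varepsilon^{2s/3},
\end{equation*}
with $\mathfrak{C}_5(s) = (2\|f\|_{L_\infty}\|f^{-1}\|_{L_\infty})^{1-2s/3}\mathcal{C}_5^{2s/3}$; analogously for the sine with $0 \le r \le 1/2$ and $\mathfrak{C}_6(r) = (2\|f\|_{L_\infty}^2)^{1-2r}\mathcal{C}_6^{2r}$. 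Since $(\mathcal{H}_0 + I)^{s/2}$ is an isometric isomorphism of $H^s(\mathbb{R}^d;\mathbb{C}^n)$ onto $L_2(\mathbb{R}^d;\mathbb{C}^n)$, these $(L_2 \to L_2)$-estimates translate directly into the claimed $(H^s \to L_2)$- and $(H^r \to L_2)$-estimates.

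There is no genuine obstacle here; the only thing to get right is the bookkeeping of the interpolation exponents and the resulting constants. The substantive difficulty was dealt with in Section~\ref{abstr_cluster_section} (the cluster analysis) and in the $\boldsymbol{\theta}$-uniform choice of $c^\circ$ and $t^{00}$, both already packaged inside Theorem~\ref{sndw_cos_thrm_3}.
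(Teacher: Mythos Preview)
Your proposal is correct and follows exactly the approach the paper takes: the paper simply states that Theorem~\ref{sndw_cos_thrm_3} together with relations \eqref{sndw_cos-cos_and_scale_transform}--\eqref{sl2} implies the result, i.e., the scaling identities plus the trivial $(L_2\to L_2)$ bounds, followed by interpolation. Your write-up even makes the interpolation constants explicit (and in fact your $\mathfrak{C}_6(r) = (2\|f\|_{L_\infty}^2)^{1-2r}\mathcal{C}_6^{2r}$ corrects what appears to be a typo in the paper, where $\mathcal{C}_5$ is written in place of $\mathcal{C}_6$).
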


Recall that some sufficient conditions ensuring that Condition~\ref{sndw_cond2} holds are given by
Remark~\ref{sndw_simple_spec_remark}.

\begin{remark}
Theorems \emph{\ref{sndw_cos_sin_thrm1_H^s_L2}}, \emph{\ref{sndw_cos_sin_thrm2_H^s_L2}}, and \emph{\ref{sndw_cos_sin_thrm3_H^s_L2}}
allow us to obtain qualified estimates for small $\varepsilon$ and large $|\tau|$. Cf. Remark \emph{\ref{remS11}}.
\end{remark}

Applying Theorem~\ref{sndw_s<2_cos_thrm},  we confirm the sharpness of the result of Theorem~\ref{sndw_cos_sin_thrm1_H^s_L2} in the general case.

\begin{theorem}
    Suppose that the assumptions of Theorem~\emph{\ref{sndw_cos_sin_thrm1_H^s_L2}} are satisfied. Let $\widehat{N}_{0,Q} (\boldsymbol{\theta})$
     be the operator defined by~\emph{\eqref{N0Q_invar_repr}}. Suppose that \hbox{$\widehat{N}_{0,Q} (\boldsymbol{\theta}_0) \ne 0$} for some  $\boldsymbol{\theta}_0 \in \mathbb{S}^{d-1}$.

\noindent $1^\circ$. Let $0 \ne \tau \in \mathbb{R}$ and $0 \le s < 2$.
     Then there does not exist a constant $\mathcal{C}(\tau) > 0$ such that the estimate
    \begin{equation*}
    \|  \bigl( f^\varepsilon \cos( \tau \mathcal{A}_\varepsilon^{1/2})  (f^\varepsilon)^{-1}  - f_0 \cos( \tau (\mathcal{A}^0)^{1/2}) f_0^{-1} \bigr) (\mathcal{H}_0  + I)^{-s/2}\|_{L_2(\mathbb{R}^d) \to L_2 (\mathbb{R}^d) }
    \le \mathcal{C}(\tau) \varepsilon
    \end{equation*}
    holds for all sufficiently small $\varepsilon > 0$.

\noindent $2^\circ$. Let $0 \ne \tau \in \mathbb{R}$ and $0 \le s < 1$.
     Then there does not exist a constant $\mathcal{C}(\tau) > 0$ such that the estimate
    \begin{equation*}
    \|  \bigl( f^\varepsilon \mathcal{A}_\varepsilon^{-1/2} \sin( \tau \mathcal{A}_\varepsilon^{1/2})  (f^\varepsilon)^*  -
f_0 (\mathcal{A}^0)^{-1/2} \sin( \tau (\mathcal{A}^0)^{1/2}) f_0 \bigr) (\mathcal{H}_0  + I)^{-s/2}\|_{L_2(\mathbb{R}^d) \to L_2 (\mathbb{R}^d) }
    \le \mathcal{C}(\tau) \varepsilon
    \end{equation*}
    holds for all sufficiently small $\varepsilon > 0$.
\end{theorem}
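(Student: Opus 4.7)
The plan is to prove the statement by contradiction, reducing it to Theorem~\ref{sndw_s<2_cos_thrm} via the scaling identities and the direct integral decomposition, exactly in parallel with the proof of Theorem~\ref{s<2_cos_thrm_Rd} earlier in this section.

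First, I would treat assertion $1^\circ$. Assume that for some $\tau\ne 0$ and some $0\le s<2$ there exists a constant $\mathcal{C}(\tau)>0$ making the estimate on $f^\varepsilon\cos(\tau \mathcal{A}_\varepsilon^{1/2})(f^\varepsilon)^{-1}-f_0\cos(\tau(\mathcal{A}^0)^{1/2})f_0^{-1}$ valid in the $H^s\to L_2$ sense for all sufficiently small $\varepsilon>0$. The key tool is the scaling identity \eqref{sndw_cos-cos_and_scale_transform}: since $T_\varepsilon$ is unitary, the assumed bound is \emph{equivalent} to
\begin{equation*}
\| {J}_1(\varepsilon^{-1}\tau)\,\mathcal{R}(\varepsilon)^{s/2}\|_{L_2(\mathbb{R}^d)\to L_2(\mathbb{R}^d)}\le \mathcal{C}(\tau)\varepsilon.
\end{equation*}
By the direct integral expansion $\mathcal{U}\mathcal{A}\mathcal{U}^{-1}=\int_{\widetilde\Omega}\oplus \mathcal{A}(\mathbf{k})\,d\mathbf{k}$ (and the analogous one for $\mathcal{A}^0$ and $\mathcal{H}_0$), the left-hand side equals $\esssup_{\mathbf{k}\in\widetilde\Omega}\|J_1(\mathbf{k},\varepsilon^{-1}\tau)\mathcal{R}(\mathbf{k},\varepsilon)^{s/2}\|$. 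Hence the fiberwise estimate \eqref{sndw_s<2_est_imp} would hold for a.\,e.\ $\mathbf{k}\in\widetilde\Omega$ and all sufficiently small $\varepsilon$. But this directly contradicts Theorem~\ref{sndw_s<2_cos_thrm}($1^\circ$), given the assumption $\widehat{N}_{0,Q}(\boldsymbol{\theta}_0)\ne 0$.

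For assertion $2^\circ$, I would proceed in exactly the same way, using the scaling identity \eqref{sndw_sin-sin_and_scale_transform} in place of \eqref{sndw_cos-cos_and_scale_transform}. The extra factor of $\varepsilon$ on the right-hand side of \eqref{sndw_sin-sin_and_scale_transform} compensates for the fact that the conjectured bound on $f^\varepsilon \mathcal{A}_\varepsilon^{-1/2}\sin(\tau \mathcal{A}_\varepsilon^{1/2})(f^\varepsilon)^*-f_0(\mathcal{A}^0)^{-1/2}\sin(\tau(\mathcal{A}^0)^{1/2})f_0$ has right-hand side $\mathcal{C}(\tau)\varepsilon$ rather than $\mathcal{C}(\tau)$; after unitary conjugation by $T_\varepsilon$, the $\varepsilon$ cancels and one arrives at the fiberwise bound $\|J_2(\mathbf{k},\varepsilon^{-1}\tau)\mathcal{R}(\mathbf{k},\varepsilon)^{s/2}\|\le \mathcal{C}(\tau)$, valid for a.\,e.\ $\mathbf{k}\in\widetilde\Omega$ and small $\varepsilon$. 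Again this contradicts Theorem~\ref{sndw_s<2_cos_thrm}($2^\circ$) under the assumption $\widehat{N}_{0,Q}(\boldsymbol{\theta}_0)\ne 0$.

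Since the deep work (the abstract sharpness Theorem~\ref{abstr_sndwchd_s<2_general_thrm}, its lift to the fiber operators in Theorem~\ref{s<2_thrm} via the Lipschitz-in-$\mathbf{k}$ lemma, and its direct integral version Theorem~\ref{sndw_s<2_cos_thrm}) has already been done, the only real ingredient here is the faithful transcription between the $\varepsilon$-rescaled pictures, i.\,e.\ the combination of \eqref{sndw_cos-cos_and_scale_transform}--\eqref{sndw_sin-sin_and_scale_transform} with the unitarity of $T_\varepsilon$ and the norm identity for direct integrals. No real obstacle is expected; the proof is essentially a two-line reduction, just like the proof of Theorem~\ref{s<2_cos_thrm_Rd}.
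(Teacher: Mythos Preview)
Your proposal is correct and follows essentially the same route as the paper: contradiction via the scaling identities \eqref{sndw_cos-cos_and_scale_transform}--\eqref{sndw_sin-sin_and_scale_transform} and reduction to Theorem~\ref{sndw_s<2_cos_thrm}, exactly parallel to the proof of Theorem~\ref{s<2_cos_thrm_Rd}. One minor remark: the direct integral step you insert is redundant, since Theorem~\ref{sndw_s<2_cos_thrm} is already stated for the operator on $L_2(\mathbb{R}^d)$; if you do pass to fibers, the theorem to cite at that point is Theorem~\ref{s<2_thrm} rather than Theorem~\ref{sndw_s<2_cos_thrm}.
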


\section{Homogenization of the Cauchy problem for  hyperbolic equations}

\subsection{The Cauchy problem for the  equation with the operator $\widehat{\mathcal{A}}_\varepsilon$}

Let $\mathbf{v}_\varepsilon (\mathbf{x}, \tau)$, $\mathbf{x} \in \mathbb{R}^d$, $\tau \in \mathbb{R}$,
be the solution of the Cauchy problem
\begin{equation}
\label{nonhomog_Cauchy_hatA_eps}
\left\{
\begin{aligned}
&\frac{\partial^2 \mathbf{v}_\varepsilon (\mathbf{x}, \tau)}{\partial \tau^2} = - b(\mathbf{D})^* g^\varepsilon (\mathbf{x}) b(\mathbf{D}) \mathbf{v}_\varepsilon (\mathbf{x}, \tau) + \mathbf{F} (\mathbf{x}, \tau), \\
& \mathbf{v}_\varepsilon (\mathbf{x}, 0) = \boldsymbol{\phi} (\mathbf{x}), \quad \frac{\partial \mathbf{v}_\varepsilon }{\partial \tau} (\mathbf{x}, 0) = \boldsymbol{\psi}(\mathbf{x}),
\end{aligned}
\right.
\end{equation}
where $\boldsymbol{\phi}, \boldsymbol{\psi} \in L_2 (\mathbb{R}^d; \mathbb{C}^n)$ and $\mathbf{F} \in L_{1, \mathrm{loc}} (\mathbb{R}; L_2 (\mathbb{R}^d; \mathbb{C}^n) )$.
 The solution can be represented as
 \begin{equation}
\label{14.2}
\mathbf{v}_\varepsilon (\cdot, \tau) = \cos(\tau \widehat{\mathcal{A}}_\varepsilon^{1/2}) \boldsymbol{\phi} + \widehat{\mathcal{A}}_\varepsilon^{-1/2} \sin(\tau \widehat{\mathcal{A}}_\varepsilon^{1/2}) \boldsymbol{\psi}
+ \int_{0}^{\tau} \widehat{\mathcal{A}}_\varepsilon^{-1/2} \sin((\tau - \tilde{\tau}) \widehat{\mathcal{A}}_\varepsilon^{1/2}) \mathbf{F} (\cdot, \tilde{\tau}) \, d \tilde{\tau}.
\end{equation}
Let $\mathbf{v}_0 (\mathbf{x}, \tau)$~be the solution of the homogenized problem
\begin{equation}
\label{nonhomog_Cauchy_hatA_0}
\left\{
\begin{aligned}
&\frac{\partial^2 \mathbf{v}_0 (\mathbf{x}, \tau)}{\partial \tau^2} = - b(\mathbf{D})^* g^0 b(\mathbf{D}) \mathbf{v}_0 (\mathbf{x}, \tau) + \mathbf{F} (\mathbf{x}, \tau) , \\
& \mathbf{v}_0 (\mathbf{x}, 0) = \boldsymbol{\phi} (\mathbf{x}), \quad \frac{\partial \mathbf{v}_0 }{\partial \tau} (\mathbf{x}, 0) = \boldsymbol{\psi}(\mathbf{x}).
\end{aligned}
\right.
\end{equation}
Then
\begin{equation}
\label{14.4}
\mathbf{v}_0 (\cdot, \tau) = \cos(\tau (\widehat{\mathcal{A}}^0)^{1/2}) \boldsymbol{\phi} + (\widehat{\mathcal{A}}^0)^{-1/2} \sin(\tau (\widehat{\mathcal{A}}^0)^{1/2}) \boldsymbol{\psi}
+ \int_{0}^{\tau} (\widehat{\mathcal{A}}^0)^{-1/2} \sin((\tau - \tilde{\tau}) (\widehat{\mathcal{A}}^0)^{1/2}) \mathbf{F} (\cdot, \tilde{\tau}) \, d \tilde{\tau}.
\end{equation}

In the general case the following result holds.

\begin{theorem}
    \label{nonhomog_Cauchy_hatA_eps_thrm}
    Let $\mathbf{v}_\varepsilon$~be the solution of problem~\emph{\eqref{nonhomog_Cauchy_hatA_eps}}, and let $\mathbf{v}_0$~be the solution of problem~\emph{\eqref{nonhomog_Cauchy_hatA_0}}.

  \noindent  $1^{\circ}$.
         If $\boldsymbol{\phi} \in H^s (\mathbb{R}^d; \mathbb{C}^n)$,  $\boldsymbol{\psi} \in H^r (\mathbb{R}^d; \mathbb{C}^n)$, and
$\mathbf{F} \in L_{1, \mathrm{loc}} (\mathbb{R}; H^r (\mathbb{R}^d; \mathbb{C}^n))$, where $0 \le s \le 2$, $0\le r \le 1$,
then for $\tau \in \mathbb{R}$ and $\varepsilon > 0$ we have
        \begin{multline}
        \label{nonhomog_Cauchy_hatA_eps_est1}
        \| \mathbf{v}_\varepsilon(\cdot, \tau) - \mathbf{v}_0 (\cdot, \tau) \|_{L_2 (\mathbb{R}^d)} \le
        \widehat{\mathfrak{C}}_1 (s) (1+ |\tau|)^{s/2} \varepsilon^{s/2}
\| \boldsymbol{\phi} \|_{H^s(\mathbb{R}^d)} \\+ \widehat{\mathfrak{C}}_2 (r) (1+ |\tau|) \varepsilon^r
\left( \| \boldsymbol{\psi} \|_{H^r(\mathbb{R}^d)} +  \|\mathbf{F} \|_{L_1((0,\tau);H^r(\mathbb{R}^d))} \right).
\end{multline}

     \noindent   $2^{\circ}$.  If $\boldsymbol{\phi}, \boldsymbol{\psi} \in L_2 (\mathbb{R}^d; \mathbb{C}^n)$ and $\mathbf{F} \in L_{1, \mathrm{loc}} (\mathbb{R}; L_2 (\mathbb{R}^d; \mathbb{C}^n) )$, then
        $\lim\limits_{\varepsilon \to 0} \| \mathbf{v}_\varepsilon (\cdot, \tau) - \mathbf{v}_0 (\cdot, \tau) \|_{L_2(\mathbb{R}^d)} = 0$, $\tau \in \mathbb{R}$.
\end{theorem}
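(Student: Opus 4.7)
The plan is to transfer the operator-norm estimates of Theorem~\ref{cos_sin_thrm1_H^s_L2} directly to the solution representations \eqref{14.2} and \eqref{14.4}, and then to handle Part~$2^\circ$ by a density argument based on the uniform $L_2$-boundedness of the wave propagators.

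For Part~$1^\circ$, subtracting \eqref{14.4} from \eqref{14.2} yields
\begin{equation*}
\mathbf{v}_\varepsilon(\cdot,\tau) - \mathbf{v}_0(\cdot,\tau) = \mathcal{C}_\varepsilon(\tau)\boldsymbol{\phi} + \mathcal{S}_\varepsilon(\tau)\boldsymbol{\psi} + \int_0^\tau \mathcal{S}_\varepsilon(\tau-\tilde{\tau})\mathbf{F}(\cdot,\tilde{\tau})\,d\tilde{\tau},
\end{equation*}
where $\mathcal{C}_\varepsilon(\sigma):=\cos(\sigma\widehat{\mathcal{A}}_\varepsilon^{1/2})-\cos(\sigma(\widehat{\mathcal{A}}^0)^{1/2})$ and $\mathcal{S}_\varepsilon(\sigma):=\widehat{\mathcal{A}}_\varepsilon^{-1/2}\sin(\sigma\widehat{\mathcal{A}}_\varepsilon^{1/2})-(\widehat{\mathcal{A}}^0)^{-1/2}\sin(\sigma(\widehat{\mathcal{A}}^0)^{1/2})$. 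Estimate~\eqref{cos_thrm1_H^s_L2_est} applied to the first term gives the bound $\widehat{\mathfrak{C}}_1(s)(1+|\tau|)^{s/2}\varepsilon^{s/2}\|\boldsymbol{\phi}\|_{H^s}$, and~\eqref{sin_thrm1_H^s_L2_est} applied to the second term gives $\widehat{\mathfrak{C}}_2(r)(1+|\tau|)\varepsilon^r\|\boldsymbol{\psi}\|_{H^r}$. For the Duhamel integral, apply~\eqref{sin_thrm1_H^s_L2_est} pointwise in $\tilde\tau$; since $|\tau-\tilde\tau|\le|\tau|$ for $\tilde\tau$ between $0$ and $\tau$, the factor $(1+|\tau-\tilde\tau|)$ is dominated by $(1+|\tau|)$ and can be pulled outside the integral, yielding the contribution $\widehat{\mathfrak{C}}_2(r)(1+|\tau|)\varepsilon^r\|\mathbf{F}\|_{L_1((0,\tau);H^r)}$. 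Summing the three bounds produces~\eqref{nonhomog_Cauchy_hatA_eps_est1}.

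For Part~$2^\circ$, observe that $\cos(\tau\widehat{\mathcal{A}}_\varepsilon^{1/2})$ and $\cos(\tau(\widehat{\mathcal{A}}^0)^{1/2})$ are bounded on $L_2(\mathbb{R}^d;\mathbb{C}^n)$ by $1$ uniformly in $\varepsilon$ and $\tau$, while $\widehat{\mathcal{A}}_\varepsilon^{-1/2}\sin(\sigma\widehat{\mathcal{A}}_\varepsilon^{1/2})$ and its $\widehat{\mathcal{A}}^0$-counterpart are bounded by $|\sigma|$ via the functional-calculus estimate $|\sin x /x|\le 1$. Given $\eta>0$, choose approximations $\boldsymbol{\phi}_\eta\in H^2(\mathbb{R}^d;\mathbb{C}^n)$, $\boldsymbol{\psi}_\eta\in H^1(\mathbb{R}^d;\mathbb{C}^n)$, and $\mathbf{F}_\eta\in L_{1,\mathrm{loc}}(\mathbb{R};H^1(\mathbb{R}^d;\mathbb{C}^n))$ satisfying $\|\boldsymbol{\phi}-\boldsymbol{\phi}_\eta\|_{L_2}+\|\boldsymbol{\psi}-\boldsymbol{\psi}_\eta\|_{L_2}+\|\mathbf{F}-\mathbf{F}_\eta\|_{L_1((0,\tau);L_2)}<\eta$. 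Split each term of $\mathbf{v}_\varepsilon-\mathbf{v}_0$ into a smoothed part (computed from $\boldsymbol{\phi}_\eta,\boldsymbol{\psi}_\eta,\mathbf{F}_\eta$) and a residual part. By the uniform propagator bounds, the residual part is controlled by $C(1+|\tau|)\eta$ independently of $\varepsilon$, whereas Part~$1^\circ$ applied to the smoothed data guarantees that the smoothed contribution tends to $0$ as $\varepsilon\to 0$ for every fixed $\eta$. A standard $\eta/3$ argument then yields $\lim_{\varepsilon\to0}\|\mathbf{v}_\varepsilon(\cdot,\tau)-\mathbf{v}_0(\cdot,\tau)\|_{L_2}=0$.

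No serious obstacle is anticipated: the key points are the harmless majorization $(1+|\tau-\tilde\tau|)\le(1+|\tau|)$ inside the Duhamel integral and the uniform $L_2$-boundedness of the cosine and sine propagators, which decouples the $\varepsilon\to0$ limit from the smoothing parameter $\eta$.
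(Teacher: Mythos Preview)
Your proposal is correct and follows exactly the approach indicated in the paper, which simply states that the theorem follows from Theorem~\ref{cos_sin_thrm1_H^s_L2} together with the representations \eqref{14.2} and \eqref{14.4}. You have supplied the details the paper leaves implicit, including the majorization $(1+|\tau-\tilde\tau|)\le(1+|\tau|)$ in the Duhamel term and the density argument for Part~$2^\circ$.
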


Theorem \ref{nonhomog_Cauchy_hatA_eps_thrm} follows from Theorem~\ref{cos_sin_thrm1_H^s_L2} and representations
\eqref{14.2}, \eqref{14.4}.

Statement $1^\circ$ of Theorem~\ref{nonhomog_Cauchy_hatA_eps_thrm} can be refined under the additional assumptions.
Theorems~\ref{cos_sin_thrm2_H^s_L2} and ~\ref{cos_sin_thrm3_H^s_L2}  imply the following results.

\begin{theorem}
    \label{nonhomog_Cauchy_hatA_eps_ench_thrm_1}
    Suppose that the assumptions of Theorem~\emph{\ref{nonhomog_Cauchy_hatA_eps_thrm}} are satisfied. Let $\widehat{N}(\boldsymbol{\theta})$ be the operator defined by~\emph{\eqref{N(theta)}}. Suppose that $\widehat{N}(\boldsymbol{\theta}) = 0$ for any $\boldsymbol{\theta} \in \mathbb{S}^{d-1}$. If $\boldsymbol{\phi} \in H^s (\mathbb{R}^d; \mathbb{C}^n)$, $\boldsymbol{\psi} \in H^r (\mathbb{R}^d; \mathbb{C}^n)$, and  $\mathbf{F} \in L_{1, \mathrm{loc}} (\mathbb{R}; H^r (\mathbb{R}^d; \mathbb{C}^n))$, where $0 \le s \le 3/2$, $0\le r \le 1/2$, then for $\tau \in \mathbb{R}$ and $\varepsilon > 0$ we have
    \begin{multline*}
    \| \mathbf{v}_\varepsilon (\cdot, \tau) - \mathbf{v}_0 (\cdot, \tau) \|_{L_2(\mathbb{R}^d)} \le
    \widehat{\mathfrak{C}}_3 (s) (1+|\tau|)^{2s/3} \varepsilon^{2s/3} \| \boldsymbol{\phi} \|_{H^s(\mathbb{R}^d)}
\\
+ \widehat{\mathfrak{C}}_4 (r) (1+ |\tau|) \varepsilon^{2r} \left(\| \boldsymbol{\psi} \|_{H^r(\mathbb{R}^d)}
+ \|\mathbf{F} \|_{L_1((0,\tau);H^r(\mathbb{R}^d))}\right).
    \end{multline*}
      \end{theorem}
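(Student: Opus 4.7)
The plan is to reduce the statement directly to Theorem~\ref{cos_sin_thrm2_H^s_L2} by exploiting the explicit Duhamel representations \eqref{14.2} and \eqref{14.4} for $\mathbf{v}_\varepsilon$ and $\mathbf{v}_0$, respectively. Subtracting these two formulas, we obtain
\begin{equation*}
\mathbf{v}_\varepsilon(\cdot,\tau) - \mathbf{v}_0(\cdot,\tau) = \mathcal{D}_1(\tau)\boldsymbol{\phi} + \mathcal{D}_2(\tau)\boldsymbol{\psi} + \int_0^\tau \mathcal{D}_2(\tau-\tilde{\tau})\mathbf{F}(\cdot,\tilde{\tau})\,d\tilde{\tau},
\end{equation*}
where $\mathcal{D}_1(\tau):=\cos(\tau\widehat{\mathcal{A}}_\varepsilon^{1/2})-\cos(\tau(\widehat{\mathcal{A}}^0)^{1/2})$ and $\mathcal{D}_2(\tau):=\widehat{\mathcal{A}}_\varepsilon^{-1/2}\sin(\tau\widehat{\mathcal{A}}_\varepsilon^{1/2})-(\widehat{\mathcal{A}}^0)^{-1/2}\sin(\tau(\widehat{\mathcal{A}}^0)^{1/2})$. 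This reduces the problem to three independent operator-norm estimates on the three summands.

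The first two terms are handled immediately: estimate \eqref{ccc} of Theorem~\ref{cos_sin_thrm2_H^s_L2} (which is applicable because $\widehat{N}(\boldsymbol{\theta})\equiv 0$ by hypothesis) gives $\|\mathcal{D}_1(\tau)\boldsymbol{\phi}\|_{L_2}\le \widehat{\mathfrak{C}}_3(s)(1+|\tau|)^{2s/3}\varepsilon^{2s/3}\|\boldsymbol{\phi}\|_{H^s}$ for $0\le s\le 3/2$, while \eqref{sss} yields $\|\mathcal{D}_2(\tau)\boldsymbol{\psi}\|_{L_2}\le \widehat{\mathfrak{C}}_4(r)(1+|\tau|)\varepsilon^{2r}\|\boldsymbol{\psi}\|_{H^r}$ for $0\le r\le 1/2$.

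For the Duhamel term I would estimate pointwise under the integral sign: for each $\tilde{\tau}\in[0,\tau]$ (treating $\tau>0$; the case $\tau<0$ is symmetric), apply \eqref{sss} with $\tau$ replaced by $\tau-\tilde{\tau}$ to get
\begin{equation*}
\|\mathcal{D}_2(\tau-\tilde{\tau})\mathbf{F}(\cdot,\tilde{\tau})\|_{L_2}\le \widehat{\mathfrak{C}}_4(r)(1+|\tau-\tilde{\tau}|)\varepsilon^{2r}\|\mathbf{F}(\cdot,\tilde{\tau})\|_{H^r}.
\end{equation*}
Since $1+|\tau-\tilde{\tau}|\le 1+|\tau|$ for $\tilde{\tau}$ between $0$ and $\tau$, integration in $\tilde{\tau}$ bounds the third summand by $\widehat{\mathfrak{C}}_4(r)(1+|\tau|)\varepsilon^{2r}\|\mathbf{F}\|_{L_1((0,\tau);H^r)}$. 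Summing the three pieces by the triangle inequality produces exactly the claimed estimate.

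There is essentially no obstacle here, because all the analytic work has already been done in proving Theorem~\ref{cos_sin_thrm2_H^s_L2}. The only point requiring attention is the uniform-in-$\tilde{\tau}$ control of the factor $(1+|\tau-\tilde{\tau}|)$ inside the Duhamel integral, which is handled by the elementary bound above; this is what allows the prefactor $(1+|\tau|)$ (rather than something worse) to appear in front of the $\|\mathbf{F}\|_{L_1((0,\tau);H^r)}$ norm.
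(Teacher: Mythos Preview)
Your proposal is correct and follows exactly the approach the paper takes: the paper states that Theorem~\ref{nonhomog_Cauchy_hatA_eps_ench_thrm_1} is implied by Theorem~\ref{cos_sin_thrm2_H^s_L2} together with the representations~\eqref{14.2} and~\eqref{14.4}, and your argument spells out precisely that deduction (including the elementary bound $1+|\tau-\tilde\tau|\le 1+|\tau|$ needed to control the Duhamel term).
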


\begin{theorem}
    Suppose that the assumptions of Theorem~\emph{\ref{nonhomog_Cauchy_hatA_eps_thrm}} are satisfied. Suppose also that
    Condition~\emph{\ref{cond9}} \emph{(}or more restrictive Condition~\emph{\ref{cond99})} is satisfied.
If $\boldsymbol{\phi} \in H^s (\mathbb{R}^d; \mathbb{C}^n)$, $\boldsymbol{\psi} \in H^r (\mathbb{R}^d; \mathbb{C}^n)$, and  $\mathbf{F} \in L_{1, \mathrm{loc}} (\mathbb{R}; H^r (\mathbb{R}^d; \mathbb{C}^n))$, where $0 \le s \le 3/2$, $0\le r \le 1/2$, then for $\tau \in \mathbb{R}$ and $\varepsilon > 0$ we have
    \begin{multline*}
    \| \mathbf{v}_\varepsilon (\cdot, \tau) - \mathbf{v}_0 (\cdot, \tau) \|_{L_2(\mathbb{R}^d)}  \le
\widehat{\mathfrak{C}}_5 (s) (1+| \tau|)^{2s/3} \varepsilon^{2s/3} \| \boldsymbol{\phi} \|_{H^s(\mathbb{R}^d)}
\\
+ \widehat{\mathfrak{C}}_6 (r) (1+ |\tau|) \varepsilon^{2r} \left( \| \boldsymbol{\psi} \|_{H^r(\mathbb{R}^d)}
+ \|\mathbf{F} \|_{L_1((0,\tau);H^r(\mathbb{R}^d))} \right).
    \end{multline*}
  \end{theorem}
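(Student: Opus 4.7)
The plan is to deduce this result directly from Theorem~\ref{cos_sin_thrm3_H^s_L2} via the representations~\eqref{14.2} and~\eqref{14.4} for the solutions of the Cauchy problems, following the same scheme by which Theorem~\ref{nonhomog_Cauchy_hatA_eps_thrm} is extracted from Theorem~\ref{cos_sin_thrm1_H^s_L2}. The only change is that, under Condition~\ref{cond9} (or the stronger Condition~\ref{cond99}), the refined bounds of Theorem~\ref{cos_sin_thrm3_H^s_L2} are available in place of the general ones.

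First, I subtract \eqref{14.4} from \eqref{14.2} to obtain the identity
\begin{equation*}
\mathbf{v}_\varepsilon(\cdot,\tau) - \mathbf{v}_0(\cdot,\tau)
= \bigl(\cos(\tau\widehat{\mathcal{A}}_\varepsilon^{1/2}) - \cos(\tau(\widehat{\mathcal{A}}^0)^{1/2})\bigr)\boldsymbol{\phi}
+ \widehat{J}_2(\tau)\boldsymbol{\psi}
+ \int_0^{\tau} \widehat{J}_2(\tau-\tilde{\tau})\mathbf{F}(\cdot,\tilde{\tau})\,d\tilde{\tau},
\end{equation*}
where $\widehat{J}_2(\tau) := \widehat{\mathcal{A}}_\varepsilon^{-1/2}\sin(\tau\widehat{\mathcal{A}}_\varepsilon^{1/2}) - (\widehat{\mathcal{A}}^0)^{-1/2}\sin(\tau(\widehat{\mathcal{A}}^0)^{1/2})$ (using the notation of Section~10 applied to the $\varepsilon$-operators). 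To the first two terms I apply the two bounds of Theorem~\ref{cos_sin_thrm3_H^s_L2} with the specified values of $s$ and $r$, yielding the $\widehat{\mathfrak{C}}_5(s)$ and $\widehat{\mathfrak{C}}_6(r)$ contributions at the orders $\varepsilon^{2s/3}$ and $\varepsilon^{2r}$ respectively.

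For the forcing term I take the $L_2$-norm inside the $\tilde{\tau}$-integral (Minkowski's inequality) and apply the second estimate of Theorem~\ref{cos_sin_thrm3_H^s_L2} with the same exponent $r$ to the integrand at time $\tau-\tilde{\tau}$, estimating $(1+|\tau-\tilde{\tau}|)\le (1+|\tau|)$ for $\tilde{\tau}$ between $0$ and $\tau$. This produces the bound
\begin{equation*}
\Bigl\|\int_0^{\tau}\widehat{J}_2(\tau-\tilde{\tau})\mathbf{F}(\cdot,\tilde{\tau})\,d\tilde{\tau}\Bigr\|_{L_2}
\le \widehat{\mathfrak{C}}_6(r)(1+|\tau|)\varepsilon^{2r}\,\|\mathbf{F}\|_{L_1((0,\tau);H^r(\mathbb{R}^d))},
\end{equation*}
which combines with the previous two contributions to give the stated inequality.

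There is no real obstacle: every ingredient has already been assembled in Sections~11--13 and in the preceding Theorem~\ref{nonhomog_Cauchy_hatA_eps_ench_thrm_1} (which carries out exactly the same argument under the stronger assumption $\widehat{N}(\boldsymbol{\theta})\equiv 0$). The only point worth noting is that Theorem~\ref{cos_sin_thrm3_H^s_L2}, invoked here instead of Theorem~\ref{cos_sin_thrm2_H^s_L2}, replaces the hypothesis $\widehat{N}(\boldsymbol{\theta})\equiv 0$ by the weaker Condition~\ref{cond9}, at the cost of constants $\widehat{\mathfrak{C}}_5,\widehat{\mathfrak{C}}_6$ that depend additionally on $n$ and on the spectral gap parameter $\widehat{c}^{\circ}$; this dependence is inherited automatically from Theorem~\ref{cos_sin_thrm3_H^s_L2} and needs no further comment in the proof.
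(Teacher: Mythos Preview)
Your proposal is correct and matches the paper's approach exactly: the paper simply states that this theorem (together with Theorem~\ref{nonhomog_Cauchy_hatA_eps_ench_thrm_1}) follows from Theorems~\ref{cos_sin_thrm2_H^s_L2} and~\ref{cos_sin_thrm3_H^s_L2} via the representations~\eqref{14.2} and~\eqref{14.4}, without writing out any details. Your write-up supplies precisely the routine steps the paper leaves implicit.
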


\subsection{More general problem}

Let $Q(\mathbf{x})$ be a $\Gamma$-periodic Hermitian $(n\times n)$-matrix-valued function such that
$Q, Q^{-1} \in L_\infty$; $Q(\mathbf{x})>0$. The matrix 
 $Q(\mathbf{x})^{-1}$ can be written in a factorized form $Q(\mathbf{x})^{-1} = f(\mathbf{x})f(\mathbf{x})^*$,
where $f(\mathbf{x})$ is a $\Gamma$-periodic $(n\times n)$-matrix-valued function such that $f, f^{-1} \in L_\infty$.

Let $\mathbf{z}_\varepsilon(\mathbf{x},\tau)$,  $\mathbf{x}\in \mathbb{R}^d$, $\tau\in \mathbb{R}$, be the solution of the problem
\begin{equation}
\label{nonhomog_Cauchy_wQ}
\left\{
\begin{aligned}
& Q^\varepsilon (\mathbf{x}) \frac{\partial^2 \mathbf{z}_\varepsilon (\mathbf{x}, \tau)}{\partial \tau^2} = - b(\mathbf{D})^* g^\varepsilon (\mathbf{x}) b(\mathbf{D}) \mathbf{z}_\varepsilon (\mathbf{x}, \tau) + \mathbf{F} (\mathbf{x}, \tau), \\
& \mathbf{z}_\varepsilon (\mathbf{x}, 0) = \boldsymbol{\phi} (\mathbf{x}), \quad Q^\varepsilon (\mathbf{x})\frac{\partial \mathbf{z}_\varepsilon }{\partial \tau} (\mathbf{x}, 0) = \boldsymbol{\psi}(\mathbf{x}),
\end{aligned}
\right.
\end{equation}
where $\boldsymbol{\phi}, \boldsymbol{\psi} \in L_2 (\mathbb{R}^d; \mathbb{C}^n)$ and
$\mathbf{F} \in L_{1, \mathrm{loc}} (\mathbb{R}; L_2 (\mathbb{R}^d; \mathbb{C}^n) )$.
Let ${\mathcal A}_\varepsilon$~be the operator~\eqref{A_eps}. Then
\begin{multline}\label{14.8}
    \mathbf{z}_\varepsilon (\cdot, \tau) = f^\varepsilon \cos(\tau \mathcal{A}_\varepsilon^{1/2}) (f^\varepsilon)^{-1} \boldsymbol{\phi} + f^\varepsilon \mathcal{A}_\varepsilon^{-1/2} \sin(\tau \mathcal{A}_\varepsilon^{1/2}) (f^\varepsilon)^* \boldsymbol{\psi}
\\
+ \int_{0}^{\tau} f^\varepsilon \mathcal{A}_\varepsilon^{-1/2} \sin((\tau - \tilde{\tau}) \mathcal{A}_\varepsilon^{1/2}) (f^\varepsilon)^* \mathbf{F} (\cdot, \tilde{\tau}) \, d \tilde{\tau} .
\end{multline}

Let  $\mathbf{z}_0(\mathbf{x},\tau)$ be the solution of the homogenized problem
\begin{equation}
\label{nonhomog_Cauchy_wQ_eff}
\left\{
\begin{aligned}
& \overline{Q} \frac{\partial^2 \mathbf{z}_0 (\mathbf{x}, \tau)}{\partial \tau^2} = - b(\mathbf{D})^* g^0 b(\mathbf{D}) \mathbf{z}_0 (\mathbf{x}, \tau) + \mathbf{F} (\mathbf{x}, \tau), \\
& \mathbf{z}_0 (\mathbf{x}, 0) = \boldsymbol{\phi} (\mathbf{x}), \quad \overline{Q}\frac{\partial \mathbf{z}_0 }{\partial \tau} (\mathbf{x}, 0) = \boldsymbol{\psi}(\mathbf{x}).
\end{aligned}
\right.
\end{equation}
Let $f_0$ be the matrix~\eqref{f_0} and ${\mathcal A}^0$ be the operator~\eqref{A0}.
Then
\begin{multline}\label{14.10}
    \mathbf{z}_0 (\cdot, \tau) = f_0 \cos(\tau (\mathcal{A}^0)^{1/2}) f_0^{-1} \boldsymbol{\phi} + f_0 (\mathcal{A}^0)^{-1/2} \sin(\tau (\mathcal{A}^0)^{1/2}) f_0 \boldsymbol{\psi}
\\
+ \int_{0}^{\tau} f_0 (\mathcal{A}^0)^{-1/2} \sin((\tau - \tilde{\tau}) (\mathcal{A}^0)^{1/2}) f_0 \mathbf{F} (\cdot, \tilde{\tau}) \, d \tilde{\tau}.
\end{multline}

In the general case the following result holds.

\begin{theorem}
    \label{nonhomog_Cauchy_wQ_thrm_1}
    Let $\mathbf{z}_\varepsilon$~be the solution of problem~\emph{\eqref{nonhomog_Cauchy_wQ}}, and let $\mathbf{z}_0$~be the
    solution of problem~\emph{\eqref{nonhomog_Cauchy_wQ_eff}}.

    \noindent $1^{\circ}.$
         If $\boldsymbol{\phi} \in H^s (\mathbb{R}^d;\mathbb{C}^n)$, $\boldsymbol{\psi} \in H^r (\mathbb{R}^d;\mathbb{C}^n)$, and
$\mathbf{F} \in L_{1, \mathrm{loc}} (\mathbb{R}; H^r (\mathbb{R}^d; \mathbb{C}^n))$, where $0 \le s \le 2$, $0\le r \le 1$,
then for $\tau \in \mathbb{R}$ and $\varepsilon > 0$ we have
        \begin{multline*}
        \| \mathbf{z}_\varepsilon(\cdot, \tau) - \mathbf{z}_0 (\cdot, \tau) \|_{L_2 (\mathbb{R}^d)}
\le  \mathfrak{C}_1 (s) (1+ |\tau|)^{s/2} \varepsilon^{s/2} \| \boldsymbol{\phi} \|_{H^s(\mathbb{R}^d)}
\\
+ \mathfrak{C}_2 (r) (1+ |\tau|) \varepsilon^{r} \left( \| \boldsymbol{\psi} \|_{H^r(\mathbb{R}^d)}
 +  \|\mathbf{F} \|_{L_1((0,\tau);H^r(\mathbb{R}^d))} \right).
        \end{multline*}

     \noindent $2^{\circ}.$
        If $\boldsymbol{\phi}, \boldsymbol{\psi} \in L_2 (\mathbb{R}^d; \mathbb{C}^n)$ and $\mathbf{F} \in L_{1, \mathrm{loc}} (\mathbb{R}; L_2 (\mathbb{R}^d; \mathbb{C}^n) )$, then
        $\lim\limits_{\varepsilon \to 0} \| \mathbf{z}_\varepsilon (\cdot, \tau) -  \mathbf{z}_0 (\cdot, \tau) \|_{L_2(\mathbb{R}^d)} = 0$, $\tau \in \mathbb{R}$.
 \end{theorem}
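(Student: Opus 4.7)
The plan is to deduce the theorem directly from Theorem~\ref{sndw_cos_sin_thrm1_H^s_L2} using the explicit representations \eqref{14.8} and \eqref{14.10} of the two solutions. Subtracting \eqref{14.10} from \eqref{14.8} writes $\mathbf{z}_\varepsilon(\cdot,\tau)-\mathbf{z}_0(\cdot,\tau)$ as a sum of three pieces: the cosine difference $\bigl(f^\varepsilon\cos(\tau\mathcal{A}_\varepsilon^{1/2})(f^\varepsilon)^{-1}-f_0\cos(\tau(\mathcal{A}^0)^{1/2})f_0^{-1}\bigr)\boldsymbol{\phi}$, the sine difference applied to $\boldsymbol{\psi}$, and the Duhamel integral $\int_0^\tau \bigl(f^\varepsilon\mathcal{A}_\varepsilon^{-1/2}\sin((\tau-\tilde\tau)\mathcal{A}_\varepsilon^{1/2})(f^\varepsilon)^*-f_0(\mathcal{A}^0)^{-1/2}\sin((\tau-\tilde\tau)(\mathcal{A}^0)^{1/2})f_0\bigr)\mathbf{F}(\cdot,\tilde\tau)\,d\tilde\tau$. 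The first two pieces are exactly the operators controlled by Theorem~\ref{sndw_cos_sin_thrm1_H^s_L2}, so they yield the first two contributions $\mathfrak{C}_1(s)(1+|\tau|)^{s/2}\varepsilon^{s/2}\|\boldsymbol{\phi}\|_{H^s}$ and $\mathfrak{C}_2(r)(1+|\tau|)\varepsilon^r\|\boldsymbol{\psi}\|_{H^r}$ in the required estimate.

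For the integral term in part~$1^\circ$, I would pass the $L_2(\mathbb{R}^d)$-norm inside the integral and apply the second estimate of Theorem~\ref{sndw_cos_sin_thrm1_H^s_L2} with $\tau$ replaced by $\tau-\tilde\tau$. Using the crude bound $1+|\tau-\tilde\tau|\le 1+|\tau|$ on $[0,\tau]$ gives the bound $\mathfrak{C}_2(r)(1+|\tau|)\varepsilon^r\int_0^\tau\|\mathbf{F}(\cdot,\tilde\tau)\|_{H^r}\,d\tilde\tau$, which is exactly the $\|\mathbf{F}\|_{L_1((0,\tau);H^r)}$ contribution. Summing the three bounds completes part~$1^\circ$.

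For part~$2^\circ$ I would use a density argument. Observe first that the $L_2\to L_2$ norms of the individual operators in \eqref{14.8} and \eqref{14.10} are uniformly bounded in $\varepsilon$: the cosine terms are bounded by $\|f\|_{L_\infty}\|f^{-1}\|_{L_\infty}$ respectively by $1$, and the operator $f^\varepsilon\mathcal{A}_\varepsilon^{-1/2}\sin((\tau-\tilde\tau)\mathcal{A}_\varepsilon^{1/2})(f^\varepsilon)^*$ has norm at most $\|f\|_{L_\infty}^2|\tau-\tilde\tau|$ (with the analogous bound in the homogenized case). Consequently $\|\mathbf{z}_\varepsilon(\cdot,\tau)-\mathbf{z}_0(\cdot,\tau)\|_{L_2}$ is bounded uniformly in $\varepsilon$ by a linear combination of $\|\boldsymbol{\phi}\|_{L_2}$, $|\tau|\|\boldsymbol{\psi}\|_{L_2}$, and $\int_0^\tau|\tau-\tilde\tau|\|\mathbf{F}(\cdot,\tilde\tau)\|_{L_2}d\tilde\tau$. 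Approximating $\boldsymbol{\phi},\boldsymbol{\psi}$ by $H^2$, $H^1$-functions respectively in the $L_2$-norm, and $\mathbf{F}$ by elements of $L_1((-T,T);H^1(\mathbb{R}^d;\mathbb{C}^n))$ in the $L_1((-T,T);L_2)$-norm (for any $T\ge|\tau|$), part~$1^\circ$ applied to the smooth approximants yields convergence to zero; dominated convergence (against the $L_1$-majorant $|\tau-\tilde\tau|\|\mathbf{F}(\cdot,\tilde\tau)\|_{L_2}$) handles the integral term.

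The main obstacle is not conceptual but rather bookkeeping in the Duhamel integral: one must verify that the integrand, viewed as an $L_2(\mathbb{R}^d)$-valued function of $\tilde\tau$, is indeed measurable and integrable, and that the passage of the norm inside the integral is justified. This follows from the strong continuity in $\tilde\tau$ of the semigroup-type operators $\mathcal{A}_\varepsilon^{-1/2}\sin((\tau-\tilde\tau)\mathcal{A}_\varepsilon^{1/2})$ and $(\mathcal{A}^0)^{-1/2}\sin((\tau-\tilde\tau)(\mathcal{A}^0)^{1/2})$ together with the Bochner integrability of $\mathbf{F}$. Once this is in place, both parts of the theorem follow at once from the operator-norm estimates of Theorem~\ref{sndw_cos_sin_thrm1_H^s_L2} and the uniform bounds above.
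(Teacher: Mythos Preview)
Your proposal is correct and takes exactly the same approach as the paper: the paper simply states that Theorem~\ref{nonhomog_Cauchy_wQ_thrm_1} is a consequence of Theorem~\ref{sndw_cos_sin_thrm1_H^s_L2} together with the representations~\eqref{14.8} and~\eqref{14.10}, and your argument supplies precisely the routine details (the three-piece decomposition, the bound $1+|\tau-\tilde\tau|\le 1+|\tau|$ for the Duhamel integral, and the density argument for part~$2^\circ$). One tiny cosmetic point: the bound ``respectively by $1$'' for the $f_0$-cosine term is not quite right, since $f_0\cos(\tau(\mathcal{A}^0)^{1/2})f_0^{-1}$ is only bounded by $|f_0|\,|f_0^{-1}|\le\|f\|_{L_\infty}\|f^{-1}\|_{L_\infty}$ in general; this does not affect the argument.
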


Theorem \ref{nonhomog_Cauchy_wQ_thrm_1}
is a consequence of Theorem~\ref{sndw_cos_sin_thrm1_H^s_L2}
and representations~\eqref{14.8}, \eqref{14.10}.

Statement $1^\circ$ of Theorem~\ref{nonhomog_Cauchy_wQ_thrm_1}  can be refined under the additional assumptions.
Theorems~\ref{sndw_cos_sin_thrm2_H^s_L2} and~\ref{sndw_cos_sin_thrm3_H^s_L2} and relations~\eqref{14.8}, \eqref{14.10} imply the following results.

\begin{theorem}
    \label{nonhomog_Cauchy_wQ_thrm_2}
    Suppose that the assumptions of Theorem~\emph{\ref{nonhomog_Cauchy_wQ_thrm_1}} are satisfied. Let $\widehat{N}_Q (\boldsymbol{\theta})$ be
    the operator defined by~\emph{\eqref{N_Q(theta)}}. Suppose that $\widehat{N}_Q (\boldsymbol{\theta}) = 0$ for any $\boldsymbol{\theta} \in \mathbb{S}^{d-1}$. If $\boldsymbol{\phi} \in H^s (\mathbb{R}^d; \mathbb{C}^n)$, $\boldsymbol{\psi} \in H^r (\mathbb{R}^d; \mathbb{C}^n)$, and  $\mathbf{F} \in L_{1, \mathrm{loc}} (\mathbb{R}; H^r (\mathbb{R}^d; \mathbb{C}^n))$, where $0 \le s \le 3/2$, $0\le r \le 1/2$,
then for $\tau \in \mathbb{R}$ and $\varepsilon > 0$ we have
    \begin{multline*}
    \| \mathbf{z}_\varepsilon (\cdot, \tau) - \mathbf{z}_0 (\cdot, \tau) \|_{L_2(\mathbb{R}^d)}
\le \mathfrak{C}_3 (s) (1+ |\tau|)^{2s/3} \varepsilon^{2s/3}\| \boldsymbol{\phi} \|_{H^s(\mathbb{R}^d)}
\\
+ \mathfrak{C}_4 (r) (1+|\tau|) \varepsilon^{2r} \left( \| \boldsymbol{\psi} \|_{H^r(\mathbb{R}^d)}
+ \|\mathbf{F} \|_{L_1((0,\tau);H^r(\mathbb{R}^d))} \right).
    \end{multline*}
    \end{theorem}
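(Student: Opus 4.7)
\medskip

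\textbf{Proof proposal for Theorem~\ref{nonhomog_Cauchy_wQ_thrm_2}.}
The plan is to reduce everything to the operator-function estimates of Theorem~\ref{sndw_cos_sin_thrm2_H^s_L2} via the explicit representations \eqref{14.8} and \eqref{14.10} of the solutions. Subtracting \eqref{14.10} from \eqref{14.8}, I decompose the difference into three pieces:
\begin{align*}
\mathbf{z}_\varepsilon(\cdot,\tau) - \mathbf{z}_0(\cdot,\tau)
&= \bigl( f^\varepsilon \cos(\tau \mathcal{A}_\varepsilon^{1/2})(f^\varepsilon)^{-1} - f_0\cos(\tau(\mathcal{A}^0)^{1/2})f_0^{-1} \bigr) \boldsymbol{\phi}
\\
&\quad + \bigl( f^\varepsilon \mathcal{A}_\varepsilon^{-1/2}\sin(\tau \mathcal{A}_\varepsilon^{1/2})(f^\varepsilon)^* - f_0(\mathcal{A}^0)^{-1/2}\sin(\tau(\mathcal{A}^0)^{1/2})f_0 \bigr) \boldsymbol{\psi}
\\
&\quad + \int_0^\tau \bigl( f^\varepsilon \mathcal{A}_\varepsilon^{-1/2}\sin((\tau-\tilde\tau)\mathcal{A}_\varepsilon^{1/2})(f^\varepsilon)^* - f_0(\mathcal{A}^0)^{-1/2}\sin((\tau-\tilde\tau)(\mathcal{A}^0)^{1/2})f_0 \bigr) \mathbf{F}(\cdot,\tilde\tau)\, d\tilde\tau.
\end{align*}

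Next, I apply Theorem~\ref{sndw_cos_sin_thrm2_H^s_L2} (valid precisely because $\widehat{N}_Q(\boldsymbol{\theta})\equiv 0$) to each of the three pieces. The first piece is bounded in $L_2$ by $\mathfrak{C}_3(s)(1+|\tau|)^{2s/3}\varepsilon^{2s/3}\|\boldsymbol{\phi}\|_{H^s}$, and the second by $\mathfrak{C}_4(r)(1+|\tau|)\varepsilon^{2r}\|\boldsymbol{\psi}\|_{H^r}$. For the third (integral) term, the same theorem applied with parameter $\tau-\tilde\tau$ in place of $\tau$ gives the pointwise-in-$\tilde\tau$ bound
\begin{equation*}
\mathfrak{C}_4(r) (1+|\tau-\tilde\tau|)\varepsilon^{2r} \|\mathbf{F}(\cdot,\tilde\tau)\|_{H^r} \le \mathfrak{C}_4(r)(1+|\tau|)\varepsilon^{2r}\|\mathbf{F}(\cdot,\tilde\tau)\|_{H^r},
\end{equation*}
since $|\tau-\tilde\tau|\le|\tau|$ for $\tilde\tau$ between $0$ and $\tau$. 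Integrating in $\tilde\tau$ over $(0,\tau)$ and taking the $L_2$-norm inside the integral by Minkowski's inequality yields the contribution $\mathfrak{C}_4(r)(1+|\tau|)\varepsilon^{2r}\|\mathbf{F}\|_{L_1((0,\tau);H^r)}$.

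Summing the three contributions gives exactly the claimed estimate. There is no genuine obstacle here: the entire work has already been done in Theorem~\ref{sndw_cos_sin_thrm2_H^s_L2}, and the only substantive step is the Minkowski-type interchange of the $L_2$-norm and the time-integral in Duhamel's term, together with the trivial observation $(1+|\tau-\tilde\tau|)\le(1+|\tau|)$ that allows us to pull the time-dependent prefactor outside of the integral uniformly. Thus the proof reduces to direct substitution of the operator-norm estimates into the Duhamel representations.
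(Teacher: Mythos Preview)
Your proof is correct and follows exactly the approach indicated in the paper: the text immediately preceding this theorem states that it is a direct consequence of Theorem~\ref{sndw_cos_sin_thrm2_H^s_L2} together with the representations~\eqref{14.8} and~\eqref{14.10}. You have simply written out the details of that deduction---subtracting the Duhamel formulas, applying the operator-norm bounds termwise, and using Minkowski plus $(1+|\tau-\tilde\tau|)\le(1+|\tau|)$ for the integral term---which the paper leaves implicit.
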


\begin{theorem}
    Suppose that the assumptions of Theorem~\emph{\ref{nonhomog_Cauchy_wQ_thrm_1}} are satisfied.
    Suppose also that Condition~\emph{\ref{sndw_cond1}} \emph{(}or more restrictive Condition~\emph{\ref{sndw_cond2})} is satisfied.
 If $\boldsymbol{\phi} \in H^s (\mathbb{R}^d; \mathbb{C}^n)$, $\boldsymbol{\psi} \in H^r (\mathbb{R}^d; \mathbb{C}^n)$, and  $\mathbf{F} \in L_{1, \mathrm{loc}} (\mathbb{R}; H^r (\mathbb{R}^d; \mathbb{C}^n))$, where $0 \le s \le 3/2$, $0\le r \le 1/2$,
then for $\tau \in \mathbb{R}$ and $\varepsilon > 0$ we have
    \begin{multline*}
    \| \mathbf{z}_\varepsilon (\cdot, \tau) - \mathbf{z}_0 (\cdot, \tau) \|_{L_2(\mathbb{R}^d)} \le \mathfrak{C}_5 (s) (1+|\tau|)^{2s/3} \varepsilon^{2s/3}
    \| \boldsymbol{\phi} \|_{H^s(\mathbb{R}^d)} \\
+ \mathfrak{C}_6 (r) (1+| \tau|) \varepsilon^{2r} \left( \| \boldsymbol{\psi} \|_{H^r(\mathbb{R}^d)}
+  \|\mathbf{F} \|_{L_1((0,\tau);H^r(\mathbb{R}^d))} \right).
    \end{multline*}
\end{theorem}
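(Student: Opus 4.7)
The plan is to proceed exactly as in the proof of the preceding two theorems on Cauchy problems with weight $Q$. Using the solution representations \eqref{14.8} for $\mathbf{z}_\varepsilon$ and \eqref{14.10} for $\mathbf{z}_0$, I would write the difference as
\begin{align*}
\mathbf{z}_\varepsilon(\cdot,\tau) - \mathbf{z}_0(\cdot,\tau)
&= \bigl[f^\varepsilon \cos(\tau\mathcal{A}_\varepsilon^{1/2})(f^\varepsilon)^{-1} - f_0 \cos(\tau(\mathcal{A}^0)^{1/2}) f_0^{-1}\bigr] \boldsymbol{\phi} \\
&\quad + \bigl[f^\varepsilon \mathcal{A}_\varepsilon^{-1/2} \sin(\tau\mathcal{A}_\varepsilon^{1/2})(f^\varepsilon)^* - f_0 (\mathcal{A}^0)^{-1/2} \sin(\tau(\mathcal{A}^0)^{1/2}) f_0\bigr] \boldsymbol{\psi} \\
&\quad + \int_0^\tau \bigl[f^\varepsilon \mathcal{A}_\varepsilon^{-1/2} \sin((\tau-\tilde\tau)\mathcal{A}_\varepsilon^{1/2})(f^\varepsilon)^* - f_0 (\mathcal{A}^0)^{-1/2} \sin((\tau-\tilde\tau)(\mathcal{A}^0)^{1/2}) f_0\bigr] \mathbf{F}(\cdot,\tilde\tau)\, d\tilde\tau
\end{align*}
and estimate the three resulting terms separately in $L_2(\mathbb{R}^d)$.

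For the first term, Theorem~\ref{sndw_cos_sin_thrm3_H^s_L2} applied to the sandwiched cosine with exponent $s\in[0,3/2]$ yields the bound $\mathfrak{C}_5(s)(1+|\tau|)^{2s/3}\varepsilon^{2s/3}\|\boldsymbol{\phi}\|_{H^s}$. For the second term, the sandwiched sine estimate of the same theorem with $r\in[0,1/2]$ gives $\mathfrak{C}_6(r)(1+|\tau|)\varepsilon^{2r}\|\boldsymbol{\psi}\|_{H^r}$. The third term is handled by Minkowski's inequality: pulling the $L_2$-norm under the integral and applying the same sine estimate to the integrand (with parameter $\tau-\tilde\tau$ in place of $\tau$) produces
\[
\int_0^\tau \mathfrak{C}_6(r)(1+|\tau-\tilde\tau|)\varepsilon^{2r}\|\mathbf{F}(\cdot,\tilde\tau)\|_{H^r}\, d\tilde\tau
\le \mathfrak{C}_6(r)(1+|\tau|)\varepsilon^{2r}\|\mathbf{F}\|_{L_1((0,\tau);H^r)},
\]
using $|\tau-\tilde\tau|\le|\tau|$ for $\tilde\tau$ between $0$ and $\tau$. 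Summing the three bounds gives the stated inequality.

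There is no real obstacle here; the entire content of the theorem is already embedded in Theorem~\ref{sndw_cos_sin_thrm3_H^s_L2}, which in turn rests on the abstract refinement under Condition~\ref{sndw_cond1}. The only points to double-check are that the Duhamel integrand remains in the class to which Theorem~\ref{sndw_cos_sin_thrm3_H^s_L2} applies (it does, because $\mathbf{F}(\cdot,\tilde\tau)\in H^r$ for a.e.~$\tilde\tau$ by hypothesis), and that Minkowski's inequality is legitimate in the Bochner sense (which is standard given $\mathbf{F}\in L_1((0,\tau);H^r(\mathbb{R}^d;\mathbb{C}^n))$). Thus the proof reduces to a clean assembly of three applications of Theorem~\ref{sndw_cos_sin_thrm3_H^s_L2}, exactly parallel to the proofs of Theorems~\ref{nonhomog_Cauchy_wQ_thrm_1} and~\ref{nonhomog_Cauchy_wQ_thrm_2}.
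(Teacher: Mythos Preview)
Your proposal is correct and follows exactly the approach indicated in the paper: the theorem is stated as a direct consequence of Theorem~\ref{sndw_cos_sin_thrm3_H^s_L2} together with the solution representations~\eqref{14.8} and~\eqref{14.10}, and your write-up simply spells out the three-term splitting and the Minkowski step that this entails.
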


\section{Application of the general results: the acoustics equation}

\subsection{The model example\label{model}}

In $L_2 (\mathbb{R}^d)$, $d \ge 1$, we consider the operator
\begin{equation}
\label{model_operator}
\widehat{\mathcal{A}} = \mathbf{D}^* g(\mathbf{x}) \mathbf{D} =  - \operatorname{div} g(\mathbf{x}) \nabla.
\end{equation}
Here $g(\mathbf{x})$~is a $\Gamma$-periodic Hermitian $(d \times d)$-matrix-valued function such that
  $g(\mathbf{x}) > 0$ and $g, g^{-1} \in L_\infty$.
The operator~\eqref{model_operator} is a particular case of the operator~\eqref{hatA}.
Now we have $n=1$, $m=d$, and $b(\mathbf{D}) = \mathbf{D}$.
Obviously, condition~\eqref{rank_alpha_ineq} is satisfied with $\alpha_0 = \alpha_1 = 1$.
By~\eqref{hatA0}, the effective operator for the operator~\eqref{model_operator} is given by
$\widehat{\mathcal{A}}^0 = \mathbf{D}^* g^0 \mathbf{D} =  - \operatorname{div} g^0 \nabla$.
According to~\eqref{equation_for_Lambda}--\eqref{g_tilde}, the effective matrix is defined as follows.
Let $\mathbf{e}_1, \ldots, \mathbf{e}_d$~be the standard orthonormal basis in $\mathbb{R}^d$.
Let $\Phi_j \in \widetilde{H}^1 (\Omega)$~be the weak  $\Gamma$-periodic solution of the problem
\begin{equation*}
\operatorname{div} g(\mathbf{x}) (\nabla \Phi_j(\mathbf{x}) + \mathbf{e}_j) = 0, \quad \int_{\Omega} \Phi_j(\mathbf{x}) \, d \mathbf{x} = 0.
\end{equation*}
Then $g^0$~is the $(d \times d)$-matrix with the columns
\begin{equation*}
\mathbf{g}_j^0 = |\Omega|^{-1} \int_{\Omega} g(\mathbf{x}) (\nabla \Phi_j(\mathbf{x}) + \mathbf{e}_j) \, d \mathbf{x}, \quad j = 1, \ldots, d.
\end{equation*}
If $d=1$, then $m = n = 1$, whence $g^0 = \underline{g}$.

If $g(\mathbf{x})$~is a symmetric matrix with real entries, then, by Proposition~\ref{N=0_proposit}($1^\circ$),
  $\widehat{N} (\boldsymbol{\theta}) = 0$ for any $\boldsymbol{\theta} \in \mathbb{S}^{d-1}$. If $g(\mathbf{x})$~is a Hermitian matrix with complex entries, then, in general, $\widehat{N} (\boldsymbol{\theta})$ is not zero. Since $n=1$, then $\widehat{N} (\boldsymbol{\theta}) = \widehat{N}_0 (\boldsymbol{\theta})$ is the operator of multiplication by $\widehat{\mu}(\boldsymbol{\theta})$, where  $\widehat{\mu}(\boldsymbol{\theta})$~is the coefficient at $t^3$ in the expansion  for the first eigenvalue
$
    \widehat{\lambda}(t, \boldsymbol{\theta}) = \widehat{\gamma} (\boldsymbol{\theta}) t^2 + \widehat{\mu} (\boldsymbol{\theta}) t^3 + \ldots
$
of the operator $\widehat{\mathcal{A}} (\mathbf{k})$. A calculation (see~\cite[Subsection~10.3]{BSu3}) shows that
\begin{align*}
    \widehat{N} (\boldsymbol{\theta}) &= \widehat{\mu} (\boldsymbol{\theta}) = -i \sum_{j,l,k=1}^{d} (a_{jlk} - a_{jlk}^*) \theta_j \theta_l \theta_k, \quad \boldsymbol{\theta} \in \mathbb{S}^{d-1}, \\
    a_{jlk} &:= |\Omega|^{-1} \int_{\Omega} \Phi_j (\mathbf{x})^* \left\langle g(\mathbf{x}) (\nabla \Phi_l (\mathbf{x}) + \mathbf{e}_l), \mathbf{e}_k \right\rangle \, d \mathbf{x}, \quad j, l, k = 1, \ldots, d.
\end{align*}

The following example is borrowed from~\cite[Subsection~10.4]{BSu3}.

\begin{example}[\cite{BSu3}]
    \label{model_exmpl}
    Let $d=2$ and $\Gamma = (2 \pi \mathbb{Z})^2$. Suppose that the matrix $g(\mathbf{x})$ is given by
   \begin{equation*}
    g(\mathbf{x}) = \begin{pmatrix}
    1 & i \beta'(x_1) \\
    - i \beta' (x_1) & 1
    \end{pmatrix},
   \end{equation*}
    where $\beta(x_1)$~is a smooth $(2 \pi)$-periodic real-valued function such that \hbox{$1 - (\beta'(x_1))^2 > 0$} and
    $\int_{0}^{2 \pi} \beta(x_1)\, d x_1 = 0$. Then $\widehat{N} (\boldsymbol{\theta}) = - \alpha \pi^{-1} \theta_2^3$, where
    $\alpha = \int_{0}^{2 \pi} \beta (x_1) (\beta' (x_1))^2 dx_1$.
    It is easy to give a concrete example where \hbox{$\alpha \ne 0$}{\rm :} if $\beta (x_1) = c (\sin x_1 + \cos 2x_1)$ with $0 < c < 1/3$, then
    \hbox{$\alpha = - (3 \pi/2) c^3 \ne 0$}. In this example, $\widehat{N} (\boldsymbol{\theta}) = \widehat{\mu} (\boldsymbol{\theta}) \ne 0$ for all $\boldsymbol{\theta} \in \mathbb{S}^1$ except for the points $(\pm 1 ,0)$.
\end{example}

Consider the Cauchy problem
\begin{equation}
\label{model_Cauchy}
\left\{
\begin{aligned}
& \frac{\partial^2 v_\varepsilon (\mathbf{x}, \tau)}{\partial \tau^2} = - \mathbf{D}^* g^\varepsilon (\mathbf{x}) \mathbf{D} v_\varepsilon (\mathbf{x}, \tau) + F(\mathbf{x}, \tau), \\
& v_\varepsilon (\mathbf{x}, 0) = \phi (\mathbf{x}), \quad  \frac{\partial v_\varepsilon }{\partial \tau} (\mathbf{x}, 0) = \psi (\mathbf{x}),
\end{aligned}
\right.
\end{equation}
where $\phi, \psi \in L_2({\mathbb R}^d)$ and $F\in L_{1, \mathrm{loc}} (\mathbb{R}; L_2 (\mathbb{R}^d))$. Let
 $v_0 (\mathbf{x}, \tau)$ be the solution of the homogenized problem
\begin{equation}\label{model_eff}
\left\{
\begin{aligned}
& \frac{\partial^2 v_0 (\mathbf{x}, \tau)}{\partial \tau^2} = - \mathbf{D}^* g^0  \mathbf{D} v_0 (\mathbf{x}, \tau) + F(\mathbf{x}, \tau), \\
& v_0 (\mathbf{x}, 0) = \phi (\mathbf{x}), \quad  \frac{\partial v_0 }{\partial \tau} (\mathbf{x}, 0) = \psi (\mathbf{x}).
\end{aligned}
\right.
\end{equation}
Applying Theorem~\ref{nonhomog_Cauchy_hatA_eps_thrm} in the general case and
Theorem~\ref{nonhomog_Cauchy_hatA_eps_ench_thrm_1}
in the ``real'' case, we arrive at the following statement.

\begin{proposition}
Under the assumptions of Subsection~\emph{\ref{model}}, let $v_\varepsilon$ be the solution of problem~\eqref{model_Cauchy},
and let $v_0$ be the solution of problem~\eqref{model_eff}.

\noindent
$1^\circ$. If $\phi \in H^s(\mathbb{R}^d)$, $\psi \in H^r(\mathbb{R}^d)$,  and $F\in L_{1, \mathrm{loc}} (\mathbb{R}; H^r (\mathbb{R}^d))$
with $0 \le s \le 2$, $0\le r \le 1$, then for $\tau \in \mathbb{R}$ and $\varepsilon >0$ we have
 \begin{multline*}
        \| {v}_\varepsilon(\cdot, \tau) - {v}_0 (\cdot, \tau) \|_{L_2 (\mathbb{R}^d)} \le
        \widehat{\mathfrak{C}}_1 (s) (1+|\tau|)^{s/2} \varepsilon^{s/2} \| {\phi} \|_{H^s(\mathbb{R}^d)}
\\
+ \widehat{\mathfrak{C}}_2 (r) (1+|\tau|) \varepsilon^r \left( \| {\psi} \|_{H^r(\mathbb{R}^d)}
+ \| {F} \|_{L_1((0,\tau);H^r(\mathbb{R}^d))} \right).
\end{multline*}
where $\widehat{\mathfrak{C}}_1 (s)$ depends on $s$, $\|g\|_{L_\infty}$, $\|g^{-1}\|_{L_\infty}$, and $r_0$;
$\widehat{\mathfrak{C}}_2 (r)$
depends on $r$, $\|g\|_{L_\infty}$, $\|g^{-1}\|_{L_\infty}$, and $r_0$.
If \hbox{$\phi, \psi \in L_2(\mathbb{R}^d)$} and $F\in L_{1, \mathrm{loc}} (\mathbb{R}; L_2 (\mathbb{R}^d))$, then
$\| v_\varepsilon(\cdot,\tau) - v_0(\cdot,\tau)\|_{L_2(\mathbb{R}^d)} \to 0$ as $\varepsilon \to 0$, $\tau \in \mathbb{R}$.

\noindent
$2^\circ$. Let $g(\mathbf{x})$ be a symmetric matrix with real entries.
If $\phi \in H^s(\mathbb{R}^d)$, $\psi \in H^r(\mathbb{R}^d)$,  and $F\in L_{1, \mathrm{loc}} (\mathbb{R}; H^r (\mathbb{R}^d))$
with $0 \le s \le 3/2$, $0\le r \le 1/2$, then for $\tau \in \mathbb{R}$ and $\varepsilon >0$ we have
\begin{multline*}
        \| {v}_\varepsilon(\cdot, \tau) - {v}_0 (\cdot, \tau) \|_{L_2 (\mathbb{R}^d)} \le
         \widehat{\mathfrak{C}}_3 (s) (1+ |\tau|)^{2s/3} \varepsilon^{2 s/3}
\| {\phi} \|_{H^s(\mathbb{R}^d)}
\\
+ \widehat{\mathfrak{C}}_4 (r) (1+ |\tau|) \varepsilon^{2r} \left(\| {\psi} \|_{H^r(\mathbb{R}^d)} +  \| {F} \|_{L_1((0,\tau);H^r(\mathbb{R}^d))} \right).
\end{multline*}
Here $\widehat{\mathfrak{C}}_3 (s)$ depends on $s$, $\|g\|_{L_\infty}$, $\|g^{-1}\|_{L_\infty}$, and $r_0${\rm ;}
$\widehat{\mathfrak{C}}_4 (r)$ depends on $r$, $\|g\|_{L_\infty}$, $\|g^{-1}\|_{L_\infty}$, and $r_0$.
\end{proposition}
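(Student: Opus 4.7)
The plan is to obtain both assertions by direct specialization of the general homogenization theorems for hyperbolic equations (Theorems~\ref{nonhomog_Cauchy_hatA_eps_thrm} and~\ref{nonhomog_Cauchy_hatA_eps_ench_thrm_1}) to the scalar case $n=1$, $m=d$, $b(\mathbf{D})=\mathbf{D}$, so essentially no additional work beyond a verification of hypotheses is required.

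First I would check that the operator \eqref{model_operator} falls into the class considered in Subsection~\ref{A_oper_subsect} with $f=\mathbf{1}_n$: the symbol $b(\boldsymbol{\xi})=\boldsymbol{\xi}$ satisfies $b(\boldsymbol{\theta})^* b(\boldsymbol{\theta}) = 1$ on $\mathbb{S}^{d-1}$, so condition \eqref{rank_alpha_ineq} holds with $\alpha_0=\alpha_1=1$, while $g,g^{-1}\in L_\infty$ by hypothesis. Hence $\widehat{\mathcal{A}}_\varepsilon$ is exactly the operator \eqref{Ahat_eps} (with the given $b$ and $g$), and the effective operator $\widehat{\mathcal{A}}^0 = -\operatorname{div} g^0 \nabla$ described in Subsection~\ref{model} coincides with \eqref{hatA0}. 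Thus problems \eqref{model_Cauchy} and \eqref{model_eff} are special cases of \eqref{nonhomog_Cauchy_hatA_eps} and \eqref{nonhomog_Cauchy_hatA_0}.

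For assertion~$1^\circ$, I would simply invoke Theorem~\ref{nonhomog_Cauchy_hatA_eps_thrm}($1^\circ$), whose estimate \eqref{nonhomog_Cauchy_hatA_eps_est1} is literally the inequality to be proved, and apply its statement~$2^\circ$ for the $L_2$-convergence claim. The only thing to notice is that the constants $\widehat{\mathfrak{C}}_1(s)=2^{1-s/2}\widehat{\mathcal{C}}_1^{s/2}$ and $\widehat{\mathfrak{C}}_2(r)=2^{1-r}\widehat{\mathcal{C}}_2^{r}$ from Theorem~\ref{cos_sin_thrm1_H^s_L2} depend in general on $\alpha_0$, $\alpha_1$, $\|g\|_{L_\infty}$, $\|g^{-1}\|_{L_\infty}$, and $r_0$; since $\alpha_0=\alpha_1=1$ are now fixed numerical constants, the dependence reduces precisely to $\|g\|_{L_\infty}$, $\|g^{-1}\|_{L_\infty}$, $r_0$, and on the interpolation parameter $s$ (resp.\ $r$), as claimed.

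For assertion~$2^\circ$, the only extra input is to verify the hypothesis of Theorem~\ref{nonhomog_Cauchy_hatA_eps_ench_thrm_1}, namely that $\widehat{N}(\boldsymbol{\theta})=0$ for all $\boldsymbol{\theta}\in\mathbb{S}^{d-1}$. But in the ``real symmetric'' setting assumed here, this is exactly the content of Proposition~\ref{N=0_proposit}($1^\circ$). Once this is checked, Theorem~\ref{nonhomog_Cauchy_hatA_eps_ench_thrm_1} yields the required estimate with exponents $2s/3$ and $2r$ on the ranges $0\le s\le 3/2$ and $0\le r\le 1/2$, and with constants $\widehat{\mathfrak{C}}_3(s)$, $\widehat{\mathfrak{C}}_4(r)$ depending only on $\|g\|_{L_\infty}$, $\|g^{-1}\|_{L_\infty}$, $r_0$, and the relevant Sobolev exponent. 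There is no real obstacle in this proof: the heavy lifting has been done in Chapters~1--3, and the role of the present proposition is merely to unpack the abstract hypotheses for the acoustics operator.
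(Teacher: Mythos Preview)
Your proposal is correct and follows exactly the paper's approach: the paper simply states that the proposition follows by applying Theorem~\ref{nonhomog_Cauchy_hatA_eps_thrm} in the general case and Theorem~\ref{nonhomog_Cauchy_hatA_eps_ench_thrm_1} in the real case (the latter via Proposition~\ref{N=0_proposit}($1^\circ$)), which is precisely what you outline.
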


\subsection{The acoustics equation\label{acoustic}}
Now, under the assumptions of Subsection~\ref{model}, suppose that
$g(\mathbf{x})$ is a symmetric matrix with real entries.
The matrix  $g(\mathbf{x})$ characterizes the parameters of the acoustical (in general,
anisotropic) medium under study.
 Let $Q (\mathbf{x})$~be a $\Gamma$-periodic function in $\mathbb{R}^d$ such that
$Q(\mathbf{x}) > 0$; $Q, Q^{-1} \in L_\infty$.
The function $Q(\mathbf{x})$ stands for the density of the medium.

Consider the Cauchy problem for the acoustics equation in a rapidly oscillating medium:
\begin{equation}
    \label{acoustics_Cauchy}
    \left\{
    \begin{aligned}
        & Q^\varepsilon (\mathbf{x}) \frac{\partial^2 z_\varepsilon (\mathbf{x}, \tau)}{\partial \tau^2} =
        - \mathbf{D}^* g^\varepsilon (\mathbf{x}) \mathbf{D} z_\varepsilon (\mathbf{x}, \tau), \\
        & z_\varepsilon (\mathbf{x}, 0) = \phi (\mathbf{x}), \quad  Q^\varepsilon (\mathbf{x})\frac{\partial z_\varepsilon }{\partial \tau} (\mathbf{x}, 0) = \psi (\mathbf{x}),
    \end{aligned}
    \right.
\end{equation}
where $\phi, \psi \in L_2(\mathbb{R}^d)$~are given functions.
(For simplicity, we consider the homogeneous equation.)
Then the homogenized problem takes the form
\begin{equation}\label{ac_eff}
        \left\{
    \begin{aligned}
        & \overline{Q} \frac{\partial^2 z_0 (\mathbf{x}, \tau)}{\partial \tau^2} = - \mathbf{D}^* g^0 \mathbf{D} z_0 (\mathbf{x}, \tau), \\
        & z_0 (\mathbf{x}, 0) = \phi (\mathbf{x}), \quad  \overline{Q} \frac{\partial z_0 }{\partial \tau} (\mathbf{x}, 0) = \psi (\mathbf{x}).
    \end{aligned}
    \right.
\end{equation}

By Proposition~\ref{N_Q=0_proposit}($1^\circ$), we have $\widehat{N}_Q (\boldsymbol{\theta}) = 0$ for any $\boldsymbol{\theta} \in \mathbb{S}^{d-1}$. (We put $f = Q^{-1/2}$.)
Applying Theorem~\ref{nonhomog_Cauchy_wQ_thrm_2}, we arrive at the following result.

\begin{proposition}
Under the assumptions of Subsection~\emph{\ref{acoustic}}, let $z_\varepsilon$ be the solution of problem~\eqref{acoustics_Cauchy},
and let $z_0$ be the solution of problem~\eqref{ac_eff}.
If $\phi\in H^s(\mathbb{R}^d)$ and $\psi \in H^r(\mathbb{R}^d)$ with some $0 \le s \le 3/2$, $0 \le r \le 1/2$,
then for $\tau \in \mathbb{R}$ and $\varepsilon >0$ we have
 \begin{equation*}
        \| {z}_\varepsilon(\cdot, \tau) - {z}_0 (\cdot, \tau) \|_{L_2 (\mathbb{R}^d)} \le
        {\mathfrak{C}}_3 (s) (1+ |\tau| )^{2s/3} \varepsilon^{2s/3} \| {\phi} \|_{H^s(\mathbb{R}^d)} +
 {\mathfrak{C}}_4 (r) (1+|\tau|) \varepsilon^{2r} \| {\psi} \|_{H^r(\mathbb{R}^d)},
\end{equation*}
where ${\mathfrak{C}}_3 (s)$  depends on $s$, $\|g\|_{L_\infty}$, $\|g^{-1}\|_{L_\infty}$,
$\|Q\|_{L_\infty}$, $\|Q^{-1}\|_{L_\infty}$, and $r_0${\rm ;}
${\mathfrak{C}}_4 (r)$  depends on $r$, $\|g\|_{L_\infty}$, $\|g^{-1}\|_{L_\infty}$,
$\|Q\|_{L_\infty}$, $\|Q^{-1}\|_{L_\infty}$, and $r_0$.
If $\phi, \psi \in L_2(\mathbb{R}^d)$, then
$\| z_\varepsilon(\cdot,\tau) - z_0(\cdot,\tau)\|_{L_2(\mathbb{R}^d)} \to 0$ as $\varepsilon \to 0$, $\tau \in \mathbb{R}$.
\end{proposition}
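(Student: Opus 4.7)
The plan is to cast the acoustic Cauchy problem \eqref{acoustics_Cauchy} into the abstract framework of problem \eqref{nonhomog_Cauchy_wQ} and then appeal directly to Theorem~\ref{nonhomog_Cauchy_wQ_thrm_2}. I take $n=1$, $m=d$, $b(\mathbf{D}) = \mathbf{D}$ (so that $\alpha_0 = \alpha_1 = 1$), and set $f(\mathbf{x}) := Q(\mathbf{x})^{-1/2}$, which is a scalar, $\Gamma$-periodic, positive function with $f, f^{-1} \in L_\infty$, satisfying $f(\mathbf{x}) f(\mathbf{x})^* = Q(\mathbf{x})^{-1}$. Then the operator $\mathcal{A}_\varepsilon = (f^\varepsilon)^* \mathbf{D}^* g^\varepsilon \mathbf{D} f^\varepsilon$ of \eqref{A_eps} together with the weight $Q^\varepsilon$ makes \eqref{acoustics_Cauchy} a special case of \eqref{nonhomog_Cauchy_wQ} with $\mathbf{F} \equiv 0$. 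The effective operator is $\mathcal{A}^0 = f_0 \mathbf{D}^* g^0 \mathbf{D} f_0$ where $f_0 = (\overline{Q^{-1}})^{-1/2} = (\underline{Q})^{1/2}$. Note that $\overline{Q}$ in the matrix sense coincides with the scalar average $\overline{Q}$ here, so \eqref{nonhomog_Cauchy_wQ_eff} reduces to the stated homogenized acoustics equation \eqref{ac_eff}.

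Next, I verify the hypothesis $\widehat{N}_Q(\boldsymbol{\theta}) \equiv 0$. Since $g(\mathbf{x})$ is a symmetric matrix with real entries and $b(\mathbf{D}) = \mathbf{D}$ (so that $\mathcal{A} = f^* \mathbf{D}^* g \mathbf{D} f$ is of the type considered in Proposition~\ref{N_Q=0_proposit}($1^\circ$)), that proposition applies and yields $\widehat{N}_Q(\boldsymbol{\theta}) = 0$ for every $\boldsymbol{\theta} \in \mathbb{S}^{d-1}$. Hence the hypotheses of Theorem~\ref{nonhomog_Cauchy_wQ_thrm_2} are all satisfied for any admissible exponents $0 \le s \le 3/2$ and $0 \le r \le 1/2$.

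Now I simply invoke Theorem~\ref{nonhomog_Cauchy_wQ_thrm_2} with the initial data $\phi \in H^s(\mathbb{R}^d)$, $\psi \in H^r(\mathbb{R}^d)$ and vanishing right-hand side $\mathbf{F} = 0$. This gives, for $\tau \in \mathbb{R}$ and $\varepsilon > 0$,
\begin{equation*}
\| z_\varepsilon(\cdot,\tau) - z_0(\cdot,\tau)\|_{L_2(\mathbb{R}^d)} \le \mathfrak{C}_3(s)(1+|\tau|)^{2s/3} \varepsilon^{2s/3} \|\phi\|_{H^s(\mathbb{R}^d)} + \mathfrak{C}_4(r)(1+|\tau|)\varepsilon^{2r}\|\psi\|_{H^r(\mathbb{R}^d)},
\end{equation*}
which is exactly the required estimate. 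Tracing the definitions of $\mathfrak{C}_3(s)$ and $\mathfrak{C}_4(r)$ in Theorem~\ref{sndw_cos_sin_thrm2_H^s_L2}, which in turn depend on $\mathcal{C}_3$, $\mathcal{C}_4$ from Theorem~\ref{sndw_cos_thrm_2} with $\alpha_0 = \alpha_1 = 1$, and noting that $\|f\|_{L_\infty} = \|Q^{-1}\|^{1/2}_{L_\infty}$, $\|f^{-1}\|_{L_\infty} = \|Q\|^{1/2}_{L_\infty}$, one sees that the constants depend only on $s$ or $r$, $\|g\|_{L_\infty}$, $\|g^{-1}\|_{L_\infty}$, $\|Q\|_{L_\infty}$, $\|Q^{-1}\|_{L_\infty}$, and $r_0$, as claimed.

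The convergence statement for data in $L_2(\mathbb{R}^d)$ follows by a standard density argument: approximate $\phi, \psi \in L_2$ by $\phi_\delta \in H^s$, $\psi_\delta \in H^r$ (with positive $s,r$), apply the quantitative bound to the smoothed data, and use unitarity of the cosine and boundedness of $\mathcal{A}_\varepsilon^{-1/2}\sin(\tau \mathcal{A}_\varepsilon^{1/2})$ and the analogous operators for $\mathcal{A}^0$ to estimate the contribution of the remainders uniformly in $\varepsilon$. Since every step is a direct application of previously established results, no essential obstacle appears; the only bookkeeping task is keeping track of the dependence of the constants on $Q$ through $f$, which is transparent from the formulas above.
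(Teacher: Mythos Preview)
Your proof is correct and follows exactly the paper's approach: set $f=Q^{-1/2}$, use Proposition~\ref{N_Q=0_proposit}($1^\circ$) to get $\widehat{N}_Q(\boldsymbol{\theta})\equiv 0$, and then invoke Theorem~\ref{nonhomog_Cauchy_wQ_thrm_2} (the $L_2$ convergence statement is covered by Theorem~\ref{nonhomog_Cauchy_wQ_thrm_1}($2^\circ$), which your density argument reproduces). One minor slip: by \eqref{f_0} you have $f_0=(\overline{Q})^{-1/2}$, not $(\overline{Q^{-1}})^{-1/2}$, since here $(ff^*)^{-1}=Q$; this does not affect the argument, as you correctly identify the homogenized problem \eqref{ac_eff}.
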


\section{Application of the general results:
the system of elasticity theory}

\subsection{The operator of elasticity theory\label{elast}}
Let $d \ge 2$. We represent the operator of elasticity theory as in~\cite[Chapter~5, Section~2]{BSu1}.
Let $\zeta$~be an orthogonal second rank tensor in $\mathbb{R}^d$; in the standard orthonormal basis in $\mathbb{R}^d$,
 it can be represented by a matrix $\zeta = \{\zeta_{jl}\}_{j,l = 1}^d$.
 We consider \emph{symmetric} tensors $\zeta$, which are identified with vectors $\zeta_* \in \mathbb{C}^m$, $2m = d(d+1)$,
 by the following rule. The vector $\zeta_*$ is formed by all components $\zeta_{jl}$,  $j \le l$,
 and the pairs $(j, l)$ are put in order in some fixed way.

For the \emph{displacement vector} $\mathbf{u} \in H^1 (\mathbb{R}^d; \mathbb{C}^n)$, we introduce the deformation tensor
$e (\mathbf{u}) = \frac{1}{2} \left\lbrace \frac{\partial u_j}{\partial x_l} + \frac{\partial u_l}{\partial x_j} \right\rbrace$.
Let $e_* (\mathbf{u})$~be the vector corresponding to the tensor
$e (\mathbf{u})$. The relation $b(\mathbf{D}) \mathbf{u} = -i e_* (\mathbf{u})$
determines an $(m \times d)$-matrix homogeneous DO $b(\mathbf{D})$ uniquely; the symbol of this DO
is a matrix with real entries. 
For instance, with an appropriate ordering, we have
\begin{equation*}
b(\boldsymbol{\xi}) = \begin{pmatrix}
\xi_1 & 0 \\
\frac{1}{2} \xi_2 & \frac{1}{2} \xi_1 \\
0 & \xi_2
\end{pmatrix}, \quad d=2; \quad
b(\boldsymbol{\xi}) = \begin{pmatrix}
\xi_1 & 0 & 0 \\
\frac{1}{2} \xi_2 & \frac{1}{2} \xi_1 & 0 \\
0 & \xi_2 & 0 \\
0 & \frac{1}{2} \xi_3 & \frac{1}{2} \xi_2 \\
0 & 0 & \xi_3 \\
\frac{1}{2} \xi_3 & 0 & \frac{1}{2} \xi_1
\end{pmatrix}, \quad d=3.
\end{equation*}
The constants $\alpha_0, \alpha_1$ (see \eqref{rank_alpha_ineq}) depend only on $d$.

Let $\sigma(\mathbf{u})$~be the \emph{stress tensor}, and let $\sigma_*(\mathbf{u})$~be the corresponding vector.
In the accepted way of writing, the \emph{Hooke law} about proportionality of stresses and deformations can be expressed by
the relation
$\sigma_*(\mathbf{u}) = g (\mathbf{x}) e_* (\mathbf{u})$,
where $g (\mathbf{x})$~is an $(m \times m)$-matrix-valued function with real entries
(which gives a \textquotedblleft concise\textquotedblright \ description of the Hooke tensor).
The matrix $g (\mathbf{x})$ characterizes the parameters of the elastic (in general, anisotropic) medium under study.
We assume that the matrix-valued function $g (\mathbf{x})$~is $\Gamma$-periodic and such that
$g (\mathbf{x}) > 0$, and $g, g^{-1} \in L_\infty$.

The energy of elastic deformations is given by the quadratic form
\begin{equation}
\label{elast_energy}
w[\mathbf{u},\mathbf{u}] = \frac{1}{2} \int_{\mathbb{R}^d} \left\langle \sigma_*(\mathbf{u}), e_* (\mathbf{u}) \right\rangle_{\mathbb{C}^m} d \mathbf{x} = \frac{1}{2} \int_{\mathbb{R}^d} \left\langle g(\mathbf{x}) b(\mathbf{D}) \mathbf{u}, b(\mathbf{D}) \mathbf{u} \right\rangle_{\mathbb{C}^m} d \mathbf{x},
\quad
\mathbf{u} \in H^1 (\mathbb{R}^d; \mathbb{C}^d).
\end{equation}
The operator $\mathcal{W}$ generated by this form is
the \emph{operator of elasticity theory}. Thus, the operator
$2 \mathcal{W} = b(\mathbf{D})^* g b(\mathbf{D}) = \widehat{\mathcal{A}}$
is of the form~\eqref{hatA} with $n = d$ and $m = d(d + 1)/2$.

In the case of isotropic medium, the matrix $g(\mathbf{x})$ depends only on two functional \emph{Lame parameters} $\lambda(\mathbf{x})$ and $\mu(\mathbf{x})$. The parameter $\mu(\mathbf{x})$~is the \emph{shear modulus}.
Often, another parameter $K(\mathbf{x})$ is introduced instead of $\lambda(\mathbf{x})$;
$K(\mathbf{x})$ is called the  \emph{modulus of volume compression}. We need yet another modulus $\beta(\mathbf{x})$.
Here are the relations:
$K(\mathbf{x}) = \lambda(\mathbf{x}) + \frac{2 \mu(\mathbf{x})}{d}$,  $\beta(\mathbf{x}) = \mu(\mathbf{x}) + \frac{\lambda(\mathbf{x})}{2}$.
The modulus $\lambda(\mathbf{x})$ may be negative. In the isotropic case, the conditions that ensure the positive definiteness of the matrix  $g(\mathbf{x})$ are as follows: $\mu(\mathbf{x}) \ge \mu_0 > 0$, $K(\mathbf{x}) \ge K_0 > 0$.
As an example, we write down the matrix $g$ in the isotropic case for $d = 2, 3$:
\begin{gather*}
g_{\mu, K} (\mathbf{x}) = \begin{pmatrix}
K(\mathbf{x}) + \mu(\mathbf{x}) & 0 & K(\mathbf{x}) - \mu(\mathbf{x}) \\
0 & 4 \mu(\mathbf{x}) & 0 \\
K(\mathbf{x}) - \mu(\mathbf{x}) & 0 & K(\mathbf{x}) + \mu(\mathbf{x})
\end{pmatrix}, \quad d = 2, \\
g_{\mu, K} (\mathbf{x}) = \begin{pmatrix}
K(\mathbf{x}) + \frac{4}{3}\mu(\mathbf{x}) & 0 & K(\mathbf{x}) - \frac{2}{3}\mu(\mathbf{x}) & 0 & K(\mathbf{x}) - \frac{2}{3}\mu(\mathbf{x}) & 0 \\
0 & 4\mu(\mathbf{x}) & 0 & 0 & 0 & 0 \\
K(\mathbf{x}) - \frac{2}{3} \mu(\mathbf{x}) & 0 & K(\mathbf{x}) + \frac{4}{3}\mu(\mathbf{x}) & 0 & K(\mathbf{x}) - \frac{2}{3}\mu(\mathbf{x}) & 0 \\
0 & 0 & 0 & 4 \mu(\mathbf{x}) & 0 & 0 \\
K(\mathbf{x}) - \frac{2}{3}\mu(\mathbf{x}) & 0 & K(\mathbf{x}) - \frac{2}{3} \mu(\mathbf{x}) & 0 & K(\mathbf{x}) + \frac{4}{3}\mu(\mathbf{x}) & 0 \\
0 & 0 & 0 & 0 & 0 & 4 \mu(\mathbf{x})
\end{pmatrix}, \quad d = 3.
\end{gather*}

\subsection{Homogenization of the Cauchy problem for the elasticity system}
\label{elast_homog_section}

Consider the operator $\mathcal{W}_\varepsilon = \frac{1}{2} \widehat{\mathcal{A}}_\varepsilon$ with rapidly oscillating
 coefficients. The effective matrix $g^0$ and the effective operator $\mathcal{W}^0 = \frac{1}{2} \widehat{\mathcal{A}}^0$
 are defined by the general rules (see~\eqref{g0}, \eqref{g_tilde},  \eqref{hatA0}).

Let $Q(\mathbf{x})$~be a $\Gamma$-periodic symmetric $(d \times d)$-matrix-valued function with real entries such that
$Q(\mathbf{x}) > 0$; $Q, Q^{-1} \in L_\infty$.
Usually $Q(\mathbf{x})$~is a scalar function (the density of the medium).
We assume that $Q(\mathbf{x})$ is a matrix-valued function.
Consider the following \emph{Cauchy problem for the elasticity system}:
\begin{equation}
\label{elasticity_Cauchy}
\left\{
\begin{aligned}
& Q^\varepsilon (\mathbf{x}) \frac{\partial^2 \mathbf{u}_\varepsilon (\mathbf{x}, \tau)}{\partial \tau^2} = - \mathcal{W}_\varepsilon \mathbf{u}_\varepsilon (\mathbf{x}, \tau), \\
& \mathbf{u}_\varepsilon (\mathbf{x}, 0) = \boldsymbol{\phi} (\mathbf{x}), \quad  Q^\varepsilon (\mathbf{x}) \frac{\partial \mathbf{u}_\varepsilon }{\partial \tau} (\mathbf{x}, 0) = \boldsymbol{\psi}(\mathbf{x}),
\end{aligned}
\right.
\end{equation}
where $\boldsymbol{\phi}, \boldsymbol{\psi} \in L_2 (\mathbb{R}^d; \mathbb{C}^d)$~are given functions.
(For simplicity, we consider the homogeneous equation.)
The homogenized problem takes the form
\begin{equation}
\label{elasticity_Cauchy_eff}
\left\{
\begin{aligned}
& \overline{Q} \frac{\partial^2 \mathbf{u}_0 (\mathbf{x}, \tau)}{\partial \tau^2} = - \mathcal{W}^0 \mathbf{u}_0 (\mathbf{x}, \tau), \\
& \mathbf{u}_0 (\mathbf{x}, 0) = \boldsymbol{\phi} (\mathbf{x}), \quad  \overline{Q}\frac{\partial \mathbf{u}_0 }{\partial \tau} (\mathbf{x}, 0) = \boldsymbol{\psi}(\mathbf{x}).
\end{aligned}
\right.
\end{equation}

Applying Theorem~\ref{nonhomog_Cauchy_wQ_thrm_1}, we obtain the following result.

\begin{proposition}\label{pr161}
Under the assumptions of Subsection~\emph{\ref{elast}}, let ${\mathbf u}_\varepsilon$ be the solution of problem~\eqref{elasticity_Cauchy},
and let ${\mathbf u}_0$ be the solution of problem~\eqref{elasticity_Cauchy_eff}.
If $\boldsymbol{\phi} \in H^s(\mathbb{R}^d;\mathbb{C}^d)$ and $\boldsymbol{\psi} \in H^r(\mathbb{R}^d;\mathbb{C}^d)$ with some $0 \le s \le 2$, $0\le r \le 1$, then for $\tau \in \mathbb{R}$ and $\varepsilon >0$ we have
 \begin{equation}
\label{elast_general_est}
        \| {\mathbf u}_\varepsilon(\cdot, \tau) - {\mathbf u}_0 (\cdot, \tau) \|_{L_2 (\mathbb{R}^d)} \le
        {\mathfrak{C}}_1 (s) (1+| \tau|)^{s/2} \varepsilon^{s/2} \| \boldsymbol{\phi} \|_{H^s(\mathbb{R}^d)} +
 {\mathfrak{C}}_2 (r) (1+ |\tau|) \varepsilon^r \| \boldsymbol{\psi} \|_{H^r(\mathbb{R}^d)},
\end{equation}
where ${\mathfrak{C}}_1 (s)$ depends on $s$, $d$, $\|g\|_{L_\infty}$, $\|g^{-1}\|_{L_\infty}$,
$\|Q\|_{L_\infty}$, $\|Q^{-1}\|_{L_\infty}$, and $r_0${\rm ;}
${\mathfrak{C}}_2 (r)$ depends on $r$, $d$, $\|g\|_{L_\infty}$, $\|g^{-1}\|_{L_\infty}$,
$\|Q\|_{L_\infty}$, $\|Q^{-1}\|_{L_\infty}$, and $r_0$.
If $\boldsymbol{\phi}, \boldsymbol{\psi} \in L_2(\mathbb{R}^d;\mathbb{C}^d)$, then
$\| {\mathbf u}_\varepsilon(\cdot,\tau) - {\mathbf u}_0(\cdot,\tau)\|_{L_2(\mathbb{R}^d)} \to 0$ as $\varepsilon \to 0$, $\tau \in \mathbb{R}$.
\end{proposition}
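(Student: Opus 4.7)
The plan is to recast the elasticity Cauchy problem~\eqref{elasticity_Cauchy} in the exact form of the general problem~\eqref{nonhomog_Cauchy_wQ} and invoke Theorem~\ref{nonhomog_Cauchy_wQ_thrm_1} directly. Since $2\mathcal{W} = b(\mathbf{D})^* g\, b(\mathbf{D}) = \widehat{\mathcal{A}}$ with $b(\mathbf{D})$ being the symmetrized-gradient operator of Subsection~\ref{elast}, we introduce $\widetilde{g}(\mathbf{x}) := \tfrac{1}{2} g(\mathbf{x})$. Then $\widetilde{g}$ is still a $\Gamma$-periodic Hermitian $(m \times m)$-matrix-valued function satisfying $\widetilde{g}, \widetilde{g}^{-1} \in L_\infty$, $\widetilde{g} > 0$, and $\mathcal{W}_\varepsilon = b(\mathbf{D})^* \widetilde{g}^{\,\varepsilon}(\mathbf{x}) b(\mathbf{D})$. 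Hence problem~\eqref{elasticity_Cauchy} is a special case of~\eqref{nonhomog_Cauchy_wQ} with $g$ replaced by $\widetilde{g}$, $n=d$, $m=d(d+1)/2$, and $\mathbf{F} \equiv 0$, while~\eqref{elasticity_Cauchy_eff} is the corresponding homogenized problem~\eqref{nonhomog_Cauchy_wQ_eff} (with the same effective matrix construction applied to $\widetilde{g}$, which gives $(\widetilde{g})^0 = \tfrac{1}{2} g^0$, consistently with $\mathcal{W}^0 = \tfrac{1}{2}\widehat{\mathcal{A}}^0$).

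Next I would verify the factorization hypothesis on $Q$. Since $Q(\mathbf{x})$ is a real symmetric positive definite matrix-valued function with $Q, Q^{-1} \in L_\infty$, we may take $f(\mathbf{x}) := Q(\mathbf{x})^{-1/2}$, which is $\Gamma$-periodic, real, symmetric, and satisfies $Q^{-1} = f f^*$ together with $\|f\|_{L_\infty} \le \|Q^{-1}\|_{L_\infty}^{1/2}$ and $\|f^{-1}\|_{L_\infty} \le \|Q\|_{L_\infty}^{1/2}$. With these choices the assumptions of Theorem~\ref{nonhomog_Cauchy_wQ_thrm_1} are met for the data $\boldsymbol{\phi}, \boldsymbol{\psi}$ in the stated Sobolev classes.

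Applying Theorem~\ref{nonhomog_Cauchy_wQ_thrm_1}($1^\circ$) with $\mathbf{F}=0$ yields estimate~\eqref{elast_general_est} with constants $\mathfrak{C}_1(s)$, $\mathfrak{C}_2(r)$ depending on $s$ (resp.\ $r$), on $\alpha_0, \alpha_1$, on $\|\widetilde{g}\|_{L_\infty}, \|\widetilde{g}^{-1}\|_{L_\infty}$, on $\|f\|_{L_\infty}, \|f^{-1}\|_{L_\infty}$, and on $r_0$. Using $\|\widetilde{g}^{\pm 1}\|_{L_\infty} = 2^{\mp 1}\|g^{\pm 1}\|_{L_\infty}$ and the bounds on $\|f^{\pm 1}\|_{L_\infty}$ in terms of $\|Q^{\pm 1}\|_{L_\infty}$, these constants may be re-expressed in terms of $\|g\|_{L_\infty}, \|g^{-1}\|_{L_\infty}, \|Q\|_{L_\infty}, \|Q^{-1}\|_{L_\infty}$; and since for the elasticity symbol $b(\mathbf{D})$ the constants $\alpha_0, \alpha_1$ depend only on $d$ (as noted in Subsection~\ref{elast}), the overall dependence matches exactly the one claimed in Proposition~\ref{pr161}. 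The final statement ($L_2$ convergence for merely $L_2$ data) follows from Theorem~\ref{nonhomog_Cauchy_wQ_thrm_1}($2^\circ$) applied in the same way.

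There is essentially no obstacle: the proposition is a transcription of the abstract result to the elasticity setting, and the only bookkeeping required is the harmless rescaling $g \mapsto g/2$ to absorb the $\tfrac{1}{2}$ in the definition of $\mathcal{W}$, together with the identification $f = Q^{-1/2}$ to put the density into the framework of Subsection~\ref{elast_homog_section}.
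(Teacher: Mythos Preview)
Your proposal is correct and follows exactly the approach of the paper, which simply states that the result is obtained by applying Theorem~\ref{nonhomog_Cauchy_wQ_thrm_1}. Your additional bookkeeping (the harmless rescaling $g\mapsto g/2$ and the identification $f=Q^{-1/2}$) merely makes explicit the details the paper leaves implicit.
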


Even in the isotropic case, in general estimate~\eqref{elast_general_est} cannot be improved (see Subsection~\ref{example}).
Recall also Example~\ref{elast_exmpl_N0_ne_0} which confirms the sharpness of the result in the anisotropic case.

 \subsection{Example}
\label{example}
    Consider the system of isotropic elasticity in the twodimensional case assuming that $K$ and $\mu$ are periodic and depend only on $x_1$.
    Now $d=2$, $m=3$, $n=2$, $\Gamma = (2 \pi \mathbb{Z})^2$, and $Q(\mathbf{x}) = \mathbf{1}$.

    The periodic solution $\Lambda(\mathbf{x})$ of equation~\eqref{equation_for_Lambda} is given by
    \begin{equation*}
    \Lambda (\mathbf{x}) = \begin{pmatrix}
    \Lambda_{11} (x_1) & 0 & \Lambda_{13} (x_1) \\
    0 & \Lambda_{22} (x_1) & 0
    \end{pmatrix},
    \end{equation*}
where $\Lambda_{11}$, $\Lambda_{22}$, and $\Lambda_{13}$ are $(2\pi {\mathbb Z})$-periodic solutions of the following equations
    \begin{align}
    \notag
    D_1 (K(x_1) + \mu(x_1)) (D_1 \Lambda_{11}(x_1) + 1) &= 0, \qquad \int_0^{2\pi} \Lambda_{11}(x_1) \, d x_1 = 0, \\
    \label{isotr_elast_exmp_Lambda22_eq}
    D_1 \left(2 \mu(x_1) \left(\frac{1}{2} D_1 \Lambda_{22}(x_1) + 1 \right)  \right) &= 0, \qquad \int_0^{2\pi} \Lambda_{22}(x_1) \, d x_1 = 0,  \\
    \notag
    D_1 \left( (K(x_1) + \mu(x_1)) D_1 \Lambda_{13}(x_1) + K(x_1) - \mu(x_1) \right) &= 0,
    \qquad \int_0^{2\pi} \Lambda_{13}(x_1) \, d x_1 = 0.
    \end{align}
    Hence,
    \begin{gather*}
    (K + \mu) (D_1 \Lambda_{11} + 1) = \underline{(K + \mu)},
    \quad \mu \left( \frac{1}{2} D_1 \Lambda_{22} + 1 \right) = \underline{\mu}, \\
    (K + \mu) D_1 \Lambda_{13} + K - \mu = \underline{(K + \mu)} \overline{\left(\frac{K - \mu}{K + \mu} \right) }.
    \end{gather*}
        Then the matrix $\widetilde{g} (\mathbf{x})$ (see~\eqref{g_tilde}) is given by
    \begin{equation*}
    \widetilde{g} (\mathbf{x}) = \begin{pmatrix}
    \underline{(K + \mu)} & 0 & \underline{(K + \mu)} \overline{\left(\frac{K - \mu}{K + \mu} \right) } \\
    0 & 4 \underline{\mu} & 0 \\
    \underline{(K + \mu)} \left(\frac{K - \mu}{K + \mu} \right) & 0 & \frac{4 K \mu}{K + \mu} + \frac{K - \mu}{K + \mu} \overline{\left(\frac{K - \mu}{K + \mu} \right) }  \underline{(K + \mu)}
    \end{pmatrix}.
    \end{equation*}
    The effective matrix $g^0$~(see~\eqref{g0}) takes the form
    \begin{align*}
    &g^0 = \begin{pmatrix}
    A & 0 & B \\
    0 & C & 0 \\
    B & 0 & E
    \end{pmatrix},
  \\
    A = \underline{(K + \mu)}, \quad  B = \underline{(K + \mu)}  &\overline{\left(\frac{K - \mu}{K + \mu} \right) }, \quad   C = 4 \underline{\mu},
\quad
 E = 4 \overline{\left( \frac{K \mu}{K + \mu} \right) } + \left(\overline{\frac{K - \mu}{K + \mu}} \right)^2 \underline{(K + \mu)}.
    \end{align*}
    The spectral germ is given by
    \begin{equation}
    \label{isotr_elast_exmp_germ}
    \widehat{S}(\boldsymbol{\theta}) = \begin{pmatrix}
    A\theta_1^2  + \frac{1}{4}C\theta_2^2   &   \left( B + \frac{1}{4}C \right)\theta_1 \theta_2  \\
     \left( B + \frac{1}{4}C \right)\theta_1 \theta_2  & E\theta_2^2  + \frac{1}{4}C\theta_1^2
    \end{pmatrix}.
    \end{equation}
    In order to construct an example where $B + \frac{1}{4}C = 0$, we put
    \begin{equation*}
    \mu(x_1) = 1 + 624 \cdot \cos^2 x_1, \quad K(x_1)
= a + 100 \cdot
\begin{cases}
-1,  & \text{if} \  x_1 < \frac{\pi}{2}\\
 1, &  \text{if} \  x_1 \ge \frac{\pi}{2}.
\end{cases}
    \end{equation*}
    Then
    \begin{equation*}
    A^{-1} =  \frac{1}{4 q} + \frac{3}{4 \rho},
\quad
    B =  \frac{6 (a + b) q + 2(a - b) \rho - 4 q \rho}{\rho + 3 q },
\quad
    C = 4\sqrt{c+1},\quad
    E = \frac{6 b \rho - 6 b q  - 12 b^2 + 4 q \rho}{\rho  + 3q }.
    \end{equation*}
    Here $b = 100$, $c=624$, $q= \sqrt{(a-b+c+1)(a-b+1)}$,
and $\rho = \sqrt{(a+b+c+1)(a+b+1)}$. Thus, relation $B + \frac{1}{4} C = 0$ holds if $a$ satisfies the equation
    \begin{equation*}
    \frac{6 (a + b) q  + 2(a - b) \rho - 4 q \rho}{ \rho + 3 q}  = -25.
    \end{equation*}
    The root exists, since the left-hand side is continuous in $a$ and, by calculations,
    is (approximately) equal to $-34.4$ for $a= 130$, and is equal to $-22.1$ for $a=150$.
    The approximate value of the root is $a \approx 145.6581$.
    For such $a$ the eigenvalues of the germ~\eqref{isotr_elast_exmp_germ} coincide if
    $A \theta_1^2 + \frac{1}{4}C \theta_2^2 = E \theta_2^2 + \frac{1}{4} C \theta_1^2$.
    Since $ \theta_1^2 +  \theta_2^2 = 1$, this is valid for
    \begin{equation}
    \label{isotr_elast_exmp_theta}
    \theta_1^2 = \frac{E - \frac{1}{4}C}{A + E - \frac{1}{2}C} \approx 0.5394.
    \end{equation}

    Next, we calculate $L(\boldsymbol{\theta})$ and $\widehat{N} (\boldsymbol{\theta})$ (see~\eqref{N(theta)}, \eqref{L(theta)}):
    \begin{equation*}
    L(\boldsymbol{\theta}) =
    \begin{pmatrix}
    0 & S \theta_2 & 0 \\
    S^* \theta_2 & 0 & T^* \theta_2 \\
    0 & T \theta_2 & 0
    \end{pmatrix},
    \quad \widehat{N} (\boldsymbol{\theta}) = \frac{1}{2}\begin{pmatrix}
    0 &  S \theta_1^2 \theta_2 + T^* \theta_2^3 \\
    S^* \theta_1^2 \theta_2 + T \theta_2^3 & 0
    \end{pmatrix},
    \end{equation*}
    where
    \begin{equation*}
    S = \underline{K + \mu} \overline{\left(\frac{K - \mu}{K + \mu} \right) \Lambda_{22}}, \quad T = \overline{\left( \frac{4 K \mu}{K + \mu} +  \frac{K - \mu}{K + \mu} \overline{\left(\frac{K - \mu}{K + \mu} \right)} \underline{K + \mu} \right) \Lambda_{22} },
    \end{equation*}
    and $\Lambda_{22}$ is the solution of equation~\eqref{isotr_elast_exmp_Lambda22_eq}:
    \begin{equation*}
    \Lambda_{22}(x_1) =
\begin{cases}
2i \arctan\left(\frac{1}{25} \tan(x_1) \right) - 2 i x_1 , & \text{if} \; 0 \le x_1 < \frac{\pi}{2}\\
    2i \arctan\left(\frac{1}{25} \tan(x_1) \right) - 2 i x_1 + 2 \pi i , &  \text{if} \; \frac{\pi}{2} \le x_1 < \frac{3\pi}{2}\\
    2i \arctan\left(\frac{1}{25} \tan(x_1) \right) - 2 i x_1 + 4 \pi i , &  \text{if} \; \frac{3 \pi}{2} \le  x_1 \le 2 \pi
\end{cases}.
    \end{equation*}
    Approximately, we have $S \approx 65.6650i$, $T \approx 76.2833i$.

    For the points $\boldsymbol{\theta}^{(j)}, j=1,2,3,4$, satisfying~\eqref{isotr_elast_exmp_theta}, we have $\widehat{\gamma}_1 (\boldsymbol{\theta}^{(j)}) = \widehat{\gamma}_2 (\boldsymbol{\theta}^{(j)})$ and $\widehat{N} (\boldsymbol{\theta}^{(j)}) = \widehat{N}_0 (\boldsymbol{\theta}^{(j)}) \ne 0$. The numbers $\pm \widehat{\mu}$, where $\widehat{\mu}$ is approximately equal to $0.09850$, are the
eigenvalues of the operator  $\widehat{N} (\boldsymbol{\theta}^{(j)})$.
    Applying Theorem~\ref{s<2_cos_thrm_Rd}, we confirm that the result of Proposition~\ref{pr161}  is sharp.

\subsection{The Hill body}
In mechanics (see,~e.g.,~\cite{ZhKO}), the elastic isotropic medium with constant shear modulus
$\mu(\mathbf{x}) = \mu_0 = \mathrm{const}$ is called the Hill body.
In this case, a simpler factorization for the operator $\mathcal{W}$ is possible (see~\cite[Chapter~5, Subsection~2.3]{BSu1}).
The form~\eqref{elast_energy} can be written as
\begin{equation}
\label{elast_energy_Hill}
w[\mathbf{u},\mathbf{u}] = \int_{\mathbb{R}^d} \left\langle g_\wedge(\mathbf{x}) b_\wedge(\mathbf{D}) \mathbf{u}, b_\wedge(\mathbf{D}) \mathbf{u} \right\rangle_{\mathbb{C}^{m_\wedge}} d \mathbf{x}.
\end{equation}
Here $m_\wedge = 1 + d(d-1)/2$. The $(m_\wedge \times d)$-matrix $b_\wedge(\boldsymbol{\xi})$ can be described as follows.
The first row of $b_\wedge(\boldsymbol{\xi})$ is $(\xi_1, \xi_2, \ldots, \xi_d)$.
The other rows correspond to pairs of indices $(j, l), 1 \le j <l \le d$.
The element in the $(j, l)$th row and the $j$th column is $\xi_l$, and
the element in the $(j, l)$th row and the $l$th column is $(-\xi_j)$; all other elements
of the $(j, l)$th row are equal to zero. The order of the rows is irrelevant. Finally,
$g_\wedge (\mathbf{x}) = \mathrm{diag} \{\beta(\mathbf{x}), \mu_0/2, \mu_0/2, \ldots, \mu_0/2\}$.
Thus, by~\eqref{elast_energy_Hill},
$\mathcal{W} = b_\wedge(\mathbf{D})^* g_\wedge (\mathbf{x}) b_\wedge(\mathbf{D})$.
The effective matrix $g_\wedge^0$ coincides with $\underline{g_\wedge}$, i.~e.,
$g_\wedge^0 = \underline{g_\wedge} = \mathrm{diag} \{\underline{\beta}, \mu_0/2, \mu_0/2, \ldots, \mu_0/2\}$; see~\cite[Chapter~5, Subsection~2.3]{BSu1}.
The effective operator is given by
\begin{equation}
\label{elast_W^0_Hill}
\mathcal{W}^0 = b_\wedge(\mathbf{D})^* g_\wedge^0 b_\wedge(\mathbf{D}).
\end{equation}
For the new factorization of the operator $\mathcal{W}_\varepsilon = b_\wedge(\mathbf{D})^* g_\wedge^\varepsilon (\mathbf{x}) b_\wedge(\mathbf{D})$, the statement of the Cauchy problem~\eqref{elasticity_Cauchy} remains the same.
In the homogenized problem~\eqref{elasticity_Cauchy_eff}, the effective operator $\mathcal{W}^0$ is given by~\eqref{elast_W^0_Hill}.
 For the Hill body, the solutions of problem~\eqref{elasticity_Cauchy}
 satisfy all the statements of Subsection~\ref{elast_homog_section}.

Suppose that $Q(\mathbf{x}) = \mathbf{1}$. By Proposition~\ref{N=0_proposit}($3^\circ$),
we see that $\widehat{N}(\boldsymbol{\theta}) = 0$ for any $\boldsymbol{\theta} \in \mathbb{S}^{d-1}$.
Then, by Theorem~\ref{nonhomog_Cauchy_hatA_eps_ench_thrm_1}, for
$\boldsymbol{\phi}\in H^s (\mathbb{R}^d; \mathbb{C}^d)$ and $\boldsymbol{\psi} \in H^r (\mathbb{R}^d; \mathbb{C}^d)$,
where $0 \le s \le 3/2$, $0 \le r \le 1/2$, we have
\begin{equation*}
\| \mathbf{u}_\varepsilon (\cdot, \tau) - \mathbf{u}_0 (\cdot, \tau) \|_{L_2(\mathbb{R}^d)} \le
\widehat{\mathfrak{C}}_3 (s) (1+ |\tau|)^{2s/3} \varepsilon^{2s/3} \| \boldsymbol{\phi} \|_{H^s(\mathbb{R}^d)} +
 \widehat{\mathfrak{C}}_4 (r) (1+ |\tau|) \varepsilon^{2r} \| \boldsymbol{\psi} \|_{H^r(\mathbb{R}^d)},
\end{equation*}
where $\widehat{\mathfrak{C}}_3 (s)$ depends
on $s$, $d$, $\|\beta\|_{L_\infty}$, $\|\beta^{-1}\|_{L_\infty}$, $\mu_0$, and $r_0${\rm ;}
$\widehat{\mathfrak{C}}_4 (r)$ depends
on $r$, $d$, $\|\beta\|_{L_\infty}$, $\|\beta^{-1}\|_{L_\infty}$, $\mu_0$, and $r_0$.

\end{document}